\newcommand{\af}{\mathrm{af}}
\newcommand{\al}{\alpha}
\newcommand{\alb}{{\overline{\alpha}}}
\newcommand{\Aut}{\mathrm{Aut}}
\newcommand{\bv}{\mathbf{v}}
\newcommand{\C}{\mathbb{C}}
\newcommand{\cB}{\mathcal{B}}
\newcommand{\cC}{\mathcal{C}}
\newcommand{\cF}{\mathcal{F}}
\newcommand{\cK}{\mathcal{K}}
\newcommand{\cM}{\mathcal{M}}
\newcommand{\charge}{\mathrm{charge}}
\newcommand{\cl}{\mathrm{cl}}
\newcommand{\cO}{\mathcal{O}}
\newcommand{\core}{\mathrm{core}}
\newcommand{\coremap}{\mathrm{Core}}
\newcommand{\cP}{\mathcal{P}}
\newcommand{\cV}{\mathcal{V}}
\newcommand{\diag}{\mathrm{diag}}
\newcommand{\Dh}{\hat{D}}
\newcommand{\dom}{\trianglerighteq}
\newcommand{\domop}{\dom^{\mathrm{opp}}}
\newcommand{\End}{\mathrm{End}}
\newcommand{\F}{\mathbb{F}}
\newcommand{\fM}{\mathfrak{M}}
\newcommand{\hb}{\overline{h}}
\newcommand{\hD}{{\hat{D}}}
\newcommand{\hH}{{\hat{H}}}
\newcommand{\Hilb}{\mathrm{Hilb}}
\newcommand{\id}{\mathrm{id}}
\newcommand{\inv}{\mathrm{inv}}
\newcommand{\K}{\mathbb{K}}
\newcommand{\kb}{\overline{\kappa}}
\newcommand{\tK}{\tilde{K}}
\newcommand{\la}{\lambda}
\newcommand{\lad}{{\la^\bullet}}
\newcommand{\La}{\Lambda}
\newcommand{\Mat}{\mathrm{Mat}}
\newcommand{\mud}{{\mu^\bullet}}
\newcommand{\negate}{\mathrm{neg}}
\newcommand{\nud}{{\nu^\bullet}}
\newcommand{\ol}[1]{\overline{#1}}
\newcommand{\Q}{\mathbb{Q}}
\newcommand{\quot}{\mathrm{quot}}
\newcommand{\rdom}{\trianglelefteq}
\newcommand{\rdomstrict}{\vartriangleleft}
\newcommand{\reg}{\mathrm{reg}}
\newcommand{\res}[1]{\Big|_{\cV_{{#1}}}}
\newcommand{\resGamma}{\mathrm{res}_\Gamma}
\newcommand{\rev}{\mathrm{rev}}
\newcommand{\rootmap}{\mathrm{Root}}
\newcommand{\sg}{\mathrm{s}}
\newcommand{\shape}{\mathrm{shape}}
\newcommand{\sq}{\square}
\newcommand{\swap}{\mathrm{swap}}
\newcommand{\T}{\mathbb{T}}
\newcommand{\Taut}{\mathrm{Taut}}
\newcommand{\tD}{\tilde{D}}
\newcommand{\tH}{\tilde{H}}
\newcommand{\tHopp}{{}^{\mathrm{opp}}\tH}
\newcommand{\triv}{\mathrm{triv}}
\newcommand{\vn}{\varnothing}
\newcommand{\Waf}{\hat{\mathfrak{S}}_I}
\newcommand{\Wfin}{\mathfrak{S}_I}
\newcommand{\Xd}{X^\bullet}
\newcommand{\Y}{\mathbb{Y}}
\newcommand{\Yd}{{Y^\bullet}}
\newcommand{\Z}{\mathbb{Z}}
\newcommand{\fsl}{\mathfrak{sl}}
\newcommand{\Utor}{U^{\mathrm{tor}}}
\newcommand{\Uaf}{U^{\mathrm{af}}}
\newcommand{\ad}{\mathrm{ad}}
\newcommand{\gl}{\mathfrak{gl}}
\newcommand{\fS}{\mathfrak{S}}
\newcommand{\loc}{\mathrm{loc}}
\newcommand{\Coh}{\mathrm{Coh}}
\newcommand{\Frac}{\mathrm{Frac}}
\newcommand{\miki}{\varsigma}
\newcommand{\pair}[2]{\langle #1,\,#2\rangle}
\newcommand{\pairqt}[2]{\pair{#1}{#2}_{q,t}}
\newcommand{\pairP}[2]{{\pairqt{#1}{#2}^P}}
\newcommand{\ya}{\ydiagram{1}}
\newcommand{\yb}{\ydiagram{2}}
\newcommand{\yaa}{\ydiagram{1,1}}
\newcommand{\cd}{\cdot}
\newcommand{\cG}{\mathcal{G}}
\newcommand{\Tor}{\mathrm{Tor}}
\newcommand{\ch}{\mathrm{ch}}
\newcommand{\pairTor}[2]{\pair{#1}{#2}_{\Tor}}
\newcommand{\Sym}{\mathrm{Sym}}
\newcommand{\Hom}{\mathrm{Hom}}
\newcommand{\heis}{\mathfrak{H}}
\newcommand{\bra}[1]{\langle #1|}
\newcommand{\ket}[1]{|#1\rangle}
\newcommand{\fixit}[1]{{\color{red}\texttt{*** #1 ***}}}
\newcommand{\comment}[1]{}
\newtheorem{thm}{Theorem}[section]
\newtheorem{lem}[thm]{Lemma}
\newtheorem{prop}[thm]{Proposition}
\newtheorem{cor}[thm]{Corollary}
\newtheorem{conj}[thm]{Conjecture}
\theoremstyle{definition}
\newtheorem{ex}[thm]{Example}
\theoremstyle{remark}
\newtheorem{rem}[thm]{Remark}
\numberwithin{equation}{section}
\begin{document}

\title[Wreath Macdonald polynomials]{Wreath Macdonald polynomials, a survey}


\author{Daniel Orr}
\address{}
\curraddr{}
\email{dorr@math.vt.edu}
\thanks{}

\author{Mark Shimozono}
\address{}
\curraddr{}
\email{mshimo@math.vt.edu}
\thanks{}


\date{\today}

\begin{abstract}

Wreath Macdonald polynomials arise from the geometry of $\Gamma$-fixed loci of Hilbert schemes of points in the plane, where $\Gamma$ is a finite cyclic group of order $r\ge 1$. For $r=1$, they recover the classical (modified) Macdonald symmetric functions through Haiman's geometric realization of these functions. The existence, integrality, and positivity of wreath Macdonald polynomials for $r>1$ was conjectured by Haiman and first proved in work of Bezrukavnikov and Finkelberg by means of an equivalence of derived categories. Despite the power of this approach, a lack of explicit tools providing direct access to wreath Macdonald polynomials---in the spirit of Macdonald's original works---has limited progress in the subject.

A recent result of Wen provides a remarkable set of such tools, packaged in the representation theory of quantum toroidal algebras. In this article, we survey Wen's result along with the basic theory of wreath Macdonald polynomials, including its geometric foundations and the role of bigraded reflection functors in the construction of wreath analogs of the $\nabla$ operator. We also formulate new conjectures on the values of important constants arising in the theory of wreath Macdonald $P$-polynomials. A variety of examples are used to illustrate these objects and constructions throughout the paper.

\end{abstract}

\maketitle

\section{Introduction}

\subsection{Classical Macdonald theory} The Macdonald polynomials $P_\mu$ of \cite[VI]{Mac} were originally conceived as a distinguished family of symmetric orthogonal polynomials in several (or infinitely many) variables and depending on two parameters, $q$ and $t$. They were designed to interpolate, through specialization of the parameters, between the major families of multivariable symmetric orthogonal polynomials arising in representation theories associated with the general linear groups $GL_n$ and symmetric groups $\fS_n$---namely, the Schur, Hall-Littlewood, and Jack polynomials. The existence of the $P_\mu$ was originally derived from the digaonalization of explicit $q$-difference operators \cite{Mac} or, equivalently, the symmetric function vertex operators of \cite{GH}.

Macdonald's famous integrality and positivity conjectures \cite[VI.8]{Mac} drove the study of Macdonald polynomials into deeper territory and inspired many tremendous efforts toward their proof. Among these, Haiman's proof \cite{H:pos} of the Macdonald positivity conjecture established a remarkable connection between Macdonald polynomials and the geometry of Hilbert schemes of points in the plane. Central to Haiman's approach is an equivalence of derived categories of equivariant coherent sheaves
\begin{align}\label{E:Haiman equiv intro}
D^b(\Coh^\T(\Hilb_n))\cong D^b(\Coh^{\T\times\fS_n}(\C^{2n})),
\end{align}
where $\Hilb_n=\Hilb_n(\C^2)$ is the Hilbert scheme of $n$ points in the plane $\C^2$ and $\T=(\C^\times)^2$ is a two-dimensional torus acting naturally on $\Hilb_n$ and on  $\C^{2n}=(\C^2)^n$. At the level of equivariant $K$-theory, the equivalence \eqref{E:Haiman equiv intro} leads to an isomorphism of vector spaces
\begin{align}\label{E:Haiman iso intro}
\Phi: \bigoplus_{n\ge 0} K^{\T}(\Hilb_n)_\loc \overset{\sim}{\longrightarrow} \Lambda,
\end{align}
where $\Lambda$ denotes the $\K=\Q(q,t)$-algebra of symmetric functions in infinitely many variables and $\loc$ stands for localization with respect to the $\T$-action. (This localization is achieved by extending of scalars to the field of fractions $\K$ of the representation ring $K^\T(\mathrm{pt}) \cong R(\T)\cong \Z[q^{\pm 1},t^{\pm 1}]$ of the torus $\T$.)

For each $n\ge 0$ and each partition $\mu$ of size $n$, we have a $\T$-fixed point $I_\mu\in\Hilb_n$ given by a monomial ideal, and the isomorphism $\Phi$ sends the corresponding class $[I_\mu]$ in $K^{\T}(\Hilb_n)_\loc$ to the modified Macdonald symmetric function $\tH_\mu$ (which is a cousin of $P_\mu$). Moreover, the equivalence \eqref{E:Haiman equiv intro} realizes $\tH_\mu$ as the Frobenius character of a bigraded $\fS_n$-module, namely the fiber $P_n|_{I_\mu}$ of a rank $n!$ vector bundle $P_n$ on $\Hilb_n$. This vector bundle $P_n$, which induces \eqref{E:Haiman equiv intro} and has many special properties, is known as the Procesi bundle.

It was later realized that the isomorphism $\Phi$ carries further representation-theoretic significance. In particular, both sides of \eqref{E:Haiman iso intro} admit natural actions of the quantum toroidal $\gl_1$ algebra $\Utor_1$, also known as the elliptic Hall algebra \cite{BS}. The space on the left side of \eqref{E:Haiman iso intro} affords a well-known geometrically-defined action of $\Utor_1$ \cite{SV2,FT}, which is constructed along the lines of earlier work of Nakajima and Grojnowski. The action of $\Utor_1$ on the right-hand side $\Lambda$ is expressed via vertex operators going back to \cite{GH,GT} and arises as limit of polynomial representations of type $GL$ spherical double affine Hecke algebras \cite{SV1}. A result of \cite{SV2,FT} then asserts that the isomorphism $\Phi$ arising from Haiman's theorem interwines these two actions of $\Utor_1$, up to an automorphism of $\Utor_1$ known as the Miki automorphism \cite{Miki2} (or, equivalently, the limit of Cherednik's Fourier transform \cite{C}). 
This perspective on $\Phi$ provided by the methods of \cite{SV1,SV2,FT} has proven to be tremendously fruitful in Macdonald theory; see, for instance, the works \cite{GN} and \cite{BHMPS}, both of which also employ the powerful shuffle realization of $\Utor_1$ \cite{SV2,FT,Neg1}.

The particular developments in Macdonald theory which we have described above will serve as a model for our treatment of wreath Macdonald polynomials. To summarize, these developments proceeded as follows:
\begin{enumerate}
\item \textit{Eigenoperators.} Macdonald polynomials $P_\mu$ (and their variants $\tH_\mu$) are constructed as orthogonal polynomials by means of explicit eigenoperators.
\item \textit{Geometry.} Haiman's theorem on the Hilbert scheme realization of Macdonald polynomials implies the Macdonald positivity conjecture.
\item \textit{Representation theory.} The isomorphism \eqref{E:Haiman iso intro} arising from Haiman's theorem is understood as an intertwining map between two representations of the quantum toroidal $\gl_1$ algebra.
\end{enumerate}

\subsection{Wreath Macdonald theory}
Our aim in this article is to describe developments in the theory of \textit{wreath Macdonald polynomials}. These are a generalization of Macdonald polynomials arising from the geometry of the $\Gamma$-fixed loci $\Hilb_n^\Gamma=\Hilb_n(\C^2)^\Gamma$ for a finite subgroup $\Gamma\subset SL_2(\C)$. Although the original framework for wreath Macdonald theory proposed by Haiman \cite{H} allows for an arbitrary finite subgroup $\Gamma$, we shall assume throughout that $\Gamma\cong\Z/r\Z$ is a finite cyclic group, as essentially all developments in the subject to date are limited to this case. 

While we shall use the developments in classical Macdonald theory described above as a model, it is interesting that wreath Macdonald theory has proceeded in a different order, beginning with geometry. In broad strokes, its development has proceeded to date as follows:
\begin{enumerate}
\item \textit{Geometry.} Following Haiman's proposal of the geometric foundations for wreath Macdonald theory \cite{H}, Bezrukavnikov and Finkelberg \cite{BF} prove that a wreath analog of the equivalence \eqref{E:Haiman equiv intro} leads to a geometric realization of wreath Macdonald polynomials via the analog of the map \eqref{E:Haiman iso intro}. This establishes the existence, integrality, and positivity of wreath Macdonald polynomials, all conjectured by Haiman \cite{H}. This was later established and extended by Losev \cite{L1,L2}.
\item \textit{Representation theory.} Wen \cite{Wen} proves that the wreath analog of the isomorphism \eqref{E:Haiman iso intro} is, up to nontrivial scalars, an intertwining map between two representations of the quantum toroidal $\gl_r$ algebra $\Utor_r$. This involves a twist by the remarkable \textit{Miki automorphism} of $\Utor_r$. Wen's result, which builds upon on earlier work of Negut~\cite{Neg} and Tsymbaliuk~\cite{T}, constitutes an highly nontrivial extension of the result of \cite{SV2,FT} described above.
\item \textit{Eigenoperators.} By virtue of \cite{Wen}, the horizontal Heisenberg subalgebra of $\Utor_r$ acts diagonally on wreath Macdonald polynomials. This leads to explicit vertex operators having the wreath Macdonald polynomials as their joint eigenbasis.
\end{enumerate}

In somewhat more detail, for each element $\beta$ in the root lattice $Q$ of $\fsl_r$, one has a basis of wreath Macdonald polynomials $\{\tH_\mud^{t_{-\beta^\vee}}\}_\mud$ for the $r$-fold tensor power $\Lambda^{\otimes r}$ of symmetric functions. These are indexed arbitrary by $r$-tuples of partitions $\mud=(\mu^{(0)},\dotsc,\mu^{(r-1)})$, with $\tH_\mud^{t_{-\beta^\vee}}$ homogeneous of total degree $|\mud|=\sum_i|\mu^{(i)}|$. The root lattice element $\beta$ corresponds to an $r$-core partition $\gamma$, and once this is fixed we can think of the multipartition $\mud$ as the $r$-quotient of an arbitrary (single) partition of size $N=|\gamma|+r|\mud|$ (see \S\ref{SS:core-quot}).

The wreath analog of \eqref{E:Haiman equiv intro} is an equivalence
\begin{align}\label{E:wreath Haiman equiv intro}
D^b(\Coh^\T(\fM_{\beta,n}))\cong D^b(\Coh^{\T\times\Gamma_n}(\C^{2n}))
\end{align}
studied in \cite{BF} and arising from more general result of Bezrukavnikov and Kaledin \cite{BK}. Here $\Gamma_n=\fS_n \wr \Gamma$ is the wreath product group, $\beta\in Q$ is an element of root lattice of $\fsl_r$, and $\fM_{\beta,n}$ is a Nakajima quiver variety for the cyclic quiver with $r$ vertices and one-dimensional framing space. As we recall in detail below, $\fM_{\beta,n}$ is isomorphic to an irreducible component of $\Hilb_N^\Gamma$ for $N$ as above.

The analog of \eqref{E:Haiman iso intro} is then an isomorphism 
\begin{align}\label{E:wreath Haiman iso intro n}
\Phi_{\beta,n} : K^{\T}(\fM_{\beta,n})_\loc \overset{\sim}{\longrightarrow} (\Lambda^{\otimes r})_n,
\end{align}
where $(\Lambda^{\otimes r})_n$ is the component of $\Lambda^{\otimes r}$ consisting of multisymmetric functions of total degree $n$. Bezrukavnikov and Finkelberg \cite{BF} established that the isomorphism $\Phi_{\beta,n}$ sends the fixed-point basis $\{[I_\mud^{t_{-\beta^\vee}}]\}$ of $K^{\T}(\fM_{\beta,n})_\loc$, where $\mud$ runs over $r$-multipartitions of total size $n$, to the sought-after wreath Macdonald basis $\{\tH_\mud^{t_{-\beta^\vee}}\}$ of $(\Lambda^{\otimes r})_n$ conjectured to exist in \cite{H}. In this way, the wreath Macdonald polynomial $\tH_\mud^{t_{-\beta^\vee}}$ is realized as the bigraded $\Gamma_n$-Frobenius character of the fiber of a rank $r^nn!$ vector bundle (the wreath Procesi bundle) over the corresponding $\T$-fixed point, establishing integrality and positivity of wreath Macdonald-Kostka coefficients. 

Taking the direct sum of all maps $\Phi_{\beta,n}$ from \eqref{E:wreath Haiman iso intro n}, one obtains an isomorphism
\begin{align}\label{E:wreath Haiman iso intro}
\Phi: \bigoplus_{\substack{\beta\in Q\\n\ge 0}} K^{\T}(\fM_{\beta,n})_\loc \overset{\sim}{\longrightarrow} \bigoplus_{\beta\in Q} \Lambda^{\otimes r}
\end{align}
of $\K$-vector spaces. This is where the representation theory of the quantum toroidal algebra $\Utor_r$ enters the picture. In particular, \eqref{E:wreath Haiman iso intro} is the setting for Wen's work \cite{Wen}, which involves a comparison between the geometrically-defined action of $\Utor_r$ on the left-hand side \cite{VV,Nak} and Saito's vertex representation \cite{Saito} on the right-hand side, twisted by the Miki automorphism. The main result of \cite{Wen} can be interpreted as the assertion that the map \eqref{E:wreath Haiman iso intro} arising from the equivalence \eqref{E:wreath Haiman equiv intro} agrees with Tsymbaliuk's isomorphism \cite{T} between two realizations of the Fock representation of $\Utor_r$, up to a rescaling of the fixed-point basis. 

Under the $\Utor_r$ action of \cite{VV,Nak}, it is easy to identify eigenoperators for the fixed-point basis on the left-hand side of \eqref{E:wreath Haiman iso intro}. A key point of Wen's result is that it allows one to transfer these to the right-hand side, thus obtaining eigenoperators for wreath Macdonald polynomials $\{\tH_\mud^{t_{-\beta^\vee}}\}$ in the vertex representation \cite{Saito}. What makes this transfer highly nontrivial and difficult to discover directly is that it must go through the Miki automorphism of $\Utor_r$.

More generally, for any element $w=ut_{-\beta^\vee}\in \fS_r \ltimes Q$ in the affine Weyl group of type $A_{r-1}$, one has a wreath Macdonald basis $\{\tH_\mud^{w}\}$ of $\Lambda^{\otimes r}$ indexed by multipartitions $\mud$ of arbitrary size. There are corresponding equivalences \eqref{E:wreath Haiman equiv intro} and isomorphisms $\Phi^w_n : K^{\T}(\fM_{\beta,n})_\loc \overset{\sim}{\longrightarrow} (\Lambda^{\otimes r})_n$ realizing each $\tH_{\mud}^w$ as the bigraded $\Gamma_n$-Frobenius character of a fiber of a different wreath Procesi bundle (depending on the finite permutation $u$). These more general Procesi bundles were introduced and classifed by Losev \cite{L1,L2}, who gave an independent proof of the main result of \cite{BF}. At present, we do not know if it is possible to characterize the more general wreath Macdonald polynomials $\tH^w_\mud$ for $w=ut_{-\beta^\vee}$ with $u\neq 1$ using quantum toroidal algebras.

In forthcoming work \cite{OS,OSW}, we use Wen's result to extract explicit $q$-difference eigenoperators for the finite variable specializations of wreath Macdonald polynomials, providing a characterization of these polynomials true to the spirit of Macdonald's original approach to the subject.

\subsection{Overview}
We provide a survey of the developments described above, culminating in a statement of Wen's result and a completely concrete description of the vertex eigenoperators it provides for the wreath Macdonald polynomials $\tH^{t_{-\beta^\vee}}_\mud$ indexed by translations. Along the way, and with the intention of making the subject more widely accessible, we provide an explicit and detailed exposition of fundamental properties and symmetries of wreath Macdonald polynomials $\tH^w_\mud$ for arbitrary affine Weyl group elements $w$. We explain the role of bigraded reflection functors in the theory (as explained to us by Haiman \cite{H:private}), and in particular how these lead to wreath analogs of the $\nabla$ operator from classical Macdonald theory. We illustrate these constructions with a variety of examples. As an original contribution, we also formulate new conjectures on important constants arising in the theory of wreath Macdonald $P$-polynomials (Conjectures \ref{CJ:P pairing} and \ref{CJ:J to P coefficient}).

To set the stage, we begin with a review of some major results from classical Macdonald theory which have influenced these developments.

\section*{Acknowledgements}

We thank Mark Haiman for helpful discussions and for sharing his unpublished results on wreath Macdonald polynomials \cite{H:private}. We also thank Joshua Wen for many fruitful discussions and related collaborations. D.O. gratefully acknowledges support from the Simons Foundation (Collaboration Grant for Mathematicians, 638577) and the Max Planck Institute for Mathematics (MPIM Bonn).

\section{Classical Macdonald theory}

\subsection{Partitions}

Let $\Y$ be Young's lattice of partitions. 
The diagram of a partition $\mu=(\mu_1,\mu_2,\dotsc)\in \Y$ is the subset of $\Z_{\ge0}^2$ given by
\begin{align*}
D(\mu)=\{(a,b)\mid 0 \le a < \mu_{b+1} \}.
\end{align*}
Its elements are called cells.
We often write $\mu$ to mean $D(\mu)$. 
For any $\mu\in\Y$, let $\mu^t\in\Y$ be the transposed partition, defined by $(a,b)\in \mu^t$ if and only if $(b,a)\in\mu$. Let $|\mu|=\sum_i \mu_i$. Let $\unrhd$ be the dominance partial order on $\Y$ \cite[\S I.1]{Mac}. Let $\Y_N\subset \Y$ be the set of partions with at most $N$ parts.

For any $\mu\in\Y$, define the following quantities:
\begin{align}\label{E:Bmu}
B_\mu = B_\mu(q,t) &= \sum_{(a,b)\in\mu} q^a t^b \\
\label{E:Amudef}
A_\mu = A_\mu(q,t) &= 1-(1-q)(1-t)B_\mu.
\end{align}
One has
\begin{align*}
A_\mu =\sum_{(a,b)\in A(\mu)} q^a t^b - qt \sum_{(a,b)\in R(\mu)} q^a t^b,
\end{align*}
where $A(\mu)$ is the set of $\mu$-addable cells, the cells $s\in \Z_{\ge0}^2\setminus D(\mu)$ such that $D(\mu)\cup \{s\}$ is the diagram of a partition, and $R(\mu)$ is the set of $\mu$-removable cells, those cells $s\in D(\mu)$ such that $D(\mu)\setminus \{s\}$ is the diagram of a partition.

\begin{ex} For the partition $\mu=(3,2)$
\begin{align*}
D(\mu)&=\{(0,0),(1,0),(2,0),(0,1),(1,1)\}, \\
A(\mu)&=\{(3,0),(2,1),(0,2)\}, \\
R(\mu)&=\{(2,0),(1,1)\}.
\end{align*}
The addable (resp. removable) cells of $\mu$ are depicted with $+$ (resp. $-$) entries.
\ytableausetup{aligntableaux=bottom,boxsize=5mm}

\begin{align*}
\begin{ytableau}
01&11\\
00&10&20
\end{ytableau}\qquad
\begin{ytableau}
 +\\
{} & &+ \\
 & & & +\\
\end{ytableau}\qquad
\begin{ytableau}
{}& \text{$-$}\\
{}&{} & \text{$-$}\\
\end{ytableau}
\qquad
\end{align*}
We have
\begin{align*}
B_\mu(q,t)&=1+q+q^2+t+qt\\
A_\mu(q,t)&=q^3+q^2t+t^2-qt(q^2+qt).
\end{align*}
\end{ex}
The arm $a_\mu(s)$ (resp. leg $l_\mu(s)$) of a cell $s=(a,b)\in \mu$ is by definition the number of cells in $\mu$ that are strictly to the right (resp. above) $s$ in its row (resp. column). The hook of $s$ in $\mu$ is given by $h_\mu(s) = 1 + a_\mu(s)+l_\mu(s)$. 

\begin{ex}
For $\mu=(6,4,2,1)$ and $s=(1,1)$ we have $a_\mu(s)=2$, $l_\mu(s)=1$, and $h_\mu(s)=4$.
\begin{align*}
\begin{ytableau}
{}\\
{}&{l}\\
{}&{\bullet}&{a}&{a}\\
{}&{}&{}&{}&{}&{}
\end{ytableau}
\end{align*}
\end{ex}

\subsection{Symmetric functions}

For any integer $N\ge 0$, let $\fS_N$ be the symmetric group and let $\Lambda_{N,\Z}=\Z[x_1,\dotsc,x_N]^{\fS_N}$ be the ring of symmetric polynomials in the variables $x_1,\dotsc,x_N$. This is a graded ring: $\Lambda_{N,\Z}=\bigoplus_{n=0}^\infty \Lambda^n_{N,\Z}$ where $\Lambda^n_{N,\Z}\subset \Lambda_{N,\Z}$ consists of symmetric polynomials which are homogeneous of total degree $n$.
 
Let $\Lambda_\Z=\bigoplus_{n=0}^\infty \Lambda^n_\Z$ be the ring of symmetric functions \cite[\S I.2]{Mac}, which is by definition the projective limit of the $\Lambda_{N,\Z}$ in the category of graded rings. In concrete terms, one may regard elements of $\Lambda_\Z$ as formal power series in infinitely many variables $x_1,x_2,\dotsc$ which are symmetric and of bounded degree. For $f\in\Lambda_\Z$, we use the plethystic notation $f[X_N]$, where $X_N=x_1+\dotsc+x_N$, to denote the image of $f$ in $\Lambda_{N,\Z}$.

Let $\{m_\lambda\}_{\lambda\in\Y}$ and $\{s_\lambda\}_{\lambda\in\Y}$ be the bases of monomial and Schur symmetric functions, respectively. We continue to denote the images of $m_\lambda$ and $s_\lambda$ in $\Lambda_{N,\Z}$ by the same letters; for $\lambda\in\Y_N$ we obtain bases of $\Lambda_{N,\Z}$.


For any commutative ring $R$, let $\Lambda_R=\Lambda_\Z\otimes_\Z R$, $\Lambda_R^n=\Lambda_\Z^n\otimes_\Z R$, and similarly define $\Lambda_{N,R}$ and $\Lambda_{N,R}^n$. We have the power sum basis $\{p_\lambda\}_{\lambda\in\Y}$ for $\Lambda_{\Q}$.


Since we will typically work over the field $\K=\Q(q,t)$ of rational functions in indeterminates $q$ and $t$, we reserve the notations $\Lambda$, $\Lambda^n$, $\Lambda_N$, and $\Lambda_N^n$ when working over this base field, i.e., $\Lambda=\Lambda_{\K}$, etc.

For any $A=A(q,t)\in \K^\times$ and $B=B(q,t)\in\K$, let $\psi_{A,B}$ be the  $\K$-algebra automorphism of $\Lambda$ defined by
\begin{align*}
\psi_{A,B}(p_n)=A(q^n,t^n)p_n+B(q^n,t^n)\qquad\text{for $n\in\Z_{>0}$.}
\end{align*}
We also define the plethystic notation $f[AX+B]$ by means of the equality $\psi_{A,B}(f)=f[AX+B]$ for any $f\in\Lambda$, where $X=x_1+x_2+\dotsm$.

The Hall inner product $\langle \cdot,\cdot\rangle$ on $\Lambda_\Z$ (or, by extension of scalars, on any $\Lambda_R$) is defined by $\langle s_\lambda, s_\mu\rangle=\delta_{\lambda\mu}$.
Let $\End(\La)$ denote the $\K$-algebra of $\K$-linear endomorphisms of $\La$.
For $T\in\End(\Lambda)$, let $T^\perp\in\End(\Lambda)$ denote the adjoint of $T$ with respect to the Hall inner product. We often regard $g\in\Lambda$ as the multiplication operator $f\mapsto gf$ on $\Lambda$, and then $g^\perp$ denotes the adjoint of this operator. For instance, we have
\begin{align*}
p_n^\perp = n\frac{\partial}{\partial p_n}\qquad\text{for $n\in\Z_{>0}$.}
\end{align*}

Finally, denoting by $R(\fS_n)$ the representation ring of the symmetric group, let $\mathrm{Frob} : R(\fS_n) \to \Lambda^n_\Z$ be the Frobenius character map \cite[\S I.7]{Mac}, which is a linear isomorphism sending the irreducible character $\chi^\lambda$ of $\fS_n$ for $|\lambda|=n$ to the Schur function $s_\lambda$. For a bigraded $\fS_n$-module $V=\bigoplus_{a,b\in\Z} V_{a,b}$ with each $V_{a,b}$ finite-dimensional, we denote by $\mathrm{Frob}_{q,t}(V)$ the bigraded Frobenius series $\sum_{a,b} \mathrm{Frob}(V_{a,b})q^a t^b$.

\subsection{Macdonald $P$-functions}

The Macdonald symmetric functions $\{P_\mu\}_{\mu\in\Y}$ form an orthogonal basis of $\Lambda$ with respect to Macdonald's $(q,t)$-deformation of the Hall inner product \cite[\S VI]{Mac}. For any $\mu\in\Y$, $P_\mu$ is homogeneous of degree $|\mu|$ and satisfies the unitriangularity (with $\lhd$ meaning $\unlhd$ and not equal):
\begin{align*}
    P_\mu\in m_\mu+\bigoplus_{\la \lhd \mu} \K m_\la.
\end{align*}

The images $P_\mu[X_N]$ for $\mu\in\Y_N$ form a basis for $\Lambda_N$. They can be characterized as the symmetric polynomial eigenfunctions of the Macdonald operator
\begin{align}\label{E:Mac op}
M_N=\sum_{k=1}^N \prod_{\substack{\ell=1\\\ell\neq k}}^N\frac{tx_k-x_\ell}{x_k-x_\ell}T_{q,x_k},
\end{align}
where $T_{q,x_k}f(x_1,\dotsc,x_N) = f(x_1,\dotsc,qx_k,\dotsc,x_N)$, with eigenvalues given by:
\begin{align}\label{E:M-P}
M_N P_\mu[X_N] = \left(\sum_{k=1}^N q^{\mu_k}t^{N-k}\right)P_\mu[X_N].
\end{align}

\begin{ex}
Let $N=2$. The monomial symmetric polynomials $$m_{(0,0)}=1,\quad m_{(1,0)}=x_1+x_2,\quad m_{(1,1)}=x_1x_2,\quad m_{(2,0)}=x_1^2+x_2^2$$
form a basis for the subspace $\Lambda_2^0\oplus\Lambda_2^1\oplus\Lambda_2^2\subset \Lambda_2$.
With respect to this basis, the restriction of the Macdonald operator $M_2$ has the following matrix:
$$ \begin{pmatrix} 1+t & 0 & 0 & 0\\ 0 & 1+qt & 0 & 0\\ 0 & 0 & q(1+t) & (1-q^2)(1-t) \\ 0 & 0 & 0 & 1+q^2t \end{pmatrix}. $$
One finds that
\begin{align*}
P_{(0,0)} &= 1 = m_{(0,0)}\\
P_{(1,0)} &= x_1 + x_2 = m_{(1,0)}\\
P_{(1,1)} &= x_1 x_2 = m_{(1,1)}\\
P_{(2,0)} &= x_1^2+x_2^2 + \frac{(1+q)(1-t)}{1-qt}x_1 x_2 = m_{(2,0)}+\frac{(1+q)(1-t)}{1-qt}m_{(1,1)}.
\end{align*}
\end{ex}

\subsection{Vertex operators}


Let $\Omega$ be the plethystic exponential.\footnote{For an introduction to $\Omega$ in the context of symmetric functions, see \cite[\S 5.3]{LR}.} Thus, in the completion $\widehat{\Lambda} = \prod_{n=0}^\infty \Lambda^n$ of symmetric functions, one has the elements
\begin{align*}
\Omega[X] &= \prod_{i=1}^\infty \frac{1}{1-x_i}
= \sum_{k=0}^\infty h_k
= \exp\left(\sum_{k=1}^\infty \frac{p_k}{k}\right) \\
\Omega[-X] &= \prod_{i=1}^\infty (1-x_i)
= \sum_{k=0}^\infty (-1)^k e_k
= \exp \left(-\sum_{k=1}^\infty \frac{p_k}{k}\right)
\end{align*}
where $e_k=s_{(1^k)}$ and $h_k=s_{(k)}$ are the elementary and complete homogeneous symmetric functions, respectively.
Generally, for plethystic alphabets $X$ and $Y$, one has the identity 
$\Omega[X+Y]=\Omega[X]\Omega[Y]$.

By a symmetric function \textit{vertex operator}, we mean a formal operator series of the form
\begin{align*}
V(z)=\sum_{k=-\infty}^\infty V_k z^k = \Omega[zAX]\Omega[z^{-1}BX]^\perp\in \End(\Lambda)[[z,z^{-1}]]
\end{align*}
for some $A,B\in\K$. Here we regard the series
\begin{align*}
\Omega[zAX] = \sum_{k=0}^\infty h_k[AX]z^k = \exp\left(\sum_{k=1}^\infty \frac{p_k[AX]}{k}z^k\right)
\end{align*}
as an element of $\End(\Lambda)[[z]]$ by viewing each symmetric function $h_k[AX]$ (or $p_k[AX]$) as a multiplication operator. For the other ``half'' of $V(z)$, we have the operator series
\begin{align*}
\Omega[z^{-1}BX]^\perp 
= \sum_{k=0}^\infty h_k[BX]^\perp z^{-k}
= \exp\left(\sum_{k=1}^\infty \frac{p_k[BX]^\perp}{k}z^{-k}\right)\in \End(\Lambda)[[z^{-1}]].
\end{align*}
Formally multiplying these two operator series, one obtains the following formula for the coefficients of $V(z)$:
\begin{align*}
V_k = \sum_{\substack{\ell,m\ge 0\\\ell-m=k}} h_\ell[AX]h_m[BX]^\perp.
\end{align*}
Since $g^\perp \Lambda^n=0$ for any $g\in\Lambda^m$ such that $m>n$, the expression for $V_k$ above reduces to a finite sum when acting on any fixed $\Lambda^n$. In particular, each $V_k$ for $k\in\Z$ is a well-defined endomorphism of $\Lambda$ (of degree $k$).

By acting on the reproducing kernel $\Omega[XY]$ for the Hall pairing, one can also show that the annihilating half of a vertex operator acts on a symmetric function $f\in\Lambda$ by the plethystic formula:
\begin{align*}
\Omega[z^{-1}BX]^\perp\cdot f &= f[X+z^{-1}B].
\end{align*}


\subsection{Macdonald vertex operator}

In the context of Macdonald $P$-functions, the vertex operator
\begin{align}\label{E:MacD}
D(z)=\sum_{n=-\infty}^\infty D_k z^{-k}=\Omega[z(t-1)X]\Omega[z^{-1}\frac{q-1}{t}X]^\perp
\end{align}
is of particular importance. Here we have the two halves
\begin{align*}
\Omega[z(t-1)X] &= \prod_{i=1}^\infty \frac{1-zx_i}{1-tzx_i}
= \exp\left(\sum_{k=1}^\infty \frac{p_k}{k}(t^k-1)z^k\right)\\
\Omega[z^{-1}\frac{q-1}{t}X]^\perp &= \exp\left(\sum_{k=1}^\infty \frac{p_k^\perp}{k}\frac{q^k-1}{t^k}z^{-k}\right).
\end{align*}

\begin{thm}[{\cite[Theorem 3.2]{GH}}]\label{T:GH}
The coefficient $D_0$ of $z^0$ in the vertex operator $D(z)$ acts diagonally on Macdonald symmetric functions
\begin{align}\label{E:D0-P}
D_0 P_\mu &= A_\mu(q,t^{-1})P_\mu,
\end{align}
with $A_\mu$ given by \eqref{E:Amudef}.
\end{thm}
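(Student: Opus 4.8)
The plan is to verify \eqref{E:D0-P} by working in finitely many variables, where the vertex operator $D(z)$ has an explicit rational form, and then comparing with the Macdonald operator $M_N$ from \eqref{E:Mac op}, whose eigenvalues on $P_\mu[X_N]$ are already known by \eqref{E:M-P}. First I would specialize to $N$ variables: using the product formula $\Omega[z(t-1)X]=\prod_{i=1}^N \frac{1-zx_i}{1-tzx_i}$ and the plethystic action of the annihilating half, $\Omega[z^{-1}\frac{q-1}{t}X]^\perp f = f[X_N+z^{-1}\frac{q-1}{t}]$, one can write $D(z)$ acting on $\Lambda_N$ as an explicit $z$-dependent operator built from multiplication by $\prod_i\frac{1-zx_i}{1-tzx_i}$ and the ``evaluation shift'' $x_i\mapsto$ plethystic insertion. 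Expanding this in $z$ and reading off the $z^0$ coefficient gives a concrete difference operator on $\Lambda_N$.

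Next I would identify this $z^0$ coefficient with a known operator. The cleanest route is to relate $D_0$ (in $N$ variables) to $M_N$: one shows, by a residue/partial-fractions computation with the kernel $\prod_i\frac{1-zx_i}{1-tzx_i}$, that picking out $z^0$ produces precisely the sum over $k$ of the Macdonald weights $\prod_{\ell\ne k}\frac{tx_k-x_\ell}{x_k-x_\ell}$ times a $q$-shift in $x_k$, up to an overall normalization and a shift by a scalar depending only on $N$, $t$. Concretely, $D_0|_{\Lambda_N}$ should equal a linear combination $\alpha\, M_N + \beta\,\mathrm{id}$ for explicit $\alpha,\beta\in\K$ (with $\alpha,\beta$ built from $t^N$ and the specialization conventions), after inverting $t$ in the appropriate slot. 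Then \eqref{E:M-P} immediately gives
\begin{align*}
D_0\, P_\mu[X_N] = \Bigl(\alpha\sum_{k=1}^N q^{\mu_k}t^{N-k} + \beta\Bigr) P_\mu[X_N].
\end{align*}

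The final step is to check that the eigenvalue $\alpha\sum_k q^{\mu_k}t^{N-k}+\beta$ is independent of $N$ for $N\ge \ell(\mu)$ and equals $A_\mu(q,t^{-1})$. This is a direct manipulation: writing $\sum_{k=1}^N q^{\mu_k}t^{N-k}$ as $\sum_{(a,b)\in\mu} (\text{something}) + (\text{tail from empty rows})$ and using the identity $A_\mu = 1-(1-q)(1-t)B_\mu$ with $B_\mu=\sum_{(a,b)\in\mu}q^a t^b$, one sees that after the substitution $t\mapsto t^{-1}$ the $N$-dependent pieces cancel against $\beta$ and the geometric-series tail, leaving exactly $A_\mu(q,t^{-1})$. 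Since $D_0$ is a degree-zero operator on $\Lambda$ and $P_\mu$ is obtained by a stable limit of $P_\mu[X_N]$, the finite-variable identity passes to $\Lambda$.

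The main obstacle is the bookkeeping in the second step: correctly matching the plethystic ``$z^{-1}\frac{q-1}{t}$'' insertion in $\Omega[z^{-1}\frac{q-1}{t}X]^\perp$ with the geometric $q$-shift operators $T_{q,x_k}$ appearing in $M_N$, and pinning down the constants $\alpha,\beta$ (including the role of the $t$ versus $t^{-1}$ and the factor of $t$ in the denominator of \eqref{E:MacD}). A clean way to sidestep some of this is to instead test $D_0$ directly against the basis of power sums or against $P_\mu$ via the known commutation relations of the two halves of $D(z)$ with multiplication and $p_k^\perp$, but either way the crux is the residue computation that converts the vertex-operator kernel into the Macdonald weights $\prod_{\ell\ne k}\frac{tx_k-x_\ell}{x_k-x_\ell}$. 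Once that kernel identity is in hand, everything else is routine.
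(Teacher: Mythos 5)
Your proposal follows exactly the route the paper indicates in the remark after Theorem~\ref{T:GH}: identify $D_0$ with the stable $N\to\infty$ limit of a renormalized Macdonald operator of the form $\alpha\,M_N+\beta\,\mathrm{id}$ (with $\alpha,\beta$ monomials in $t^{-N}$ and $1-t$), read off the eigenvalue from \eqref{E:M-P}, and check that after the substitution $t\mapsto t^{-1}$ the $N$-dependent pieces cancel to leave $A_\mu(q,t^{-1})$. The crux you single out --- the partial-fractions/residue identity converting the kernel $\prod_i\frac{1-zx_i}{1-tzx_i}$ together with the plethystic shift $f\mapsto f[X_N+z^{-1}\frac{q-1}{t}]$ into the weights $\prod_{\ell\neq k}\frac{tx_k-x_\ell}{x_k-x_\ell}T_{q,x_k}$ --- is precisely the content of the computation in \cite{GH} that the paper cites, so your plan is correct and essentially identical to the intended proof.
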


\begin{rem}
The operator $D_0$ arises as the $N\to\infty$ limit of the operator $t^{-N}(1+(1-t)M_N)$, and this is essential in the proof of Theorem~\ref{T:GH}.
The renormalization of $M_N$ is necessary for compatibility with the projective limit defining $\Lambda$ (see \cite[VI.4]{Mac}). One sees this most transparently when deriving the eigenvalues in \eqref{E:D0-P} from those in \eqref{E:M-P}.
\end{rem}

\begin{ex}
For the purpose of illustration, let us verify \eqref{E:D0-P} for the Macdonald symmetric function $P_{(1,0)} = p_1$. First, we compute:
$$
\Omega[z^{-1}\frac{q-1}{t}X]^\perp\cdot P_{(1,0)} = p_1+z^{-1}\frac{q-1}{t}.
$$
Since $\Omega[z(t-1)X]=1+z(t-1)p_1+\dotsm$, we then find:
\begin{align*}
D_0P_{(1,0)} &= \left(1+(t-1)\frac{q-1}{t}\right)p_1\\
&= (1-(1-q)(1-t^{-1}))p_1\\
&= A_{(1,0)}(q,t^{-1})p_1.
\end{align*}
\end{ex}

\subsection{Modified Macdonald functions}
The modified Macdonald symmetric functions $\{\tH_\mu\}_{\mu\in\Y}$ are defined by
\begin{align*}
\tH_\mu &= t^{n(\mu)} (J_\mu[X/(1-t)]|_{t\mapsto t^{-1}}),
\end{align*}
where $n(\mu) = \sum_i (i-1)\mu_i$ and $J_\mu = \prod_{s\in\lambda}(1-q^{a_\mu(s)}t^{l_\mu(s)+1}) P_\mu.$

The $\tH_\mu$ are uniquely characterized by the following three conditions \cite[\S 3.5]{H}:
\begin{align}
\label{E:tH-def-q}
\tH_\mu[(1-q)X] &\in \bigoplus_{\lambda\unrhd \mu} \K s_\lambda\\
\label{E:tH-def-t}
\tH_\mu[(1-t)X] &\in \bigoplus_{\lambda\unrhd \mu^t} \K s_\lambda\\
\label{E:tH-def-normalization}
\langle s_n, \tH_\mu\rangle &= 1.
\end{align}

\begin{rem}
One can write \eqref{E:tH-def-t} in the equivalent form
\begin{align}
\label{E:tH-def-t-inv}
\tH_\mu[(1-t^{-1})X] &\in \bigoplus_{\lambda\unlhd \mu} \K s_\lambda,
\end{align}
since $\tH_\mu$ is homogeneous, $s_\lambda[-X]=(-1)^{|\lambda|}s_{\lambda^t}$, and $\lambda \unrhd \mu^t$ if and only if $\lambda^t\unlhd \mu$.
\end{rem}

The $\tH_\mu$ are often preferred over $P_\mu$ due to their remarkable integrality and positivity properties. In particular, defining the Macdonald-Kostka coefficients $\tK_{\lambda\mu}(q,t)$ through the expansion
\begin{align*}
\tH_\mu = \sum_\la \tK_{\la\mu}(q,t)s_\la,
\end{align*}
the Macdonald positivity conjecture \cite[VI.8]{Mac} (cf. Theorem~\ref{T:Haiman} below) asserts that
\begin{align}\label{E:Mac-pos}
\tK_{\lambda\mu}(q,t)\in\Z_{\ge 0}[q,t].
\end{align}

\begin{ex}
One has
\begin{align*}
J_{(2)}&=(1-t)(1-qt)P_{(2)}\\
&=(1-t)(1-qt)m_{(2)}+(1+q)(1-t)^2m_{(1,1)}\\
&=\frac{(1-q)(1-t^2)}{2}p_2+\frac{(1+q)(1-t)^2}{2}p_1^2
\end{align*}
and the corresponding Macdonald symmetric function
\begin{align*}
\tH_{(2)}&=t^0\left.\left(\frac{1-q}{2}p_2+\frac{1+q}{2}p_1^2\right)\right|_{t\mapsto t^{-1}}\\
&=\frac{1-q}{2}p_2+\frac{1+q}{2}p_1^2\\
&=s_{(2)}+q s_{(1,1)}.
\end{align*}
We observe \eqref{E:Mac-pos} in this case. We also see that \eqref{E:tH-def-q}-\eqref{E:tH-def-normalization} hold, with the only nontrivial remaining computation being:
\begin{align*}
\tH_{(2)}[(1-q)X] &= \frac{(1-q)(1-q^2)}{2}p_2+\frac{(1+q)(1-q)^2}{2}p_1^2\\
&= \frac{(1-q)(1-q^2)}{2}(p_2+p_1^2)\\
&= (1-q)(1-q^2)s_{(2)}.
\end{align*}
\end{ex}

Conjugating the vertex operator of Theorem~\ref{T:GH} by the transformation from $P_\mu$ to $\tH_\mu$ and making use of the identity $P_\mu|_{q\mapsto q^{-1},t\mapsto t^{-1}}=P_\mu$ \cite[VI, (4.14)]{Mac}, we arrive at the following:
\begin{cor}\label{C:tH-eigen}
The coefficients $\tD_0$ and $\tD_0^*$ of $z^0$ in the vertex operators
\begin{align}
\label{E:tD}
\tD(z)&=\sum_{n=-\infty}^\infty \tD_k z^{-k}=\Omega[-zX]\Omega[z^{-1}(1-q)(1-t)X]^\perp\\
\label{E:tD-star}
\tD^*(z)&=\sum_{n=-\infty}^\infty \tD_k^* z^{-k}=\Omega[zX]\Omega[-z^{-1}(1-q^{-1})(1-t^{-1})X]^\perp
\end{align}
act diagonally on modified Macdonald functions:
\begin{align*}
\tD_0 \tH_\mu &= A_\mu(q,t) \tH_\mu\\
\tD_0^* \tH_\mu &= A_\mu(q^{-1},t^{-1}) \tH_\mu.
\end{align*}
\end{cor}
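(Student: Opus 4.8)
The plan is to reduce Corollary \ref{C:tH-eigen} to Theorem \ref{T:GH} by carefully tracking how the operator $D_0$ of \eqref{E:D0-P} transforms under the invertible plethystic/normalization change of variables that carries $P_\mu$ to $\tH_\mu$. Concretely, write $\tH_\mu = t^{n(\mu)}\,\theta(J_\mu[X/(1-t)])$ where $\theta$ is the substitution $t\mapsto t^{-1}$ and $J_\mu = c_\mu(q,t)\,P_\mu$ with $c_\mu = \prod_{s\in\mu}(1-q^{a_\mu(s)}t^{l_\mu(s)+1})$ a scalar. Since $D_0$ acts by the scalar $A_\mu(q,t^{-1})$ on $P_\mu$, it acts by the \emph{same} scalar on $J_\mu$ (scalars commute with operators), and we only need to understand the conjugate of $D_0$ by the maps $f\mapsto f[X/(1-t)]$ and $f\mapsto \theta(f)$; the scalar $t^{n(\mu)}$ is irrelevant since it too commutes. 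Under $\theta$, the eigenvalue $A_\mu(q,t^{-1})$ becomes $A_\mu(q,t)$, which already matches the claimed eigenvalue for $\tD_0$, so the entire content is the identity $\tD_0 = \Psi \circ D_0 \circ \Psi^{-1}$ for the appropriate algebra automorphism $\Psi$ of $\Lambda$.

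First I would make $\Psi$ explicit. The plethystic substitution $X\mapsto X/(1-t)$ sends $p_n\mapsto p_n/(1-t^n)$, so conjugation by it rescales the multiplication operators $p_n\mapsto p_n/(1-t^n)$ and the adjoint operators $p_n^\perp\mapsto (1-t^n)p_n^\perp$; composing with $\theta$ (which sends $t\mapsto t^{-1}$ everywhere, including in the coefficients of the vertex operator) turns $1-t^n$ into $1-t^{-n} = -t^{-n}(1-t^n)$. Applying this transformation termwise to the exponential form of $D(z)$ in \eqref{E:MacD}, namely $\exp\left(\sum_{k\ge1}\frac{p_k}{k}(t^k-1)z^k\right)\exp\left(\sum_{k\ge1}\frac{p_k^\perp}{k}\frac{q^k-1}{t^k}z^{-k}\right)$, and absorbing the various powers of $t$ (and a rescaling $z\mapsto$ a multiple of $z$, which does not affect the $z^0$ coefficient), I expect the creation half to become $\exp\left(-\sum_{k\ge1}\frac{p_k}{k}z^k\right) = \Omega[-zX]$ and the annihilation half to become $\exp\left(\sum_{k\ge1}\frac{p_k^\perp}{k}(1-q^k)(1-t^k)z^{-k}\right) = \Omega[z^{-1}(1-q)(1-t)X]^\perp$, which is exactly $\tD(z)$ as in \eqref{E:tD}. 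The $\tD^*$ statement then follows from the $\tD$ statement together with the symmetry $P_\mu|_{q\mapsto q^{-1},t\mapsto t^{-1}}=P_\mu$ cited from \cite[VI, (4.14)]{Mac}, which forces $\tH_\mu$ to satisfy a corresponding duality; applying $q\mapsto q^{-1}, t\mapsto t^{-1}$ to $\tD_0$ and using that duality converts the eigenvalue $A_\mu(q,t)$ into $A_\mu(q^{-1},t^{-1})$ and the operator into $\tD_0^*$.

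The main obstacle is bookkeeping rather than conceptual: correctly tracking the powers of $t$ and the rescaling of the spectral parameter $z$ through the composition of the plethystic substitution, the scalar $t\mapsto t^{-1}$, and the passage to the $z^0$ coefficient, so that the final creation and annihilation halves come out exactly as the clean expressions $\Omega[-zX]$ and $\Omega[z^{-1}(1-q)(1-t)X]^\perp$ with no stray factors. A subtle point worth checking is that the non-scalar part of $c_\mu$ and $n(\mu)$ genuinely contribute nothing — they are constants in $\K$ depending only on $\mu$, hence commute with every operator in sight, so conjugating $D_0$ by multiplication-by-$c_\mu$ or by $t^{n(\mu)}$ is trivial; the only $\mu$-independent maps that matter are $f\mapsto f[X/(1-t)]$ and $\theta$. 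Once the vertex operator identity $\tD(z) = \Psi\,D(\text{const}\cdot z)\,\Psi^{-1}$ is pinned down, extracting the $z^0$ coefficient and reading off the eigenvalue is immediate from Theorem \ref{T:GH}, and the $\tD^*$ case is a formal duality argument.
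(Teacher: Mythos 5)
Your proposal is correct and follows exactly the paper's (one-sentence) argument: conjugate $D(z)$ from Theorem~\ref{T:GH} by the transformation $f\mapsto (f[X/(1-t)])|_{t\mapsto t^{-1}}$ carrying $J_\mu$ to $\tH_\mu$ (the scalars $c_\mu$ and $t^{n(\mu)}$ being irrelevant), which your bookkeeping correctly shows yields $\tD(z)$ with eigenvalue $\theta(A_\mu(q,t^{-1}))=A_\mu(q,t)$, and then deduce the $\tD^*$ case from $P_\mu|_{q\mapsto q^{-1},t\mapsto t^{-1}}=P_\mu$. The only point worth making explicit in your last step is that the resulting duality for $\tH_\mu$ involves a twist by $\omega$ (equivalently $p_k\mapsto -p_k$) in addition to inverting $q,t$, and it is conjugation by that sign twist that converts $\tD_0|_{q\mapsto q^{-1},t\mapsto t^{-1}}$ into $\tD_0^*$.
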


\subsection{Haiman's Positivity Theorem}\label{SS:Haiman}

Let $\Hilb_n=\Hilb_n(\C^2)$ be the Hilbert scheme of $n$ points in the plane $\C^2$. This is a resolution of the symplectic quotient singularity $\C^{2n}/\fS_n$; in particular, $\Hilb_n$ is a smooth variety of dimension $2n$. Its points can be described by ideals $I\subset \C[x,y]$ such that $\dim \C[x,y]/I = n$. The natural action of $\T=(\C^\times)^2$ on $\C^2$ induces an action of $\T$ on $\Hilb_n$ with finitely many fixed points---namely, the monomial ideals $I_\mu=(y^{\mu_1},xy^{\mu_2},\dotsc)$ for $\mu\in \Y$ with $|\mu|=n$.

To connect with Macdonald theory, we identify $\Z[q^{\pm 1},t^{\pm 1}]$ with the representation ring $R(\T)$ and the field $\K=\Q(q,t)$ with $\Frac(R(\T))$. Here we follow the conventions of \cite{H}, so that the standard coordinate functions $x(a_1,a_2)=a_1$ and $y(a_1,a_2)=a_2$ on $\C^2$ have $\T$-weights $t$ and $q$, respectively.
 

Haiman's proof of the Macdonald positivity conjecture \eqref{E:Mac-pos} centers on establishing the existence of a rank $n!$ vector bundle $P_n$ on $\Hilb_n$, the \textit{Procesi bundle}, with the following special properties:
\begin{enumerate}
\item $P_n$ is $(\T\times \fS_n)$-equivariant, with respect to the trivial $\fS_n$-action on $\Hilb_n$
\item each fiber of $P_n$ is isomorphic as an $\fS_n$-representation to the regular representation\label{I:reg}
\item each fiber of $P_n$ is an $\fS_n$-equivariant quotient of the polynomial ring $\C[x_1,\dotsc,x_n,y_1,\dotsc,y_n]$ with respect to the diagonal action of $\fS_n$ and with $\T$ acting so that each $x_i$ (resp. each $y_i$) has weight $t$ (resp. $q$)\label{I:pol}
\end{enumerate}
In particular, any fiber $P_n|_{I_\mu}$ over a $\T$-fixed point in $\Hilb_n$ carries an action of $\T\times \fS_n$; equivalently, one may regard $P_n|_{I_\mu}$ as a \textit{bigraded} $\fS_n$-module.

\begin{thm}[\cite{H:pos}]\label{T:Haiman}
There exists a vector bundle $P_n$ on $\Hilb_n$ with the properties described above. This vector bundle induces an equivalence between derived categories of coherent sheaves
\begin{align}\label{E:Haiman equivalence}
D^b(\Coh^\T(\Hilb_n)) \cong D^b(\Coh^{\T\times \fS_n}(\C^{2n}))
\end{align}
which, at the level of $K$-theory, realizes the modified Macdonald function $\tH_\mu$ as the bigraded Frobenius character of the fiber $P_n|_{I_\mu}$.
\end{thm}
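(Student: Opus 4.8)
The plan is to follow Haiman's strategy, whose technical heart is the study of the \emph{isospectral Hilbert scheme} $X_n$: the reduced fiber product of $\Hilb_n$ with $\C^{2n}$ over $\Sym^n\C^2=\C^{2n}/\fS_n$. A point of $X_n$ is a pair $(I,(P_1,\dots,P_n))$ with $I\in\Hilb_n$ and $(P_1,\dots,P_n)\in\C^{2n}$ whose multiset of points equals the support (with multiplicity) of $\C[x,y]/I$. It carries commuting $\T$- and $\fS_n$-actions (the latter permuting the $P_i$) together with a finite projection $\rho\colon X_n\to\Hilb_n$, and one \emph{defines} the Procesi bundle by $P_n:=\rho_*\cO_{X_n}$. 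Over the locus of $n$ distinct points $\rho$ is \'etale of degree $n!$ with generic fiber $\C[\fS_n]$, so the content of the theorem is that this structure persists over all of $\Hilb_n$.

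The crux is to prove that $X_n$ is \emph{normal, Cohen--Macaulay and Gorenstein}, with $\rho$ flat. Haiman reduces this, through a chain of deformation and restriction arguments, to the \emph{polygraph theorem}: for all $n,l$ the coordinate ring of the polygraph $Z(n,l)\subset\C^{2n}\times\C^{2l}$---the union of linear subspaces on which each of $l$ marked points coincides with one of $n$ free points---is a \emph{free} module over $\C[x_1,\dots,x_n,y_1,\dots,y_n]$. This freeness is the main obstacle: it is proved by a delicate simultaneous induction on $n$ and $l$, passing to a Gr\"obner/monomial degeneration and building explicit free generating sets, and it is by far the most demanding part of the argument. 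Granting it, one obtains that $X_n$ has rational Gorenstein singularities and that $\rho$ is flat of degree $n!$, so $P_n$ is a genuine rank-$n!$ vector bundle.

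Properties (1)--(3) are then consequences of this structure: the $(\T\times\fS_n)$-equivariance is built into the construction; the surjection $\C[x_1,\dots,x_n,y_1,\dots,y_n]\twoheadrightarrow\cO_{X_n}$ (the $P_i$ are literally the coordinate points) restricts on each fiber to the surjection in (3); and each fiber is the regular representation because this holds on the generic locus and, by flatness together with the Gorenstein property, the class of the fiber in $R(\fS_n)$ is locally constant, hence equal to $[\C[\fS_n]]$ everywhere (the fiber over $I_\vn$ at the origin being the $\fS_n$-analogue of the coinvariant algebra). For the derived equivalence I would invoke the Bridgeland--King--Reid theorem in Haiman's form: $\Hilb_n$ coincides with the $\fS_n$-Hilbert scheme of $\C^{2n}$, $X_n$ is the corresponding universal subscheme, and the Fourier--Mukai functor with kernel $\cO_{X_n}$ is an equivalence $D^b(\Coh^\T(\Hilb_n))\cong D^b(\Coh^{\T\times\fS_n}(\C^{2n}))$ precisely because $\rho$ is flat---equivalently $X_n$ is Cohen--Macaulay of dimension $2n$---which is exactly what verifies the Bridgeland--King--Reid criterion. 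In particular $R\rho_*\cO_{X_n}=P_n$ is concentrated in cohomological degree $0$.

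It remains to identify $\tH_\mu$ with $\mathrm{Frob}_{q,t}(P_n|_{I_\mu})$. Writing $C_\mu:=\mathrm{Frob}_{q,t}(P_n|_{I_\mu})$, I would verify the three conditions \eqref{E:tH-def-q}--\eqref{E:tH-def-normalization} that characterize $\tH_\mu$. The normalization \eqref{E:tH-def-normalization}, $\langle s_n,C_\mu\rangle=1$, holds because the $\fS_n$-invariants of the fiber form the one-dimensional space spanned by the image of $1\in\cO_{X_n}$ in degree $0$. The triangularity conditions \eqref{E:tH-def-q} and \eqref{E:tH-def-t} come from the two $\T$-stable ``nested'' subvarieties of $\Hilb_n$ through $I_\mu$ obtained by requiring the ideal to contain a fixed power of $x$, respectively of $y$: restricting $P_n$ to these loci and running a torus-fixed-point localization computation against the monomial-ideal combinatorics shows that $C_\mu[(1-q)X]$ is a $\K$-combination of $s_\lambda$ with $\lambda\unrhd\mu$ and $C_\mu[(1-t)X]$ one with $\lambda\unrhd\mu^t$. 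Since \eqref{E:tH-def-q}--\eqref{E:tH-def-normalization} pin $\tH_\mu$ down uniquely, $C_\mu=\tH_\mu$; and as $C_\mu$ is the Frobenius series of an honest bigraded $\fS_n$-module it is Schur-positive, which yields the positivity conjecture \eqref{E:Mac-pos}.
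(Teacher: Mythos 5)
The survey states this theorem with a citation to \cite{H:pos} and does not reprove it, so the only meaningful comparison is with Haiman's original argument, whose architecture your outline reproduces faithfully: the isospectral Hilbert scheme $X_n$ with its finite projection $\rho$, the definition $P_n=\rho_*\cO_{X_n}$, the reduction of the Cohen--Macaulay/Gorenstein property of $X_n$ (hence flatness of $\rho$ and the $n!$ theorem) to the polygraph theorem, the identification of $\Hilb_n$ with the $\fS_n$-Hilbert scheme of $\C^{2n}$ feeding a Bridgeland--King--Reid-type equivalence, and the verification of the axioms \eqref{E:tH-def-q}--\eqref{E:tH-def-normalization} for the bigraded Frobenius characters of the fibers.

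Three points need correction or sharpening. First, you have misstated the polygraph theorem: Haiman proves that the coordinate ring $R(n,l)$ of $Z(n,l)$ is a free module over $\C[y_1,\dotsc,y_n]$ (the $y$-coordinates of the $n$ unmarked points only), not over the full ring $\C[x_1,\dotsc,x_n,y_1,\dotsc,y_n]$; freeness over the latter would amount to $Z(n,l)$ being Cohen--Macaulay, a stronger statement that Haiman neither proves nor needs. Since this freeness is the engine of the whole proof, the statement of the black box matters even though you defer its proof. Second, the literal dimension hypothesis of Bridgeland--King--Reid fails for $\Hilb_n\times_{S^n\C^2}\Hilb_n$ once $n$ is large, so one cannot simply invoke their theorem; Haiman reruns its proof, using precisely the flatness of $\rho$ and the Gorenstein property of $X_n$ in place of the dimension bound, so your parenthetical ``which is exactly what verifies the criterion'' hides a genuine argument. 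Third, the triangularity conditions are not obtained by torus-fixed-point localization on nested subvarieties of $\Hilb_n$ --- equivariant localization on $\Hilb_n$, where $\fS_n$ acts trivially, does not see the $\fS_n$-module structure of the fibers. Haiman's actual argument is a commutative-algebra support argument on the fiber ring $R_\mu=\C[x_1,\dotsc,x_n,y_1,\dotsc,y_n]/J_\mu$, comparing it with the Garsia--Procesi-type quotients obtained by killing the $x$- or the $y$-variables, whose Frobenius characters are supported on $\lambda\unrhd\mu$, resp.\ $\lambda\unrhd\mu^t$. With these repairs your outline is an accurate compression of the proof; the normalization, regular-representation, and positivity steps are handled correctly.
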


Let us describe how the equivalence \eqref{E:Haiman equivalence} connects to symmetric functions in more detail. Passing to equivariant $K$-theory, i.e., applying the $K_0$-functor to \eqref{E:Haiman equivalence}, one obtains an isomorphism of $R(\T)$-modules 
$K^\T(\Hilb_n)\cong K^{\T\times \fS_n}(\C^{2n})$.
As explained in \cite[\S 5.4.3]{H}, there is an injection $K^{\T\times \fS_n}(\C^{2n})\hookrightarrow \Lambda^n$ of $R(\T)$-modules given by the global sections map followed by the bigraded Frobenius series $\mathrm{Frob}_{q,t}$. The composite map $\Phi_n : K^\T(\Hilb_n)\hookrightarrow \Lambda^n$ is given explicitly by $\Phi_n([\cF])=\mathrm{Frob}_{q,t}(R\Gamma([\cF]\otimes P_n))$ for any $\T$-equivariant coherent sheaf $\cF$ on $\Hilb_n$. Letting $[I_\mu]$ denote the class of the skyscraper sheaf at a $\T$-fixed point $I_\mu$ in $\Hilb_n$, the second assertion in Theorem~\ref{T:Haiman} is then stated precisely as follows: 
\begin{align*}
\Phi_n([I_\mu])=\mathrm{Frob}_{q,t}(P_n|_{I_\mu})=\tH_\mu.
\end{align*}
With $\tH_\mu$ realized as a bigraded Frobenius character, one immediately deduces that  $\tK_{\lambda\mu}(q,t)$ is a \textit{Laurent} polynomial in $q$ and $t$ with nonnegative integer coefficients for all $\lambda,\mu\in \Y$. Property \eqref{I:pol} of the vector bundle $P_n$ then implies the full assertion \eqref{E:Mac-pos} of the Macdonald positivity conjecture.





\begin{ex}
Let us observe Macdonald positivity \eqref{E:Mac-pos} in conjunction with property \eqref{I:reg} of the Procesi bundle for $|\mu|=3$:
\begin{align*}
\mathrm{Frob}(\C\fS_3) &= s_{(3)} + 2 s_{(2,1)} + s_{(1,1,1)}\\
\tH_{(3)} &= s_{(3)}+(q+q^2)s_{(2,1)}+q^3s_{(1,1,1)}\\
\tH_{(2,1)} &= s_{(3)}+(q+t)s_{(2,1)}+qts_{(1,1,1)}\\
\tH_{(1,1,1)} &= s_{(3)}+(t+t^2)s_{(2,1)}+t^3s_{(1,1,1)}.
\end{align*}
\end{ex}

\subsection{Quantum toroidal $\gl_1$}

The maps $\Phi_n$ arising from Theorem~\ref{T:Haiman} admit a further representation-theoretic interpretation involving an algebra $\Utor_1$ known as the elliptic Hall algebra or quantum toroidal $\gl_1$ algebra. This result---which beautifully unifies all of the material in this section---is due independently to Schiffmann and Vasserot \cite{SV2} and Feigin and Tsymbaliuk \cite{FT}.

Let $\Phi : \bigoplus_{n\ge 0} K^\T(\Hilb_n)_\loc \to \Lambda$ be the direct sum of the maps $K^\T(\Hilb_n)_\loc \overset{\sim}{\to}\Lambda^n$ obtained from the $\Phi_n$ by extension of scalars to the localized equivariant $K$-groups $K^\T(\Hilb_n)_\loc=K^\T(\Hilb_n)\otimes_{R(T)}\K$.
Then $\Phi$ is the isomorphism of $\K$-vector spaces given by $\Phi([I_\mu])=\tH_\mu$ for all $\mu\in\Y$. The result of \cite{FT,SV2} asserts the isomorphism $\Phi$ intertwines two actions of $\Utor_1$. Without giving a complete definition of $\Utor_1$, let us briefly describe the two actions:

\begin{itemize}

\item The action of $\Utor_1$ on $\Lambda$ is generated by all coefficients $\tD_k$ and $\tD_k^*$ of the vertex operators $\tD(z)$ and $\tD^*(z)$ from Corollary~\ref{C:tH-eigen}, together with all multiplication opertors $f$ and all skewing operators $f^\perp$ for $f\in\Lambda$. In fact, the four operators $p_1, p_1^\perp, \tD_0$, and $\tD_0^*$ are sufficient to generate the entire action of $\Utor_1$.

\item The action of $\Utor_1$ on $\bigoplus_{n=0}^\infty K^\T(\Hilb_n)_\loc$ is defined geometrically, along the lines of earlier constructions of Nakajima and Grojnowski. This involves the nested Hilbert schemes $\Hilb_{n,n+1}\subset \Hilb_n \times \Hilb_{n+1}$ parametrizing pairs of ideals $(I,J)\in\Hilb_n \times \Hilb_{n+1}$ such that $J\subset I$. Each nested Hilbert scheme is equipped with natural projections $p_{n,n+1} : \Hilb_{n,n+1} \to \Hilb_n$ and $q_{n,n+1} : \Hilb_{n,n+1}\to \Hilb_{n+1}$, as well as a line bundle $L_{n,n+1}$ whose fiber over $(I,J)$ is the quotient space $I/J$. 

Define convolution operators $e_k : K^\T(\Hilb_n) \to K^\T(\Hilb_{n+1})$ and $f_k : K^\T(\Hilb_n) \to K^\T(\Hilb_{n+1})$ for all $k\in\Z$ by
\begin{align*}
e_k([\cF])&=(q_{n,n+1})_*([L_{n,n+1}^{\otimes k}]\cdot p_{n,n+1}^*([\cF]))\\
f_k([\cF])&=(p_{n,n+1})_*([L_{n,n+1}^{\otimes (k-1)}]\cdot q_{n,n+1}^*([\cF])).
\end{align*}
Note that $f_k$ is defined to be $0$ on $K^\T(\Hilb_0)$. We continue denote the induced operators on $\bigoplus_{n=0}^\infty K^\T(\Hilb_n)_\loc$ by $e_k$ and $f_k$.

Finally, on each Hilbert scheme $\Hilb_n$ there is a tautological vector bundle $T_n$ whose fibers are given by $T_n|_I=\C[x,y]/I$. For any symmetric function $f\in\Lambda$, define $h_f$ and $h_f^*$ to be the operators on $\bigoplus_{n=0}^\infty K^\T(\Hilb_n)_\loc$ which on each $K^\T(\Hilb_n)_\loc$ are given by $K$-theory multiplication with the virtual equivariant bundle $f[1-(1-q)(1-t)T_n]$ and its dual, respectively. The $\T$-character of $1-(1-q)(1-t)T_n$ over a fixed point $I_\mu$ is precisely $A_\mu(q,t)$ from \eqref{E:Amudef}, since that of $T_n$ is given by $B_\mu(q,t)$. The $\T$-character of $f[1-(1-q)(1-t)T_n]|_{I_\mu}$ is given by the plethysm $f[A_\mu(q,t)]$.

The operators $e_k,f_k,h_f$, and $h_f^*$ for $k\in\Z$ and $f\in \Lambda$ generate the action of $\Utor_1$ on $\bigoplus_{n=0}^\infty K_\T(\Hilb_n)_\loc$.

\end{itemize}

The following result gives the connection these two $\Utor_1$-actions:

\begin{thm}[\cite{FT,SV2}]\label{T:FT-SV}
The isomorphism $\Phi$ intertwines the two actions of $\Utor_1$ described above as follows: the matrix coefficients of the operators $e_{-1}, f_1, h_{p_1}, h_{p_1}^*$ in the basis $\{[I_\mu]\}_{\mu\in\Y}$ are equal to those of $\frac{1}{(1-q)(1-t)}p_1, p_1^\perp, \tD_0, \tD_0^*$ in the basis $\{\tH_\mu\}_{\mu\in\Y}$, respectively.
\end{thm}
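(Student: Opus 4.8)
\emph{Reduction.} By Theorem~\ref{T:Haiman} the map $\Phi$ is already known to satisfy $\Phi([I_\mu])=\tH_\mu$ for all $\mu\in\Y$, so the four asserted equalities of matrix coefficients are equivalent to the operator identities $\Phi\,e_{-1}=\tfrac{1}{(1-q)(1-t)}\,p_1\,\Phi$, $\Phi\,f_1=p_1^\perp\,\Phi$, $\Phi\,h_{p_1}=\tD_0\,\Phi$, and $\Phi\,h_{p_1}^*=\tD_0^*\,\Phi$, where on the right $p_1$ denotes multiplication. The plan is therefore to compute the action of each of the four geometric operators on the fixed-point basis $\{[I_\mu]\}$ and match it, term by term, with the action of the corresponding symmetric-function operator on $\{\tH_\mu\}$. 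The two diagonal operators are immediate; the creation and annihilation operators require a localization computation on nested Hilbert schemes, and that is where the real content lies.

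\emph{The diagonal operators $h_{p_1},h_{p_1}^*$.} Over the $\T$-fixed point $I_\mu\in\Hilb_n$ the tautological bundle $T_n$ has $\T$-character $B_\mu(q,t)$, so $h_{p_1}$ acts on the class $[I_\mu]$ by the scalar $p_1[\,1-(1-q)(1-t)B_\mu(q,t)\,]=A_\mu(q,t)$ by \eqref{E:Amudef}, and likewise $h_{p_1}^*$ acts by $A_\mu(q^{-1},t^{-1})$. On the symmetric-function side, Corollary~\ref{C:tH-eigen} gives $\tD_0\tH_\mu=A_\mu(q,t)\tH_\mu$ and $\tD_0^*\tH_\mu=A_\mu(q^{-1},t^{-1})\tH_\mu$. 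Since $\Phi([I_\mu])=\tH_\mu$, the matrix coefficients of $h_{p_1},h_{p_1}^*$ in $\{[I_\mu]\}$ and of $\tD_0,\tD_0^*$ in $\{\tH_\mu\}$ coincide, and these two cases are finished with no further work.

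\emph{The operators $e_{-1}$ and $f_1$.} Here I would use equivariant localization on $\Hilb_{n,n+1}$, whose $\T$-fixed points are exactly the pairs $(I_\mu,I_\nu)$ with $\nu=\mu\cup\{s\}$ for an addable cell $s=(a,b)$ of $\mu$; over such a point the line bundle $L_{n,n+1}$ has $\T$-weight $t^aq^b$. The Atiyah--Bott--Lefschetz formula in $K$-theory then expresses the matrix coefficient $\langle[I_\nu],e_{-1}[I_\mu]\rangle$ (respectively $\langle[I_\mu],f_1[I_\nu]\rangle$) as the weight of $L_{n,n+1}^{-1}$ (respectively $L_{n,n+1}^{0}$) times a ratio of the equivariant $K$-theoretic Euler classes of the tangent spaces at the three fixed points involved. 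Using the standard (Ellingsrud--Str\o mme, Haiman) description of the $\T$-weights of $T_{I_\mu}\Hilb_n$ via arms and legs, this ratio collapses to an explicit product of factors of the form $1-q^{a_\mu(\cdot)+1}t^{-l_\mu(\cdot)}$ and $1-q^{-a_\mu(\cdot)}t^{l_\mu(\cdot)+1}$ over the cells in the row and column of $s$. On the other side one invokes the Pieri rule for modified Macdonald polynomials, $p_1\tH_\mu=\sum_{\nu=\mu\cup\{s\}}c_{\mu\nu}(q,t)\,\tH_\nu$ (derivable from Macdonald's Pieri rule for $P_\mu$ together with the change of variables and normalization \eqref{E:tH-def-normalization} defining $\tH_\mu$), and, for $f_1$, either the analogous localization computation for the other projection or an adjointness argument expressing the $\tH$-basis matrix of $p_1^\perp$ through that of $p_1$ and the Gram matrix $\langle\tH_\mu,\tH_\lambda\rangle$; geometrically, $e_{-1}$ and $f_1$ are adjoint for the $K$-theoretic pairing on $\Hilb_n$, which corresponds under $\Phi$ to the Hall-adjointness of $p_1$ and $p_1^\perp$. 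The proof then reduces to the identity $c_{\mu\nu}(q,t)=(1-q)(1-t)\,\langle[I_\nu],e_{-1}[I_\mu]\rangle$ and its $f_1$ analog.

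\emph{Main obstacle.} The hard part is precisely this last matching: carrying out the tangent-weight bookkeeping on $\Hilb_{n,n+1}$, simplifying the resulting Euler-class ratios, and recognizing the outcome as the modified Macdonald Pieri coefficient divided by $(1-q)(1-t)$ --- this is the technical heart of \cite{FT,SV2}. A useful preliminary observation is that both sides vanish unless $\nu=\mu\cup\{s\}$ (on the geometric side because $\Hilb_{n,n+1}$ only adds a single point, on the symmetric-function side by the Pieri rule), and that the already-established diagonal identities constrain the off-diagonal coefficients through the commutators $[\,\tD_0,\,p_1\,]$ and $[\,h_{p_1},\,e_{-1}\,]$; but these constraints do not by themselves fix the normalizations, so the explicit computation cannot be avoided. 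An alternative, more structural route would be to verify that the geometric operators satisfy the defining relations of $\Utor_1$, identify the resulting module with the irreducible Fock representation having highest-weight vector $[I_\vn]$, match it with $1=\tH_\vn$, and deduce the intertwining up to scalars from cyclicity; however, pinning all the scalars to $1$ then again reduces to matching the action of the creation operator, so the same computation reappears.
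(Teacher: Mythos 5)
The paper does not actually prove Theorem~\ref{T:FT-SV}: it is a survey statement cited from \cite{FT,SV2}, so there is no in-paper argument to compare against. Judged on its own terms, your proposal correctly reduces the claim to four intertwining identities via $\Phi([I_\mu])=\tH_\mu$, and your treatment of the two diagonal operators is complete and correct: $K$-theoretic multiplication by a bundle class acts on a skyscraper class by the $\T$-character of the fiber, which for $h_{p_1}$ is $p_1[A_\mu(q,t)]=A_\mu(q,t)$, matching the $\tD_0$-eigenvalue from Corollary~\ref{C:tH-eigen} (and likewise for the duals).

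For $e_{-1}$ and $f_1$, however, what you have written is a strategy rather than a proof: the entire content of the theorem for these generators is the localization computation on $\Hilb_{n,n+1}$ and its identification with the Pieri coefficients of $\frac{1}{(1-q)(1-t)}p_1$ acting on the $\tH_\mu$, and you explicitly defer this step. Two smaller cautions. First, the claim that $e_{-1}$ and $f_1$ are ``adjoint for the $K$-theoretic pairing'' needs care: with the paper's conventions $f_k$ carries the twist $L_{n,n+1}^{\otimes(k-1)}$ while $e_k$ carries $L_{n,n+1}^{\otimes k}$, and the Tor pairing on the fixed-point basis involves the factors $\ch_\T(\Sym T^*)^{-1}$ together with a $(q,t)\mapsto(q^{-1},t^{-1})$ conjugation (compare \S\ref{SS:norms} in the wreath case), so this adjointness does not transport to Hall-adjointness of $p_1$ and $p_1^\perp$ without tracking those factors; your fallback of computing $f_1$ by direct localization on the other projection is the safer route. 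Second, you are right that the commutators with the diagonal subalgebra pin down only the support and relative structure of the off-diagonal matrix coefficients, not their normalization, so the Euler-class bookkeeping genuinely cannot be bypassed. As a roadmap to \cite{FT,SV2} the proposal is sound; as a proof it is incomplete at precisely the step that constitutes the theorem.
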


The nontrivial identification of generators arising in Theorem~\ref{T:FT-SV} is given by the \textit{Miki automorphism} (or Fourier transform \cite{SV1}) of $\Utor_1$. This automorphism exists for all quantum toroidal algebras $\Utor_r$ of type $\gl_r$ (see \S\ref{SS:miki} for the $r\ge 3$ case) and it plays a key role in Wen's extension of Theorem~\ref{T:FT-SV} to the wreath setting (Theorem~\ref{T:eigen}).

\section{Wreath Macdonald Theory}

The definition of wreath Macdonald polynomials has two main ingredients.
The first is a partial order $\dom_w$ on $\Y^I$ that depends on an affine Weyl group element $w\in\Waf$ (definitions and notation appear later in this section). This comes from the geometry of quiver varieties.
The second is the construction of matrix plethysms $\cP_M$, which are endomorphisms of the $I$-fold tensor product of symmetric functions $\La^{\otimes I}$ determined by an $I\times I$ matrix $M$ with coefficients in the base field $\K=\Q(q,t)$. The specific matrix plethysms come from tensoring with certain representations of the wreath product $\Gamma_n = \fS_n \wr \Z/r\Z$ via a wreath Frobenius map.

\subsection{Combinatorics of the partial orders}
Fix an integer $r\ge1$ and let $I=\Z/r\Z$, which is regarded as the set of nodes of the affine Dynkin diagram of type $A_{r-1}^{(1)}$. 

\subsubsection{Maya diagrams, 1-runner abaci, and charged partitions}
\label{SS:Maya}

A Maya diagram or edge sequence is a function $b: \Z\to \{0,1\}$ such that $b(i)=0$ for $i\ll0$ and $b(i)=1$ for $i\gg 0$. It can be viewed as a 1-runner abacus with integer positions which places a bead at $i$ if $b(i)=0$ and a hole at $i$ if $b(i)=1$. For $c\in\Z$ let $\vn_c$ be the Maya diagram defined by 
$\vn_c(i) = 0$ for $i< c$ and $\vn_c(i)=1$ for $i\ge c$.
The charge of $b$ is the unique integer $c$ such that
\begin{align}\label{E:charge}
| \{ k\in\Z\mid \text{$k < c$ and $b(k)=1$} \} | = | \{ k\in\ \mid \text{$k \ge c$ and $b(k)=0$} \}|,
\end{align}
that is, the number of holes at positions before $c$, is equal to the number of beads at positions at least $c$. Equivalently, starting with $b$ and repeatedly exchanging beads with holes to their left, one reaches an abacus with all beads to the left of all holes, that is, an abacus of the form $\vn_c$. Then $c=\charge(b)$. The Durfee square side length $d(b)$ of $b$ is the common cardinality in \eqref{E:charge}.

\begin{ex} For neatness in the diagram below we write $\overline{k}$ for $-k$. Let $b$ be defined by
\renewcommand{\arraystretch}{1.2}
\[
\begin{array}{|c||c|c|c|c|c|c||c|c|c|c|c|c|c|c|} \hline
i& \dotsm&\ol{5}&\ol{4}&\ol{3}&\ol{2}&\ol{1}&0&1&2&3&4&\dotsm \\ \hline \hline
b(i) &0&0&1&0&1&0&1&1&0&0&1&1  \\ \hline
\end{array} \,\,.
\]
Its abacus is drawn below.
\[
\begin{tikzpicture}[scale=.8]
\node at (-5,0) {$\bullet$};
\node at (-4,0) {$\bullet$};
\node at (-3,0) {$\circ$};
\node at (-2,0) {$\bullet$};
\node at (-1,0) {$\circ$};
\node at (0,0) {$\bullet$};
\node at (1,0) {$\circ$};
\node at (2,0) {$\circ$};
\node at (3,0) {$\bullet$};
\node at (4,0) {$\bullet$};
\node at (5,0) {$\circ$};
\node at (6,0) {$\circ$};
\node at (-5,.5) {$\dotsm$};
\node at (-4,.5) {$\ol{5}$};
\node at (-3,.5) {$\ol{4}$};
\node at (-2,.5) {$\ol{3}$};
\node at (-1,.5) {$\ol{2}$};
\node at (0,.5) {$\ol{1}$};
\node at (1,.5) {$0$};
\node at (2,.5) {$1$};
\node at (3,.5) {$2$};
\node at (4,.5) {$3$};
\node at (5,.5) {$4$};
\node at (6,.5) {$\dotsm$};
\foreach \x in {-5,...,5}
  \draw (\x+.05,0)--(\x+.95,0);
\draw[dashed] (.5,-.5) -- (.5,1);
\draw (-5.5,0)--(-5.05,0);
\draw (6.05,0)--(6.5,0);
\end{tikzpicture}
\]
We have $\charge(b)=0$; the dashed vertical line between $\ol{1}$ and $0$ indicates this.
We have $d(b)=2$.
\end{ex}

There is a partition defined by a Maya diagram $b$ we denote by $\shape(b)$. It is defined as follows. We draw a polygonal path in the plane which follows a sequence of vectors (for increasing $i\in \Z$) in which the $i$-th vector is $(-1,0)$ if $b(i)=0$ and $(0,1)$ if $b(i)=1$, anchored such that the $(c-1)$-th vector enters the point $(d(b),d(b))$ and the $c$-th vector leaves it. 
The path is the top right border of the diagram of the partition which we call $\shape(b)$. Let $\Y$ be the set of partitions and $\cM$ the set of Maya diagrams. There is a bijection 
\begin{align*}
\cM &\to \Y \times \Z \\
b &\mapsto (\shape(b), \charge(b)).
\end{align*}
We call the elements of $\Y\times\Z$ charged partitions.

\begin{ex} The path associated to the Maya diagram $b$ of the previous example is pictured below.
We have $\shape(b)=(4,3,2,2)$.
\[
\begin{tikzpicture}
\node at (-.5,-.5) {$(0,0)$};
\node at (6.5,-.5) {$(6,0)$};
\node at (-.5,6.5) {$(0,6)$};
\draw (0,0) grid (6,6);
\node at (2,2) {$\bullet$};
\node at (.4,5.5) {\scriptsize{$b(5)$}};
\node at (.4,4.7) {\scriptsize{$b(4)$}};
\node at (.6,4.2) {\scriptsize{$b(3)$}};
\node at (1.5,4.2) {\scriptsize{$b(2)$}};
\node at (2.4,3.5) {\scriptsize{$b(1)$}};
\node at (2.4,2.7) {\scriptsize{$b(0)$}};
\node at (2.6,2.2) {\scriptsize{$b(-1)$}};
\node at (3.5,1.7) {\scriptsize{$b(-2)$}};
\node at (3.6,1.2) {\scriptsize{$b(-3)$}};
\node at (4.5,.7) {\scriptsize{$b(-4)$}};
\node at (4.6,.2) {\scriptsize{$b(-5)$}};
\node at (5.5,.2) {\scriptsize{$b(-6)$}};
\tikzset{>={Stealth[length=3mm, width=2mm]}}
\draw[->] (6,0)--(5,0);
\draw[->] (5,0)--(4,0);
\draw[->] (4,0)--(4,1);
\draw[->] (4,1)--(3,1);
\draw[->] (3,1)--(3,2);
\draw[->] (3,1)--(3,2);
\draw[->] (3,2)--(2,2);
\draw[->] (2,2)--(2,3);
\draw[->] (2,3)--(2,4);
\draw[->] (2,4)--(1,4);
\draw[->] (1,4)--(0,4);
\draw[->] (0,4)--(0,5);
\draw[->] (0,5)--(0,6);
\end{tikzpicture}
\]
\end{ex}

\begin{rem} Usually the bijection between Maya diagrams and charged partitions is defined in an opposite manner, by arrows pointing in the opposite directions and indexing them so as $i\in\Z$ increases, the arrows proceed from upper left to lower right. There are a number of notational consequences of this difference in indexing which we will need to point out from time to time.
\end{rem}

\subsubsection{Cores and quotients}
\label{SS:core-quot}

Let $\cC=\cC_r\subset\Y$ be the set of $r$-cores, which by definition are the partitions $\la$ with no cell $(a,b)\in\la$ of hook length $h_\la(a,b)=r$. 

Let $\Y^I$ be the set of $I$-multipartitions, tuples $(\mu^{(0)},\mu^{(1)},\dotsc,\mu^{(r-1)})$ of partitions indexed by the affine Dynkin node set $I=\Z/r\Z$.

\subsubsection{Cores and quotients via (un)interleaving Maya diagrams}
\label{SS:core quotient via maya}
Let $b$ be a Maya diagram or equivalently 1-runner abacus. For $i\in I$ let $b^{(i)}$ be the 1-runner abacus obtained by selecting the subsequence of beads in positions in the coset of $i$ mod $r\Z$, that is,
\begin{align*}
b^{(i)}(k) = b(kr+i) \qquad\text{for all $k\in\Z$ and $0\le i\le r-1$.}
\end{align*}
This gives a bijection
\begin{align}
\notag  \cM &\cong \cM^I \\
\label{E:quotient on Maya diagrams}
  b&\mapsto \quot_r(b) = (b^{(0)},b^{(1)},\dotsc,b^{(r-1)}),
\end{align}
which we call the $r$-quotient for Maya diagrams. The inverse bijection is given by interleaving the $r$ Maya diagrams $b^{(0)},\dotsc,b^{(r-1)}$ using the same formula in \eqref{E:quotient on Maya diagrams}. Pictorially we place $b^{(i)}$ as the $i$-th runner in an $r$-runner abacus and read the beads down the columns, proceeding from columns on the left to columns on the right.

\begin{ex} For $b$ in the previous example, the Maya diagram $b^{(i)}$ in $\quot_3(b)$ is pictured as  the $i$-th runner in the $3$-runner abacus pictured below. The vertical dashed lines picture the charge
for each runner: $\charge(b^{(0)})=1$, $\charge(b^{(1)})=-1$, and $\charge(b^{(2)})=0$.
\[
\begin{tikzpicture}[scale=.8]
\node at (-5,2) {runner $0$};
\node at (-5,1) {runner $1$};
\node at (-5,0) {runner $2$};
\node at (-3,2) {$\dotsm$};
\node at (-3,1) {$\dotsm$};
\node at (-3,0) {$\dotsm$};
\node at (-2,2.5) {$\ol{3}$};
\node at (-2,2) {$\bullet$};
\node at (-2,1) {$\bullet$};
\node at (-2,0) {$\bullet$};
\node at (-1,2.5) {$\ol{2}$};
\node at (-1,2) {$\bullet$};
\node at (-1,1) {$\bullet$};
\node at (-1,0) {$\circ$};
\node at (0,2.5) {$\ol{1}$};
\node at (0,2) {$\bullet$};
\node at (0,1) {$\circ$};
\node at (0,0) {$\bullet$};
\node at (1,2.5) {$0$};
\node at (1,2) {$\circ$};
\node at (1,1) {$\circ$};
\node at (1,0) {$\bullet$};
\node at (2,2.5) {$1$};
\node at (2,2) {$\bullet$};
\node at (2,1) {$\circ$};
\node at (2,0) {$\circ$};
\node at (3,2.5) {$2$};
\node at (3,2) {$\circ$};
\node at (3,1) {$\circ$};
\node at (3,0) {$\circ$};
\node at (4,2) {$\dotsm$};
\node at (4,1) {$\dotsm$};
\node at (4,0) {$\dotsm$};
\foreach \y in {2,1,0} {
  \draw (-2.5,\y)--(-1.95,\y);
  \draw (3.05,\y)--(3.5,\y);}
\foreach \x in {-2,...,2}
  \foreach \y in {2,1,0}
  \draw (\x+.05,\y)--(\x+.95,\y);
\draw[dashed] (1.5,1.5)--(1.5,2.5);
\draw[dashed] (-.5,.5)--(-.5,1.5);
\draw[dashed] (.5,-.5)--(.5,.5);
\end{tikzpicture}
\]
\ytableausetup{boxsize=4pt}
Under $\cM \cong \Y\times\Z$ we have $b^{(0)}\mapsto (\ya,1)$,
$b^{(1)}\mapsto (\vn,-1)$, and $b^{(2)}\mapsto (\yb,0)$; here $\vn$ denotes the empty partition.
\end{ex}

We have the following commutative diagram in which all horizontal maps are bijections and vertical maps are obvious inclusions, where $(\vn^I)$ denotes the empty multipartition.
\[
\begin{tikzcd}
&\Y \times \Z \arrow[r] \arrow[rrr,bend left=15,"\phi"]& \cM \arrow[r,"\quot_r"] & \cM^I \arrow[r] & \Y^I \times \Z^I & \\
\Y\arrow[r] &\Y \times \{0\} \arrow[u] \arrow[rrr,"\tau^{-1}"] &&&\Y^I \times Q \arrow[u] \arrow[r,"\id\times \coremap"]& \Y^I \times \cC \\
\cC \arrow[r] \arrow[u] \arrow[rrrrr,bend right=15,swap,"\rootmap=\coremap^{-1}"]& \cC \times \{0\} \arrow[u] \arrow[rrr] &&& \{(\vn^I)\} \times Q \arrow[r] \arrow[u] & Q 
\end{tikzcd}
\]
The top row of maps are given by bijections described above; we use the composite map $\cM^I \cong (\Y\times \Z)^I\cong \Y^I\times \Z^I$ using the $I$-fold product of the bijection $\cM\to \Y\times\Z$ followed by reordering Cartesian factors.
Let $\phi$ be the composite of the top row of maps. For $(\mu,c)\in\Y\times\Z$ let $\phi(\mu,c)=(\mud,\beta)$ where $\mud=(\mu^{(0)},\dotsc,\mu^{(r-1)})\in\Y^I$ and $\beta=(\beta_0,\dotsc,\beta_{r-1})\in\Z^I$. It is straightforward to verify that
\begin{align*}
  c= \sum_{i\in I} \beta_i.
\end{align*}
Restricting $\phi$ to $\Y \times \{0\}$ we get a bijection $\Y\cong \Y^I \times Q$ where $Q\subset \Z^I$ is the root lattice of $\mathfrak{sl}_r$ realized by the zero-sum elements of $\Z^I$.
The simple roots $\alb_i\in Q$ are realized by $\epsilon_{i-1}-\epsilon_i\in\Z^I$ for $1\le i\le r-1$ where $\{\epsilon_i\mid i\in I\}$ is the standard basis of $\Z^I$. The inverse of this bijection we will denote by $\tau: \Y^I\times Q \to \Y$. 

Let $(\vn^I)\in \Y^I$ denote the multipartition consisting of empty partitions. Then restricting $\tau$ to $\{(\vn^I)\}\times Q$ we get a bijection $Q\to \cC$ called $\coremap$ whose inverse bijection we will call $\rootmap: \cC\to Q$. Composing $\tau^{-1}$ with $\id_{\Y^I}\times \coremap$ we obtain a bijection
\begin{align*}
\Y \to \Y^I \times \cC ,
\end{align*}
whose components are known as the $r$-quotient $\quot_r:\Y\to \Y^I$ and the $r$-core $\core_r:\Y\to\cC$.

\begin{rem} The classical $r$-core map agrees with our map $\core_r$,
but the classical $r$-quotient map $\quot'_r$ (see \cite[Example I.1.8]{Mac} with $m$ taken to be a multiple of $r$, or \cite[\S 6.2]{G}) produces the reverse sequence of partitions than ours:
\begin{align*}
  \quot_r(\mu) = w_0 \,\quot'_r(\mu).
\end{align*}
where $w_0$ is the reversing permutation.
This is due to our convention for the bijection $\cM\cong \Y\times\Z$, namely, the direction of tracing of the edge of the partition from bottom right to upper left.
\end{rem}

\begin{ex} Continuing the above example, starting with $((4,3,2,2),0)\in\Y\times\Z$, we map to $\Y^I\times \Z^I$, getting $\mud = (\ya,\vn,\yb)$ and $\beta=(1,-1,0)$.
\end{ex}

For $\la\in\Y$ a $\la$-removable $r$-ribbon is a set of cells of the form $D(\la)\setminus D(\mu)$ for some $\mu\in\Y$ with $D(\mu)\subset D(\la)$, which is rookwise connected and has exactly one cell of each residue. 
Then $\la$ is an $r$-core if and only if it has no removable $r$-ribbon.

One way to obtain $\core_r(\mu)$ is to repeatedly remove (removable) $r$-ribbons starting with $\mu$ until an $r$-core is reached. One obtains the same $r$-core independently of the order of removal of $r$-ribbons. This resulting $r$-core is $\core_r(\mu)$.

\subsubsection{Transposing partitions and the core-quotient bijection}
For all $\la\in\Y$ we have
\begin{align*}
  \core_r(\la^t) &= \core_r(\la)^t \\
  \quot_r(\la^t) &= \quot_r(\la)^*,
\end{align*}
where $*$ denotes the reverse of the componentwise transpose of a multipartition.

\subsubsection{Residues}\label{SS:par}
We shall only require residues for boxes with respect to a partition $\shape(b)$ associated with a Maya diagram $b$ of charge $0$. In such a diagram the $i$-th step tracing the border of the partition $\shape(b)$, starts on the diagonal $y-x=i$. Therefore it is consistent to name this diagonal line as the $i$-th diagonal, that is, to define the content or diagonal index of $(x,y)$ by $y-x$ 
and to define the residue of $(x,y)\in\Z^2$ as the residue of $y-x$ in $\Z/r\Z$. \footnote{This is the negative of the standard convention.} 
For a partition $\mu\in\Y$ denote by $A_i(\mu)\subset A(\mu)$ and $R_i(\mu)\subset R(\mu)$ the subsets of $\mu$-addable and $\mu$-removable cells having residue $i$. 

\subsubsection{Cores to root lattice, reprised}
The bijection $\rootmap:\cC\to Q$ was defined previously in terms of the bijection $\phi$.
It can be computed directly as follows.
Let $Q_\af = \bigoplus_{i\in I} \Z \alpha_i$ be the root lattice of 
the affine Kac-Moody algebra $\hat{\mathfrak{sl}}_r$ with simple roots $\{\alpha_i\mid i\in I\}$. Define the map $\kappa:\Y\to Q_\af^+=\bigoplus_{i\in I} \Z_{\ge0} \alpha_i$ by
\begin{align*}
  \kappa(\mu) = \sum_{(x,y)\in\mu} \alpha_{y-x},
\end{align*}
where the subscripts of the affine simple roots are taken modulo $r$ as usual.
Let $\cl: Q_\af\to Q$ be the restriction map. 
We have $\cl(\al_i)=\alb_i$ for $1\le i\le r-1$ while $\cl(\alpha_0) = - \theta = -\sum_{i=1}^{r-1} \alb_i$ where $\theta\in Q$ is the highest root. The kernel of $\cl$ is 
$\Z\delta$ where $\delta=\sum_{i\in I} \alpha_i\in Q_\af$ is the null root. Define the map
$\kb: \Y \to Q$ by 
\begin{align}\label{E:kb}
\kb(\mu) = - \cl(\kappa(\mu)).
\end{align}
Since each $r$-ribbon contributes $\delta$ in the computation of $\kappa$, it follows from \cite[\S 6.5,7.5]{G} that the restriction of $\kb$ to $\cC$, which we shall by abuse of notation also call $\kb$, defines a bijection $\cC\to Q$.
The reason we use the negative sign in \eqref{E:kb} is because with this definition, $\kb$ and $\rootmap$ define the same bijection $\cC\to Q$.
\begin{ex}\label{X:core to root}
The boxes of the partition $\mu=(4,3,2,2)$ and its 3-core $\gamma=(2)$ 
are labeled by their residues in $\Z/3\Z$.
\ytableausetup{boxsize=normal,aligntableaux=bottom}
\[
\begin{ytableau}
0 & 2\\
 2 & 1\\
1 & 0&2 \\
 0& 2& 1& 0\\
\end{ytableau}
\qquad \quad\begin{ytableau} 0 & 2\end{ytableau}
\]
We have $\kb(\mu)=\kb(\gamma) = - \cl(\alpha_0+\alpha_2) = \alb_1$.
\end{ex}

\subsubsection{Affine Weyl action on partitions}\label{SS:root to core}
We now give another way to compute the bijection $Q \to \cC$ inverse to $\kb$, based on affine crystal graph computations. Let $\Waf$ be the affine Weyl group for $\hat{\mathfrak{sl}}_r$, with Coxeter generators $s_i$ for $i\in I$.

There is an action of the quantum affine $\fsl_r$ algebra on a certain Fock space $\mathcal{F}$,\footnote{This corresponds to the \textit{horizontal} quantum affine algebra action discussed in \S\ref{SS:Fock}.} inside which is the basic representation. The crystal graph of $\mathcal{F}$ may be realized by the set $\Y$ of all partitions. 
The empty partition $\vn$ corresponds to the vacuum vector. The orbit of $\vn$ under the affine crystal reflection operators $s_i$ for $i\in I$, generates the extremal weight vectors in the 
subcrystal given by the basic representation; see \cite[\S 2]{MM} \cite[\S 5]{LLT}. Moreover the orbit $\Waf\cdot \vn$ is precisely the set $\cC$ of $r$-core partitions.

The affine Weyl group $\Waf\cong Q^\vee \rtimes \Wfin$ acts on $\Y$: for all $i\in I$, $s_i$ acts on $\la\in\Y$ by removing every removable cell of residue $i$ and adding every addable cell of residue $i$. We have already seen this action: using the bijection between partitions and $r$-runner abaci described above, this action has the following pleasant description. The group $\Wfin$ of permutations of the set $I$, acts on $r$-runner abaci by permuting the runners, which are indexed by $I$. We adopt the convention that the simple reflection $s_i$ for $1\le i\le r-1$, acts on $I$ by exchanging $i-1$ and $i$. We use $Q\cong Q^\vee$ for the translation lattice.
The translation element $t_{\beta^\vee}$ for $\beta=(\beta_0,\beta_1,\dotsc,\beta_{r-1})\in Q\subset \Z^I$ acts on the $r$-runner abaci by shifting the $i$-th runner by $\beta_i$ positions for $i\in I$. 
Standard formulas for the affine Weyl group such as $s_0 = t_{\theta^\vee} s_\theta$ are consistent with our conventions. 

We have $\cC = \Waf\cdot \vn$ and the stabilizer of $\vn$ in $\Waf$ is $\Wfin$. Since there is a unique translation element in every coset of $\Waf/\Wfin$, there is a bijection $Q\to \cC$ which agrees with the previously-defined map $\coremap$ with
\begin{align*}
\coremap(\beta)=t_{\beta^\vee}\cdot \vn.
\end{align*}
This bijection coincides with $\rootmap^{-1}=\kb^{-1}$ \cite[Lemma 7.5]{G}.

Let $w_0\in \Wfin$ be the long element, sending $i$ to $r-1-i$ for all $i\in I$. 
We have
\begin{align*}
\coremap(\beta)^t = \coremap(-w_0(\beta))\qquad\text{for all $\beta \in Q$.}
\end{align*}
Finally, let $w\mapsto w^*$ be the involutive group automorphism of $\Waf$ sending $s_i$ to $s_{-i}$ for all $i\in I$. For $u\in\Wfin$ and $\beta^\vee\in Q^\vee$ we have $u^*=w_0uw_0$ and $t_{\beta^\vee}^* = t_{-w_0\beta^\vee}$ which is not equal to $w_0 t_{\beta^\vee}w_0 = t_{w_0\beta^\vee}$. We have
\begin{align*}
(w \cdot \la)^t = w^* \cdot \la^t\qquad\text{for all $w\in\Waf$ and $\la\in\Y$.}
\end{align*}


\begin{ex} \label{X:root to core} Let $r=3$ and $\beta=\alb_1$. 
We have $t_{\alb_1^\vee} =s_2s_0s_2s_1$.
Computing in the crystal graph we obtain the 3-core
\ytableausetup{aligntableaux=bottom,boxsize=4mm}
\begin{align*}
s_2s_0s_2s_1\cdot \vn = s_2s_0\cdot \vn =s_2 \cdot \begin{ytableau} 0 \end{ytableau}
= \begin{ytableau}  0&2 \end{ytableau}.
\end{align*}
So $\core(\alb_1)=(2)$. Compare this with Example \ref{X:core to root}.
\end{ex}

\subsubsection{Partial orders on multipartitions}
The previously-defined action of $\Waf$ on partitions or $r$-runner abaci is given explicitly on $\Y^I\times Q$ as follows: for $\beta^\vee\in Q^\vee$ and $u\in \Wfin$, define
\begin{align}\label{E:Waf action}
t_{\beta^\vee} u \cdot (\mud, \alpha) &= (u(\mud),\beta+u(\alpha))\qquad\text{for all $(\mud,\alpha)\in \Y^I\times Q$.}
\end{align}
The bijection $\tau$ of \S \ref{SS:core quotient via maya} is $\Waf$-equivariant using the $\Waf$-actions in \eqref{E:Waf action} and \S \ref{SS:root to core}.

For $w\in \Waf$ let $\tau_w:\Y^I\to\Y$ be defined by 
\begin{align*}
\tau_w(\mud) = \tau(w^{-1}(\mud,0)).
\end{align*}
Define the partial order $\dom_w$ on $\Y^I$ as in \cite{G}\footnote{Gordon uses different conventions.}:
\begin{align}\label{E:partial order} 
\lad \dom_w \mud \Leftrightarrow \tau_w(\lad) \dom \tau_w(\mud)
\end{align}
where $\dom$ is the usual dominance order on $\Y$.

It is immediate from the definitions that (see \cite[Lemma 7.11]{G})
\begin{align*}
  \lad\dom_w\mud \Leftrightarrow u(\lad) \dom_{uw} u(\mud) \qquad\text{for all $u\in\Wfin$.}
\end{align*}
Since for $\la,\mu\in\Y$, $\la\dom\mu$ if and only if $\la^t\rdom \mu^t$, 
it follows that
\begin{align*}
\lad \dom_w \mud &\Leftrightarrow (\lad)^* \rdom_{w^*} (\mud)^* \\
&\Leftrightarrow \lad^t \rdom_{w_0w^*} \mud^t.
\end{align*}

To set some notation, let 
\begin{align*}
w = u t_{-\beta^\vee} \in \Waf \qquad\text{with $u\in \Wfin$ and $\beta^\vee\in Q^\vee$.}
\end{align*}

\begin{rem} We will need to apply $w^{-1}=t_{\beta^\vee} u^{-1}$ to the weight $\La_0-n\delta$ and want the core created by $w^{-1}\cdot\vn$ to be associated with $\beta$ rather than $-\beta$. This explains our preference for the minus sign in the translation part of $w$.
\end{rem}

Let
\begin{align*}
\mu &:= \tau_w(\mud) = \tau(u^{-1}\mud,\beta).
\end{align*}
Equivalently 
we have
\begin{align*}
\mud &= u \,\quot_r(\mu) \\
\beta &= \kb(\core_r(\mu)).
\end{align*}

\ytableausetup{boxsize=1mm}

\begin{ex} \label{X:order} 
Let $r=3$, $n=1$, $w=t_{-\alb_1^\vee}$. We consider the partial order $\dom_w$ on the multipartitions of size $1$. We have $u=\id$ and $\beta=\alb_1$.
We compute $t_{\beta^\vee}\vn = t_{\alb_1}\vn = s_2s_0s_2s_1\vn = \yb$.
The partitions with $3$-core $\yb$ with quotients of size $1$ are $(5), (2,2,1), (2,1,1,1)$.
We list them in reverse lex order, a total order which refines dominance order. The corresponding ($u$-permuted) quotients are listed.
\begin{align*}
\begin{array}{|c|||c|c|c|} \hline
\mu  & \ydiagram{5} &  \ydiagram{1,2,2} & \ydiagram{0,1,1,1,2} \\[1mm] \hline
\quot_r(\mu) & (\cd,\ya,\cd)&(\cd,\cd,\ya)&(\ya,\cd,\cd) \\ \hline
u\,\quot_r(\mu) & (\cd,\ya,\cd)&(\cd,\cd,\ya)&(\ya,\cd,\cd) \\ \hline
 \end{array}
\end{align*}
The table gives 
\begin{align*}
(\cd,\ya,\cd) \dom_w (\cd,\cd,\ya) \dom_w (\ya,\cd,\cd)\qquad \text{for $w = t_{-\alb_1^\vee}$.}
\end{align*}

Now let $w = s_2s_1 t_{-\alb_1^\vee}$. Then $u=s_2s_1$ and 
\begin{align*}
\begin{array}{|c|||c|c|c|} \hline
\mu  & \ydiagram{5} &  \ydiagram{1,2,2} & \ydiagram{0,1,1,1,2} \\[1mm] \hline
\quot_r(\mu) & (\cd,\ya,\cd)&(\cd,\cd,\ya)&(\ya,\cd,\cd) \\ \hline
u \,\quot_r(\mu) &  (\ya,\cd,\cd)&(\cd,\ya,\cd)&(\cd,\cd,\ya) \\ \hline
 \end{array}
\end{align*}
Therefore
\begin{align}\label{E:example order}
(\ya,\cd,\cd) \dom_w (\cd,\ya,\cd) \dom_w (\cd,\cd,\ya)\qquad \text{for $w = s_2s_1 t_{-\alb_1^\vee}$.}
\end{align}

\end{ex}

\subsection{Multisymmetric functions}
\label{SS:multisymmetric}

Let $\La^{\otimes I}$ be the $I$-fold tensor power of $\Lambda$ over $\K=\Q(q,t)$. For $f\in \La$ we write $f[X^{(i)}]$ to indicate an element in $\La^{\otimes I}$ with $1$ in tensor factors $j\ne i$ and $f$ in factor $i$. 

Then $\La^{\otimes I}$ is a polynomial $\K$-algebra with generators $p_k[X^{(i)}]$ for $k\in \Z_{>0}$ and $i\in I$ where $p_k$ is the power sum \cite[\S I.2]{Mac}.

We will sometimes write $f[\Xd]$ for an element $f\in \La^{\otimes r}$ to remind the reader that $f$ is symmetric separately in $r$ sets of variables $X^{(0)},X^{(1)},\dotsc,X^{(r-1)}$.

The space $\Lambda^{\otimes I}$ acquires a grading by summing the degree of each factor in a tensor product of homogeneous elements. We write $(\Lambda^{\otimes I})_n$ for the subspace of $\Lambda^{\otimes I}$ consisting of elements of degree $n$.

\subsubsection{Tensor Schur basis}
For $\lad=(\la^{(0)},\la^{(1)},\dotsc,\la^{(r-1)})\in\Y^I$ define the tensor Schur function $s_\lad=\bigotimes_{i\in I} s_{\la^{(i)}}=\prod_{i\in I} s_{\la^{(i)}}[X^{(i)}]$. The tensor Hall pairing is the pairing on $\La^{\otimes I}$ defined by
\begin{align*}
  \pair{s_\lad}{s_\mud} = \delta_{\lad,\mud}\qquad\text{for all $\lad,\mud\in \Y^I$.}
\end{align*}
The Hall pairing on $\La$ has reproducing kernel
\begin{align*}
\Omega[XY] := \prod_{i,j\ge1} (1-x_iy_j)^{-1}=\sum_{\la\in\Y} s_\la[X]s_\la[Y].
\end{align*}
Let us regard $\Xd$ and $\Yd$ as column vectors with $i$-th components $X^{(i)}$ and $Y^{(i)}$ for $i\in I$. Then the tensor Hall pairing has reproducing kernel
\begin{align*}
\sum_{\lad\in\Y^I} s_\lad[\Xd] s_\lad[\Yd] 
&= \Omega[\Yd^t \Xd] = \Omega[\sum_{i\in I} X^{(i)} Y^{(i)}].
\end{align*}

\subsubsection{Vector and matrix plethysms}\label{SS:matrix plethysm}

Let $\Mat_{I\times I}(\K)$ be the algebra of $I\times I$ matrices over $\K$. To any $M\in M_{I\times I}(\K)$ with entries $M_{ij}(q,t)$ and $g\in\La^{\otimes I}$ let $g[M \Xd]$ denote the image of $g$ under the $\K$-algebra endomorphism of  $\La^{\otimes I}$ given by
\begin{align*}
  p_k[X^{(i)}] \mapsto \sum_{j\in I} M_{ij}(q^k,t^k) p_k[X^{(j)}]\qquad\text{for all $k\in \Z_{>0}$, $i\in I$.}
\end{align*}
Let $\cP_M\in\End(\La^{\otimes I})$ be the $\K$-algebra homomorphism $g[\Xd]\mapsto g[M^t \Xd]$ where $M^t$ is the transpose of $M$. The transpose makes the construction covariantly functorial:
\begin{align*}
  \cP_{MM'} = \cP_M \circ \cP_{M'}
\qquad\text{for $M,M'\in \Mat_{I\times I}(\K).$}
\end{align*}

\begin{ex}\label{X:antipode} Let $-\id$ be the negative of the $I\times I$ identity matrix. 
We have $\cP_{-\id}(p_k[X^{(i)}]) = - p_k[X^{(i)}]$ for all $k>0$ and $i\in I$. That is, $\cP_{-\id}$ is the $I$-fold tensor product of the antipode map.
\end{ex}

\begin{ex}\label{X:permutation plethysm}
Suppose $M$ is the matrix of a permutation $u$ of the set $I$. Our convention is that this matrix sends the $i$-th standard basis column vector to the $u(i)$-th for all $i$. Then, writing $\cP_u=\cP_M$, we have
\begin{align*}
  \cP_u(p_k[X^{(i)}])=p_k[X^{u(i)}]\qquad\text{for all $k\in \Z_{>0}, i\in I$.}   
\end{align*}
Examples of permutations of $I$ that we will use are:
\begin{itemize}
    \item $\negate$ (negate): $i\mapsto \negate(i)=i^*:=r-i$;
    \item $w_0$ (reversal): $i\mapsto r-1-i$;
    \item $\chi$ (rotation): $i\mapsto i+1$.
\end{itemize}
Often we just write $u\in\Wfin$ for the automorphism of $\La^{\otimes I}$ rather than $\cP_u$. 
\end{ex}

\begin{ex}\label{X:The Matrix Plethysm} The following matrix plethysm is used in the definition of wreath Macdonald polynomials:
\begin{align*}
  \cP_{\id-q\chi^{-1}}(p_k[X^{(i)}]) = p_k[X^{(i)}] - q^k p_k[X^{(i-1)}]\qquad\text{for all $k\in \Z_{>0}$, $i\in I$.}
\end{align*}
\end{ex}

\subsection{Wreath Macdonald polynomials} 
We now define Haiman's wreath Macdonald polynomials \cite{H,H:private}. 
For each $w\in \Waf$, there is a wreath Macdonald basis $\{\tH^w_\mud \mid \mud\in\Y^I\}$ of $\La^{\otimes I}$. By definition the wreath Macdonald polynomial
$\tH^w_\mud$ is the unique element of $\La^{\otimes I}$ satisfying
\begin{align}
\label{E:q triangularity}
    \cP_{\id-q\chi^{-1}} (\tH^w_\mud) &\in \bigoplus_{\lad\dom_w \mud} \K s_\lad \\
\label{E:t triangularity}
    \cP_{\id-t^{-1}\chi^{-1}}(\tH^w_\mud) &\in \bigoplus_{\lad\rdom_w\mud} \K s_\lad \\
\label{E:normalization}
    \pair{s_n[X^{(0)}]}{\tH^w_\mud} &= 1 
\end{align}
where $n=|\mud|=\sum_{i\in I} |\mu^{(i)}|$ is the total size of $\mud$.

As in the ordinary Macdonald case ($r=1$), the conditions \eqref{E:q triangularity} and \eqref{E:t triangularity} overdetermine the $\tH^w_\mud$ and their existence therefore requires proof. Existence was conjectured by Haiman \cite{H} and first proved by Bezrukavnikov and Finkelberg \cite{BF} using powerful geometric methods originating from earlier work of Bezrukavnikov and Kaledin \cite{BK}; a second proof of existence was given later by Losev \cite{L2}. We formulate these results in more detail in \S\ref{SS:wreath Haiman} below. 

It is not difficult to show that the conditions above uniquely determine the $\tH^w_\mud$. Moreover, using properties the inner product $\pair{\cdot}{\cdot}_{q,t}$ introduced below (see \S\ref{SS:ortho-tH}), one can show that the leading terms in \eqref{E:q triangularity} and \eqref{E:t triangularity} are nonzero.

\begin{rem}
For $r=1$, $\mud$ is a $1$-tuple of partitions $(\mu)$ where $\mu$ is a partition, 
the affine Weyl group is the identity, and $\dom_\id$ becomes dominance order.
The matrix plethysm $\cP_{\id-q\chi^{-1}}$ becomes the plethysm $f\mapsto f[(1-q)X]$ so that
\eqref{E:q triangularity} matches \eqref{E:tH-def-q},
$\cP_{\id-t^{-1}\chi^{-1}}$ becomes the plethysm $f\mapsto f[(1-t^{-1})X]$ so that 
\eqref{E:t triangularity} matches \eqref{E:tH-def-t-inv}, and 
\eqref{E:normalization} matches \eqref{E:tH-def-normalization}.
Hence the wreath Macdonald polynomial $\tH^{\id}_{(\mu)}$ for $r=1$ is the usual 
modified Macdonald polynomial $\tH_\mu$.
\end{rem}

\begin{ex} \label{X:wreath Mac triangularity}
Using the order from Example \ref{X:order} with $w = s_2s_1 t_{-\alb_1^\vee}$ we give a few wreath Macdonald polynomials and verify some of their triangularity properties using the order \eqref{E:example order}.
The corresponding wreath Macdonalds are given explicitly and the nontrivial triangularity conditions are verified.
\begin{align*}
\tH^w_{(\ya,\cd,\cd)} &= s_1[X^{(0)}] + q^2 s_1[X^{(1)}] + q s_1[X^{(2)}] \\
\cP_{\id-q \chi^{-1}} \tH^w_{(\cd,\cd,\ya)} &= (1-q^3) s_1[X^{(0)}]
\end{align*}
\begin{align*}
\tH^w_{(\cd,\ya,\cd)} &= s_1[X^{(0)}] + t s_1[X^{(1)}] + q s_1[X^{(2)}] \\
\cP_{\id-q \chi^{-1}}\tH^w_{(\cd,\ya,\cd)}  &= (1-qt) s_1[X^{(0)}] + (t-q^2) s_1[X^{(1)}] \\
\cP_{\id-t^{-1} \chi^{-1}}\tH^w_{(\ya,\cd,\cd)}  &=  (t-qt^{-1}) s_1[X^{(1)}] + (q-t^{-1})s_1[X^{(2)}]
\end{align*}
\begin{align*}
\tH^w_{(\cd,\cd,\ya)} &= s_1[X^{(0)}] + t s_1[X^{(1)}] + t^2 s_1[X^{(2)}] \\
\cP_{\id-t^{-1} \chi^{-1}}\tH^w_{(\cd,\cd,\ya)}  &=(t^2-t^{-1}) s_1[X^{(2)}]
\end{align*}
\end{ex}

\subsection{Wreath Macdonald-Kostka coefficients}
Define the wreath Macdonald-Kostka coefficients $\tK^w_{\lad,\mud}(q,t)\in \K$ by
\begin{align*}
  \tH^w_\mud = \sum_{\lad} \tK^w_{\lad,\mud}(q,t) s_{\lad}.
\end{align*}
The wreath analog of Haiman's Theorem (Theorem~\ref{T:wreath Haiman} below) implies the Laurent polynomiality and positivity of wreath Macdonald-Kostka coefficients:

\begin{thm}[\cite{BF}]\label{T:wreath pos}
$\tK^w_{\lad,\mud}(q,t)\in \Z_{\ge0}[q^{\pm 1},t^{\pm 1}]$.
\end{thm}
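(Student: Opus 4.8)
The plan is to deduce Theorem~\ref{T:wreath pos} from the geometric realization of wreath Macdonald polynomials stated (in outline) in the introduction and to be proved in detail in \S\ref{SS:wreath Haiman}, which is the exact wreath analog of the argument recalled after Theorem~\ref{T:Haiman} in the $r=1$ case. Concretely, I would invoke the wreath Procesi bundle: for $w=ut_{-\beta^\vee}$ and each $n\ge 0$ there is a rank $r^n n!$ vector bundle $P^w_{\beta,n}$ on the quiver variety $\fM_{\beta,n}$, which is $(\T\times\Gamma_n)$-equivariant, whose generic fiber is the regular representation of $\Gamma_n=\fS_n\wr\Z/r\Z$, and which realizes each fiber over a $\T$-fixed point as a $\Gamma_n$-equivariant bigraded quotient of the polynomial ring $\C[x_1,\dots,x_n,y_1,\dots,y_n]$ with $x_i$ of weight $t$ and $y_i$ of weight $q$. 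The map $\Phi^w_{\beta,n}\colon K^\T(\fM_{\beta,n})_\loc\overset{\sim}{\to}(\Lambda^{\otimes r})_n$ is then $[\cF]\mapsto \mathrm{Frob}^{\Gamma_n}_{q,t}(R\Gamma([\cF]\otimes P^w_{\beta,n}))$, where $\mathrm{Frob}^{\Gamma_n}$ is the wreath Frobenius map sending an irreducible $\Gamma_n$-character indexed by a multipartition to the corresponding tensor Schur function, and the wreath analog of Haiman's theorem says $\Phi^w_{\beta,n}([I^w_\mud]) = \tH^w_\mud$ for the $\T$-fixed point $I^w_\mud\in\fM_{\beta,n}$ indexed by $\mud$.

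Given this, the proof is structurally identical to the classical case. First, the class $[I^w_\mud]$ of a $\T$-fixed skyscraper sheaf, tensored with $P^w_{\beta,n}$ and pushed to a point, computes the $(\T\times\Gamma_n)$-character of the fiber $P^w_{\beta,n}|_{I^w_\mud}$; hence $\tH^w_\mud = \mathrm{Frob}^{\Gamma_n}_{q,t}(P^w_{\beta,n}|_{I^w_\mud})$ is a genuine bigraded $\Gamma_n$-Frobenius \emph{series} of an honest (finite-dimensional) bigraded $\Gamma_n$-module. Decomposing that module into $\Gamma_n$-irreducibles and reading off multiplicities, we get immediately
\[
\tK^w_{\lad,\mud}(q,t) = \sum_{a,b\in\Z} \big(\dim \mathrm{Hom}_{\Gamma_n}(V_{\lad},\,(P^w_{\beta,n}|_{I^w_\mud})_{a,b})\big)\,q^a t^b \in \Z_{\ge 0}[q^{\pm1},t^{\pm1}],
\]
where $V_\lad$ is the irreducible $\Gamma_n$-module indexed by $\lad$. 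This already gives Laurent polynomiality and nonnegativity of the coefficients; the possible negative exponents of $q,t$ are harmless since only finitely many bidegrees occur. (In the $r=1$ case one upgrades $\Z_{\ge0}[q^{\pm1},t^{\pm1}]$ to $\Z_{\ge0}[q,t]$ using the quotient-of-polynomial-ring property, but the statement of Theorem~\ref{T:wreath pos} as written only asks for the Laurent version, so that refinement is not needed here.)

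The one point requiring care is bookkeeping of conventions, not mathematics: one must check that the $\Waf$-twisted indexing of $\fM_{\beta,n}$, the $\T$-fixed points $I^w_\mud$, and the ordering conventions for tensor Schur functions match those used in \eqref{E:q triangularity}--\eqref{E:normalization}, so that the geometrically-produced basis is genuinely $\{\tH^w_\mud\}$ and not some relabeling of it; this is exactly what the wreath analog of Haiman's theorem (Theorem~\ref{T:wreath Haiman}) asserts, so it can be cited. The main obstacle, and the reason this ``proof'' is really a citation, is the existence of the wreath Procesi bundle $P^w_{\beta,n}$ with all the stated properties: this is the deep input, supplied by Bezrukavnikov--Finkelberg \cite{BF} via the derived equivalence \eqref{E:wreath Haiman equiv intro} coming from Bezrukavnikov--Kaledin \cite{BK}, and independently by Losev \cite{L1,L2} for general $w$. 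Once that is granted, the positivity statement is a formal consequence, just as \eqref{E:Mac-pos} follows formally from Theorem~\ref{T:Haiman}.
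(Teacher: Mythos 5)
Your proposal is correct and follows the paper's own route exactly: Theorem~\ref{T:wreath Haiman} realizes $\tH^w_\mud$ as the bigraded $\Gamma_n$-Frobenius character of the Procesi fiber $P^w_n|_{I^w_\mud}$, so the coefficients $\tK^w_{\lad,\mud}(q,t)$ are graded multiplicities of $\Gamma_n$-irreducibles and hence lie in $\Z_{\ge0}[q^{\pm 1},t^{\pm 1}]$. One caveat: your ancillary claim that each fiber is a bigraded $\Gamma_n$-equivariant quotient of $\C[x_1,\dotsc,x_n,y_1,\dotsc,y_n]$ is false for $r>1$ (the paper gives explicit counterexamples in a remark after Theorem~\ref{T:wreath Haiman}), but since you never use this for the Laurent statement, the argument stands.
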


Another immediate consequence is the following dimension formula for the 
$\Gamma_n$-isotypic components of fibers of the wreath Procesi bundle over torus fixed points.

\begin{thm} 
$\tK^w_{\lad,\mud}(1,1)$ is the number of standard multitableaux $f^{\lad}$ of shape $\lad$. 
\end{thm}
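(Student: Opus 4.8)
The plan is to evaluate at $q=t=1$ and reduce to a classical fact about the structure of the fibers of the wreath Procesi bundle, combined with the branching rule for induced representations from $\fS_n$-parabolic subgroups of $\Gamma_n$. First I would invoke Theorem~\ref{T:wreath Haiman} to realize $\tH^w_\mud$ as the bigraded $\Gamma_n$-Frobenius character of the fiber $P^w|_{I_\mud^w}$ of the wreath Procesi bundle, a $(\T\times\Gamma_n)$-equivariant vector bundle of rank $r^n n!$. Setting $q=t=1$ collapses the bigrading, so $\tH^w_\mud(1,1)$ equals the ordinary $\Gamma_n$-Frobenius character of this fiber regarded as an ungraded $\Gamma_n$-module. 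By the wreath analog of property~\eqref{I:reg} of the classical Procesi bundle, each such fiber is isomorphic to the regular representation $\C\Gamma_n$ of $\Gamma_n=\fS_n\wr\Z/r\Z$, independently of $\mud$ and of $w$. Hence $\tK^w_{\lad,\mud}(1,1)$ is the multiplicity of the irreducible $\Gamma_n$-representation indexed by $\lad$ in $\C\Gamma_n$, which is its dimension $\dim\chi^{\lad}$.

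Next I would identify $\dim\chi^{\lad}$ combinatorially. The irreducible representations of $\Gamma_n$ are indexed by $r$-multipartitions $\lad$ of size $n$ (via Clifford theory for the abelian extension $\Z/r\Z$), and a standard count gives $\dim\chi^{\lad}=\binom{n}{|\la^{(0)}|,\dotsc,|\la^{(r-1)}|}\prod_{i\in I}\dim\chi^{\la^{(i)}}=\binom{n}{|\la^{(0)}|,\dotsc,|\la^{(r-1)}|}\prod_{i\in I}f^{\la^{(i)}}$, where $f^{\la^{(i)}}$ is the number of standard Young tableaux of shape $\la^{(i)}$ (the ordinary hook-length formula). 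The right-hand side is by definition the number $f^{\lad}$ of standard Young multitableaux of shape $\lad$: one chooses which of the entries $1,\dotsc,n$ go into each component (the multinomial factor) and then fills each component with a standard tableau. This matches the assertion.

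As a sanity check one can verify this against the Frobenius-character statement directly: since $\mathrm{Frob}(\C\Gamma_n)=\sum_{|\lad|=n}(\dim\chi^{\lad})\,s_{\lad}$ in $\Lambda^{\otimes I}$ (the wreath analog of $\mathrm{Frob}(\C\fS_n)=\sum_{|\la|=n}(\dim\chi^\la)s_\la$), comparing coefficients of $s_{\lad}$ gives exactly $\tK^w_{\lad,\mud}(1,1)=\dim\chi^{\lad}=f^{\lad}$.

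The main obstacle is purely expository: one must correctly cite or establish the wreath analogs of properties~\eqref{I:reg} (regular-representation fibers) of the Procesi bundle as proved in \cite{BF} (and in the generality of arbitrary $w$ in \cite{L1,L2}), and one must fix the indexing conventions so that the $r$-multipartition $\lad$ appearing in the Schur expansion of $\tH^w_\mud$ is genuinely the same as the one labeling the $\Gamma_n$-irreducible under the wreath Frobenius map. Once these conventions are pinned down, the argument is the formal computation above.
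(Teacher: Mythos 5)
Your argument is correct and follows essentially the same route as the paper's proof: invoke Theorem~\ref{T:wreath Haiman} to realize $\tH^w_\mud$ as the $\Gamma_n$-Frobenius character of a wreath Procesi fiber, use that the fiber (with the $\T$-grading forgotten) is the regular representation of $\Gamma_n$ so that $\tK^w_{\lad,\mud}(1,1)=\dim\chi^{\lad}$, and identify this dimension with the multitableaux count via the multinomial formula. No gaps.
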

\begin{proof} 
Theorem~\ref{T:wreath Haiman} below realizes $\tH^w_\mud$ as the $\Gamma_n$-Frobenius character of a wreath Procesi bundle fiber $P^w_n|_{I_\mud^w}$, where $n=|\mud|$. The important point here is that, after forgetting the $\T$-grading, $P^w_n|_{I_\mud^w}$ affords a copy of the regular representation of $\Gamma_n$ and its irreducible decomposition is given by $\sum_{|\lad|=n} \tK^w_{\lad,\mud}(1,1) \chi^\lad$ where $\chi^\lad$ denotes the $\Gamma_n$-irreducible representation with Frobenius character $s_{\lad}$ (see \eqref{E:wreath Frob}). Taking the coefficient of $\chi^\lad$ and using the fact that the multiplicity of an irreducible representation in the regular representation is equal to its dimension, we obtain $\tK^w_{\lad,\mud}(1,1)=\dim \chi^\lad$. The dimension is given by multitableaux.
\end{proof}

\begin{rem}
If $|\la^{(i)}|=n_i$, $n=\sum_{i\in I} n_i$, and $f^\la$ is the the number of standard tableaux of shape $\la\in\Y$, then 
\begin{align*}
    f^{\lad} = \binom{n}{n_0,n_1,\dotsc,n_{r-1}} \prod_{i\in I} f^{\la^{(i)}}.
\end{align*}
\end{rem}

The notion of $\Gamma$-degree is defined in \S \ref{SS:McKay and cyclic quiver}.
The following says that only certain Laurent monomials in $q$ and $t$ may occur in a given wreath Macdonald-Kostka coefficient.

\begin{prop}[\cite{H:private}]\label{P:Gamma grading}
The wreath Macdonald-Kostka coefficient $\tK^w_{\lad,\mud}(q,t)$ is homogeneous of $\Gamma$-degree
$\sum_{i\in I} |\la^{(i)}| i$.
\end{prop}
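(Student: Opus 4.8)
The plan is to show that, for an appropriate $\Z/r\Z$-grading, the entire system \eqref{E:q triangularity}--\eqref{E:normalization} characterizing $\tH^w_\mud$ is $\Gamma$-homogeneous, and then to apply the uniqueness of $\tH^w_\mud$. First I would extend the $\Gamma$-grading on the coefficients -- under which $q$ and $t$ have $\Gamma$-degrees $-1$ and $1$, so that a monomial $q^a t^b$ has $\Gamma$-degree $b-a$ -- to a $\Z/r\Z$-grading on $\La^{\otimes I}$ by assigning each variable of the alphabet $X^{(i)}$ the $\Gamma$-degree $-i$; equivalently, $p_k[X^{(i)}]$ has $\Gamma$-degree $-ki$ and the tensor Schur function $s_\lad$ is $\Gamma$-homogeneous of degree $-\sum_{i\in I}i\,|\la^{(i)}|$. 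By Theorem~\ref{T:wreath pos} one has $\tH^w_\mud\in\La^{\otimes I}_{\Q[q^{\pm 1},t^{\pm 1}]}$, which is a genuinely $\Z/r\Z$-graded ring, so the decomposition of $\tH^w_\mud$ into $\Gamma$-homogeneous components below is literally meaningful. Since $\tK^w_{\lad,\mud}$ is the coefficient of $s_\lad$ in $\tH^w_\mud$, the proposition becomes the statement that $\tH^w_\mud$ is $\Gamma$-homogeneous of degree $0$.

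The crucial step is that the matrix plethysms occurring in the definition preserve this grading. By Example~\ref{X:The Matrix Plethysm}, $\cP_{\id-q\chi^{-1}}(p_k[X^{(i)}]) = p_k[X^{(i)}] - q^k p_k[X^{(i-1)}]$, whose two summands have $\Gamma$-degree $-ki$ and $(-k)+\bigl(-k(i-1)\bigr) = -ki$; similarly $\cP_{\id-t^{-1}\chi^{-1}}(p_k[X^{(i)}]) = p_k[X^{(i)}] - t^{-k} p_k[X^{(i-1)}]$ has both summands of $\Gamma$-degree $-ki$. Being algebra endomorphisms that send each graded generator to an element of the same $\Gamma$-degree, $\cP_{\id-q\chi^{-1}}$ and $\cP_{\id-t^{-1}\chi^{-1}}$ are $\Gamma$-homogeneous of degree $0$. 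Moreover the target subspaces in \eqref{E:q triangularity} and \eqref{E:t triangularity} are $\Gamma$-graded (being spanned by homogeneous $s_\lad$), and $s_n[X^{(0)}] = s_{((n),\vn,\dotsc,\vn)}$ is $\Gamma$-homogeneous of degree $0$.

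Now write $\tH^w_\mud = \sum_{d\in\Z/r\Z} H_d$ for its decomposition into $\Gamma$-homogeneous pieces. Applying the degree-preserving map $\cP_{\id-q\chi^{-1}}$ and extracting the degree-$d$ part, $\cP_{\id-q\chi^{-1}}(H_d)$ is the degree-$d$ component of an element of $\bigoplus_{\lad\dom_w\mud}\K s_\lad$, hence again lies in that graded subspace, so $H_d$ satisfies \eqref{E:q triangularity}; likewise $H_d$ satisfies \eqref{E:t triangularity}. For the normalization, $\pair{s_n[X^{(0)}]}{H_d}$ is the degree-$d$ homogeneous part of the scalar $\pair{s_n[X^{(0)}]}{\tH^w_\mud}=1$, hence equals $\delta_{d,0}$. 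Thus $H_0$ satisfies all of \eqref{E:q triangularity}--\eqref{E:normalization}, and by the uniqueness of the wreath Macdonald polynomial (recalled just after its definition) $H_0 = \tH^w_\mud$, so $H_d = 0$ for $d\neq 0$; that is, $\tH^w_\mud$ is $\Gamma$-homogeneous of degree $0$, as required.

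I do not expect a deep obstacle here: the one point requiring care is the bookkeeping of sign conventions, so that the $\Gamma$-grading of \S\ref{SS:McKay and cyclic quiver}, the two defining plethysms, and the normalization are all simultaneously compatible. As a conceptual cross-check -- and presumably the source of Haiman's assertion in \cite{H:private} -- the same conclusion follows from geometry: realizing $\tH^w_\mud$ by Theorem~\ref{T:wreath Haiman} as $\mathrm{Frob}_{q,t}$ of a fiber of the wreath Procesi bundle, which is a $\T\times\Gamma_n$-equivariant quotient of $\C[x_1,\dotsc,x_n,y_1,\dotsc,y_n]$, one sees that on the $\T$-weight-$q^a t^b$ subspace the central diagonal copy of $\Gamma$ inside $\Gamma_n$ acts by $\zeta^{b-a}$ (with $\zeta=e^{2\pi i/r}$), while on the $\chi^\lad$-isotypic summand it acts through the central character $\zeta^{\sum_{i}i\,|\la^{(i)}|}$; comparing forces $b-a\equiv\sum_i i\,|\la^{(i)}|\pmod r$ for every monomial $q^a t^b$ occurring in $\tK^w_{\lad,\mud}$.
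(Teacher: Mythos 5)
The paper states this proposition without proof, attributing it to \cite{H:private}, so there is nothing internal to compare against; judged on its own, your argument is correct. The key computation---that $\cP_{\id-q\chi^{-1}}$ and $\cP_{\id-t^{-1}\chi^{-1}}$ send $p_k[X^{(i)}]$ to a sum of terms all of $\Gamma$-degree $-ki$, that the target subspaces $\bigoplus_{\lad\dom_w\mud}\K s_\lad$ are spanned by homogeneous elements, and that the normalization pairs against the degree-$0$ element $s_n[X^{(0)}]$---is exactly what is needed, and the extraction of homogeneous components followed by the uniqueness of $\tH^w_\mud$ closes the argument; the sign conventions check out against Example~\ref{X:wreath Mac triangularity} (e.g.\ the coefficient $q^2$ of $s_1[X^{(1)}]$ has $\Gamma$-degree $-2\equiv 1$). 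One small refinement worth noting: your appeal to Theorem~\ref{T:wreath pos} to make the $\Z/r\Z$-grading literal is legitimate but not forced. You can work directly over $\Q(\zeta)(q,t)$ with the field automorphism $q\mapsto\zeta^{-1}q$, $t\mapsto\zeta t$ extended by $p_k[X^{(i)}]\mapsto\zeta^{-ki}p_k[X^{(i)}]$; the defining system is stable under this twist, so uniqueness forces $\tH^w_\mud$ to be fixed, which is the same homogeneity statement without invoking Laurent polynomiality of the coefficients. Your closing geometric remark (the central $\Gamma\subset\Gamma_n$ acting by $\zeta^{b-a}$ on the $q^at^b$-weight space versus by the central character $\zeta^{\sum_i i|\la^{(i)}|}$ on the $\chi^\lad$-isotypic part) is indeed the natural source of the statement via Theorem~\ref{T:wreath Haiman}, and either route is acceptable.
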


\begin{ex} We see that Theorem \ref{T:wreath pos} and Proposition \ref{P:Gamma grading} holds for the tensor Schur coefficients of the wreath Macdonald polynomials appearing in Example \ref{X:wreath Mac triangularity}.
\end{ex}

\subsection{Wreath Macdonald symmetries}
The wreath Macdonald polynomials satisfy a number of beautiful symmetries.
Some of these symmetries generalize properties of modified Macdonald polynomials for the $r=1$ case, while others are only visible in the wreath setting when $r\ge 2$.

For $f,g\in \La^{\otimes I}$ let $f\equiv g$ mean that 
$g=cf$ for $c\in \K^\times$. In all of the following cases of wreath Macdonald symmetries presented below, the constant of proportionality is a Laurent monomial which may easily be deduced 
using the normalization condition \eqref{E:normalization}. The following symmetries can be readily proved using the definitions.

\subsubsection{Symmetry swapping $q$ and $t$}
Let ``$\swap$" be the $\Q$-algebra automorphism of $\K$ given by exchanging $q$ and $t$. It extends to a $\Q$-algebra automorphism of $\La^{\otimes I}$. 
The combination of $\swap$ and $\negate$ is compatible with $\Gamma$-degree 
(see \S \ref{SS:McKay and cyclic quiver})
in the sense that $\resGamma(\swap(f)) = \negate (\resGamma(f))$ for all $\Gamma$-homogeneous $f\in R(\T)$.

\begin{prop} \label{P:swapping symmetry}
For all $w\in \Waf$ and $\mud\in \Y^I$,
\begin{align*}
  \swap\,\negate\, \tH^w_{\mud} &= \tH^{w^*}_{\mud^*}.
\end{align*} 
\end{prop}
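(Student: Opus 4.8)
The plan is to show that $\swap\,\negate\,\tH^w_\mud$ satisfies the three defining conditions \eqref{E:q triangularity}--\eqref{E:normalization} for $\tH^{w^*}_{\mud^*}$, up to the scalar normalization, and then invoke uniqueness of wreath Macdonald polynomials. The strategy is entirely parallel to the $r=1$ verification that $\tH_\mu$ is characterized by \eqref{E:tH-def-q}--\eqref{E:tH-def-normalization}, but one must carefully track how $\swap$ and $\negate$ interact with the matrix plethysms $\cP_{\id-q\chi^{-1}}$ and $\cP_{\id-t^{-1}\chi^{-1}}$, with the tensor Schur basis, and with the partial orders $\dom_w$.

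First I would analyze how $\swap$ and $\negate$ act on the two matrix plethysms. Since $\swap$ exchanges $q$ and $t$, one has $\swap\circ\cP_{\id-q\chi^{-1}} = \cP_{\id-t\chi^{-1}}\circ\swap$ as operators on $\La^{\otimes I}$ (reading off the action on power sums $p_k[X^{(i)}]\mapsto p_k[X^{(i)}]-t^kp_k[X^{(i-1)}]$). Replacing $t$ by $t^{-1}$ using homogeneity of $\tH^w_\mud$ — exactly as in the Remark following \eqref{E:tH-def-t} — converts the condition involving $\cP_{\id-t\chi^{-1}}$ into one involving $\cP_{\id-t^{-1}\chi^{-1}}$, with the dominance direction reversed. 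The $\negate$ operator, being the permutation plethysm $\cP_{\negate}$ for $i\mapsto r-i$ (Example \ref{X:permutation plethysm}), conjugates $\cP_{\id-q\chi^{-1}}$ into $\cP_{\id-q\chi}$ (since $\negate$ inverts the rotation $\chi$), and crucially intertwines tensor Schur functions by $s_\lad\mapsto s_{\negate(\lad)}$. The key bookkeeping identity to assemble is that $\swap\,\negate$ sends the $q$-triangularity condition for $\mud$ relative to $w$ to the $t^{-1}$-triangularity condition for $\negate(\mud)$ relative to $w^*$, and vice versa; this is where one uses $\negate(\mud)$ together with the transpose-type relations in \S\ref{SS:root to core}.

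Next I would handle the partial orders. The needed input is the compatibility statement already recorded in the excerpt, namely
\begin{align*}
\lad \dom_w \mud \Leftrightarrow (\lad)^* \rdom_{w^*} (\mud)^*,
\end{align*}
where $*$ is the reverse componentwise transpose. The subtlety is that $\negate$ (the plethystic operator permuting tensor factors by $i\mapsto r-i$) is not the same as $*$ (which also transposes each component partition), so I must combine the order-reversal coming from $*$ with the $P_\mu\mapsto P_\mu$ symmetry under simultaneous inversion of $q,t$ and with $s_\lambda[-X]=(-1)^{|\lambda|}s_{\lambda^t}$, in the spirit of the Remark after \eqref{E:tH-def-t}, to convert the transpose into the bare factor-permutation $\negate$ at the cost of swapping the roles of the two triangularity conditions and of $q\leftrightarrow t$. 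Once this is set up, the index on the right-hand side lands on $\mud^*$ for condition \eqref{E:q triangularity} and on $(\mud^*)$ again for \eqref{E:t triangularity} relative to $w^*$ — matching $\tH^{w^*}_{\mud^*}$.

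Finally, the normalization \eqref{E:normalization}: $\pair{s_n[X^{(0)}]}{\tH^w_\mud}=1$. Applying $\swap\,\negate$ multiplies the left slot by a computable Laurent monomial and sends $s_n[X^{(0)}]$ to (up to sign/monomial) $s_n[X^{(0)}]$, since $\negate$ fixes the index $0$ and $s_n$ is a single-row Schur function; this verifies the normalization up to a scalar in $\K^\times$, which is all the statement claims ($\equiv$ in the sense of the subsection, or equivalently the stated equality after inserting the monomial read off from \eqref{E:normalization}). I expect the main obstacle to be the order-theoretic step: disentangling the three different "dual" operations — the plethystic factor-reversal $\negate$, the multipartition operation $*$, and honest transposition of partitions — and confirming that after using homogeneity to trade $t\leftrightarrow t^{-1}$ the conditions line up exactly with $w^*$ and $\mud^*$ rather than some other index. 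Everything else is a bookkeeping computation on power sums and on how $\swap$, $\negate$ commute past $\cP_{\id-q\chi^{-1}}$ and $\cP_{\id-t^{-1}\chi^{-1}}$.
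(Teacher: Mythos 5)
Your plan is correct and is essentially the argument the paper has in mind: the paper offers no written proof beyond the remark that these symmetries ``can be readily proved using the definitions,'' and your verification of \eqref{E:q triangularity}--\eqref{E:normalization} for $\swap\,\negate\,\tH^w_\mud$ via the intertwining of $\swap$, $\negate$ with the matrix plethysms, the factorization $\id-t\chi=(-t\chi)(\id-t^{-1}\chi^{-1})$ absorbing the componentwise transpose, and the recorded equivalence $\lad\dom_w\mud\Leftrightarrow\lad^*\rdom_{w^*}\mud^*$ is exactly that argument. One small sharpening: since $\negate$ fixes $X^{(0)}$ and is a Hall isometry, the normalization pairing is literally preserved, so the constant of proportionality is $1$ and the stated equality (not merely $\equiv$) follows directly.
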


\begin{rem} For $r=1$, $\tH_\mu[X;t,q] = \tH_{\mu^t}[X;q,t]$.
\end{rem}

\subsubsection{Inversion symmetry}
\label{SS:inv}
Let ``$\inv$" be the $\Q$-algebra automorphism of $\K$ given by sending $q$ and $t$ to their 
reciprocals. It extends to a $\Q$-algebra automorphism of $\La^{\otimes I}$ by acting on coefficients.
Let $\omega$ be the $\K$-algebra involutive automorphism of $\La$ given by 
$s_\la\mapsto s_{\la^t}$. Consider the involutive $\K$-algebra automorphism of $\La^{\otimes I}$ given by the $I$-th tensor power of $\omega$;
by abuse of notation we also call this $\omega$. 
For $f\in \La^{\otimes I}$ homogeneous of degree $d$, we have
\begin{align*}
  \omega(f) = (-1)^d \cP_{-\id}(f).
\end{align*}
It follows that $\omega$ commutes with matrix plethysms:
\begin{align*}
   \omega \,\cP_M = \cP_M \,\omega\qquad\text{for all $M\in M_{I\times I}(\K)$.}
\end{align*}

Define the operator $\downarrow$ on $\La^{\otimes I}$ by
\begin{align*}
\downarrow \,= \inv\, \negate\, \omega,
\end{align*}
in analogy with the operator $\downarrow$ of \cite[Prop. 1.1]{GHT}. It is an involution.

\begin{prop} \label{P:inversion symmetry}
For all $w\in \Waf$ and $\mud\in \Y^I$,
\begin{align*}
   \downarrow \tH^w_{\mud} &\equiv \tH^{w_0 w}_{w_0\mud}.
\end{align*} 
\end{prop}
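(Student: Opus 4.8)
The plan is to show that $\downarrow\tH^w_\mud$ satisfies, up to a nonzero scalar, the three defining conditions \eqref{E:q triangularity}--\eqref{E:normalization} of $\tH^{w_0w}_{w_0\mud}$, and then to invoke uniqueness. The three structural facts we use about $\downarrow=\inv\circ\negate\circ\omega$ are: $\omega$ commutes with every matrix plethysm; $\negate=\cP_N$ is itself a matrix plethysm (for the permutation matrix $N$ of $\negate$); and $\inv$ and the matrix plethysms commute up to inverting $q,t$ in the matrix entries. Using these, together with the factorization $\id-q^{-1}\chi=-q^{-1}\chi(\id-q\chi^{-1})$ and the commutation $\chi^{-1}N=N\chi$, one rewrites
\[
\cP_{\id-q\chi^{-1}}\circ\downarrow \;=\; R_q\circ\cP_{\id-q\chi^{-1}}, \qquad R_q:=\inv\circ\omega\circ\cP_N\circ\cP_{-q^{-1}\chi},
\]
and similarly $\cP_{\id-t^{-1}\chi^{-1}}\circ\downarrow=R_t\circ\cP_{\id-t^{-1}\chi^{-1}}$ with $R_t:=\inv\circ\omega\circ\cP_N\circ\cP_{-t\chi}$. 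The point of this rearrangement is that $R_q$ and $R_t$ are invertible operators that act on the tensor Schur basis by a monomial times a relabeling: unwinding the twisted rotation $\cP_{-q^{-1}\chi}$ (which, because of its negative scalar, carries a componentwise transpose — equivalently it factors through the antipode $\cP_{-\id}$) against the $\omega$ in $R_q$, the two transpositions cancel, leaving $R_q(s_\lad)=(-q)^{|\lad|}s_{w_0\cdot\lad}$ and $R_t(s_\lad)=(-t^{-1})^{|\lad|}s_{w_0\cdot\lad}$, where $w_0\cdot\lad$ denotes the $\Wfin$-action of the long element on $\Y^I$ (reversal of the tuple of components).

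Granting these identities, apply $\cP_{\id-q\chi^{-1}}$ to $\downarrow\tH^w_\mud$: by \eqref{E:q triangularity} we have $\cP_{\id-q\chi^{-1}}\tH^w_\mud\in\bigoplus_{\lad\dom_w\mud}\K s_\lad$, so $\cP_{\id-q\chi^{-1}}(\downarrow\tH^w_\mud)=R_q(\cP_{\id-q\chi^{-1}}\tH^w_\mud)\in\bigoplus_{\lad\dom_w\mud}\K s_{w_0\cdot\lad}$. By the $\Wfin$-equivariance $\lad\dom_w\mud\Leftrightarrow u(\lad)\dom_{uw}u(\mud)$ recorded after the definition of $\dom_w$ (case $u=w_0$), one has $\{\,w_0\cdot\lad : \lad\dom_w\mud\,\}=\{\,\nud : \nud\dom_{w_0w}w_0\mud\,\}$, whence $\cP_{\id-q\chi^{-1}}(\downarrow\tH^w_\mud)\in\bigoplus_{\nud\dom_{w_0w}w_0\mud}\K s_\nud$; this is precisely condition \eqref{E:q triangularity} for $\tH^{w_0w}_{w_0\mud}$. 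The identical argument with $\rdom$ in place of $\dom$ gives \eqref{E:t triangularity}. Thus $\downarrow\tH^w_\mud$ is a solution of \eqref{E:q triangularity}--\eqref{E:t triangularity} for the pair $(w_0w,w_0\mud)$, and it is nonzero since $\downarrow$ is invertible; as these two conditions cut out a line (the leading coefficients being nonzero, as noted after the definition of $\tH^w_\mud$), it follows that $\downarrow\tH^w_\mud\equiv\tH^{w_0w}_{w_0\mud}$, and the proportionality constant is then fixed by the normalization \eqref{E:normalization}.

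The main obstacle is the bookkeeping in the first step: correctly tracking the transpose $M\mapsto M^t$ built into the definition of $\cP_M$, the permutation-matrix conventions of Example~\ref{X:permutation plethysm}, and in particular verifying that the componentwise transpose produced by the negative scalars (equivalently the antipode) in $\cP_{-q^{-1}\chi}$ and $\cP_{-t\chi}$ cancels exactly against the $\omega$ in $\downarrow$, so that $R_q$ and $R_t$ collapse to the clean relabeling $s_\lad\mapsto(\text{monomial})\,s_{w_0\cdot\lad}$ — note in particular that both $R_q$ and $R_t$ must produce the \emph{same} relabeling for the two triangularity conditions to land on the common index $w_0\mud$. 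Once the formulas for $R_q$ and $R_t$ are in place, the rest follows formally from the equivariance of $\dom_w$ and uniqueness. As a sanity check, for $r=1$ everything trivializes ($\negate$, $\chi$, $w_0$ are trivial, $\downarrow=\inv\circ\omega$, $R_q(s_\mu)=(-q)^{|\mu|}s_\mu$), recovering the classical symmetry $\downarrow\tH_\mu\equiv\tH_\mu$.
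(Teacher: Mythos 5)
Your proof is correct and is exactly the kind of argument the paper has in mind: the paper omits a proof of Proposition \ref{P:inversion symmetry}, stating only that these symmetries ``can be readily proved using the definitions,'' and your verification --- commuting $\downarrow$ past the two defining matrix plethysms to get the monomial relabelings $R_q(s_\lad)=(-q)^{|\lad|}s_{w_0\lad}$ and $R_t(s_\lad)=(-t^{-1})^{|\lad|}s_{w_0\lad}$, then invoking the $\Wfin$-equivariance of $\dom_w$ and uniqueness --- is that proof, with the matrix identities $(\id-q^{-1}\chi^{-1})N=-q^{-1}N\chi(\id-q\chi^{-1})$ and $N\chi=$ (matrix of $w_0$) checking out under the paper's conventions. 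No gaps.
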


\begin{rem} For $r=1$, $q^{n(\mu^t)} t^{n(\mu)} \inv \,\omega\,\tH_\mu[X;q,t]= \tH_{\mu}[X;q,t]$.
\end{rem}

\begin{rem} Let $c$ be the constant such that 
$\downarrow \tH^w_\mud = c \tH^{w_0w}_{w_0\mud}$. With $n=|\mud|$ we have
\begin{align*}
c &= \pair{s_n[X^{(0)}]}{\downarrow \tH^w_\mud} \\
&= \inv\, \pair{e_n[X^{(0)}]}{\tH^w_\mud} \\
&= \inv\, e^{(0),w}_\mud, 
\end{align*}
where $e^{(0),w}_\mud$ is the eigenvalue of Haiman's operator $\nabla^{(0)}$ on $\tH^w_\mud$; see \S \ref{SS:wreath nabla}. It is a Laurent monomial so  $\inv(e^{(0),w}_\mud)=1/e^{(0),w}_\mud$. It follows that 
\begin{align}
\downarrow (\nabla^{(0)})^{-1} \tH^w_\mud &= \tH^{w_0w}_{w_0\mud}.
\end{align}
We deduce that $\downarrow (\nabla^{(0)})^{-1}$, $\nabla^{(0)}\downarrow$, and $\downarrow \nabla^{(0)}$ are all involutions. In particular $\downarrow \nabla^{(0)} \downarrow = (\nabla^{(0)})^{-1}$; see \cite[Prop. 1.1(b)]{GHT}.
\end{rem}

\subsubsection{Rotational symmetry}
The following is peculiar to the wreath setting for $r\ge 2$.
The quotient construction in \cite[Ex. I.1.8]{Mac}
depends on a sufficiently large integer $m$. Increasing $m$ has the effect of rotating the quotient. This leads to the following symmetry.

\begin{thm} For all $w\in\Waf$ and $\mud\in\Y^I$,
\begin{align*}
  \chi\tH^w_\mud \equiv \tH^{\chi w}_{\chi \mud}.
\end{align*}
\end{thm}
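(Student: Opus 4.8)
The plan is to check that $\cP_\chi(\tH^w_\mud)$ satisfies the two triangularity conditions \eqref{E:q triangularity} and \eqref{E:t triangularity} characterizing $\tH^{\chi w}_{\chi\mud}$, and then to conclude by uniqueness up to a scalar. The key simplification is that, apart from the normalization \eqref{E:normalization}, the only dependence of the defining conditions on $w$ is through the partial orders $\dom_w$ and $\rdom_w$: the matrix plethysms $\cP_{\id-q\chi^{-1}}$ and $\cP_{\id-t^{-1}\chi^{-1}}$ do not involve $w$. So it suffices to understand how the rotation automorphism $\cP_\chi$ of $\La^{\otimes I}$ interacts with these matrix plethysms, with the tensor Schur basis, and with the partial orders.

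The first two interactions are formal. The permutation matrix $\chi$ commutes with $\id-q\chi^{-1}$ and with $\id-t^{-1}\chi^{-1}$, so covariant functoriality $\cP_{MM'}=\cP_M\circ\cP_{M'}$ gives $\cP_\chi\circ\cP_{\id-q\chi^{-1}}=\cP_{\id-q\chi^{-1}}\circ\cP_\chi$ and likewise for $t^{-1}$. Since $\cP_\chi(f[X^{(i)}])=f[X^{(i+1)}]$ for every $f\in\La$, one has $\cP_\chi(s_\lad)=s_{\chi\lad}$ with $(\chi\lad)^{(i)}=\la^{(i-1)}$, which is the $\chi$-action on $\Y^I$ in the statement. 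Granting the order compatibility
\begin{align}\label{E:rot order compat}
\lad\dom_w\mud\iff\chi\lad\dom_{\chi w}\chi\mud\qquad(\text{and the same for }\rdom_w),
\end{align}
one then computes
\[
\cP_{\id-q\chi^{-1}}\bigl(\cP_\chi\tH^w_\mud\bigr)=\cP_\chi\,\cP_{\id-q\chi^{-1}}\bigl(\tH^w_\mud\bigr)\in\cP_\chi\Bigl(\bigoplus_{\lad\dom_w\mud}\K s_\lad\Bigr)=\bigoplus_{\lad\dom_w\mud}\K s_{\chi\lad}=\bigoplus_{\nud\dom_{\chi w}\chi\mud}\K s_\nud,
\]
which is \eqref{E:q triangularity} for $\tH^{\chi w}_{\chi\mud}$; condition \eqref{E:t triangularity} follows in the same way from the $\rdom$-statement in \eqref{E:rot order compat}. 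As $\cP_\chi$ is an algebra automorphism, $\cP_\chi\tH^w_\mud$ is nonzero, and since \eqref{E:q triangularity} and \eqref{E:t triangularity} determine their nonzero common solution up to a scalar (their leading terms being nonzero, as noted after the definition), one gets $\chi\tH^w_\mud=\cP_\chi\tH^w_\mud\equiv\tH^{\chi w}_{\chi\mud}$; the proportionality constant is then recovered from \eqref{E:normalization} as the Laurent monomial $\pair{s_n[X^{(1)}]}{\tH^{\chi w}_{\chi\mud}}^{-1}$ with $n=|\mud|$.

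The real content is the order compatibility \eqref{E:rot order compat}. By the definition \eqref{E:partial order}, where $\lad\rdom_w\mud$ means $\tau_w(\lad)\rdom\tau_w(\mud)$, both equivalences in \eqref{E:rot order compat} follow from the single identity of maps $\Y^I\to\Y$
\[
\tau_{\chi w}\circ\chi=\tau_w .
\]
This is the combinatorial shadow of the Remark preceding the theorem: the diagram automorphism $\chi$ acts on $r$-runner abaci by cyclically permuting the runners (shifting the position of the runner that wraps around), which is exactly the effect on the $r$-quotient of increasing the parameter $m$ in \cite[Ex.~I.1.8]{Mac}, while the interleaved Maya diagram --- and hence the underlying partition --- is left unchanged. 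To prove it one extends the $\Waf$-action on Maya diagrams of \S\ref{SS:root to core} to the extended affine Weyl group $\Waf\rtimes\langle\chi\rangle$, with $\chi$ acting by the appropriate shift of positions; checks that the bijection $\tau$ of \S\ref{SS:core quotient via maya} stays equivariant for the induced $\chi$-action on $\Y^I\times\Z^I$, which restricts on the slice $\Y^I\times\{0\}$ to the componentwise rotation $\mud\mapsto\chi\mud$; and then reads $\tau_{\chi w}(\chi\mud)=\tau((\chi w)^{-1}(\chi\mud,0))=\tau(w^{-1}(\mud,0))=\tau_w(\mud)$ off the commutative diagram of \S\ref{SS:core quotient via maya}. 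The main obstacle is executing this last step in full agreement with the paper's sign and indexing conventions --- the direction of tracing Maya diagrams, the charge shift incurred when a runner wraps around, and the choice of base point defining $\tau_w$ on the extended group; once those are fixed, everything else is bookkeeping.
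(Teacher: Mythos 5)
Your proof is correct and is essentially the argument the paper has in mind when it says these symmetries follow readily from the definitions (the paper records no separate proof of this theorem): commute $\cP_\chi$ past the two matrix plethysms, track the tensor Schur basis, match the partial orders, and invoke uniqueness up to scalar. One point, though: the step you single out as ``the real content'' is much easier than you make it. In the paper's conventions $\chi$ is the $r$-cycle $i\mapsto i+1$ regarded as an element of $\Wfin\subset\Waf$ (it must be, since otherwise $\chi w\notin\Waf$ and $\tH^{\chi w}_{\chi\mud}$ would not be defined), so there is no need to extend anything to the extended affine Weyl group or to worry about Maya-diagram shifts and wrap-around charges. The identity $\tau_{\chi w}\circ\chi=\tau_w$ is the case $u=\chi$ of the equivariance $\lad\dom_w\mud\Leftrightarrow u(\lad)\dom_{uw}u(\mud)$ already displayed in the paper, and follows in one line from \eqref{E:Waf action}: $(\chi w)^{-1}\cdot(\chi\mud,0)=w^{-1}\chi^{-1}\cdot(\chi\mud,0)=w^{-1}\cdot(\mud,0)$. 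The remark about the parameter $m$ in Macdonald's quotient construction is motivation for why the symmetry exists, not a needed ingredient. Your step showing $\cP_\chi$ commutes with $\cP_{\id-q\chi^{-1}}$ and $\cP_{\id-t^{-1}\chi^{-1}}$ is the genuinely $\chi$-specific part (it fails for general $u\in\Wfin$, which is why only rotations give this symmetry), and you handle it correctly.
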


\subsubsection{Forward rotation in matrix plethysms}
\label{SS:forward wreath Mac} 
Another variant of wreath Macdonald polynomials that only appears for $r\ge2$ is the following. Let $\hH^w_\mud$ be the version of wreath Macdonald polynomials defined by replacing $\chi^{-1}$ by $\chi$ in \eqref{E:q triangularity} and \eqref{E:t triangularity}.

\begin{prop}\label{P:backward H} For all $w\in\Waf$ and $\mud\in\Y^I$,
\begin{align*}
  \hH^w_\mud \equiv w_0 \tH^{w_0w}_{w_0\mud}.
\end{align*}
\end{prop}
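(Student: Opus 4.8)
The plan is to check that $F:=w_0\,\tH^{w_0 w}_{w_0\mud}=\cP_{w_0}(\tH^{w_0 w}_{w_0\mud})$ satisfies the two triangularity conditions that characterize $\hH^w_\mud$, and then conclude by uniqueness up to scalar. Recall that $\hH^w_\mud$ is, by definition, obtained by replacing $\chi^{-1}$ with $\chi$ in \eqref{E:q triangularity}--\eqref{E:t triangularity}, so --- by exactly the same reasoning that yields uniqueness of the $\tH^w_\mud$ --- it is the element of $\La^{\otimes I}$ determined up to a nonzero scalar by $\cP_{\id-q\chi}(\hH^w_\mud)\in\bigoplus_{\lad\dom_w\mud}\K s_\lad$ and $\cP_{\id-t^{-1}\chi}(\hH^w_\mud)\in\bigoplus_{\lad\rdom_w\mud}\K s_\lad$ (the normalization \eqref{E:normalization} is left untouched). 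Since $\cP_{w_0}$ is an algebra automorphism and $\tH^{w_0 w}_{w_0\mud}\neq 0$, the element $F$ is nonzero, so it suffices to verify these two conditions for $F$.

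The first thing I would record is the conjugation identity: the reversal permutation $w_0\colon i\mapsto r-1-i$ interchanges the rotation $\chi\colon i\mapsto i+1$ with $\chi^{-1}$, i.e.\ $w_0\chi w_0^{-1}=\chi^{-1}$ in $\Wfin$ (and $w_0$ is an involution). Viewing $\id-q\chi^{-1}$ and $\id-t^{-1}\chi^{-1}$ as $I\times I$ matrices over $\K$, conjugation by the permutation matrix of $w_0$ then sends them to $\id-q\chi$ and $\id-t^{-1}\chi$, and by the covariant functoriality $\cP_{MM'}=\cP_M\circ\cP_{M'}$ this gives $\cP_{w_0}\circ\cP_{\id-q\chi^{-1}}\circ\cP_{w_0}^{-1}=\cP_{\id-q\chi}$ and likewise with $t^{-1}$ in place of $q$. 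I expect this step --- getting the conjugation right while keeping track of the transpose in the definition of $\cP_M$ and of the permutation-matrix convention --- to be the only real subtlety; the rest is bookkeeping.

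Then I would apply $\cP_{w_0}$ to the defining condition \eqref{E:q triangularity} for $\tH^{w_0 w}_{w_0\mud}$, namely $\cP_{\id-q\chi^{-1}}(\tH^{w_0 w}_{w_0\mud})\in\bigoplus_{\lad\dom_{w_0 w}w_0\mud}\K s_\lad$. By the previous step the left-hand side becomes $\cP_{\id-q\chi}(F)$. Since $\cP_{w_0}(s_\lad)=s_{w_0\lad}$ (Example~\ref{X:permutation plethysm}), substituting $\nud=w_0\lad$ and invoking the relation $\lad\dom_w\mud\Leftrightarrow u(\lad)\dom_{uw}u(\mud)$ recorded above with $u=w_0$ (so that $\nud\dom_w\mud\Leftrightarrow w_0(\nud)\dom_{w_0 w}w_0(\mud)$), the right-hand side becomes $\bigoplus_{\nud\dom_w\mud}\K s_\nud$. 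Hence $\cP_{\id-q\chi}(F)\in\bigoplus_{\nud\dom_w\mud}\K s_\nud$, which is precisely the $q$-triangularity condition for $\hH^w_\mud$. Running the same manipulation on \eqref{E:t triangularity} (the order $\rdom_w$ transforms under $\Wfin$ exactly as $\dom_w$ does) gives $\cP_{\id-t^{-1}\chi}(F)\in\bigoplus_{\nud\rdom_w\mud}\K s_\nud$. Thus $F$ satisfies both triangularity conditions defining $\hH^w_\mud$, and being nonzero it must be a scalar multiple of $\hH^w_\mud$, i.e.\ $\hH^w_\mud\equiv F=w_0\,\tH^{w_0 w}_{w_0\mud}$, as claimed. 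As a by-product this also establishes existence of $\hH^w_\mud$; the proportionality constant is generally not $1$ since $\cP_{w_0}(s_n[X^{(0)}])=s_n[X^{(r-1)}]$, but it may be read off from \eqref{E:normalization}, and the statement can alternatively be recast as $\hH^w_\mud\equiv w_0\downarrow\tH^w_\mud$ by combining with Proposition~\ref{P:inversion symmetry}.
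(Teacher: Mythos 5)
Your proof is correct: the conjugation identity $\cP_{w_0}\circ\cP_{\id-q\chi^{-1}}\circ\cP_{w_0}^{-1}=\cP_{\id-q\chi}$ (from $w_0\chi w_0^{-1}=\chi^{-1}$ and the functoriality $\cP_{MM'}=\cP_M\circ\cP_{M'}$), together with $\cP_{w_0}(s_\lad)=s_{w_0\lad}$ and the order equivalence $\lad\dom_{w}\mud\Leftrightarrow u(\lad)\dom_{uw}u(\mud)$, is exactly the definition-chasing argument the paper has in mind when it says these symmetries ``can be readily proved using the definitions'' (no explicit proof is given there). Your closing observations about the normalization constant and the reformulation via $\downarrow$ and Proposition~\ref{P:inversion symmetry} are also consistent with the paper's remark following the proposition.
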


Note how closely related this is with the inversion symmetry.

\subsubsection{Dualizing the partial order}
For $w\in \Waf$ let $\domop_w$ be the partial order dual to $\dom_w$ on $\Y^I$: $\lad \domop_w \mud$ if and only if $\mud \dom_w \lad$.
Let $\tHopp^w_\mud$ be the family of symmetric functions obtained by using the definition of wreath Macdonald polynomial with $\domop_w$ instead of $\dom_w$. Replacing $\dom_w$ by $\domop_w$ and sending $(q,t)$ to $(t^{-1},q^{-1})$ leaves the definition of wreath Macdonald polynomial invariant. This implies the following.

\begin{prop} \label{P:dual partial order} For all $w\in \Waf$ and $\mud\in\Y^I$,
\begin{align*}
\tHopp^w_\mud = \inv \,\swap\, \tH^w_\mud.
\end{align*}
\end{prop}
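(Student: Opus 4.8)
The plan is to apply the $\Q$-algebra automorphism $\inv\,\swap$ of $\La^{\otimes I}$ to the three defining conditions \eqref{E:q triangularity}, \eqref{E:t triangularity}, \eqref{E:normalization} of $\tH^w_\mud$ and to check that $\inv\,\swap\,\tH^w_\mud$ satisfies the conditions characterizing $\tHopp^w_\mud$; uniqueness then yields the identity.

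First I would record the intertwining relations. The automorphism $\inv\,\swap$ acts only on the coefficient field $\K$ — fixing every power sum $p_k[X^{(i)}]$ and every tensor Schur function $s_\lad$ — and it sends $q \mapsto t^{-1}$ and $t \mapsto q^{-1}$. Hence it preserves each subspace $\bigoplus_{\lad \in S} \K s_\lad$, and from the explicit formula for $\cP_{\id - q\chi^{-1}}$ in Example \ref{X:The Matrix Plethysm} one reads off $\inv\,\swap \circ \cP_{\id - q\chi^{-1}} = \cP_{\id - t^{-1}\chi^{-1}} \circ \inv\,\swap$ and, symmetrically, $\inv\,\swap \circ \cP_{\id - t^{-1}\chi^{-1}} = \cP_{\id - q\chi^{-1}} \circ \inv\,\swap$. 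Finally, $\inv\,\swap$ fixes $s_n[X^{(0)}]$ and intertwines the tensor Hall pairing with its own action on scalars, so it preserves the normalization \eqref{E:normalization}.

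Next I would apply $\inv\,\swap$ to each condition in turn, using that $\rdom_w$ — the order in \eqref{E:t triangularity} — is by definition the reverse of $\dom_w$, i.e. equal to $\domop_w$, so that the conditions defining $\tHopp^w_\mud$ read $\cP_{\id-q\chi^{-1}}(\tHopp^w_\mud) \in \bigoplus_{\lad \domop_w \mud}\K s_\lad$, $\cP_{\id-t^{-1}\chi^{-1}}(\tHopp^w_\mud) \in \bigoplus_{\lad \dom_w \mud}\K s_\lad$, and $\pair{s_n[X^{(0)}]}{\tHopp^w_\mud} = 1$. From \eqref{E:q triangularity}, commuting $\inv\,\swap$ past $\cP_{\id - q\chi^{-1}}$ gives $\cP_{\id - t^{-1}\chi^{-1}}(\inv\,\swap\,\tH^w_\mud) \in \bigoplus_{\lad \dom_w \mud} \K s_\lad$, the second condition; from \eqref{E:t triangularity}, commuting past $\cP_{\id - t^{-1}\chi^{-1}}$ gives $\cP_{\id-q\chi^{-1}}(\inv\,\swap\,\tH^w_\mud) \in \bigoplus_{\lad \rdom_w \mud}\K s_\lad = \bigoplus_{\lad \domop_w \mud}\K s_\lad$, the first; and \eqref{E:normalization} becomes the third. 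Thus $\inv\,\swap\,\tH^w_\mud$ meets all three conditions defining $\tHopp^w_\mud$, which in passing establishes the existence of $\tHopp^w_\mud$.

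Finally I would invoke uniqueness: the over-determined linear-algebra argument showing that \eqref{E:q triangularity}--\eqref{E:normalization} pin down a single element (the leading triangular coefficients being nonzero, as noted after \eqref{E:normalization}) applies verbatim with $\dom_w$ replaced by $\domop_w$, so there is at most one family $\tHopp^w_\mud$; therefore $\tHopp^w_\mud = \inv\,\swap\,\tH^w_\mud$. The only point demanding care is the bookkeeping — verifying $\rdom_w = \domop_w$, that passing to $\domop_w$ interchanges $\dom_w$ and $\rdom_w$, and that the inverse-and-swap conventions are aligned so that $q\mapsto t^{-1}$, $t\mapsto q^{-1}$ matches the interchange $\cP_{\id-q\chi^{-1}} \leftrightarrow \cP_{\id-t^{-1}\chi^{-1}}$. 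No genuine obstacle arises; this is a symmetry argument of the same flavor as the proofs of Propositions \ref{P:swapping symmetry}--\ref{P:backward H}.
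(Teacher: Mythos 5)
Your proposal is correct and follows essentially the same route as the paper, which justifies the proposition in one sentence by observing that replacing $\dom_w$ by $\domop_w$ while sending $(q,t)\mapsto(t^{-1},q^{-1})$ leaves the defining conditions \eqref{E:q triangularity}--\eqref{E:normalization} invariant; your write-up simply makes the intertwining relations $\inv\,\swap\circ\cP_{\id-q\chi^{-1}}=\cP_{\id-t^{-1}\chi^{-1}}\circ\inv\,\swap$ and the uniqueness step explicit.
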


\subsection{Orthogonality of $\tH^w_\mud$}
\label{SS:ortho-tH}

Define $A\in \Mat_{I\times I}(\K)$ by 
\begin{align}
A = (\id - q \chi^{-1})^{-1} (\id - t \chi)^{-1} \negate,
\end{align}
where $\negate$ is regarded as a permutation matrix. Let $\swap$ and $\inv$ act on matrices by acting on their entries.

\begin{lem}
\begin{align*}
A^t &= A\\
\swap \,\inv (A) &= qt \, A \\
A^{-1} \inv\,\negate &= qt\,\inv\,\negate \,A^{-1}.
\end{align*}
\end{lem}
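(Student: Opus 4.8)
\textit{Proof plan.} The entire lemma reduces to elementary manipulations in the commutative subalgebra generated by the cyclic shift matrix $\chi$, together with conjugation by the involution $\negate$. First I would record the ingredients. As orthogonal permutation matrices we have $\negate^t=\negate=\negate^{-1}$, $\negate^2=\id$, and $\chi^t=\chi^{-1}$, and from the permutation formulas $\negate(i)=r-i$, $\chi(i)=i+1$ one gets the key conjugation relation $\negate\,\chi\,\negate=\chi^{-1}$ (hence also $\negate\,\chi^{-1}\negate=\chi$). Moreover $\id-q\chi^{-1}$ and $\id-t\chi$ lie in the commutative subalgebra $\K[\chi]\subset\Mat_{I\times I}(\K)$ and are invertible there (their determinants are $1-q^r$ and $1-t^r$, nonzero in $\K$), so $(\id-q\chi^{-1})^{-1}$ and $(\id-t\chi)^{-1}$ commute with one another and with every power of $\chi$. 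These two facts — conjugation by $\negate$, and commutativity of the two Cauchy-type factors — are all that is needed.

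For $A^t=A$: transposing and using $\chi^t=\chi^{-1}$, $\negate^t=\negate$ gives $A^t=\negate(\id-t\chi^{-1})^{-1}(\id-q\chi)^{-1}$. Inserting $\negate^2=\id$ around and between the two factors and applying $\negate\,\chi^{\pm1}\negate=\chi^{\mp1}$ turns this into $(\id-t\chi)^{-1}(\id-q\chi^{-1})^{-1}\negate$, and commuting the two factors yields $A$.

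For $\swap\,\inv(A)=qt\,A$: the operation $\swap\,\inv$ sends $q\mapsto t^{-1}$, $t\mapsto q^{-1}$, so $\swap\,\inv(A)=(\id-t^{-1}\chi^{-1})^{-1}(\id-q^{-1}\chi)^{-1}\negate$. The identities $\id-t^{-1}\chi^{-1}=-t^{-1}\chi^{-1}(\id-t\chi)$ and $\id-q^{-1}\chi=-q^{-1}\chi(\id-q\chi^{-1})$ let me rewrite the two inverses as $(\id-t\chi)^{-1}(-t\chi)$ and $(\id-q\chi^{-1})^{-1}(-q\chi^{-1})$; collecting the scalar $qt$ and cancelling the commuting $\chi$ and $\chi^{-1}$ leaves $qt\,(\id-t\chi)^{-1}(\id-q\chi^{-1})^{-1}\negate=qt\,A$. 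Applying $\swap$ to this identity gives the useful reformulation $\inv(A)=qt\,\swap(A)$, and a direct computation of the same type shows $\negate A\negate=\swap(A)$, hence $\inv(A)=qt\,\negate A\negate$.

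For the last identity I would read it, in keeping with the matrix-plethysm conventions, as an identity of operators on $\La^{\otimes I}$ in which $\inv$ intertwines $\cP_M$ with $\cP_{\inv(M)}$ (and $\cP$ is injective on matrices); moving $\inv$ to the right on both sides and cancelling $\inv$ reduces it to the matrix statement $\inv(A)=qt\,\negate A\negate$, which is exactly what was obtained at the end of the previous step. I do not expect a genuine obstacle here: the proof is a sequence of routine identities, and the only thing that needs care is bookkeeping the conjugations by $\negate$ and keeping straight which matrices commute, along with fixing the operator reading of the third line.
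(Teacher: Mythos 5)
Your verification is correct, and the paper in fact states this lemma without proof, treating exactly these manipulations (transpose via $\chi^t=\chi^{-1}$, $\negate\,\chi\,\negate=\chi^{-1}$, commutativity in $\K[\chi]$, and the identities $\id-t^{-1}\chi^{-1}=-t^{-1}\chi^{-1}(\id-t\chi)$, etc.) as routine. One small caution on the third line: it cannot be read as an identity of plethysm operators on all of $\La^{\otimes I}$, since the scalar $qt$ would have to become $(qt)^k$ on the degree-$k$ power-sum slice; it is an identity of $\Q$-linear operators on $\K^I$ (equivalently a formal identity in $\Mat_{I\times I}(\K)$ extended by the symbols $\inv,\negate$ with $M\,\inv=\inv\,\inv(M)$), and under that reading your reduction to $\inv(A)=qt\,\negate A\,\negate$ and its verification are exactly right.
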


Let $\pairqt{\cdot}{\cdot}$ be the pairing on $\La^{\otimes I}$ defined by
\begin{align*}
\pairqt{f[\Xd]}{g[\Xd]} &= \pair{f[\Xd]}{g[A^{-1} \Xd]}.
\end{align*}

The following are straightforward consequences of the definition of wreath Macdonald polynomials and the pairing $\pair{\cdot}{\cdot}_{q,t}$.

\begin{prop} \label{P:qt pairing}
\begin{enumerate}
\item 
The reproducing kernel for $\pairqt{\cdot}{\cdot}$ is 
\begin{align*}
\Omega_{q,t} = \Omega[\Yd^t A X].
\end{align*}
\item For all $f,g\in \La^{\otimes I}$, $\pairqt{f}{g}=\pairqt{g}{f}$.
\item $s_\lad[A \Xd]$ and $s_\lad[\Xd]$ are $\pairqt{\cdot}{\cdot}$-dual bases.
\item 
$\cP_{(\id-q\chi^{-1})^{-1}} s_\lad[\Xd]=s_\lad[(\id-q\chi)^{-1}\Xd]$ and 
$\cP_{\negate\,(\id-t\chi^{-1})^{-1}} s_\lad[\Xd]=s_\lad[(1-t\chi)^{-1}\negate\,\Xd]$ are $\pairqt{\cdot}{\cdot}$-dual bases.
\end{enumerate}
\end{prop}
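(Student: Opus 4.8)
The plan is to reduce all four parts to a single adjunction identity for the ordinary tensor Hall pairing, together with the transpose relations $\chi^t=\chi^{-1}$ and $\negate^t=\negate$ (both matrices are permutation matrices --- of a pair of mutually inverse permutations, respectively of an involution) and the symmetry $A^t=A$ from the preceding lemma; of the three lemma identities, only $A^t=A$ is needed here. Throughout I use the substitution endomorphisms $g[\Xd]\mapsto g[M\Xd]$ of $\La^{\otimes I}$ for $M\in\Mat_{I\times I}(\K)$: these are $\K$-algebra homomorphisms, substituting $\Xd\mapsto M\Xd$ into $g[M'\Xd]$ yields $g[M'M\Xd]$, and $\cP_M$ is the one with $\cP_M(g[\Xd])=g[M^t\Xd]$.

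The only non-formal ingredient is the claim that the $\pair{\cdot}{\cdot}$-adjoint of $f\mapsto f[M\Xd]$ is $f\mapsto f[M^t\Xd]$, equivalently
\begin{align*}
\pair{f[M\Xd]}{g[\Xd]}=\pair{f[\Xd]}{g[M^t\Xd]}\qquad(f,g\in\La^{\otimes I}).
\end{align*}
I would prove this with the usual reproducing-kernel argument: the adjoint of an endomorphism is determined by its effect on the kernel $\Omega[\Yd^t\Xd]=\exp\!\big(\sum_{k>0}\tfrac1k\sum_{i\in I}p_k[X^{(i)}]\,p_k[Y^{(i)}]\big)$, and acting by $g\mapsto g[M\Xd]$ in the $\Xd$-variables has the same effect as acting by $g\mapsto g[M^t\Yd]$ in the $\Yd$-variables, since both produce $\exp\!\big(\sum_{k>0}\tfrac1k\sum_{i,j}M_{ij}(q^k,t^k)\,p_k[X^{(j)}]\,p_k[Y^{(i)}]\big)$. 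Feeding this adjunction, the substitution composition rule, and the definition $\pairqt{f}{g}=\pair{f[\Xd]}{g[A^{-1}\Xd]}$ into one another then yields the master identity
\begin{align*}
\pairqt{f[M\Xd]}{g[N\Xd]}=\pair{f[\Xd]}{g[NA^{-1}M^t\Xd]}\qquad(f,g\in\La^{\otimes I}).
\end{align*}

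Each part now becomes a short computation with the master identity. For (2), the case $M=N=\id$ shows $f\mapsto f[A^{-1}\Xd]$ is $\pair{\cdot}{\cdot}$-self-adjoint (here $(A^{-1})^t=A^{-1}$ uses $A^t=A$), and combining with the symmetry of $\pair{\cdot}{\cdot}$ gives $\pairqt{f}{g}=\pairqt{g}{f}$. For (3), the case $M=A$, $N=\id$ together with $A^tA^{-1}=AA^{-1}=\id$ gives $\pairqt{s_\lad[A\Xd]}{s_\mud[\Xd]}=\pair{s_\lad[\Xd]}{s_\mud[\Xd]}=\delta_{\lad,\mud}$. Part (1) follows, since applying the Cauchy kernel $\Omega[ZW]=\sum_\la s_\la[Z]s_\la[W]$ factorwise gives $\sum_{\lad}s_\lad[A\Xd]\,s_\lad[\Yd]=\Omega[\Yd^tA\Xd]$, and a kernel of the form $\sum_\alpha u_\alpha\otimes u^\alpha$ with $\{u_\alpha\},\{u^\alpha\}$ dual bases is automatically the reproducing kernel. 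For (4), the two displayed equalities are instances of $\cP_M(s_\lad[\Xd])=s_\lad[M^t\Xd]$, using $\big((\id-q\chi^{-1})^{-1}\big)^t=(\id-q\chi)^{-1}$ and $\big(\negate(\id-t\chi^{-1})^{-1}\big)^t=(\id-t\chi)^{-1}\negate$; then the master identity with $M=(\id-q\chi)^{-1}$, $N=(\id-t\chi)^{-1}\negate$ and the expansion $A^{-1}=\negate(\id-t\chi)(\id-q\chi^{-1})$ gives $NA^{-1}M^t=\id$ once the $\negate\negate=\id$ and the $(\id-t\chi)$, $(\id-q\chi^{-1})$ factors cancel in order, so the pairing again collapses to $\pair{s_\lad[\Xd]}{s_\mud[\Xd]}=\delta_{\lad,\mud}$.

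I do not expect a genuine obstacle. The one substantive step is the reproducing-kernel adjunction $f[M\Xd]^\perp=f[M^t\Xd]$, which is a routine generating-function manipulation, and the symmetry $A^t=A$ is already available from the lemma. What requires care is purely bookkeeping --- keeping the contravariance of $M\mapsto(g\mapsto g[M\Xd])$ straight against the covariance of $M\mapsto\cP_M$, and tracking $\chi^t=\chi^{-1}$ and $\negate^t=\negate$ --- which is exactly the mechanism by which the $\negate$'s and inverses built into $A$ cancel in parts (3) and (4).
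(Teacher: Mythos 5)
Your proof is correct, and since the paper offers no argument for this proposition (it is dismissed as a ``straightforward consequence of the definition''), your write-up is precisely the intended routine verification: the adjunction $\pair{f[M\Xd]}{g}=\pair{f}{g[M^t\Xd]}$ via the Cauchy kernel, the symmetry $A^t=A$ from the preceding lemma, and transpose bookkeeping with $\chi^t=\chi^{-1}$, $\negate^t=\negate$. All four parts check out, including the cancellation $NA^{-1}M^t=\id$ in part (4).
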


\begin{prop} \label{P:H orthogonal}
\begin{align*}
\pairqt{\tH^w_\mud}{\inv\,\negate\,\tH^w_\nud} = 0 \qquad\text{if $\mud\ne\nud$.}
\end{align*}
\end{prop}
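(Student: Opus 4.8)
The plan is to derive the orthogonality from the two triangularity conditions \eqref{E:q triangularity}--\eqref{E:t triangularity}, the dual bases of Proposition~\ref{P:qt pairing}(4), and a semilinear symmetry of $\pairqt{\cdot}{\cdot}$ under $\inv\,\negate$ furnished by the Lemma. Since $\cP_{A^{-1}}$ preserves total degree and the tensor Hall pairing is graded, $\pairqt{\cdot}{\cdot}$ is graded; as $\inv$ and $\negate$ preserve degree, $\pairqt{\tH^w_\mud}{\inv\,\negate\,\tH^w_\nud}$ vanishes automatically unless $|\mud|=|\nud|$, so we may assume $|\mud|=|\nud|=n$.

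First I would record the triangularity of each factor in the appropriate dual basis. Set $u_\lad=\cP_{(\id-q\chi^{-1})^{-1}}(s_\lad)$ and $v_\lad=\cP_{\negate\,(\id-t\chi^{-1})^{-1}}(s_\lad)$, which are $\pairqt{\cdot}{\cdot}$-dual bases by Proposition~\ref{P:qt pairing}(4). Applying $\cP_{(\id-q\chi^{-1})^{-1}}$ to \eqref{E:q triangularity} gives $\tH^w_\mud\in\bigoplus_{\lad\dom_w\mud}\K\,u_\lad$. For the second factor, using $\inv\,\cP_M=\cP_{\inv(M)}\,\inv$, the functoriality $\cP_{MM'}=\cP_M\cP_{M'}$, and the fact that $\negate$ is an involution commuting with $\inv$, one computes
\[
\cP_{(\id-t\chi^{-1})\,\negate}\bigl(\inv\,\negate\,\tH^w_\nud\bigr)=\inv\bigl(\cP_{\id-t^{-1}\chi^{-1}}\tH^w_\nud\bigr),
\]
and since $\negate\,(\id-t\chi^{-1})^{-1}=\bigl((\id-t\chi^{-1})\,\negate\bigr)^{-1}$, applying $\cP_{\negate\,(\id-t\chi^{-1})^{-1}}$ and invoking \eqref{E:t triangularity} yields $\inv\,\negate\,\tH^w_\nud\in\bigoplus_{\lad\rdom_w\nud}\K\,v_\lad$. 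Pairing the two expansions and using $\pairqt{u_\lad}{v_{\lad'}}=\delta_{\lad\lad'}$, we see $\pairqt{\tH^w_\mud}{\inv\,\negate\,\tH^w_\nud}$ is a sum of terms indexed by $\lad$ with $\lad\dom_w\mud$ and $\lad\rdom_w\nud$; hence it can be nonzero only if $\nud\dom_w\mud$.

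This support argument alone does not force $\mud=\nud$, since the $\dom_w$-interval between $\mud$ and $\nud$ may contain many multipartitions, so I would close the argument by establishing that for $f,g\in\La^{\otimes I}$ homogeneous of degree $n$,
\[
\pairqt{f}{\inv\,\negate\,g}=(qt)^{\,n}\,\inv\bigl(\pairqt{g}{\inv\,\negate\,f}\bigr).
\]
This follows by unwinding $\pairqt{f}{h}=\pair{f}{\cP_{A^{-1}}h}$: move $\inv$ past $\cP_{A^{-1}}$, move the resulting matrix plethysm across the Hall pairing (whose adjoint is $\cP_{M^t}$), and use the $\inv$-semilinearity $\pair{a}{\inv b}=\inv\pair{\inv a}{b}$ of the Hall pairing. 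The Lemma enters at the crucial point: from $\swap\,\inv(A)=qt\,A$ and $A^t=A$ (together with $\negate\,\chi\,\negate=\chi^{-1}$) one obtains the matrix identity $\inv(A^{-1})\,\negate=(qt)^{-1}\,\negate\,A^{-1}$, and on degree-$n$ elements the scalar $(qt)^{-1}$ occurring in the matrix contributes the overall factor $(qt)^{-n}$ to $\cP_{\inv(A^{-1})\,\negate}$, which produces the $(qt)^n$ above. In particular $\pairqt{\tH^w_\mud}{\inv\,\negate\,\tH^w_\nud}=0$ if and only if $\pairqt{\tH^w_\nud}{\inv\,\negate\,\tH^w_\mud}=0$. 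Therefore, were $\pairqt{\tH^w_\mud}{\inv\,\negate\,\tH^w_\nud}$ nonzero we would obtain both $\nud\dom_w\mud$ (from the support argument) and $\mud\dom_w\nud$ (from the same argument with $\mud$ and $\nud$ interchanged), forcing $\mud=\nud$ by antisymmetry of $\dom_w$. Hence the pairing vanishes whenever $\mud\neq\nud$.

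I expect the symmetry identity of the last paragraph to be the main obstacle: it requires careful tracking of the non-$\K$-linear automorphism $\inv$ as it interacts with the matrix $A$, with the matrix plethysms, and with the grading, in order to extract the scalar $(qt)^n$ correctly. Everything else is a routine unwinding of the definitions of $\cP_M$, of $\pairqt{\cdot}{\cdot}$, and of the triangularity conditions \eqref{E:q triangularity}--\eqref{E:t triangularity}.
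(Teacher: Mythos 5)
Your proof is correct and is exactly the argument the paper has in mind when it labels this proposition a ``straightforward consequence'' of the definitions: triangularity expressed in the dual bases of Proposition~\ref{P:qt pairing}(4) gives $\nud\dom_w\mud$, and the twisted symmetry $\pairqt{f}{\inv\,\negate\,g}=(qt)^n\,\inv\bigl(\pairqt{g}{\inv\,\negate\,f}\bigr)$, which you correctly extract from the Lemma's identity $A^{-1}\inv\,\negate=qt\,\inv\,\negate\,A^{-1}$, gives the reverse inequality. All steps check out, including the scalar $(qt)^{\pm n}$ bookkeeping.
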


For $w\in \Waf$ and $\mud\in \Y^I$ define
\begin{align}\label{E:H pairing constant}
  b^w_\mud(q,t) &= \pairqt{\tH^w_\mud}{\inv\,\negate\,\tH^w_\nud}
\end{align}
Since $\inv\,\negate$ is an automorphism, $\{\inv\,\negate\,\tH^w_\mud\mid \mud\in \Y^I\}$ is a basis of 
$\La^{\otimes I}$. We conclude that $b^w_\mud(q,t)\ne0$. It follows that 
$(b^w_\mud(q,t))^{-1} \inv\,\negate\,\tH^w_\mud$ is the $\pairqt{\cdot}{\cdot}$-dual basis to 
$\tH^w_\mud$.

\begin{thm}\label{T:H pairing constant}
The value of the pairing in \eqref{E:H pairing constant} is
\begin{align}\label{E:H pairing constant formula}
  b^w_\mud(q,t) &= b_{\tau(w^{-1} (\mud,0))}(q,t) \qquad\text{where} \\
\notag
  b_\mu(q,t) &= \prod_{\substack{s\in \mu \\ h_\mu(s)\equiv\, 0 \,\text{mod}\, r}} 
  (1 - q^{1+a_\mu(s)}t^{-l_\mu(s)})(1-t^{1+l_\mu(s)}q^{-a_\mu(s)}).
\end{align}
\end{thm}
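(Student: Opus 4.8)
The plan is to deduce Theorem~\ref{T:H pairing constant} from the geometric realization of wreath Macdonald polynomials (Theorem~\ref{T:wreath Haiman}, i.e.\ \cite{BF,L1,L2}) by $\T$-equivariant localization on the quiver variety $\fM_{\beta,n}$. Fix notation: let $\mu=\tau_w(\mud)$, $\gamma=\core_r(\mu)$ and $N=|\gamma|+rn$ with $n=|\mud|$, so that $\fM_{\beta,n}$ is an irreducible component of $\Hilb_N^\Gamma$, its isolated $\T$-fixed points being the monomial ideals $I_\mu$ with $\core_r(\mu)=\gamma$ and $|\quot_r(\mu)|=n$, and under the isomorphism $\Phi^w_n$ the fixed-point class $[I_{\mud}^w]$ corresponds to $\tH^w_\mud$.

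The first, and principal, step is to match the bilinear form $(f,g)\mapsto\pairqt{f}{\inv\,\negate\,g}$ on $\La^{\otimes I}$ with a geometric pairing on $K^\T(\fM_{\beta,n})_\loc$. Following the template of Haiman's argument for $r=1$, one shows that $\Phi^w_n$ carries this form to a pairing of the shape $([\cF],[\cG])\mapsto R\Gamma\big(\fM_{\beta,n},[\cF]\otimes[\cG]^\vee\otimes\mathcal{E}\big)$ (interpreted in the localized $K$-theory, as $\fM_{\beta,n}$ is not proper), where $\mathcal{E}$ is a twist coming from the self-duality of the wreath Procesi bundle $P^w_n$; here $\inv$ and $\negate$ together implement the dualization $[\cG]\mapsto[\cG]^\vee$ of the $(\T\times\Gamma)$-equivariant structure, and the matrix $A$ records the change of basis between tensor Schur functions and their $\pairqt{\cdot}{\cdot}$-dual basis (Proposition~\ref{P:qt pairing}). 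Granting this, the localization theorem in $\T$-equivariant $K$-theory makes the pairing of fixed-point classes diagonal, so that
\[
b^w_\mud=\pairqt{\tH^w_\mud}{\inv\,\negate\,\tH^w_\mud}
\]
is the contribution of the single fixed point $I_\mu$, namely the $K$-theoretic Euler class of the tangent space $T_{I_\mu}\fM_{\beta,n}$ (the sign convention being dictated by $\pairqt{\cdot}{\cdot}$ and its $\inv\,\negate$-twist); the twist by $\mathcal{E}$ and any residual Procesi-bundle factor drop out thanks to the normalization \eqref{E:normalization}, because $P^w_n|_{I_{\mud}^w}$ is the regular representation of $\Gamma_n$.

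It remains to compute this tangent space. Since $\fM_{\beta,n}$ is a component of $\Hilb_N^\Gamma$, one has $T_{I_\mu}\fM_{\beta,n}=(T_{I_\mu}\Hilb_N)^\Gamma$, and the full tangent space carries the classical Haiman character, a sum over cells $s\in\mu$ of two $\T$-weights whose product is $qt$; in the paper's conventions for torus weights and residues (\S\ref{SS:par}, arm along the $q$-direction, leg along the $t$-direction) these weights are $q^{1+a_\mu(s)}t^{-l_\mu(s)}$ and $t^{1+l_\mu(s)}q^{-a_\mu(s)}$, of $\Gamma$-degrees $\mp h_\mu(s)$. Hence
\[
(T_{I_\mu}\Hilb_N)^\Gamma=\sum_{\substack{s\in\mu\\ r\,\mid\,h_\mu(s)}}\big(q^{1+a_\mu(s)}t^{-l_\mu(s)}+t^{1+l_\mu(s)}q^{-a_\mu(s)}\big),
\]
whose $K$-theoretic Euler class is precisely the product $b_\mu(q,t)$ of \eqref{E:H pairing constant formula}. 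This also explains transparently why $b^w_\mud$ depends only on the single partition $\mu=\tau_w(\mud)$, and for $r=1$ --- where every cell satisfies $1\mid h_\mu(s)$ --- it recovers the known norm of the modified Macdonald polynomial $\tH_\mu$, the customary monomial prefactor being absorbed into the $\inv\,\negate$-twist.

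The genuine difficulty, I expect, lies entirely in the first step: pinning down the geometric pairing corresponding to $\pairqt{\cdot}{\cdot}$ with its $\inv\,\negate$-twist, and tracking all Procesi-bundle contributions, signs, and $\T$-weight monomials so that the final answer is $b_\mu(q,t)$ on the nose --- rather than its image under $\inv$, or a monomial multiple. The remaining steps are either formal (localization) or a routine character computation. One could in principle avoid the geometry and argue combinatorially instead: establish Pieri rules for multiplication by the $s_1[X^{(i)}]$ on the $\tH^w_\mud$ together with the adjointness of the corresponding skewing operators under $\pairqt{\cdot}{\cdot}$, and then induct on $|\mud|$, the product $b_\mu$ emerging by telescoping --- but this route would be considerably more laborious.
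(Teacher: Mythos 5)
Your plan is essentially the paper's proof: the paper identifies $(f,g)\mapsto\pairqt{\inv\,\negate\,f}{g}$ with a conjugate-$\K$-linear twist of the Tor pairing $\pairTor{\cdot}{\cdot}$ on $K_\T(\fM_{\beta,n})_\loc$, then applies the fixed-point formula of \cite[Prop.\ 5.4.10]{CG} to get $\ch_\T(\Sym T^*\Hilb_N^\Gamma|_{I_\mu})^{-1}=b_\mu(q,t)$, exactly the localization-plus-tangent-character computation you outline. The step you rightly flag as the genuine difficulty is discharged in the paper by evaluating both pairings on the dual bases $\{[I^w_\mud]\}$ and $\{[P^{w,*}_\lad]\}$, using the fact from \cite{BF,L2} that $\Phi^w_n([P^{w,*}_\lad])=s_\lad[(1-q\chi)^{-1}(1-t\chi^{-1})^{-1}\Xd]$; no auxiliary twist $\mathcal{E}$ is needed once the first argument is made conjugate-linear via $\inv$.
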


\begin{rem}
\begin{enumerate}
    \item We give a proof of Theorem~\ref{T:H pairing constant} in \S\ref{SS:norms} using the wreath analog of Theorem~\ref{T:wreath Haiman}.
    \item In terms of the locus of the Hilbert scheme fixed under $\Gamma\cong\Z/r\Z$ (see \S\ref{S:quiver varieties}), one has $b_\mu(q,t)^{-1} = \mathrm{ch}_{\T}\, \mathrm{Sym}\, T^*\Hilb_{|\mu|}(\C^2)^\Gamma|_{I_\mu}$.
   \item $(qt)^n b_\mud^w(q^{-1},t^{-1}) = b_\mud^w(q,t)$ for $n=|\mud|$.
\end{enumerate}
\end{rem}

\subsection{Wreath Macdonald $P$ polynomials}

As known to Haiman \cite{H:private} there is a wreath analogue of the Macdonald $P$ polynomial.
The analogies in this subsection are not perfect in comparison to the $r=1$ case; the variations will be pointed out.

Define $B\in \Mat_{I\times I}(\K)$ by
\begin{align*}
B = (\id- q \chi^{-1})^{-1} (\id - t^{-1}\chi^{-1})\,\negate.
\end{align*}
Define the pairing $\pairP{\cdot}{\cdot}$ on $\La^{\otimes I}$ by
\begin{align*}
    \pairP{f[\Xd]}{g[\Xd]} &= \pair{f[\Xd]}{g[B^{-1}\Xd]} 
\end{align*}
It has reproducing kernel
\begin{align*}
    \Omega^P_{q,t} &= \Omega[\Yd^t B \Xd].
\end{align*}

\begin{lem}\label{L:Omega qt to P}
\begin{align*}
\Omega^P_{q,t} &= \cP^X_{\id-t^{-1}\chi^{-1}} \cP^Y_{\id-t\chi} \Omega_{q,t}.
\end{align*}
\end{lem}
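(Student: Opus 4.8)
The plan is to reduce the identity to a single matrix computation. The key step is a transformation rule describing how matrix plethysms in the $X$- and $Y$-variables act on a bilinear reproducing kernel: for any $C,M,N\in\Mat_{I\times I}(\K)$,
\begin{align*}
\cP^X_M\,\cP^Y_N\,\Omega[\Yd^t C\Xd] = \Omega[\Yd^t\,NCM^t\,\Xd].
\end{align*}
To establish this I would first write $\Omega[\Yd^t C\Xd]=\exp\bigl(\sum_{k\ge1}\tfrac1k\,p_k[\Yd^t C\Xd]\bigr)$ and, using additivity of $p_k$, the multiplicativity $p_k[X^{(i)}Y^{(j)}]=p_k[X^{(i)}]p_k[Y^{(j)}]$, and the scalar rule $p_k[C_{ij}Z]=C_{ij}(q^k,t^k)\,p_k[Z]$, expand $p_k[\Yd^t C\Xd]=\sum_{i,j\in I}C_{ij}(q^k,t^k)\,p_k[Y^{(i)}]\,p_k[X^{(j)}]$. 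Since $\cP^Y_N$ and $\cP^X_M$ are $\K$-algebra homomorphisms they commute with the plethystic exponential, so it suffices to apply them to each $p_k[\Yd^t C\Xd]$; the definition $\cP_M(g)=g[M^t\Xd]$ (with its built-in transpose) then converts the coefficient matrix $C$ into $NCM^t$ after reindexing the sums.

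I would then apply this rule with $C=A$, $N=\id-t\chi$, and $M=\id-t^{-1}\chi^{-1}$, which reduces the lemma to the matrix identity $NAM^t=B$. First, since $\chi$ is a permutation matrix one has $\chi^t=\chi^{-1}$, hence $M^t=\id-t^{-1}\chi$. Next, all of $\id-t\chi$, $(\id-q\chi^{-1})^{-1}$, and $(\id-t\chi)^{-1}$ are Laurent polynomials in $\chi$ and therefore mutually commute, so in
\begin{align*}
NAM^t=(\id-t\chi)(\id-q\chi^{-1})^{-1}(\id-t\chi)^{-1}\,\negate\,(\id-t^{-1}\chi)
\end{align*}
the factor $\id-t\chi$ cancels against $(\id-t\chi)^{-1}$, leaving $(\id-q\chi^{-1})^{-1}\,\negate\,(\id-t^{-1}\chi)$. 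Finally, because $\negate$ is the permutation $i\mapsto -i$ and $\chi$ the rotation $i\mapsto i+1$, one has $\negate\,\chi\,\negate=\chi^{-1}$, so $\negate(\id-t^{-1}\chi)=(\id-t^{-1}\chi^{-1})\,\negate$ and thus $NAM^t=(\id-q\chi^{-1})^{-1}(\id-t^{-1}\chi^{-1})\,\negate=B$. Combining the two steps, $\cP^X_{\id-t^{-1}\chi^{-1}}\cP^Y_{\id-t\chi}\,\Omega_{q,t}=\Omega[\Yd^t B\Xd]=\Omega^P_{q,t}$.

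The argument is almost entirely bookkeeping and I do not anticipate a conceptual obstacle; the main thing requiring care is keeping the conventions straight. In particular: the transpose built into the definition of $\cP_M$ is precisely why $M$ reappears transposed on the right of the kernel in the transformation rule; the identity $\chi^t=\chi^{-1}$ and the relation $\negate\,\chi=\chi^{-1}\,\negate$ are what make the final algebra collapse; and one should briefly note that matrix plethysms extend continuously to the degree-completion in which $\Omega[\Yd^t C\Xd]$ lives, so that interchanging them with the infinite exponential is legitimate.
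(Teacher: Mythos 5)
Your proof is correct, and it is exactly the direct verification the paper leaves implicit (the lemma is stated without proof): the kernel transformation rule $\cP^X_M\,\cP^Y_N\,\Omega[\Yd^t C\Xd]=\Omega[\Yd^t NCM^t\Xd]$ follows from the definition of $\cP_M$ on power sums, and the matrix identity $(\id-t\chi)A(\id-t^{-1}\chi)=B$ holds by the cancellation and the relations $\chi^t=\chi^{-1}$, $\negate\,\chi\,\negate=\chi^{-1}$ that you cite. Your closing remark about extending matrix plethysms to the degree completion is the right point of care, and nothing further is needed.
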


\begin{prop}\label{P:wreathP}
For $w\in\Waf$ there is a unique basis $\{P^w_\mud\mid \mud\in\Y^I\}$ of $\La^{\otimes I}$ called the wreath Macdonald $P$ polynomials, such that
\begin{align*}
&P^w_\mud \in s_\mud + \bigoplus_{\lad\rdomstrict_w \mud}  \K s_\lad \\
&\pairP{P^w_\mud}{\inv\,\negate\,P^w_\nud} = 0 \qquad \text{for $\mud\ne\nud$.}
\end{align*}
\end{prop}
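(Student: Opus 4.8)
The plan is to establish existence and uniqueness of the $P^w_\mud$ by the standard Gram–Schmidt argument relative to the pairing $\pairP{\cdot}{\cdot}$ and the partial order $\rdom_w$, exactly as one proves existence of Macdonald $P$-functions in the $r=1$ case, with the only new input being the compatibility between $\pairP{\cdot}{\cdot}$ and the order that makes the triangular system solvable. First I would set up the triangularity: we seek $P^w_\mud = s_\mud + \sum_{\lad \rdomstrict_w \mud} c_{\lad,\mud} s_\lad$ with the requirement that $\pairP{P^w_\mud}{\inv\,\negate\,P^w_\nud}=0$ whenever $\mud\ne\nud$. Because $\rdom_w$ is a partial order with finite lower intervals on each graded piece $(\La^{\otimes I})_n$ (it is the pullback of dominance on $\Y$ under the bijection $\tau_w$, and $\dom$ has finite intervals on partitions of fixed size), one may induct on the order: process the $\mud$ from $\rdom_w$-minimal to larger, and at each stage solve for the coefficients $c_{\lad,\mud}$ for $\lad\rdomstrict_w\mud$ by imposing orthogonality against the already-constructed $\inv\,\negate\,P^w_\nud$ for $\nud\rdomstrict_w\mud$.

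The key point making this system solvable is that the Gram matrix involved is triangular with invertible diagonal. Concretely, one needs: (i) $\pairP{s_\mud}{\inv\,\negate\,s_\nud}$ vanishes unless $\mud$ and $\nud$ are comparable in $\rdom_w$ (in fact unless one dominates the other, or they are equal), so that the constraints for a fixed $\mud$ only involve the strictly-smaller $\nud$'s; and (ii) the ``diagonal'' pairings $\pairP{P^w_\mud}{\inv\,\negate\,P^w_\mud}$ are nonzero, which is needed so that at the next stage one can actually divide. For (i) I would use Proposition~\ref{P:qt pairing}(4) together with Lemma~\ref{L:Omega qt to P}: the matrix $B$ differs from $A$ by the factor $\cP_{(\id-t^{-1}\chi^{-1})}$ on one side and $\cP_{(\id-t\chi)}$ on the other, and expanding $s_\mud[B^{-1}\Xd]$ in the tensor Schur basis produces only terms $s_\lad$ with $\lad$ in the same graded piece; the unitriangularity of the matrix-plethysm action with respect to $\rdom_w$ — which is precisely what underlies the definition of $\tH^w_\mud$ via \eqref{E:q triangularity}–\eqref{E:t triangularity} — then forces the required comparability. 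For (ii), the nonvanishing of the diagonal pairing is the genuine content: this is where I expect the main obstacle.

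To handle (ii) I would relate $P^w_\mud$ to the already-constructed wreath Macdonald polynomials $\tH^w_\mud$. Indeed, applying the matrix plethysm $\cP_{\id-q\chi^{-1}}$ (or its inverse) one expects $\tH^w_\mud$ and $P^w_\mud$ to be related by a plethystic change of alphabet of the type familiar from $r=1$, namely $P^w_\mud$ should be proportional to $\cP_{(\id-q\chi^{-1})^{-1}}\tH^w_\mud$ up to the usual $\prod(1-q^{a}t^{l+1})$-type normalizing product (this is the wreath analog of $\tH_\mu = t^{n(\mu)}J_\mu[X/(1-t)]|_{t\mapsto t^{-1}}$ and $J_\mu = \prod_{s}(1-q^{a_\mu(s)}t^{l_\mu(s)+1})P_\mu$). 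Granting such a relation, the diagonal pairing $\pairP{P^w_\mud}{\inv\,\negate\,P^w_\mud}$ is a nonzero scalar multiple of $\pairqt{\tH^w_\mud}{\inv\,\negate\,\tH^w_\mud}=b^w_\mud(q,t)$, which is nonzero by the discussion preceding \eqref{E:H pairing constant} (and explicitly given by Theorem~\ref{T:H pairing constant}). Uniqueness then follows because any two solutions differ by an element of $\bigoplus_{\lad\rdomstrict_w\mud}\K s_\lad$ that is $\pairP{\cdot}{\cdot}$-orthogonal (after applying $\inv\,\negate$) to all $P^w_\nud$ with $\nud\rdomstrict_w\mud$, hence — by downward induction and the invertibility of the diagonal — zero. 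The honest work is verifying the $P$-to-$\tH$ relation in the wreath setting and checking that the plethystic normalizing factor is a genuine nonzero element of $\K$; once that is in hand the rest is the routine triangular Gram–Schmidt bookkeeping.
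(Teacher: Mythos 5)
Your fallback route---relating $P^w_\mud$ to $\tH^w_\mud$ by a matrix plethysm---is the right idea and is essentially what the paper does, but your primary Gram--Schmidt route has a genuine gap, and the plethysm you name is the wrong one. The claimed fact (i), that $\pairP{s_\mud}{\inv\,\negate\,s_\nud}$ vanishes unless $\mud$ and $\nud$ are $\rdom_w$-comparable, is false and cannot be repaired: the pairing $\pairP{\cdot}{\cdot}$ does not depend on $w$, whereas the order $\rdom_w$ does, so triangularity for every $w$ would force the Gram matrix of tensor Schur functions to be essentially diagonal, which it is not (already for $r=1$ the matrix $\langle s_\la, s_\mu[\tfrac{1-q}{1-t^{-1}}X]\rangle$ has nonzero entries at dominance-incomparable pairs). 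Without (i), Gram--Schmidt along a total order refining $\rdom_w$ yields triangularity only with respect to that total order, not the partial order; this is exactly the classical subtlety in the existence of Macdonald $P$-functions, and it is why the triangularity-plus-orthogonality conditions are described as overdetermined.

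The existence argument the paper intends is recorded immediately after the statement: set $J^w_\mud = \cP_{\id-t^{-1}\chi^{-1}}\tH^w_\mud$. Condition \eqref{E:t triangularity} in the definition of $\tH^w_\mud$, together with the nonvanishing of its leading term, says precisely that $J^w_\mud \in c^w_\mud s_\mud + \bigoplus_{\lad\rdomstrict_w\mud}\K s_\lad$ with $c^w_\mud\neq 0$, so $P^w_\mud=(c^w_\mud)^{-1}J^w_\mud$ has the required triangularity. Your candidate $\cP_{(\id-q\chi^{-1})^{-1}}$ is tied to \eqref{E:q triangularity} and the order $\dom_w$, hence gives the wrong triangularity direction (and applying it to $\tH^w_\mud$ is not triangular in either order). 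Orthogonality then transfers from Proposition~\ref{P:H orthogonal} via Lemma~\ref{L:Omega qt to P}: the kernel identity $\Omega^P_{q,t}=\cP^X_{\id-t^{-1}\chi^{-1}}\cP^Y_{\id-t\chi}\Omega_{q,t}$, together with $\cP_{\id-t\chi}\circ\inv\circ\negate=\inv\circ\negate\circ\cP_{\id-t^{-1}\chi^{-1}}$, shows that $\{J^w_\mud\}$ and $\{(b^w_\mud)^{-1}\inv\,\negate\,J^w_\mud\}$ are $\pairP{\cdot}{\cdot}$-dual bases. This gives both the orthogonality and the nonvanishing of the diagonal pairing $\pairP{P^w_\mud}{\inv\,\negate\,P^w_\mud}$, which you correctly flagged as the input needed for the uniqueness induction; your reduction of that nonvanishing to $b^w_\mud\neq 0$ is sound.
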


For $w\in\Waf$ and $\mud\in\Y^I$ define the wreath Macdonald $J$ polynomial by
\begin{align*}
J^w_\mud = \cP_{\id - t^{-1}\chi^{-1}} \tH^w_\mud.
\end{align*}

It follows directly from the definitions that there is a nonzero constant
$c^w_\mud\in\K$ such that 
\begin{align*}
  J^w_\mud \in c^w_\mud s_\mud + \bigoplus_{\lad \rdomstrict \mud} \K s_\lad.
\end{align*}
Then
\begin{align*}
  P^w_\mud = (c^w_\mud)^{-1} J^w_\mud.
\end{align*}

\begin{conj} \label{CJ:P pairing} For $\mu=\tau(w^{-1}(\mud,0))$
 \begin{align*}
    \pairP{P^w_\mud}{\inv\,\negate\,P^w_\mud} = \prod_{\substack{s\in\mu\\ h(s)\equiv 0\,\text{mod}\,\, r}} \dfrac{1-q^{1+a(s)}t^{-l(s)}}{1-q^{a(s)} t^{-1-l(s)}}.
 \end{align*}
\end{conj}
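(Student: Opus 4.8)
The plan is to reduce the $P$-pairing norm to the already-computed $H$-pairing constant $b^w_\mud(q,t)$ of Theorem~\ref{T:H pairing constant}, using the relation between the two pairings $\pairP{\cdot}{\cdot}$ and $\pairqt{\cdot}{\cdot}$ recorded in Lemma~\ref{L:Omega qt to P}, together with the expansion $J^w_\mud = \cP_{\id-t^{-1}\chi^{-1}}\tH^w_\mud$ and the transition $P^w_\mud = (c^w_\mud)^{-1} J^w_\mud$. Concretely, from $B = (\id-q\chi^{-1})^{-1}(\id-t^{-1}\chi^{-1})\negate$ and $A = (\id-q\chi^{-1})^{-1}(\id-t\chi)^{-1}\negate$ one gets a clean matrix identity $B = A\,\cP'$ for an explicit $\cP'$ built from $\id-t^{-1}\chi^{-1}$ and $\id-t\chi$, which by the functoriality $\cP_{MM'} = \cP_M\circ\cP_{M'}$ (and the reproducing-kernel formula in Lemma~\ref{L:Omega qt to P}) translates into an identity of the form
\begin{align*}
\pairP{f}{g} = \pairqt{\cP_{\id-t^{-1}\chi^{-1}}f}{\cP_{\negate(\id-t\chi^{-1})^{-1}\negate}\,g}
\end{align*}
or its adjoint variant; this is exactly the $r>1$ analog of the classical identity $\pairP{f}{g}_{q,t} = \pair{f[(1-t)X]}{g[(1-t)X]}_{q,t}$ relating the Macdonald $P$-inner product to the one on the $J$/$H$ side. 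Applying this with $f = g = P^w_\mud$ and using $\cP_{\id-t^{-1}\chi^{-1}}P^w_\mud = (c^w_\mud)^{-1}\,\cP_{\id-t^{-1}\chi^{-1}}J^w_\mud = (c^w_\mud)^{-2}\,\cP_{\id-t^{-1}\chi^{-1}}\tH^w_\mud$ ... wait — more precisely, tracking that $J^w_\mud = \cP_{\id-t^{-1}\chi^{-1}}\tH^w_\mud$, so $P^w_\mud = (c^w_\mud)^{-1}\cP_{\id-t^{-1}\chi^{-1}}\tH^w_\mud$, we should land on
\begin{align*}
\pairP{P^w_\mud}{\inv\,\negate\,P^w_\mud} = (c^w_\mud)^{-1}\,(\inv\, c^w_\mud)^{-1}\;\pairqt{\tH^w_\mud}{\inv\,\negate\,\tH^w_\mud} = \frac{b^w_\mud(q,t)}{c^w_\mud(q,t)\,c^w_\mud(q^{-1},t^{-1})},
\end{align*}
after checking that the two $\cP$-operators appearing are precisely the ones whose images form $\pairqt{\cdot}{\cdot}$-dual-type bases as in Proposition~\ref{P:qt pairing}(4), so that the net effect of the matrix identity is to convert the $P$-norm into the $H$-norm with a correction factor governed only by $c^w_\mud$.

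The second ingredient is then an explicit product formula for the normalization constant $c^w_\mud$ defined by $J^w_\mud \in c^w_\mud\,s_\mud + \bigoplus_{\lad\rdomstrict\mud}\K s_\lad$. Since $J^w_\mud = \cP_{\id-t^{-1}\chi^{-1}}\tH^w_\mud$ and $\tH^w_\mud = s_\mud + (\text{lower terms})$ after the appropriate $\pairqt{}{}$-renormalization, the leading coefficient $c^w_\mud$ is the coefficient of $s_\mud$ in $\cP_{\id-t^{-1}\chi^{-1}}s_\mud$ — but this plethysm is not unitriangular on the Schur basis, so one must instead extract $c^w_\mud$ from a pairing: $c^w_\mud = \pairP{P^w_\mud}{s_\mud[A\Xd]}$-type expression, or directly via $c^w_\mud = \langle s_\mud[\Xd],\, \cP_{\id-t^{-1}\chi^{-1}}\tH^w_\mud\rangle / \langle s_\mud,\cdots\rangle$. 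I expect the clean route is: $c^w_\mud$ equals the analog of $\prod_{s\in\mu}(1-q^{a(s)}t^{l(s)+1})$ from the classical case $J_\mu = \prod(1-q^{a}t^{l+1})P_\mu$, but \emph{restricted to cells with hook divisible by $r$}, i.e.
\begin{align*}
c^w_\mud(q,t) = \prod_{\substack{s\in\mu\\ h(s)\equiv 0\,\text{mod}\,r}} \bigl(1 - q^{a(s)}t^{-1-l(s)}\bigr),
\end{align*}
with $\mu = \tau(w^{-1}(\mud,0))$ — the sign/reciprocal pattern dictated by our conventions ($t^{-1}$ in place of $t$, residue/diagonal sign flip of \S\ref{SS:par}). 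Granting this, combined with the displayed formula for $b^w_\mud = b_\mu$ from Theorem~\ref{T:H pairing constant}, namely $b_\mu = \prod_{h(s)\equiv 0}(1-q^{1+a(s)}t^{-l(s)})(1-t^{1+l(s)}q^{-a(s)})$, and $c^w_\mud(q^{-1},t^{-1}) = \prod_{h(s)\equiv 0}(1-q^{-a(s)}t^{1+l(s)})$, the ratio telescopes:
\begin{align*}
\frac{b_\mu(q,t)}{c^w_\mud(q,t)\,c^w_\mud(q^{-1},t^{-1})} = \prod_{\substack{s\in\mu\\ h(s)\equiv 0\,\text{mod}\,r}} \frac{(1-q^{1+a(s)}t^{-l(s)})(1-t^{1+l(s)}q^{-a(s)})}{(1-q^{a(s)}t^{-1-l(s)})(1-q^{-a(s)}t^{1+l(s)})} = \prod_{\substack{s\in\mu\\ h(s)\equiv 0\,\text{mod}\,r}} \frac{1-q^{1+a(s)}t^{-l(s)}}{1-q^{a(s)}t^{-1-l(s)}},
\end{align*}
which is exactly the claimed formula.

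The main obstacle will be pinning down the constant $c^w_\mud$ and, relatedly, verifying the precise form of the $\pairP{\cdot}{\cdot}$-to-$\pairqt{\cdot}{\cdot}$ conversion — i.e.\ that the two matrix plethysm operators dressing $f$ and $g$ really are the dual pair of Proposition~\ref{P:qt pairing}(4) rather than some twisted variant, so that no extra factor sneaks in. In the classical $r=1$ case $c_\mu = \prod_{s\in\mu}(1-q^{a(s)}t^{l(s)+1})$ is an identity of Macdonald, but here I would need to derive the ``hook $\equiv 0 \bmod r$'' restriction. The cleanest derivation is probably geometric: via Theorem~\ref{T:wreath Haiman}, $J^w_\mud$ (or $P^w_\mud$) corresponds to a $K$-theory class on the quiver variety $\fM_{\beta,n}$ whose leading coefficient is a product of $\T$-weights over the tangent space, and the cells with $r\mid h_\mu(s)$ are exactly those whose contribution survives the passage to the $\Gamma$-fixed locus — the same mechanism that produces the restricted product in Theorem~\ref{T:H pairing constant} (cf.\ Remark (2) after that theorem, $b_\mu^{-1} = \ch_\T\Sym\, T^*\Hilb_{|\mu|}(\C^2)^\Gamma|_{I_\mu}$). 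So the proof would run parallel to the proof of Theorem~\ref{T:H pairing constant} promised in \S\ref{SS:norms}, extracting the $P$-norm instead of the $H$-norm from the same fixed-point / tangent-space computation. An alternative, purely combinatorial, route is to expand both sides on the $\tH$-basis and match coefficients using the orthogonality Proposition~\ref{P:H orthogonal}, but that essentially re-proves Theorem~\ref{T:H pairing constant} and seems strictly harder. I would therefore present the reduction to $c^w_\mud$ as above and then cite or establish the product formula for $c^w_\mud$ via the quiver-variety fixed-point computation of \S\ref{SS:norms}.
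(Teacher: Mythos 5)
The statement you are trying to prove is labeled \textbf{Conjecture}~\ref{CJ:P pairing} in the paper: the authors explicitly present it (together with Conjecture~\ref{CJ:J to P coefficient}) as a new, open conjecture, and give no proof. So there is no argument in the paper to compare yours against; the question is only whether your proposal actually closes the conjecture. It does not, but the reduction you carry out is correct and worth recording. Your first step — using Lemma~\ref{L:Omega qt to P} to transport the dual-basis property of $\{\tH^w_\mud\}$ under $\pairqt{\cdot}{\cdot}$ to the $P$-pairing — does work: since $\cP_{\id-t\chi}\,\inv\,\negate=\inv\,\negate\,\cP_{\id-t^{-1}\chi^{-1}}$, the bases $\{J^w_\mud\}$ and $\{(b^w_\mud)^{-1}\inv\,\negate\,J^w_\mud\}$ are $\pairP{\cdot}{\cdot}$-dual, whence
\begin{align*}
\pairP{P^w_\mud}{\inv\,\negate\,P^w_\mud}=\frac{b^w_\mud(q,t)}{c^w_\mud(q,t)\,c^w_\mud(q^{-1},t^{-1})},
\end{align*}
and your telescoping against Theorem~\ref{T:H pairing constant} is right. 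A nice feature you could make explicit is that any Laurent-monomial ambiguity in $c^w_\mud$ cancels in the product $c^w_\mud(q,t)\,c^w_\mud(q^{-1},t^{-1})$, so Conjecture~\ref{CJ:P pairing} follows from the weaker half of Conjecture~\ref{CJ:J to P coefficient}, namely that $c^w_\mud$ equals a Laurent monomial times $\prod_{r\mid h_\mu(s)}(1-q^{a(s)}t^{-1-l(s)})$.

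The genuine gap is exactly at that point: the product formula for $c^w_\mud$ that your argument requires is itself Conjecture~\ref{CJ:J to P coefficient}, which the paper also leaves open. Your proposed route to it — reading the leading Schur coefficient of $J^w_\mud$ off a tangent-space weight product at a $\T$-fixed point, in parallel with the proof of Theorem~\ref{T:H pairing constant} in \S\ref{SS:norms} — is only a heuristic: the proof of the norm formula works because $\pairTor{\cdot}{\cdot}$ is intrinsically a fixed-point quantity, whereas the coefficient of $s_\mud$ in $\cP_{\id-t^{-1}\chi^{-1}}\tH^w_\mud$ is a single matrix entry of a non-triangular plethysm composed with the full (unknown) Schur expansion of $\tH^w_\mud$, and no mechanism in the paper identifies it with a restricted hook product. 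Until that step is supplied, what you have is a correct equivalence between the two conjectures modulo Theorem~\ref{T:H pairing constant}, not a proof of either.
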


\begin{rem} In $r=1$ Macdonald theory, to go from $\tH_\mu$ to $J_\mu$ one first passes to $H_\mu$ by inverting $t$ (and rescaling) and then applies the plethystic automorphism. In the above definition there is no scaling. For $r=1$ we have $P^{\id}_{(\mu)} = P_\mu|_{t\mapsto t^{-1}}$.

One obtains another interesting basis of $\La^{\otimes I}$ by using $q$ instead of $t^{-1}$ in the definition of $J^w_\mud$.
\end{rem}

\begin{conj}\label{CJ:J to P coefficient}
Let $f^w_\mud\in\K$ be the sum of terms in the ``diagonal" wreath Macdonald-Kostka polynomial $\tK^w_{\mud,\mud}(q,t)$ with maximum power of $t$. Then $f^w_\mud$ is a single Laurent monomial in $q$ and $t$ and
\begin{align*}
c^w_\mud = f^w_\mud \prod_{\substack{s\in \mu \\ h_\mu(s)\equiv 0\, \text{mod}\,\, r}}
(1-q^{a(s)}t^{-1-l(s))}),
\end{align*}
where $\mu=\tau(w^{-1} (\mud,0))$.
\end{conj}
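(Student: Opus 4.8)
Set $\mu = \tau(w^{-1}(\mud,0))$ and $D_\mu(q,t) = \prod_{s\in\mu,\ h_\mu(s)\equiv 0\ \mathrm{mod}\ r}\bigl(1-q^{a_\mu(s)}t^{-1-l_\mu(s)}\bigr)$, so the claim is $c^w_\mud = f^w_\mud\,D_\mu$ with $f^w_\mud$ a Laurent monomial. Since $J^w_\mud = \cP_{\id-t^{-1}\chi^{-1}}\tH^w_\mud = c^w_\mud\,P^w_\mud$, the constant $c^w_\mud$ is the $s_\mud$-coefficient of $\cP_{\id-t^{-1}\chi^{-1}}\tH^w_\mud$; the plan is to determine it from the norm formulas rather than by expanding the plethysm.

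First I would prove the isometry
\begin{align*}
\pairP{\cP_{\id-t^{-1}\chi^{-1}}f}{\inv\,\negate\,\cP_{\id-t^{-1}\chi^{-1}}g} = \pairqt{f}{\inv\,\negate\,g}\qquad\text{for all }f,g\in\La^{\otimes I}.
\end{align*}
This is a matrix identity: one uses Lemma~\ref{L:Omega qt to P} to pass between reproducing kernels, the relation $\inv\,\negate\,\cP_{\id-t^{-1}\chi^{-1}} = \cP_{\id-t\chi}\circ(\inv\,\negate)$, and a direct computation that $(\id-t\chi)^t\,B^{-1}\,(\id-t^{-1}\chi)^t = A^{-1}$ (all powers of $\chi$ commute, the factors $\id-t^{-1}\chi^{-1}$ cancel, and conjugation by $\negate$ exchanges $\chi$ and $\chi^{-1}$). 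Putting $f=g=\tH^w_\mud$, and using $J^w_\mud=c^w_\mud P^w_\mud$, Theorem~\ref{T:H pairing constant}, and Conjecture~\ref{CJ:P pairing}, yields
\begin{align*}
c^w_\mud(q,t)\,c^w_\mud(q^{-1},t^{-1}) = \frac{b_\mu(q,t)}{\pairP{P^w_\mud}{\inv\,\negate\,P^w_\mud}} = D_\mu(q,t)\,D_\mu(q^{-1},t^{-1}),
\end{align*}
the common factors $1-q^{1+a_\mu(s)}t^{-l_\mu(s)}$ cancelling.

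Next, $D_\mu(q^{-1},t^{-1})$ is a Laurent monomial multiple of $D_\mu(q,t)$, and I would establish the analogous statement for $c^w_\mud$ --- either from the wreath Macdonald symmetries (Propositions~\ref{P:swapping symmetry}, \ref{P:inversion symmetry}, \ref{P:dual partial order}) applied to $J^w_\mud$, or equivalently by proving the divisibility $D_\mu\mid c^w_\mud$. Combined with the displayed identity this forces $(c^w_\mud/D_\mu)^2$ to be a Laurent monomial; a rational function whose square is a monomial is itself a monomial, so $c^w_\mud/D_\mu = \pm\,q^{a}t^{b}$ for some $a,b\in\Z$. To identify $\pm q^a t^b$ with $f^w_\mud$: since $\cP_{\id-t^{-1}\chi^{-1}}$ sends $p_k[X^{(i)}]$ to $p_k[X^{(i)}]-t^{-k}p_k[X^{(i-1)}]$, it acts as the identity on the highest-$t$-degree component of any homogeneous element, so $\cP_{\id-t^{-1}\chi^{-1}}(s_\lad)$ is $s_\lad$ plus a $\K$-combination of Schur functions with coefficients of strictly negative $t$-degree. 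Hence the $s_\mud$-coefficient $c^w_\mud$ of $J^w_\mud$ is $\tK^w_{\mud,\mud}$ plus terms of $t$-degree at most $\deg_t\tK^w_{\lad,\mud}-1$ over various $\lad$; using that $\deg_t\tK^w_{\mud,\mud}$ is the maximum of the $\deg_t\tK^w_{\lad,\mud}$ (the wreath analog of $\deg_t\tK_{\mu\mu}=n(\mu)=\deg_t\tH_\mu$, following from the bigrading on the wreath Procesi fiber), the sum of highest-$t$-degree terms of $c^w_\mud$ equals $f^w_\mud$. As $D_\mu$ has highest $t$-power $t^0$ with coefficient $1$, the highest-$t$-degree part of $\pm q^a t^b D_\mu$ is $\pm q^a t^b$; comparison gives $f^w_\mud = \pm q^a t^b$, a Laurent monomial, with coefficient $+1$ by Theorem~\ref{T:wreath pos}, whence $c^w_\mud = f^w_\mud D_\mu$. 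For $r=1$ this recovers the classical $c^{\id}_{(\mu)} = t^{n(\mu)}\prod_{s\in\mu}\bigl(1-q^{a_\mu(s)}t^{-1-l_\mu(s)}\bigr)$, $f^{\id}_{(\mu)} = t^{n(\mu)}$.

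The main obstacle is Conjecture~\ref{CJ:P pairing} itself, which is open; I would attack it with the wreath analog of Haiman's theorem, computing $\pairP{P^w_\mud}{\inv\,\negate\,P^w_\mud}$ as a $\T$-character on the $\Gamma$-fixed locus of the Hilbert scheme exactly as in the proof of Theorem~\ref{T:H pairing constant}, the extra denominators $1-q^{a_\mu(s)}t^{-1-l_\mu(s)}$ reflecting the un-rescaled passage from $\tH^w_\mud$ to $J^w_\mud$. The secondary obstacle --- the monomial-palindromy of $c^w_\mud$, equivalently $D_\mu\mid c^w_\mud$, for $r>1$ --- I expect to require the quiver-variety geometry: the cells of $\mu$ of hook divisible by $r$, equivalently (via the $r$-quotient) the cells of $\quot_r(\mu)$, should index the coordinate directions producing the factors $1-q^{a_\mu(s)}t^{-1-l_\mu(s)}$ in the relevant fixed-point character, through which $c^w_\mud$ then factors. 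A cleaner route, if one can make it work, would compute $c^w_\mud$ --- the $\chi^\mud$-multiplicity in the wreath Procesi fiber after the $\cP_{\id-t^{-1}\chi^{-1}}$ twist --- directly by localization on the quiver variety, which would then give Conjecture~\ref{CJ:P pairing} for free through the isometry.
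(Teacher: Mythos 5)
The statement you were asked to prove is Conjecture~\ref{CJ:J to P coefficient}: the paper offers no proof of it (nor of Conjecture~\ref{CJ:P pairing}, on which your argument leans), so there is nothing to compare your attempt against --- both are stated as open. What you have written is a conditional reduction, and as such it is coherent: the matrix identity $(\id-t\chi)^tB^{-1}(\id-t^{-1}\chi)^t=A^{-1}$ does hold (conjugation by $\negate$ swaps $\chi\leftrightarrow\chi^{-1}$ and the $(\id-t^{-1}\chi^{-1})^{\pm1}$ factors cancel), the resulting identity $c^w_\mud(q,t)\,c^w_\mud(q^{-1},t^{-1})=D_\mu(q,t)\,D_\mu(q^{-1},t^{-1})$ is consistent with Theorem~\ref{T:H pairing constant} and Conjecture~\ref{CJ:P pairing}, and your top-$t$-degree comparison correctly recovers the $r=1$ identity $c^{\id}_{(\mu)}=t^{n(\mu)}\prod_s(1-q^{a(s)}t^{-1-l(s)})$.

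The genuine gaps are the three inputs you yourself flag, and none of them is dispensable. First, Conjecture~\ref{CJ:P pairing} is used essentially; your proposed geometric attack on it (localization for the $J$-normalization) is exactly the hard part, since the proof of Theorem~\ref{T:H pairing constant} in \S\ref{SS:norms} depends on knowing the image $\Phi^w_n([P^{w,*}_\lad])$ explicitly, and no analogous statement is available for the $\cP_{\id-t^{-1}\chi^{-1}}$-twisted classes. Second, the norm identity alone only gives $(c^w_\mud/D_\mu)\cdot\inv(c^w_\mud/D_\mu)=1$; to conclude that $c^w_\mud/D_\mu$ is $\pm$ a monomial you need either the monomial-palindromy of $c^w_\mud$ or the divisibility $D_\mu\mid c^w_\mud$, and the symmetries of Propositions~\ref{P:swapping symmetry}--\ref{P:dual partial order} relate $\tH^w_\mud$ to \emph{different} bases ($\tH^{w_0w}_{w_0\mud}$, $\tH^{w^*}_{\mud^*}$, $\tHopp^w_\mud$) rather than producing a functional equation for the single scalar $c^w_\mud$, so this step is not a routine consequence of what is in the paper. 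Third, the claim that $\deg_t\tK^w_{\mud,\mud}=\max_\lad\deg_t\tK^w_{\lad,\mud}$ is a substantive wreath analog of $\deg_t\tK_{\lambda\mu}\le n(\mu)=\deg_t\tK_{\mu\mu}$; Proposition~\ref{P:Gamma grading} constrains only the $\Gamma$-degree, not the $t$-degree, and without this maximality your identification of the residual monomial with $f^w_\mud$ (and hence the assertion that $f^w_\mud$ is a single monomial at all) does not follow. So the proposal is a reasonable roadmap but each of its three pillars is itself open.
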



\begin{prop} $P^w_\mud[\Xd;q,q^{-1}] = s_\mud$.
\end{prop}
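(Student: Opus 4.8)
The plan is to show that the specialization $t \mapsto q^{-1}$ collapses the defining data of $P^w_\mud$ to exactly the data characterizing the tensor Schur function $s_\mud$. First I would examine what happens to the matrix $B = (\id - q\chi^{-1})^{-1}(\id - t^{-1}\chi^{-1})\,\negate$ under $t \mapsto q^{-1}$: the factor $\id - t^{-1}\chi^{-1}$ becomes $\id - q\chi^{-1}$, which cancels the inverse factor, leaving $B|_{t = q^{-1}} = \negate$ (a permutation matrix). Consequently the pairing $\pairP{\cdot}{\cdot}$ specializes to $\pair{f[\Xd]}{g[\negate^{-1}\Xd]} = \pair{f[\Xd]}{g[\negate\,\Xd]}$ (since $\negate$ is an involution on $I$), and the reproducing kernel $\Omega^P_{q,t} = \Omega[\Yd^t B \Xd]$ becomes $\Omega[\sum_i X^{(i)}Y^{(\negate(i))}]$. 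Under this specialized pairing, the basis $\{s_\lad[\Xd]\}$ is dual to $\{s_{\negate\,\lad}[\Xd]\}$ up to the obvious relabeling, so in particular $\pair{s_\lad[\Xd]}{\inv\,\negate\,s_{\mud}[\Xd]}|_{t=q^{-1}}$ is (a sign or unit times) $\delta_{\lad,\mud}$ — here one uses that $\inv$ fixes rational numbers and that $\inv(q) = q^{-1}$, which interacts with the specialization; I would track this carefully.

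Next I would invoke the characterization in Proposition~\ref{P:wreathP}: $P^w_\mud$ is the unique element of the form $s_\mud + \sum_{\lad \rdomstrict_w \mud} \K s_\lad$ that is $\pairP{\cdot}{\cdot}$-orthogonal to $\inv\,\negate\,P^w_\nud$ for all $\nud \ne \mud$. The specialization $t = q^{-1}$ is well-defined on each $P^w_\mud$ provided no denominators vanish there; this should follow from the fact that $P^w_\mud = (c^w_\mud)^{-1} J^w_\mud$ with $J^w_\mud = \cP_{\id - t^{-1}\chi^{-1}} \tH^w_\mud$, and at $t = q^{-1}$ the operator $\cP_{\id - t^{-1}\chi^{-1}}$ becomes $\cP_{\id - q\chi^{-1}}$, which by \eqref{E:q triangularity} sends $\tH^w_\mud$ into a triangular combination of tensor Schurs with $\dom_w$-leading term $s_\mud$ and nonzero leading coefficient (as noted after \eqref{E:normalization}, using properties of $\pairqt{\cdot}{\cdot}$). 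So $J^w_\mud|_{t=q^{-1}}$ is well-defined and triangular, and $c^w_\mud|_{t=q^{-1}}$ is its (nonzero) leading coefficient, hence $P^w_\mud|_{t=q^{-1}}$ is a well-defined element of $s_\mud + \bigoplus_{\lad \rdomstrict_w \mud}\K s_\lad$.

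Now the key point: at $t = q^{-1}$ the orthogonality condition $\pairP{P^w_\mud}{\inv\,\negate\,P^w_\nud} = 0$ for $\nud \ne \mud$ becomes a system of linear equations with respect to the specialized pairing $\pair{f}{g[\negate\,\Xd]}$ (up to the $\inv$-twist), and I claim $s_\mud$ itself already satisfies all of them. Indeed, by the upper-triangularity in $\rdom_w$, each $P^w_\nud|_{t=q^{-1}}$ lies in the span of $\{s_\lad : \lad \rdom_w \nud\}$; pairing $s_\mud$ against $\inv\,\negate$ of such an element under the specialized pairing picks out (up to a unit) the coefficient of $s_{\negate\,\mud}$-type term, and by the duality of $\{s_\lad\}$ established above this vanishes unless $\mud \rdom_w \nud$ and the relevant coefficient is the $s_\mud$-component of $P^w_\nud$, which is $0$ for $\nud \ne \mud$ (and $1$ for $\nud = \mud$) by the normalization $P^w_\nud \in s_\nud + \bigoplus_{\lad \rdomstrict_w \nud}\K s_\lad$ — more precisely, one argues by the usual Gram-Schmidt induction that the whole triangular system forces the off-diagonal terms to vanish at this specialization. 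By uniqueness in Proposition~\ref{P:wreathP} applied to the specialized data (which retains the same triangularity shape and a nondegenerate pairing), we conclude $P^w_\mud|_{t=q^{-1}} = s_\mud$.

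The main obstacle I anticipate is the careful bookkeeping of the $\inv$-twist and the $\negate$-permutation in the specialized pairing: one must verify that $\pair{s_\mud[\Xd]}{\inv\,\negate\,s_\lad[\Xd]}|_{t=q^{-1}}$ is genuinely a nonzero scalar multiple of $\delta_{\lad,\mud}$ and not merely triangular, since $\inv$ acts on the $q,t$-coefficients of $P^w_\nud$ and only commutes with the specialization $t \mapsto q^{-1}$ after one checks that this locus is $\inv$-stable (it is: $\inv$ sends $t = q^{-1}$ to $t^{-1} = q$, i.e. back to the same relation). Once that compatibility is pinned down, the rest is the standard argument that an orthogonal triangular basis with respect to a nondegenerate pairing is unique, run at the specialized parameters.
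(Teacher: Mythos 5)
The paper states this proposition without proof, so there is nothing to compare against directly; judged on its own, your argument is essentially correct and is the natural one. Your central observations are right: at $t=q^{-1}$ the matrix $B$ degenerates to the permutation matrix $\negate$, the pairing $\pairP{\cdot}{\cdot}$ composed with $\inv\,\negate$ specializes to the plain tensor Hall pairing (the two copies of $\negate$ cancel and $\inv$ acts trivially on the integral Schur coefficients), so the Gram matrix $\bigl(\pairP{s_\lad}{\inv\,\negate\, s_\nud}\bigr)$ specializes to the identity; unitriangularity plus orthogonality with respect to a pairing in which the $s_\lad$ are already orthonormal forces $P^w_\mud|_{t=q^{-1}}=s_\mud$.

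The one step you leave soft is the regularity of $P^w_\mud$ at $t=q^{-1}$, i.e.\ the claim that $c^w_\mud|_{t=q^{-1}}\neq 0$: you assert this "should follow" from the triangularity of $J^w_\mud$, but the nonvanishing of the leading coefficient of $J^w_\mud$ as an element of $\K$ does not by itself rule out vanishing on the hypersurface $t=q^{-1}$ (indeed, pinning down $c^w_\mud$ is the content of Conjecture~\ref{CJ:J to P coefficient}, which you cannot invoke). Two ways to close this. First, your own Gram--Schmidt setup already does it if you say it explicitly: the coefficients of $P^w_\mud$ in the $s_\lad$ basis are ratios of minors of the Gram matrix above, whose denominators specialize to $1$ at $t=q^{-1}$, so $P^w_\mud$ is regular there and the specialization commutes with solving the linear system. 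Second, and more directly: at $t=q^{-1}$ the two plethysms $\cP_{\id-q\chi^{-1}}$ and $\cP_{\id-t^{-1}\chi^{-1}}$ coincide, so \eqref{E:q triangularity} and \eqref{E:t triangularity} together put $J^w_\mud|_{t=q^{-1}}=\cP_{\id-q\chi^{-1}}\bigl(\tH^w_\mud|_{t=q^{-1}}\bigr)$ in $\bigl(\bigoplus_{\lad\dom_w\mud}\K s_\lad\bigr)\cap\bigl(\bigoplus_{\lad\rdom_w\mud}\K s_\lad\bigr)=\K s_\mud$; since $\tH^w_\mud$ has Laurent polynomial coefficients with $\pair{s_n[X^{(0)}]}{\tH^w_\mud}=1$ it is nonzero at $t=q^{-1}$, and $\cP_{\id-q\chi^{-1}}$ is invertible ($\det(\id-q\chi^{-1})=1-q^r\neq0$), so the scalar is nonzero. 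This gives $c^w_\mud|_{t=q^{-1}}\neq0$ and the proposition at once, without touching the pairing at all.
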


\section{Nakajima quiver varieties}
\label{S:quiver varieties}

In this section, we explain the geometric realization of wreath Macdonald polynomials, as formulated conjecturally by Haiman \cite{H} and later established by Bezrukavnikov and Finkelberg \cite{BF} and Losev \cite{L1,L2}. We use this realization to prove the norm formula \eqref{E:H pairing constant formula} and to construct wreath analogs of the $\nabla$ operator.

\subsection{The cyclic group $\Gamma$}
\label{SS:McKay and cyclic quiver}

Let $\Gamma\subset \C^\times$ be the cyclic group of $r$-th roots of unity; we fix the generator $\zeta = \exp(2\pi \sqrt{-1}/r)$ for $\Gamma$. We shall use the embedding
$\Gamma\to \T \subset GL(2,\C)$ determined by $\zeta\mapsto \diag(\zeta^{-1},\zeta)$, where $\T$ is the torus of \S\ref{SS:Haiman} regarded as a subgroup of diagonal matrices in $GL(2,\C)$ in the standard way, via $(t_1,t_2)\mapsto\diag(t_1,t_2)$.

The representation ring $R(\Gamma) = \Z[\chi]$ is generated by the character $\chi$ sending $\zeta = \exp(2\pi \sqrt{-1}/r)$ to itself. $R(\Gamma)$ has basis $\chi^i$ for $i\in I=\Z/r\Z$. The McKay correspondence asserts a bijection between the affine Dynkin node set $I=\Z/r\Z$ (vertex set of cyclic quiver) and the irreducible characters of $\Gamma$. It is given by $i\mapsto\chi^i$ for $i\in I$.\footnote{The reason that we used the notation $\chi$ for the rotational permutation in $\Wfin$, was due to the fact that multiplication by $\chi$ acts by rotation on the ordered basis $\{\chi^0,\chi^1,\dotsc,\chi^{r-1}\}$ of $R(\Gamma)$.}

Recall that we have made the identification $R(\T)=\Z[q^{\pm 1}, t^{\pm 1}]$, with the standard coordinate functions $x(a_1,a_2)=a_1$ and $y(a_1,a_2)=a_2$ on $\C^2$ having $\T$-weights $t$ and $q$, respectively.
Let us call $\resGamma$ the restriction map $R(\T) \to R(\Gamma)$. 
We have $\resGamma(q)=\chi^{-1}$ and $\resGamma(t)=\chi$. 
We say that $f\in R(\T)$ is homogeneous of $\Gamma$-degree $i\in\Z/r\Z$ if 
$\resGamma(f)\in \Z \chi^i$.

It is sometimes enlightening to include the $\Gamma$-grading of $R(\T)$ explicitly in formulas. This is accomplished by applying the ring homomorphism $R(\T) \to R(\T\times \Gamma)$
given by $q\mapsto q\chi^{-1}$ and $t\mapsto t\chi$.

\subsection{Quiver varieties}
Let $V(\La_0)$ be the irreducible integrable highest weight $\hat{\fsl}_r$-module of highest weight $\La_0$, i.e., the basic representation. By \cite{Nak1}, the weight spaces of $V(\La_0)$ may be realized by the top degree homology of various Nakajima quiver varieties $\fM$ for the cyclic quiver with one-dimensional framing space. These quiver varieties provide the geometric setting for wreath Macdonald theory \cite{H}. Their equivariant $K$-theory groups $K_\T^*(\fM)$ realize the weight spaces in an irreducible quantum toroidal algebra module which we shall consider in the next section (see Remark~\ref{R:Fock geom} below). 

\subsubsection{Affine root lattice and dimension vectors for cyclic quiver}
In this article we always use a one-dimensional framing at node $0$ only.
Given $\bv=(v_i\mid i\in I)\in \Z_{\ge0}^I$, let $V_i$ be an $v_i$-dimensional vector space 
over $\C$. This dimension vector is written $\sum_{i\in I} v_i\alpha_i\in Q_\af^+$.
Since the null root equals $\delta=\sum_{i\in I} \alpha_i$, the expression $n\delta$ will sometimes be used for the vector having dimension $n$ at each quiver vertex. 

\subsubsection{Stability conditions}
Let $G=\prod_{i\in I} GL(V_i)$ and let $\det_i$ be the determinant character on $GL(V_i)$ for $i\in I$. For each $\theta=(\theta_i\mid i\in I)\in \Z^I$, define the following character of $G$: $\chi^\theta = \prod_{i\in I} \det_i^{\theta_i}$.

\subsubsection{Quiver variety}
Let $\fM_\theta(\bv)$ be the Nakajima quiver variety for the cyclic quiver with 
one-dimensional framing at the node $0$, quiver dimension vector $\bv$, 
and stability condition given by $\chi^\theta$ for $\theta\in\Z^I$.
For the standard stability condition $\theta=(1,1,\dotsc,1)\in\Z^I$ the top degree homology of $\fM_\theta(\bv)$ realizes the $(\La_0-\sum_{i\in I} v_i \alpha_i)$-weight space in $V(\La_0)$ \cite{Nak1}.

\subsubsection{Stability alcoves and the affine Weyl group}
We consider how the quiver varieties $\fM_\theta(\bv)$ change as the stability condition $\theta$ changes. The variety $\fM_{k\theta}(\bv)$ is invariant up to isomorphism for $k\in \Z_{>0}$. Therefore we may work with a parameter $\theta\in \Theta=\Q^I$, modulo scaling by $\Q_{>0}$. Let $X^\af_\Q = \Q\delta \oplus \bigoplus_{i\in I} \Q \La_i$ be the affine weight lattice tensored with $\Q$, with null root $\delta$ and fundamental weights $\La_i$. 
We use a linear isomorphism $\Theta \cong X^\af_\Q/\Q\delta$ defined by
$\sum_{i\in I} \theta_i \epsilon_i \mapsto \sum_{i\in I} \theta_i \La_i$. 
The level of $\theta\in\Theta$ is by definition $\pair{\theta}{c}=\sum_{i\in I}\theta_i$, where $c$ is the canonical central element. Let $\Theta_\ell$ be the elements of $\Theta$ of level $\ell\in \Q$.
Let us assume $\theta\ne0$. By scaling we can assume $\theta\in \Theta_{\pm1}$.

Let $X_\Q = \bigoplus_{i\in I\setminus\{0\}} \Q \omega_i$ be the finite weight lattice tensored with $\Q$ where $\omega_i$ is the $i$-th finite fundamental weight.
Let $\cl: X^\af_\Q \to X_\Q$ be the restriction map. It has kernel $\Q\delta\oplus \Q\La_0$. Consider the lift $\omega_i\mapsto\La_i-\La_0$ for $i\in I\setminus\{0\}$. Via this lift we have
$\Theta_{\pm1} \cong \pm \La_0 \oplus X_\Q$.

Define $\theta_{\pm} = \pm (1/r) \sum_{i\in I} \La_i$. Note that the vector $\theta_\pm$ has level $\pm 1$ and that $\theta_+$ is a $\Q_{>0}$-multiple of the standard stability condition.

There is a natural action of $\Waf$ on $\Theta_\ell$ for each $\ell\in \Q$.
For every $i\in I$ and $m\in \Z$ let $L_{i,m}$ be the hyperplane in $\Q^I$ consisting of the vectors $\beta$ such that $\pair{\beta}{\alpha_i^\vee}=m$. The affine hyperplane arrangement is the set of hyperplanes of the form $w(L_{i,m})$ for $w\in \Waf$, $i\in I$, and $m\in\Z$.
Let $\Theta_{\pm1}^\reg$ denote the subset of $\Theta_{\pm1}$ that avoids all the hyperplanes in the affine hyperplane arrangement. The affine Weyl group $\Waf$ acts on $\Theta_{\pm1}^\reg$, permuting the connected (for the usual topology on Euclidean space) components of $\Theta_{\pm1}^\reg$, which are called alcoves. There are exactly two orbits of alcoves, $\Waf \cdot\theta_+$ and $\Waf\cdot \theta_-$.
By \cite{G} the quiver variety $\fM_\theta(\bv)$ is smooth and stays the same up to isomorphism as $\theta$ varies over an alcove. Therefore we have a stability condition for every pair $(w,+)$ or $(w,-)$ for $w\in\Waf$.

\begin{rem} Gordon \cite{G} uses two parametrizations of stability condition. One is 
$(h,H_1,\dotsc,H_{r-1})\in \Q^r$, which in our notation is $-h\La_0 + \sum_{i=1}^{r-1} H_i\omega_i$.
The other is $\theta\in \Q^\ell$ which we have identified with $\sum_{i\in I} \theta_i\La_i$.
\end{rem}

\subsubsection{Weight spaces}

The nonzero weight spaces in $V(\La_0)$ have weights of the form $t_{\beta^\vee}\La_0-n\delta$ for 
$(\beta,n)\in Q\times \Z_{\ge0}$ \footnote{We shall use the type A isomorphism $Q\mapsto Q^\vee$ sending $\alb_i\mapsto \alb_i^\vee$ for $i\in I\setminus\{0\}$ without further mention.}.
Subtracting this weight from the highest weight we get an element 
$\La_0-t_{\beta^\vee}\La_0+n\delta \in \bigoplus_{i\in I} \Z_{\ge0}\alpha_i$. 
By \cite[Eq. (6.5.2)]{Kac} we have 
\begin{align*}
  t_{\beta^\vee} \cdot \La_0 = \La_0 + \beta - \dfrac{(\beta,\beta)}{2} \delta.
\end{align*}

Let $\fM_{\beta,n} = \fM_{\theta_+}(\La_0-t_{\beta^\vee}\La_0+n\delta) = \fM_{\theta_+}(-\beta+((1/2)(\beta,\beta)+n)\delta).$

\subsubsection{$\Gamma$-fixed locus in $\Hilb_N$}
It is well-known (see, e.g., \cite[Theorem 12.13]{Kirbk}) that $\fM_{\beta,n}$ is isomorphic to the irreducible component of the $\Gamma$-fixed locus $\Hilb_N^\Gamma=\Hilb_N(\C^2)^\Gamma$ consisting of $\Gamma$-invariant ideals $I\subset \C[x,y]$ such that the equality $\Gamma$-characters $\mathrm{ch}_\Gamma \C[x,y]/I = \sum_{i\in I} v_i \chi^i$ holds. Here 
$\bv$ is determined by $t_{\beta^\vee}\cdot\Lambda_0-n\delta=\Lambda_0-\sum_{i\in I}v_i \alpha_i$, $N=\sum_{i\in I}v_i$, and $\Gamma$ acts on $\Hilb_N$ via the embedding $\Gamma\to\T$. In terms of the $r$-core $\gamma = \kb^{-1}(\beta)\in\cC$, one has $N = |\gamma| + rn$. Moreover, this isomorphism is $\T$-equivariant for a natural $\T$-action on $\fM_{\beta,n}$. The $\T$-fixed points of $\fM_{\beta,n}$ correspond precisely to the monomial ideals $I_\lambda$ such that $|\lambda|=N$ and $\core(\lambda)=\gamma$ \cite[Prop. 7.2.18]{H}.

\subsubsection{Correspondence between $\T$-fixed points}
For any $w=ut_{-\beta^\vee}\in\Waf$, where $u\in \Wfin$ and $\beta\in Q$, the composition of Nakajima reflection functors $R_{w^{-1}}$ \cite{Nak3} defines an isomorphism
\begin{align}\label{E:rf-iso}
\fM_{w\theta_+}(n\delta) \to \fM_{\beta,n}.
\end{align}
By \cite{G} (see also \cite{Nag1,P}), the $\T$-fixed points $I_{\mud}^w$ of $\fM_{w\theta_+}(n\delta)$ are naturally labeled by the set $\Y^I_n$ of $r$-multipartitions $\mud$ of total size $n$, so that $I_{\mud}^w$ corresponds to the $\T$-fixed point $I_{\tau(w^{-1}(\mud,0))}$ in $\Hilb_N^\Gamma$, where $\tau: \Y^I\times Q \to \Y$ is the bijection of \S \ref{SS:core-quot}.


\begin{rem} 
The combinatorial partial order $\dom_w$ on multipartitions defined in \eqref{E:partial order} refines 
the geometric partial order defined by attracting sets with respect to the hyperbolic $\mathbb{C}^*$-action on $\fM_{\beta,n}$ \cite{G} (see also \cite[\S 2.3]{BF} and \cite{P}). This $\mathbb{C}^*$-action corresponds to the subgroup of matrices $\diag(t_1,t_1^{-1})\in\T$ acting on $\Hilb_N^\Gamma$.
\end{rem}


\subsubsection{Small tautological bundle and bigraded reflection functors}
The following is due to Haiman \cite{H,H:private}.
Let $M^w_n$ be the bundle of tautological quiver data over $\fM_{w\theta_+}(n\delta)$. We call this the small tautological bundle; it has rank $rn$. Using the isomorphism \eqref{E:rf-iso}, we regard $M^w_n$ as a vector bundle on $\fM_{\beta,n}$. 
In contrast the (``big") tautological bundle $\Taut_N$ over $\fM_{\beta,n}$, has rank $N=rn+|\kb^{-1}(\beta)|$.

For $\mud\in \Y^I_n$ define
\begin{align*}
  \cB^w_\mud(q,t,\chi) = \mathrm{ch}_{\Gamma\times\T} M^w_n|_{I_{\mud}^w}\in R(\Gamma\times\T).
\end{align*}

We use $R(\Gamma\times\T)= \Z[q^{\pm1},t^{\pm1}][\chi]$; it has $R(\T)$-basis $\chi^i$ for $i\in I$.
For $i\in I$ and $f\in R(\Gamma\times\T)$ let $[\chi^i]f$ denote the coefficient of $\chi^i$ in $f$.

The following operator computes the change in the $\Gamma\times\T$-grading of the fibers of tautological quiver data bundles over $\T$-fixed points under the application of a Nakajima reflection functor $R_i$.
For $i\in I$, define the operator $R_i^*$ on $R(\Gamma\times\T)$ by
\begin{align*}
  R_i^*(f) = f + \left(q^{-1}t^{-1}\delta_{i0} - [\chi^i] (\id-q^{-1}\chi)(\id-t^{-1}\chi^{-1}) f\right) \chi^i\quad\text{for $i\in I$.}
\end{align*}

If $f=\sum_{i\in I} f^{(i)} \chi^i$ with $f^{(i)}\in R(\T)$ and $g = R_i^*(f)=\sum_{i\in I} g^{(i)} \chi^i$ then for $j\in I$ we have
\begin{align*}
  g^{(j)} &= \begin{cases}
      f^{(j)} & \text{if $j\ne i$} \\
      q^{-1}t^{-1} \delta_{i0} - q^{-1}t^{-1} f^{(i)} + t^{-1} f^{(i+1)}+q^{-1} f^{(i-1)} & \text{if $j=i$.}
  \end{cases}
\end{align*}

\begin{thm}[\cite{H:private}]
\begin{enumerate}
    \item $R_i^* R_{i+1}^* R_i^* = R_{i+1}^* R_i^* R_{i+1}^*$ for all $i\in I$.
    \item $(R_i^*-1)(R_i^*+q^{-1}t^{-1})=0$ for all $i\in I$.
    \item Let $w\in \Waf$ and $n\in\Z_{\ge0}$. Then for all $\mud\in \Y^I$ 
\begin{align*}
  \cB^w_\mud(q,t,\chi) &= R^*_{i_1} R^*_{i_2}\dotsm R^*_{i_\ell} B_\mu(q\chi^{-1},t\chi)
\end{align*}
where $w=s_{i_1}s_{i_2}\dotsm s_{i_\ell}$ is a reduced decomposition, $\mu=\tau(w^{-1}(\mud,0))$, 
and $B_\mu(q,t)$ is defined in \eqref{E:Bmu}.
\end{enumerate}
\end{thm}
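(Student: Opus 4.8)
The plan is to settle parts (1) and (2) by a direct (and short) computation with the explicit operators $R_i^*$, and to prove part (3) geometrically, by running the tautological quiver-data bundle through the chain of Nakajima reflection functors that realizes \eqref{E:rf-iso}.

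For (1) and (2): since $R_i^*$ alters only the $\chi^i$-coefficient of its argument, taking the $\chi^{i-1},\chi^i,\chi^{i+1}$-coefficients as input, both sides of the braid relation touch at most the coefficients of $\chi^{i-1},\chi^i,\chi^{i+1},\chi^{i+2}$, so (1) reduces to an identity of operators on a free $R(\T)$-module of rank four, and likewise (2) reduces to rank three. The only subtlety is the framing term $q^{-1}t^{-1}\delta_{i0}$, which makes $R_0^*$ affine rather than $R(\T)$-linear: the relations in (1)--(2) are to be read as polynomial identities in the operator $R_i^*$ with scalars acting diagonally, and one checks that the framing contributions cancel. Alternatively one may deduce (1)--(2) from the braid and Hecke-type relations for Nakajima's reflection functors on equivariant $K$-theory, whose shadow on tautological classes the $R_i^*$ are; this is also the route to (3).

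For (3) the mechanism is the compatibility of Nakajima's reflection functors with the tautological quiver-data bundles. Recall that $\fM_{\beta,n}$ carries its ``big'' tautological quiver-data bundle $\Taut_N$ of rank $N=rn+|\kb^{-1}(\beta)|$, while $M^w_n$ is the ``small'' tautological quiver-data bundle of $\fM_{w\theta_+}(n\delta)$, of rank $rn$, transported to $\fM_{\beta,n}$ along the reflection-functor isomorphism \eqref{E:rf-iso}. Fix a reduced word $w=s_{i_1}\cdots s_{i_\ell}$, so that \eqref{E:rf-iso} is realized by a chain of $\ell$ reflection functors. The key input is: under a single reflection functor $R_{s_i}$ the tautological quiver-data bundle transforms by the node-$i$ reflection formula on $K$-theory, $\mathcal{V}_i\mapsto q^{-1}t^{-1}(\mathcal{W}_i+q\,\mathcal{V}_{i+1}+t\,\mathcal{V}_{i-1}-\mathcal{V}_i)$ with $\mathcal{W}_i=\delta_{i0}$ the one-dimensional framing at node $0$; restricting to a $\T$-fixed point and recording the $\Gamma$-grading via $\resGamma(q)=\chi^{-1}$, $\resGamma(t)=\chi$, this is precisely the operator $R_i^*$. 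Moreover $R_{s_i}$ permutes $\T$-fixed points compatibly with $s_i$ acting on their $\Y^I$-labels, since $\tau$ is $\Waf$-equivariant (\S\ref{SS:core quotient via maya}). Granting this, I would run the chain carrying $\fM_{\beta,n}$ onto $\fM_{w\theta_+}(n\delta)$, starting from the fiber of $\Taut_N$ at the $\T$-fixed point $I_\mu$, $\mu=\tau(w^{-1}(\mud,0))$, that corresponds to $I^w_\mud$; from the definition of $B_\mu$ and the conventions of \S\ref{SS:McKay and cyclic quiver} one has $\ch_{\Gamma\times\T}\Taut_N|_{I_\mu}=\ch_{\Gamma\times\T}(\C[x,y]/I_\mu)=B_\mu(q\chi^{-1},t\chi)$. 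Applying the operators $R^*_{i_1},\dots,R^*_{i_\ell}$ dictated by the successive reflection functors (that is, forming the composite $R^*_{i_1}\cdots R^*_{i_\ell}$) then yields $\ch_{\Gamma\times\T}M^w_n|_{I^w_\mud}=\cB^w_\mud$ at the end of the chain, which is the asserted formula (the case $\ell=0$ being the tautology $M^{\mathrm{id}}_n=\Taut_{rn}$). Since $\cB^w_\mud$ does not depend on the choice of reduced word, the resulting expression is automatically word-independent, in accordance with (1).

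The principal obstacle is the key input of the previous paragraph: pinning down how $R_{s_i}$ acts on the $K$-theory class of the tautological quiver-data bundle, localized at a $\T$-fixed point, and checking it agrees with $R_i^*$. This means unwinding the reflection functor at the level of quiver representations, tracking the equivariant weights carried by the quiver arrows (which produce the $q$ and $t$ in the formula), the overall $(qt)^{-1}$ twist, and---most delicately---the one-dimensional framing at node $0$, which is the source of the inhomogeneous $\delta_{i0}$ term and the only reason the relations in (1)--(2) require the polynomial reading.
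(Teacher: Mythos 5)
The paper does not actually prove this theorem --- it is stated as due to Haiman's private communication --- so there is no internal proof to measure you against. Your strategy is, however, exactly the one the surrounding text signals (``the following operator computes the change in the $\Gamma\times\T$-grading of the fibers of tautological quiver data bundles \dots under the application of a Nakajima reflection functor $R_i$''), and the parts you do carry out are sound. For (1)--(2) your reduction to a finite check is correct, and your caveat about the inhomogeneous framing term is not pedantry but essential: writing $R_i^*=A_i+b_i$ with $b_i=q^{-1}t^{-1}\delta_{i0}\chi^i$, the naive composition $(R_0^*-1)\circ(R_0^*+q^{-1}t^{-1})$ leaves the nonzero residue $A_0b_0=-q^{-2}t^{-2}\chi^0$, whereas the polynomial reading $R_i^{*2}+(q^{-1}t^{-1}-1)R_i^*-q^{-1}t^{-1}$ produces the constant term $A_ib_i+q^{-1}t^{-1}b_i$, which does vanish. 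The braid relation, being a pure composition, needs no such reading, and the framing contributions cancel there as you claim (e.g.\ $A_0A_1b_0+b_0=A_1b_0$). Your bookkeeping for (3) is also consistent with the paper's conventions: $\ch_{\Gamma\times\T}\Taut_N|_{I_\mu}=B_\mu(q\chi^{-1},t\chi)$ follows from the weight assignments of \S\ref{SS:Haiman} and \S\ref{SS:McKay and cyclic quiver}, and the right-to-left order of application of $R^*_{i_\ell},\dotsc,R^*_{i_1}$ matches the chain of chambers $\theta_+\to s_{i_\ell}\theta_+\to\dotsm\to w\theta_+$ and the worked Example~\ref{X:quiver data}.

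The one genuine gap is the one you name yourself: the assertion that a single Nakajima reflection functor transforms the localized class of the tautological quiver-data bundle by $[\cV_i]\mapsto q^{-1}t^{-1}([\cW_i]+q[\cV_{i+1}]+t[\cV_{i-1}]-[\cV_i])$, with the fixed-point labels permuted compatibly with the $\Waf$-action on $\Y^I\times Q$. This is the entire content of part (3); everything else is formal. Establishing it requires unwinding the hyper-K\"ahler/GIT construction of $R_{s_i}$ (the new $\cV_i$ as the middle cohomology of the moment-map complex $\cV_i\to\cW_i\oplus\bigoplus_{j\sim i}\cV_j\to\cV_i$), pinning down the $\T$-weights on the two arrows of each doubled edge and the overall $(qt)^{-1}$ twist of the second map, and verifying the fixed-point matching against \cite{Nak3} and \cite{G}. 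As written your proposal identifies this input precisely but does not supply it, so the argument is an honest and correctly structured outline rather than a complete proof.
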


\begin{rem} 
The operators $R_i^*$ define a representation of the affine Hecke algebra.
\end{rem}

\begin{ex} \label{X:quiver data}
Let $r=3$, $w= t_{-\alb_2^\vee} = s_2 s_1s_0s_1$, and $\mud = (\ya,\cd,\ya)$.
We have $u=\id$, $\beta=\alb_2$, 
$t_{\beta^\vee}\cdot\vn=t_{\alb_2^\vee}\cdot\vn=s_1s_0s_1s_2\cdot\vn=\yaa$ and $u^{-1}\mud=(\ya,\cd,\ya)$.
The partition with 3-core $\yaa$ and 3-quotient $(\ya,\cd,\ya)$ is $\mu=(4,4)$.
\begin{align*}
\ytableausetup{boxsize=1.2cm}
\begin{ytableau}
t\chi & qt & q^2t\chi^2&q^3t\chi\\
1 & q\chi^2& q^2\chi & q^3 
\end{ytableau}
\end{align*}
\ytableausetup{boxsize=1mm}

Starting with $B_\mu(q\chi^{-1},t\chi)$ we have
\begin{align*}
\begin{tikzcd}[ampersand replacement=\&]
(q^3+qt+1)\chi^0 + (q^3t + q^2 + t)\chi^1 + (q^2 t+q)\chi^2 \arrow[d,"R_1^*"] \& (\ya,\cd,\ya)  \\
(q^3+qt+1)\chi^0+ (q^2+t)\chi^1 + (q^2t+q)\chi^2 \arrow[d,"R_0^*"] \&
(\cd,\ya,\ya)  \\
(qt+1)\chi^0 + (q^2+t)\chi^1 + (q^2t+q)\chi^2 \arrow[d,"R_1^*"] \& 
(\ya,\ya,\cd)  \\
(qt+1)\chi^0 + (q^2+t)\chi^1 + (q^2t+q)\chi^2 \arrow[d,"R_2^*"] \& 
(\ya,\ya,\cd)  \\
(qt+1)\chi^0 + (q^2+t)\chi^1 + (tq^{-1}+q)\chi^2 \& (\ya,\cd,\ya).
\end{tikzcd}
\end{align*}
The result is $\cB_\mud^w$.
\end{ex}

\subsection{Wreath analog of Haiman's Positivity Theorem}\label{SS:wreath Haiman}

We are now prepared to state the wreath analog of Theorem~\ref{T:Haiman}.

\begin{thm}[\cite{BF,L2}]\label{T:wreath Haiman}
For each $\beta\in Q$, $n\in\Z_{\ge 0}$, and $w=u t_{-\beta^\vee}\in \Waf$, there exists an equivalence 
\begin{align}\label{E:BK equivalence}
D^b(\Coh^\T(\fM_{\beta,n})) \cong D^b(\Coh^{\T\times \Gamma_n}(\C^{2n}))
\end{align}
between derived categories of equivariant coherent sheaves. Moreover, at the level of $K$-theory, this equivalence realizes the wreath Macdonald polynomial $\tH_{\mud}^w$ as the bigraded $\Gamma_n$-Frobenius character of the fiber $P^w_n|_{I_\mud^w}$ of a rank $r^nn!$ vector bundle $P^w_n$ on $\fM_{\beta,n}$ whose fibers afford the regular representation of $\Gamma_n$.
\end{thm}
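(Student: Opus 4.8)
The plan is to reproduce the argument of \cite{BF} (an independent route is given by \cite{L1,L2}), which proceeds in three stages: build the derived equivalence from a tilting generator; identify that generator --- the wreath Procesi bundle $P^w_n$ --- together with its fiberwise $\Gamma_n$-structure; and match the resulting $K$-theory isomorphism against the defining conditions \eqref{E:q triangularity}--\eqref{E:normalization} of $\tH^w_\mud$. For the first stage, the reflection-functor isomorphism \eqref{E:rf-iso} identifies $\fM_{\beta,n}$ with $X:=\fM_{w\theta_+}(n\delta)$, a conical symplectic resolution $X\to V/G$ of the symplectic quotient singularity $V/G$, where $V=\C^{2n}$ and $G=\Gamma_n\subset Sp(V)$ acts with $\fS_n$ permuting the $\C^2$-factors and $\Gamma$ acting on each via $\zeta\mapsto\diag(\zeta^{-1},\zeta)$. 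To any such resolution the general existence theorem of Bezrukavnikov and Kaledin \cite{BK}, proved by reduction to positive characteristic, attaches a tilting vector bundle $P$ on $X$ with $\End_{\cO_X}(P)\cong\C[V]\rtimes G$, the smash-product algebra; it follows formally that $R\Hom(P,-)$ gives an equivalence $D^b(\Coh(X))\cong D^b(\Coh^{\Gamma_n}(\C^{2n}))$, the right side being the derived category of finitely generated $\C[V]\rtimes G$-modules. Carrying this out $\T$-equivariantly --- the conical $\T$-action lifts to $P$ --- produces \eqref{E:BK equivalence}, and one takes $P^w_n$ to be $P$ transported to $\fM_{\beta,n}$ along \eqref{E:rf-iso}.

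The second stage is to record three properties of $P^w_n$. First, $\mathrm{rk}\,P^w_n=|\Gamma_n|=r^nn!$, forced by $\mathrm{rk}_{\C[V]}(\C[V]\rtimes G)=|G|^2=(\mathrm{rk}\,P^w_n)^2$. Second, every fiber affords the regular representation of $\Gamma_n$: over a free $G$-orbit this is forced by the local structure of $\C[V]\rtimes G$, and the $\Gamma_n$-isotypic ranks of $P^w_n$ are then constant over the smooth connected variety $X$. Third, every fiber is a $\Gamma_n$-equivariant quotient of $\C[V]=\C[x_1,\dots,x_n,y_1,\dots,y_n]$, with each $x_i$ (resp. $y_i$) of $\T$-weight $t$ (resp. $q$); this is built into the construction of $P^w_n$ through a tautological surjection $\cO_X\otimes\C[V]\twoheadrightarrow P^w_n$, and it is the input that yields positivity of the wreath Macdonald--Kostka coefficients (Theorem~\ref{T:wreath pos}), just as property \eqref{I:pol} does in \S\ref{SS:Haiman}.

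The third stage is the combinatorial identification. At the level of $K$-theory the equivalence sends the skyscraper class $[I_\mud^w]$ at a $\T$-fixed point to $F_\mud:=\mathrm{Frob}_{q,t}(P^w_n|_{I_\mud^w})\in(\Lambda^{\otimes r})_n$, by the same global-sections-then-bigraded-Frobenius recipe as in \S\ref{SS:Haiman}; it remains to check that $F_\mud$ satisfies \eqref{E:q triangularity}--\eqref{E:normalization}, whereupon uniqueness of the solution forces $F_\mud=\tH^w_\mud$. The normalization \eqref{E:normalization} follows from the regular-representation property together with the canonical unit section $\cO\hookrightarrow P^w_n$, which contributes the trivial $\Gamma_n$-subrepresentation in bidegree $(0,0)$, so that $\pair{s_n[X^{(0)}]}{F_\mud}=1$. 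For the two triangularities one analyzes the Bia\l ynicki--Birula decomposition of $\fM_{\beta,n}$ for the hyperbolic one-parameter subgroup $\diag(t_1,t_1^{-1})\subset\T$: its fixed points are exactly the $I_\mud^w$, and its attracting and repelling cell filtrations triangularize the transition from the fixed-point basis to the tensor Schur basis with respect to $\dom_w$ and $\rdom_w$ respectively, the partial orders $\dom_w$ having been defined to match precisely this geometric order (see \S\ref{S:quiver varieties} and \cite{G,BF,P}). The matrix plethysms $\cP_{\id-q\chi^{-1}}$ and $\cP_{\id-t^{-1}\chi^{-1}}$ in \eqref{E:q triangularity}--\eqref{E:t triangularity} are then recognized as the $\Gamma$-graded refinements --- obtained via the substitution $q\mapsto q\chi^{-1}$, $t\mapsto t\chi$ of \S\ref{SS:McKay and cyclic quiver} --- of Haiman's plethysms $f\mapsto f[(1-q)X]$ and $f\mapsto f[(1-t^{-1})X]$, reflecting that the coordinate $y$ carries $\Gamma$-graded $\T$-weight $q\chi^{-1}$ and $x^{-1}$ carries $\Gamma$-graded weight $t^{-1}\chi^{-1}$. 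With these in hand, the verification of \eqref{E:q triangularity}--\eqref{E:t triangularity} runs parallel to Haiman's verification of \eqref{E:tH-def-q}--\eqref{E:tH-def-t-inv}, now with $\dom_w$ in the role of dominance order.

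The hard part is the Bezrukavnikov--Kaledin existence theorem used in the first stage: producing a single tilting bundle on $X$ with $\End\cong\C[V]\rtimes G$ and the polynomial-quotient property of the second stage simultaneously. In \cite{BF} this rests on the characteristic-$p$ Azumaya-algebra machinery of \cite{BK}; in \cite{L1,L2} it is instead obtained by deforming $X$ over the base of its universal Poisson deformation and constructing the bundle by degeneration from a model case, which additionally produces a classification of all Procesi bundles. The feature genuinely special to $r>1$ is that this classification is nontrivial --- there are several non-isomorphic Procesi bundles on $\fM_{\beta,n}$, roughly one for each relevant choice of the finite part $u\in\Wfin$ of $w$ --- so one must verify, tracking the bundle through the reflection functors, that the one matched to the combinatorial order $\dom_w$ is the correct one; this is precisely where the affine Weyl group genuinely enters the argument.
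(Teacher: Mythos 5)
The paper does not prove this theorem itself---it is quoted from \cite{BF,L2}---so the comparison is between your outline and the external argument as the survey describes it. Your first and third stages (tilting bundle from the Bezrukavnikov--Kaledin machinery transported along the reflection functors \eqref{E:rf-iso}, then matching the $K$-theoretic images of fixed-point classes against \eqref{E:q triangularity}--\eqref{E:normalization} using the hyperbolic $\C^\times$-attracting-set order and the $\Gamma$-graded refinement $q\mapsto q\chi^{-1}$, $t\mapsto t\chi$ of Haiman's plethysms) are a faithful sketch of that route, and your closing remark about having to select the correct one of the $2|\Wfin|$ Procesi bundles for a given $u$ is exactly the point the survey attributes to \cite{L1}.

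Your second stage, however, contains a concrete error. You assert that every fiber of $P^w_n$ is a $\Gamma_n$-equivariant quotient of $\C[x_1,\dotsc,x_n,y_1,\dotsc,y_n]$ via a tautological surjection $\cO_X\otimes\C[V]\twoheadrightarrow P^w_n$, and that this property is the input for Theorem~\ref{T:wreath pos}. This is false for $r>1$: the remark following \eqref{E:BK iso image} exhibits $r=3$ examples such as $\tH^{s_1}_{(\ya,\cd,\cd)} = s_{(\ya,\cd,\cd)}+qt^{-1}s_{(\cd,\ya,\cd)}+q\, s_{(\cd,\cd,\ya)}$, whose bigraded characters involve negative powers of $t$ and admit no unique minimal bidegree, so the corresponding fibers cannot be cyclic quotients of the polynomial ring. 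This is precisely why Theorem~\ref{T:wreath pos} asserts only \emph{Laurent} positivity $\tK^w_{\lad,\mud}(q,t)\in\Z_{\ge 0}[q^{\pm1},t^{\pm1}]$, which follows from the Frobenius-character realization \eqref{E:BK iso image} alone; in the $r=1$ case the quotient property is what upgrades Laurent positivity to genuine polynomiality, and that upgrade is unavailable here. Relatedly, deriving the normalization \eqref{E:normalization} from a ``canonical unit section'' is not justified once the quotient structure is gone; the normalization actually imposed in the paper is \eqref{E:wreath Procesi normalization}, $(P^w_n)^{\Gamma_{n-1}}=M^w_n$, pinned to the small tautological bundle rather than to a distinguished copy of $\cO$ in bidegree $(0,0)$.
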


The equivalence \eqref{E:BK equivalence} originates from a more general result of Bezrukavnikov and Kaledin \cite{BK}; the work \cite{BF} connects it to wreath Macdonald theory. The vector bundle $P^w_n$ on $\fM_{\beta,n}$ described above is the wreath Procesi bundle. These were first considered for arbitrary $u\in\Wfin$ by Losev \cite{L1}, who also classified Procesi bundles on symplectic resolutions of quotient singularities in greater generality. The Procesi bundle we denote by $P^w_n$ is normalized so that 
\begin{align}\label{E:wreath Procesi normalization}
(P^w_n)^{\Gamma_{n-1}} = M^w_n.
\end{align}
We note that the methods of \cite{BF,L2} provide alternative proofs of Haiman's Theorem (Theorem~\ref{T:Haiman}) in the $r=1$ case.

As in the case of Haiman's Positivity Theorem, we pass from the setting of Theorem~\ref{T:wreath Haiman} to (multi-)symmetric functions by applying the $K_0$ functor, which gives an isomorphism
\begin{align*}
K^{\T}(\fM_{\beta,n}) \cong K^{\T\times\Gamma_n}(\C^{2n})
\end{align*}
of $R(\T)$-modules. We proceed as before, but replacing the $\fS_n$-Frobenius character by the $\Gamma_n$-Frobenius character \cite[I, Appendix B]{Mac} given by the map
\begin{align}\label{E:wreath Frob}
\mathrm{Frob}^\Gamma : R(\Gamma_n) \to (\Lambda^{\otimes I}_\Z)_n
\end{align}
sending an irreducible character $\chi^{\lad}$ of $\Gamma_n$ to the tensor Schur function $s_{\lad}$; we denote the bigraded $\Gamma_n$-Frobenius series by $\mathrm{Frob}_{q,t}^\Gamma$. 

Altogether we have an isomorphism
\begin{align*}
\Phi^w_n : K^{\T}(\fM_{\beta,n})_\loc \to (\Lambda^{\otimes I})_n
\end{align*}
of $\K$-vector spaces, which by Theorem~\ref{T:wreath Haiman} satisfies 
\begin{align}\label{E:BK iso image}
\Phi^w_n([I_\mud^w])=\mathrm{Frob}_{q,t}^\Gamma(P^w_n|_{I^w_\mud})=\tH^w_\mud.
\end{align}
The Laurent integrality and positivity of wreath Macdonald-Kostka coefficients (Theorem \ref{T:wreath pos}) follows directly from \eqref{E:BK iso image}.

\begin{rem}
In contrast to the $r=1$ case, the wreath Procesi fibers $P^w_n|_{I^w_\mud}$ are not quotients of $\C[x_1,\dotsc,x_n,y_1,\dotsc,y_n]$ in general. For instance, it is impossible for either of the $r=3$ examples
\begin{align*}
\tH^{s_1}_{(\ya,\cd,\cd)} &= s_{(\ya,\cd,\cd)}+qt^{-1}s_{(\cd,\ya,\cd)}+q s_{(\cd,\cd,\ya)}\\
\tH^{t_{-\al_1^\vee-\al_2^\vee}}_{(\cd,\ya,\cd)} &= 
s_{(\ya,\cd,\cd)}+q^{-1}s_{(\cd,\ya,\cd)}+t^{-1}s_{(\cd,\cd,\ya)} 
\end{align*}
to be a bigraded $\Gamma_n$-Frobenius character of a module $\C[x_1,\dotsc,x_n,y_1,\dotsc,y_n]/I$ for some $\Gamma_n$-invariant homogeneous ideal $I$. These examples do not have a unique minimal bi-degree, which the image of $1$ would provide in such a module.
\end{rem}

\begin{rem}
According to \cite{L1}, there are exactly $2|\Wfin|$ non-equivalent Procesi bundles on $\fM_{\beta,n}$. In our formulation of Theorem~\ref{T:wreath Haiman}, we only consider half of these. The other half is obtained by applying the dual. The resulting multisymmetric functions are precisely the orthogonal basis $\inv\,\negate\,\tH^w_\mud$ appearing in Proposition~\ref{P:H orthogonal}.
\end{rem}

Let us work out the condition that \eqref{E:wreath Procesi normalization} imposes on 
the wreath Macdonald-Kostka coefficients.
Under the bigraded wreath Frobenius map, the wreath Procesi fiber maps to the wreath Macdonald polynomial: $P_n^w|_{I_{\mud}^w}\mapsto\tH^w_\mud$. Restriction to $\Gamma_{n-1}$ is achieved on $\La^{\otimes I}$ by the operator $\sum_{i\in I} s_1[X^{(i)}]^\perp$ where $\perp$ is the adjoint of multiplication with respect to the Hall scalar product on $\La$. Taking $\Gamma_{n-1}$-invariants is accomplished by taking the coefficient of $s_{n-1}[X^{(0)}]$, the image of the trivial $\Gamma_{n-1}$ character under wreath Frobenius. Thus \eqref{E:wreath Procesi normalization} is equivalent to the following.

\begin{prop}[\cite{H:private}]
For all $w\in\Waf$ and $\mud\in\Y^I$ with $|\mud|=n$ we have
\begin{align*}
  [\chi^i] \cB_{\mud}^{w} = 
  \begin{cases}
  \pair{s_{(n-1)}[X^{(0)}] s_1[X^{(i)}]}{\tH^{w}_{\mud}}& \text{for $i\in I\setminus \{0\}$} \\
  1 + \pair{s_{(n-1,1)}[X^{(0)}]}{\tH^{w}_{\mud}}& \text{for $i=0$.} 
  \end{cases}
\end{align*}
Here for $n=1$ the $s_{(n-1,1)}$ should be interpreted as $0$.
\end{prop}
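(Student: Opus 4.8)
The plan is to unwind the normalization \eqref{E:wreath Procesi normalization} over the $\T$-fixed points of $\fM_{\beta,n}$, translating the geometric identity $(P^w_n)^{\Gamma_{n-1}}=M^w_n$ into an identity of multisymmetric functions via the bigraded wreath Frobenius correspondence, while tracking the residual $\Gamma$-grading. Restricting \eqref{E:wreath Procesi normalization} to the $\T$-fixed point $I^w_\mud$ — and using that $\Gamma_n$ acts fiberwise on $P^w_n$, so that passing to $\Gamma_{n-1}$-invariants commutes with this restriction — yields an isomorphism $(P^w_n|_{I^w_\mud})^{\Gamma_{n-1}}\cong M^w_n|_{I^w_\mud}$ of $\Gamma\times\T$-modules, where $\Gamma_{n-1}\subset\Gamma_n$ is the parabolic subgroup attached to the first $n-1$ slots and $\Gamma$ denotes the residual copy acting on the $n$-th slot. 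Comparing $\Gamma\times\T$-characters and recalling the definition $\cB^w_\mud=\mathrm{ch}_{\Gamma\times\T}M^w_n|_{I^w_\mud}$ together with \eqref{E:BK iso image}, the statement reduces to computing the $\Gamma\times\T$-character of $(P^w_n|_{I^w_\mud})^{\Gamma_{n-1}}$ in terms of $\tH^w_\mud$.

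For this I would invoke the wreath Frobenius dictionary recalled just before the statement, which I would make precise as follows. Under the graded algebra isomorphism $\bigoplus_n R(\Gamma_n)\otimes_\Z R(\T)\cong\Lambda^{\otimes I}$ of \cite[I, Appendix B]{Mac}, induction $\mathrm{Ind}^{\Gamma_n}_{\Gamma_{n-1}\times\Gamma}$ corresponds to multiplication; since $s_1[X^{(i)}]$ is the Frobenius image of the one-dimensional $\Gamma$-module $\chi^i$, the adjoint restriction $\mathrm{Res}^{\Gamma_n}_{\Gamma_{n-1}\times\Gamma}$ of a $\Gamma_n$-module $V$ has $\chi^i$-isotypic part (for the residual $\Gamma$) whose underlying $\Gamma_{n-1}$-module has bigraded Frobenius $s_1[X^{(i)}]^\perp\,\mathrm{Frob}^\Gamma_{q,t}(V)\in(\Lambda^{\otimes I})_{n-1}$, and passing to $\Gamma_{n-1}$-invariants then amounts to pairing against $s_{n-1}[X^{(0)}]$, the Frobenius image of the trivial $\Gamma_{n-1}$-character. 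Applying this with $V=P^w_n|_{I^w_\mud}$ and $\mathrm{Frob}^\Gamma_{q,t}(V)=\tH^w_\mud$, we get for every $i\in I$
\begin{align*}
[\chi^i]\cB^w_\mud = \pair{s_{n-1}[X^{(0)}]}{s_1[X^{(i)}]^\perp\,\tH^w_\mud} = \pair{s_{n-1}[X^{(0)}]\,s_1[X^{(i)}]}{\tH^w_\mud},
\end{align*}
the last step being adjunction with respect to the tensor Hall pairing.

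It then remains to rewrite the right-hand side according to whether $i=0$. For $i\neq 0$ the product $s_{n-1}[X^{(0)}]\,s_1[X^{(i)}]$ is already the tensor Schur function $s_\lad$ with $\lad$ having $(n-1)$ in slot $0$ and $(1)$ in slot $i$, giving the first case. For $i=0$ the Pieri rule in the variables $X^{(0)}$ gives $s_{n-1}[X^{(0)}]\,s_1[X^{(0)}]=s_{(n)}[X^{(0)}]+s_{(n-1,1)}[X^{(0)}]$, with the second term absent when $n=1$, and the normalization \eqref{E:normalization}, namely $\pair{s_n[X^{(0)}]}{\tH^w_\mud}=1$, yields the second case.

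The step I expect to be the main obstacle is pinning down the wreath Frobenius dictionary used in the second paragraph: that ``restrict to $\Gamma_{n-1}\times\Gamma$, then take $\Gamma_{n-1}$-invariants'' corresponds to $f\mapsto\sum_{i\in I}\pair{s_{n-1}[X^{(0)}]}{s_1[X^{(i)}]^\perp f}\,\chi^i$, with the residual $\Gamma$-weight $\chi^i$ matched correctly both to the skewing variable $X^{(i)}$ and to the McKay labeling $i\mapsto\chi^i$ entering the definition of $\cB^w_\mud$. This is the wreath analog of the classical identity $\mathrm{Frob}(\mathrm{Res}^{\fS_n}_{\fS_{n-1}}V)=s_1^\perp\mathrm{Frob}(V)$ and follows formally from the Hopf-algebra structure on $\bigoplus_n R(\Gamma_n)$, but the extra $\Gamma$-bookkeeping is the substantive point. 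The converse implication — that the displayed formulas recover \eqref{E:wreath Procesi normalization} — is the same computation read backwards, using that the $\T$-characters at all $\T$-fixed points determine a class in localized equivariant $K$-theory.
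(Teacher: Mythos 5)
Your argument is correct and is essentially the paper's own: the proposition is obtained by restricting the normalization \eqref{E:wreath Procesi normalization} to a $\T$-fixed point, translating restriction to $\Gamma_{n-1}$ into the skewing operators $s_1[X^{(i)}]^\perp$ (with $\chi^i$ tracking the residual $\Gamma$-grading), taking $\Gamma_{n-1}$-invariants by pairing with $s_{n-1}[X^{(0)}]$, and then applying adjunction, the Pieri rule in the $X^{(0)}$ variables, and the normalization \eqref{E:normalization} for the $i=0$ case. You have simply filled in the wreath Frobenius bookkeeping that the paper leaves implicit.
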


\begin{ex} Let $r$, $w$, and $\mud$ be as in Example \ref{X:quiver data}.
Using the reverse lex order on big partitions, the induced total order on multipartitions (which refines the order $\dom_w$) is
\[
\begin{array}{|c|c|c|c|c|c|c|c|c|}\hline
71 & 62 & 44 & 422 & 41^4 & 3221 & 321^3 & 22211 & 1^8 \\ \hline
(\yb,\cd,\cd) & 
(\cd,\cd,\yb) &
(\ya,\cd,\ya) & 
(\yaa, \cd,\cd) &
(\ya,\ya,\cd) & 
(\cd,\cd,\yaa) & 
(\cd,\ya,\ya) & 
(\cd,\yb,\cd) & 
(\cd,\yaa,\cd) \\ \hline
\end{array}.
\]

We have
\begin{align*}
\tH^w_\mud &= s_{(\yb,\cd,\cd)} + qt \,s_{(\yaa,\cd,\cd)} + (q^2+t)s_{(\ya,\ya,\cd)} + (tq^{-1}+q) s_{(\ya,\cd,\ya)} \\
&+ q \,s_{(\cd,\yb,\cd)} + q^2 t \,s_{(\cd,\yaa,\cd)} + (qt+1)s_{(\cd,\ya,\ya)} + q^{-1}\, s_{(\cd,\cd,\yb)} + t \,s_{(\cd,\cd,\yaa)}.
\end{align*}
We have 
\begin{align*}
[\chi^0]\cB_{\mud}^w&= 1 + qt = 1 + \pair{s_{(\yaa,\cd,\cd)}}{\tH^{w}_{\mud}} \\
[\chi^1]\cB_\mud^w &= q^2+t = \pair{s_{(\ya,\ya,\cd)}}{\tH^w_\mud} \\
[\chi^2]\cB_\mud^w &= tq^{-1} + q = \pair{s_{(\ya,\cd,\ya)}}{\tH^w_\mud}.
\end{align*}
\end{ex}

\subsection{Geometric pairing and norms}
\label{SS:norms}

Adapting the approach of \cite[\S 5.4.3]{H} to the wreath setting, we can now prove formula \eqref{E:H pairing constant formula} from Theorem~\ref{T:H pairing constant} for the norms of wreath Macdonald polynomials using the isomorphisms $\Phi^w_n$ arising from Theorem~\ref{T:wreath Haiman}. 

For $\T$-equivariant coherent sheaves $\cF,\cG$ on $\fM_{\beta,n}$, define the pairing
\begin{align*}
\pairTor{\cF}{\cG} = \sum_{i=0}^\infty (-1)^i \ch_\T \mathrm{Tor}_i(\cF,\cG)\in R(\T),
\end{align*}
where $\ch_\T$ denotes the $\T$-character. This extends to a well-defined and nondegenerate $\K$-valued inner product on $K_\T(\fM_{\beta,n})_\loc$. By \cite[Proposition 5.4.10]{CG}, it is given on the fixed point basis by
\begin{align*}
\pairTor{[I^w_\mud]}{[I^w_\nud]}&=\delta_{\mud\nud}\ch_\T(\Sym (T^* \fM_{\beta,n}|_{I^w_\mud}))^{-1}\\
&=\delta_{\mud\nud}\ch_\T(\Sym (T^* \Hilb_N^\Gamma|_{I_{\tau(w^{-1}(\mud,0))}}))^{-1},
\end{align*}
where $N=|\tau(w^{-1}(\mud,0))|$.

We also consider the pairing on $K_\T(\fM_{\beta,n})_\loc$ given by
\begin{align}
\pairTor{[\cF]}{[\cG]}'&= \sum_\mud c_\mud(q^{-1},t^{-1}) \pairTor{[I^w_\mud]}{[\cG]}
\end{align}
for $[\cF] = \sum_\mud c_\mud(q,t) [I^w_\mud]$. Note that this is conjugate $\K$-linear in the first argument and $\K$-linear in the second.

\begin{prop}
\begin{align*}
\pairTor{[\cF]}{[\cG]}' &= \pairqt{\inv\,\negate\, \Phi^{w}_n([\cF])}{\Phi^w_n([\cG])}.
\end{align*}
\end{prop}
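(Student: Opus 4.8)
The plan is to compare the two pairings by transporting them across the derived equivalence \eqref{E:BK equivalence} and through the wreath Procesi bundle, in the spirit of Haiman's $r=1$ computation \cite[\S5.4]{H}. One should \emph{not} reduce to the fixed-point basis and invoke Theorem~\ref{T:H pairing constant}, since that theorem is itself deduced from the present statement in \S\ref{SS:norms}.

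First I would rewrite $\pairTor{\cdot}{\cdot}'$ intrinsically. The pairing $\pairTor{\cF}{\cG}=\sum_i(-1)^i\ch_\T\Tor_i(\cF,\cG)$ is the $\T$-equivariant Euler form on $K^{\T}(\fM_{\beta,n})_\loc$, and since $\fM_{\beta,n}$ is smooth and symplectic of dimension $2n$ with $\T$ scaling the symplectic form by $qt$, the canonical sheaf is $\cO_{\fM_{\beta,n}}$ carrying $\T$-weight $(qt)^n$; hence $[I^w_\mud]^\vee=(qt)^{-n}[I^w_\mud]$ for the derived dual, and the conjugate-linear reindexing defining $\pairTor{\cdot}{\cdot}'$ identifies it with $(qt)^n$ times the $\mathrm{Ext}$-form $\sum_i(-1)^i\ch_\T\,\mathrm{Ext}^i_{\fM_{\beta,n}}(\cF,\cG)$ (using $\mathrm{RHom}_{\fM_{\beta,n}}(\cF,\cG)\cong\cF^\vee\otimes^L\cG$, valid as $\fM_{\beta,n}$ is smooth). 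Because \eqref{E:BK equivalence} is the Fourier--Mukai equivalence attached to $P^w_n$ — so that $\Phi^w_n([\cF])=\mathrm{Frob}^\Gamma_{q,t}\,R\Gamma(\fM_{\beta,n},\cF\otimes P^w_n)$ is the composite of this equivalence with global sections and the $\Gamma_n$-Frobenius — and an equivalence preserves $\mathrm{RHom}$, this $\mathrm{Ext}$-form equals the $\Gamma_n$-equivariant $\mathrm{Ext}$-form between the images of $[\cF]$ and $[\cG]$ in $D^b(\Coh^{\T\times\Gamma_n}(\C^{2n}))$.

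Next I would unwind that equivariant $\mathrm{Ext}$-form into a pairing of multisymmetric functions. Using a $\T\times\Gamma_n$-equivariant free resolution over $\C[\C^{2n}]$, it becomes $\ch_\T\Hom_{\Gamma_n}(\cM\otimes^L_{\C[\C^{2n}]}\cO_0,\,\cN)$, where $\cM,\cN$ are the images of $[\cF],[\cG]$ and $\cO_0$ is the skyscraper at the origin; and since $\mathrm{Frob}^\Gamma$ sends irreducible $\Gamma_n$-characters to the Hall-orthonormal tensor Schur functions, $\ch_\T\Hom_{\Gamma_n}(V,W)=\pair{\inv\,\mathrm{Frob}^\Gamma_{q,t}(V)}{\mathrm{Frob}^\Gamma_{q,t}(W)}$ for bigraded virtual $\Gamma_n$-modules. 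The derived restriction to the origin is the Koszul complex, i.e.\ tensoring with $\sum_i(-1)^i\bigwedge^i((\C^2)^*\otimes\C^n)$; I would invoke the wreath analogue of the classical identity that tensoring with $\sum_i(-1)^i\bigwedge^i((\C_t\oplus\C_q)\otimes\C^n)$ induces the plethysm $f\mapsto f[(1-q)(1-t)X]$, together with the $\Gamma$-grading $\resGamma(t)=\chi$, $\resGamma(q)=\chi^{-1}$ — under which $\sum_i(-1)^i\bigwedge^i(\C^2)^*$ becomes the $I\times I$ matrix $(\id-t\chi)(\id-q\chi^{-1})$ — to identify this operation with $\cP_{(\id-t\chi)(\id-q\chi^{-1})}$. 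Writing $M=(\id-t\chi)(\id-q\chi^{-1})$ and $f=\Phi^w_n([\cF])$, $g=\Phi^w_n([\cG])$, the steps so far give $\pairTor{[\cF]}{[\cG]}'=(qt)^n\pair{\inv\,\cP_M f}{g}$; on the other hand $\pairqt{\inv\,\negate\,f}{g}=\pair{\inv\,\negate\,f}{\cP_{A^{-1}}g}$ with $A^{-1}=\negate\,M$ (using $A^t=A$), which collapses to $\pair{\inv f}{\cP_M g}$ via $\pair{\negate a}{\negate b}=\pair{a}{b}$ and $\negate\chi=\chi^{-1}\negate$. The remaining algebraic point is the adjunction $\pair{\cP_M a}{b}=\pair{a}{\cP_{M^t}b}$ together with $\inv(M^t)=(qt)^{-1}M$, which makes $\cP_{\inv(M^t)}$ act as $(qt)^{-n}\cP_M$ on degree $n$, so the factor $(qt)^n$ from symplectic Serre duality cancels and $\pairTor{[\cF]}{[\cG]}'=\pair{\inv f}{\cP_M g}=\pairqt{\inv\,\negate\,f}{g}$.

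The hard part will be the equivariant bookkeeping in the steps above: checking that the equivalence transports equivariant $\mathrm{RHom}$ with no stray shift or line-bundle twist; pinning down the $\T\times\Gamma_n$-structure of the Koszul complex of $\C^{2n}$ precisely enough that it assembles into $(\id-t\chi)(\id-q\chi^{-1})$; and confirming that the two occurrences of $(qt)^{\pm n}$ cancel exactly. The $r=1$ shadow of the whole argument is the identification of Haiman's Euler form on $K^{\T}(\Hilb_n)_\loc$ with the Macdonald $\star$-pairing on $\Lambda^n$; the genuinely new features here are the $\Gamma$-equivariant refinement (which promotes scalars to the matrices $A$ and $(\id-t\chi)(\id-q\chi^{-1})$) and the symplectic normalization responsible for the factor $(qt)^n$.
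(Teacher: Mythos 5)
Your outline is a genuinely different route from the paper's, and in broad strokes it would work, but it is considerably heavier than what the paper actually does. The paper's proof is a bilinearity-plus-bases argument: since both sides are conjugate linear in $[\cF]$ and linear in $[\cG]$, it evaluates them on the pair of bases $\{[I^w_\mud]\}$ and $\{[P^{w,*}_\lad]\}$ with $P^{w,*}_\lad=\Hom_{\Gamma_n}(V^{\lad*},(P^w_n)^*)$. On these, everything is elementary: the Tor pairing of a skyscraper with a vector bundle is just the fiber character (so no Serre duality, no Koszul complex, no $(qt)^{\pm n}$ to track), the primed and unprimed pairings agree because the coefficient of $[I^w_\mud]$ is $1$, and the two needed inputs are $\Phi^w_n([I^w_\mud])=\tH^w_\mud$ (Theorem~\ref{T:wreath Haiman}) and $\Phi^w_n([P^{w,*}_\lad])=s_\lad[(1-q\chi)^{-1}(1-t\chi^{-1})^{-1}\Xd]$ (from \cite{BF,L2}); the identity $(1-q\chi)^{-1}(1-t\chi^{-1})^{-1}A^{-1}=\negate$ then closes the computation. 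Note in particular that your opening injunction is misplaced: the circularity you fear lies in invoking the \emph{norm formula} of Theorem~\ref{T:H pairing constant} (which would amount to using the fixed-point basis in \emph{both} slots), not in using the fixed-point basis at all. Using $\{[I^w_\mud]\}$ in one slot and the Procesi basis in the other is perfectly legitimate and is exactly how the paper avoids the circle.

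Your intrinsic route — symplectic Serre duality to trade $\pairTor{\cdot}{\cdot}'$ for $(qt)^{\pm n}$ times the Ext-form, transport of $\mathrm{RHom}$ across \eqref{E:BK equivalence}, and the $\Gamma$-graded Koszul computation producing $\cP_{(\id-t\chi)(\id-q\chi^{-1})}$ — is the correct wreath analogue of Haiman's $r=1$ identification of the Euler form with the $\star$-pairing, and your final matrix algebra ($A^{-1}=\negate\,M$, $\inv(M^t)=(qt)^{-1}M$, adjunction $\cP_M^\perp=\cP_{M^t}$) checks out. What it buys is a conceptual explanation of \emph{why} the matrix $A$ appears, independent of any special basis. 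What it costs is exactly the list you defer to ``the hard part'': the sign/exponent in the canonical weight, the equivariant structure of the Koszul complex, and the absence of twists in the transported $\mathrm{RHom}$ — each of which the paper's basis computation renders unnecessary. As written, the proposal is an acceptable plan but not yet a proof until those normalizations are pinned down.
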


\begin{proof}
Since both sides are conjugate linear in $[\cF]$ and linear in $[\cG]$, it suffices to check the assertion on $\K$-basis elements. We evaluate $\pair{[\cF]}{[\cG]}_{\mathrm{Tor}}'$ on the $\K$-bases $\{[I^w_\mud]\}_{\mud}$ and $\{[P^{w,*}_\lad]\}_{\lad}$ of $K_\T(\fM_{\beta,n})_\loc$, where
$$P^{w,*}_\lad=\Hom_{\Gamma_n}(V^{\lad*},(P^w_n)^*).$$ 
By \cite[\S4.1]{BF} and \cite[\S5.1.1]{L2}, we know that $\Phi^w_n([P^{w,*}_\lad])$ coincides with the bigraded $\Gamma_n$-Frobenius character of $V^{\lad}\otimes\C[x_1,\dotsc,x_n,y_1,\dotsc,y_n]$, namely:
\begin{align*}
\Phi^w_n(P^{w,*}_\lad) &= s_{\lad}[(1-q\chi)^{-1}(1-t\chi^{-1})^{-1}\Xd].
\end{align*}

The computation then proceeds as follows:
\begin{align*}
\pairTor{[I^w_\mud]}{[P^{w,*}_\lad]}' 
&=\pairTor{[I^w_\mud]}{[P^{w,*}_\lad]}\\ &=\ch_\T(P^{w,*}_\lad|_{I^w_\mud})\\
&=\ch_\T(\Hom(V^{\lad *},P^{w,*}_\lad|_{I^w_\mud}))\\
&=\pair{\inv\,\negate\, H^w_\mud}{\negate\,s_\lad[\Xd]}\\
&=\pair{\inv\,\negate\, H^w_\mud}{\,s_\lad[\negate\, \Xd]}\\
&=\pairqt{\inv\,\negate\, H^w_\mud}{\,s_\lad[(1-q\chi)^{-1}(1-t\chi^{-1})^{-1}\Xd]}\\
&=\pairqt{\inv\,\negate\,\Phi^w_n([I^w_\mud])}{\Phi^w_n([P^{w,*}_\lad])},
\end{align*}
where we use that $A=(1-q\chi^{-1})^{-1}(1-t\chi)^{-1}\negate$ satisfies $(1-q\chi)^{-1}(1-t\chi^{-1})^{-1}A^{-1}=\negate$.
\end{proof}

We deduce \eqref{E:H pairing constant formula}:
\begin{cor}
\begin{align*}
\pairqt{\inv\,\negate\, \tH^w_\mud}{\tH^w_\mud} &= \ch_\T(\Sym (T^* \Hilb_N^\Gamma|_{I_{\tau(w^{-1}(\mud,0))}}))^{-1}.
\end{align*}
\end{cor}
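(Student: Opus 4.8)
The plan is to read off the corollary from the preceding proposition by restricting both of its arguments to a single fixed-point class. Taking $[\cF]=[\cG]=[I^w_\mud]$ and writing $[I^w_\mud]=\sum_\nud c_\nud(q,t)[I^w_\nud]$ with $c_\nud=\delta_{\nud\mud}$, the twisted inner product $\pairTor{\cdot}{\cdot}'$ reduces to the ordinary Tor pairing, since $c_\mud(q^{-1},t^{-1})=1$ forces $\pairTor{[I^w_\mud]}{[I^w_\mud]}'=\pairTor{[I^w_\mud]}{[I^w_\mud]}$. On the other hand, the proposition rewrites this same quantity as $\pairqt{\inv\,\negate\,\Phi^w_n([I^w_\mud])}{\Phi^w_n([I^w_\mud])}$, and \eqref{E:BK iso image} gives $\Phi^w_n([I^w_\mud])=\tH^w_\mud$, so the right-hand side is exactly $\pairqt{\inv\,\negate\,\tH^w_\mud}{\tH^w_\mud}$.

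It then suffices to evaluate the diagonal of the Tor pairing, and for this I would quote the fixed-point formula already recalled above from \cite[Proposition 5.4.10]{CG}: because $\fM_{\beta,n}$ is smooth and $I^w_\mud$ is an isolated $\T$-fixed point, one has $\pairTor{[I^w_\mud]}{[I^w_\mud]}=\ch_\T(\Sym(T^*\fM_{\beta,n}|_{I^w_\mud}))^{-1}$. Transporting along the $\T$-equivariant isomorphism of \S\ref{S:quiver varieties} between $\fM_{\beta,n}$ and the relevant component of $\Hilb_N^\Gamma$, under which $I^w_\mud$ corresponds to $I_{\tau(w^{-1}(\mud,0))}$ and $N=|\tau(w^{-1}(\mud,0))|$, this becomes $\ch_\T(\Sym(T^*\Hilb_N^\Gamma|_{I_{\tau(w^{-1}(\mud,0))}}))^{-1}$. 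Chaining the three equalities yields the stated formula.

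Since essentially all of the substance lives in the proposition, I do not expect a real obstacle here; the step that deserves a word of care is the use of the fixed-point formula, i.e. that the alternating sum $\sum_i(-1)^i\ch_\T\Tor_i(\cO_{I^w_\mud},\cO_{I^w_\mud})$ computes $\ch_\T\Lambda^\bullet(T^*\fM_{\beta,n}|_{I^w_\mud})=\ch_\T(\Sym(T^*\fM_{\beta,n}|_{I^w_\mud}))^{-1}$ via the Koszul resolution of the diagonal, which requires the tangent weights at the fixed point to be nonzero (automatic at an isolated fixed point on a smooth variety). Finally, to upgrade this geometric identity to the explicit product formula \eqref{E:H pairing constant formula} of Theorem~\ref{T:H pairing constant}, one must identify the $\T$-weights of $T^*\Hilb_N^\Gamma|_{I_\mu}$ with the arm--leg pairs attached to the cells of $\mu$ of hook length divisible by $r$ --- the $\Gamma$-fixed counterpart of the classical hook description of $T\Hilb_N(\C^2)|_{I_\mu}$.
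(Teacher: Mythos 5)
Your argument is correct and is exactly the deduction the paper intends: specialize the proposition to $[\cF]=[\cG]=[I^w_\mud]$ (where the conjugate-linear twist is harmless since the coefficient is $1$), identify $\Phi^w_n([I^w_\mud])=\tH^w_\mud$ via \eqref{E:BK iso image}, and evaluate the diagonal Tor pairing by the fixed-point formula from \cite[Proposition 5.4.10]{CG} already recalled just before the proposition. No gaps; the closing remarks about the Koszul resolution and the hook-length description of the tangent weights are correct supplementary context rather than required steps.
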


\begin{rem}
As shown in \cite[\S 5.4.3]{H}, one can eliminate $\inv$ (and $\negate$) from the above when $r=1$ by means of an isomorphism $P_n^*\cong P_n\otimes\varepsilon\otimes\cO(-1)$, where $\varepsilon$ denotes the sign representation of $\fS_n$ and $\cO(-1)$ is the dual of the top exterior power of the tautological bundle (corresponding to the operator $\nabla^{-1}$ on symmetric functions).
\end{rem}


\subsection{Wreath nabla}
\label{SS:wreath nabla}

For $w\in\Waf$, $\mud\in\Y^I_n$, and $i\in I$ define the Laurent monomial
\begin{align*}
    e^{(i),w}_\mud &= \ch_\T \bigwedge^n ([\chi^i]M^w_n)|_{I_\mud^w} \\
    &= e_n([\chi^i] \cB^w_\mud(q,t,\chi)).
\end{align*}
This value is the product of the $\T$-weights of the $\chi^i$ component of the fiber of $
M^w_n$ over $I_\mud^w$. The following result asserts that the monomial $e^{(i),w}_\mud$ records the bi-degree of the unique copy of the $i$-th sign representation of $\Gamma_n$ in $P^w_n|_{I_\mud^w}$.


\begin{thm}[\cite{H:private}]\label{T:quiver data} 
Let $\mud\in \Y^I$ have total size $n$. Then
\begin{align*}
\pair{e_n[X^{(i)}]}{\tH^w_\mud} &= e^{(i),w}_\mud.
\end{align*}
\end{thm}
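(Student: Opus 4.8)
The plan is to interpret $\pair{e_n[X^{(i)}]}{\tH^w_\mud}$ as a one-dimensional isotypic piece of the wreath Procesi fiber $P^w_n|_{I^w_\mud}$ and then to identify that piece, via the normalization \eqref{E:wreath Procesi normalization}, with a determinant line built from the small tautological bundle $M^w_n$; this is the wreath analogue of Haiman's identification, for $r=1$, of the sign-isotypic part of the Procesi bundle with the ``$\nabla$'' line bundle $\bigwedge^n T_n$. First I would unwind the left-hand side. The element $e_n[X^{(i)}]=s_{(1^n)}[X^{(i)}]$ is exactly the tensor Schur function $s_\lad$ for the multipartition $\lad$ with $(1^n)$ in slot $i$ and empty partitions elsewhere, so $\pair{e_n[X^{(i)}]}{\tH^w_\mud}$ is the coefficient of $s_\lad$ in $\tH^w_\mud$, i.e.\ the Kostka coefficient $\tK^w_{\lad,\mud}(q,t)$. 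By Theorem~\ref{T:wreath Haiman} and the wreath Frobenius map \eqref{E:wreath Frob} this is the bigraded multiplicity in $P^w_n|_{I^w_\mud}$ of the corresponding $\Gamma_n$-irreducible, namely the one-dimensional ``$i$-th sign'' representation $\sgn^{(i)}:=\sgn_{\fS_n}\otimes(\chi^i)^{\otimes n}$. Since $P^w_n|_{I^w_\mud}$ affords the regular representation of $\Gamma_n$ and $\dim\sgn^{(i)}=1$, this multiplicity is a single Laurent monomial: the $\T$-weight of the line $\Hom_{\Gamma_n}(\sgn^{(i)},P^w_n)|_{I^w_\mud}$.

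The core step is an isomorphism of $\T$-equivariant line bundles on $\fM_{\beta,n}$,
\begin{align*}
\Hom_{\Gamma_n}(\sgn^{(i)},P^w_n)\;\cong\;\textstyle\bigwedge^n\!\bigl([\chi^i]\,(P^w_n)^{\Gamma_{n-1}}\bigr)\;=\;\textstyle\bigwedge^n\!\bigl([\chi^i]M^w_n\bigr),
\end{align*}
in which the second equality is the normalization \eqref{E:wreath Procesi normalization} and $[\chi^i]$ denotes the $\chi^i$-isotypic summand for the $\Gamma$-action on $M^w_n$ (equivalently, for the residual action of $N_{\Gamma_n}(\Gamma_{n-1})/\Gamma_{n-1}\cong\Gamma$ on the invariants). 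Pointwise this reduces to a fact about the regular representation: $\C[\Gamma_n]^{\Gamma_{n-1}}\cong\C[\Gamma_n/\Gamma_{n-1}]$ has $\chi^i$-part an $n$-dimensional representation on which $\fS_n$ acts through a permutation basis twisted by $\chi^i$, so its top exterior power is precisely the character $\sgn^{(i)}$; the $\Gamma$-bookkeeping is what pins down the index $i$, and it matches the branching rule $\Gamma_n\downarrow\Gamma_{n-1}$ underlying the Proposition of \cite{H:private} that expresses $[\chi^i]\cB^w_\mud$ in terms of $\tH^w_\mud$. To upgrade this pointwise identity of $\Gamma_n\times\T$-modules to an isomorphism of equivariant line bundles I would write down a natural $\T$-equivariant morphism (the wedge-product/Koszul map $\bigwedge^n([\chi^i]M^w_n)\to\bigwedge^n P^w_n$ composed with the $\sgn^{(i)}$-isotypic projection) and verify it is a fibrewise isomorphism using the regular-representation structure supplied by Theorem~\ref{T:wreath Haiman}.

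Granting the displayed isomorphism, the theorem follows by restricting it to the $\T$-fixed point $I^w_\mud$ and taking $\T$-characters. The left-hand side becomes the $\T$-weight of the $\sgn^{(i)}$-line, which equals $\pair{e_n[X^{(i)}]}{\tH^w_\mud}$ by the first step, while the right-hand side becomes $\ch_\T\bigwedge^n([\chi^i]M^w_n)|_{I^w_\mud}=e_n\!\bigl([\chi^i]\cB^w_\mud(q,t,\chi)\bigr)=e^{(i),w}_\mud$, directly from the definitions of $\cB^w_\mud$ and of $e^{(i),w}_\mud$.

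I expect the main obstacle to be the line-bundle isomorphism of the second paragraph: one must get the residual $\Gamma$-grading exactly right so that the determinant of $[\chi^i]M^w_n$ is the $i$-th sign line and not some other sign-type character, and one must promote the pointwise computation in $\C[\Gamma_n]$ to a global equivariant statement over $\fM_{\beta,n}$. A convenient cross-check is an equivalent purely combinatorial reformulation: by the Proposition of \cite{H:private}, $[\chi^i]\cB^w_\mud$ equals the Kostka coefficient $\tK^w_{\nud,\mud}(q,t)$ for $\nud$ the multipartition with $(n-1)$ in slot $0$ and a single box in slot $i$ (with the obvious modification when $i=0$), so the theorem asserts that the product of the $n$ Laurent monomials occurring in $\tK^w_{\nud,\mud}$ is the single monomial $\tK^w_{\lad,\mud}$ from the first paragraph — an identity one establishes by the same geometry.
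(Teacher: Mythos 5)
The paper does not actually prove this theorem: it is quoted from \cite{H:private} with no argument given, so there is nothing to compare your proposal against line by line. Judged on its own merits, your first and last steps are correct and are certainly the intended reading of the statement: $e_n[X^{(i)}]=s_\lad$ for $\lad$ the multipartition with $(1^n)$ in slot $i$, the corresponding $\Gamma_n$-irreducible is the one-dimensional character $\sgn\otimes(\chi^i)^{\otimes n}$, and since $P^w_n|_{I^w_\mud}$ is a copy of the regular representation the pairing is the single $\T$-weight of the line $\Hom_{\Gamma_n}(\sgn\otimes(\chi^i)^{\otimes n},P^w_n|_{I^w_\mud})$; likewise $e^{(i),w}_\mud$ is by definition the $\T$-weight of $\bigwedge^n([\chi^i]M^w_n)|_{I^w_\mud}$. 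So you have faithfully reformulated the theorem as an isomorphism of $\T$-equivariant line bundles on $\fM_{\beta,n}$, which is exactly what the paper's surrounding prose says the theorem ``asserts.''

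The gap is that you have not proved that isomorphism, and the two arguments you sketch for it do not close it. First, the pointwise computation in $\C[\Gamma_n]$ only controls the $\Gamma_n$-module structure of the fibers; it shows that both sides are lines carrying the character $\sgn\otimes(\chi^i)^{\otimes n}$, but the content of the theorem is an equality of $\T$-weights, which the abstract regular-representation structure cannot see (the $\Gamma_n$-structure of $P^w_n|_{I^w_\mud}$ is the same for every $\mud$, while both sides of the identity vary with $\mud$). Second, the ``wedge-product/Koszul map'' you propose is not available for $r>1$: Haiman's $r=1$ construction of $\bigwedge^n T_n\to P_n^{\epsilon}$, $f_1\wedge\dotsm\wedge f_n\mapsto\det(\sigma_j(f_i))$, uses the $n$ algebra maps $T_n\to P_n$ coming from $P_n=\rho_*\cO_{X_n}$ being a sheaf of algebras on the isospectral Hilbert scheme, and the wreath Procesi bundles produced by \cite{BF,L1} carry no such multiplicative structure (the paper even remarks that their fibers are not quotients of $\C[x_1,\dotsc,y_n]$ in general). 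The composite you write, $\bigwedge^n([\chi^i]M^w_n)\to\bigwedge^nP^w_n$ followed by an isotypic projection, lands in a high-rank summand of $\bigwedge^nP^w_n$, not in $\Hom_{\Gamma_n}(\sgn\otimes(\chi^i)^{\otimes n},P^w_n)$, so there is no candidate morphism whose nonvanishing you could check fiberwise. Closing this gap requires a genuine input from the construction of $P^w_n$ (e.g.\ Losev's classification/rigidity of Procesi bundles, or an identification of the $\sgn\otimes(\chi^i)^{\otimes n}$-isotypic line inside the Picard group generated by the tautological determinants $\det\cV_j$ together with a computation pinning down the exponents); your closing ``cross-check'' via the branching Proposition is a correct combinatorial restatement but, as you note, rests on the same unproved geometry and is therefore circular.
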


\begin{ex} With $w$ and $\mud$ from the previous example, we have
\begin{align*}
    \pair{e_2[X^{(0)}]}{\tH^w_\mud} &= qt = e_2(qt+1) \\
    \pair{e_2[X^{(1)}]}{\tH^w_\mud} &= q^2 t = e_2(q^2+t) \\
    \pair{e_2[X^{(2)}]}{\tH^w_\mud} &= t = e_2(tq^{-1}+q).
\end{align*}
The input variables to the $e_n$ on the right are the summands of the coefficients of powers of $\chi$ in $\cB_\mud^w$.
\end{ex}

For $i\in I$ and $w\in \Waf$, the wreath nabla operator $\nabla^{(i)}_w$ is defined as the linear operator on $\La^{\otimes I}$ that acts diagonally on the wreath Macdonald basis $\tH^w_\mud$ with eigenvalue $e^{(i),w}_\mud$. (Recall that $\nabla^{(0)}_w$ made an appearance in \S\ref{SS:inv} above.)
 
\subsection{Factorization for generic stability condition}
We recall a factorization property observed by Haiman \cite{H:private}.

Fix $n\in \Z_{\ge0}$ and $u,v\in \Wfin$.
Let $\beta^\vee\in Q^\vee$ be sufficiently antidominant with respect to $n$, that is, $\pair{\beta^\vee}{\alb_i} \ll 0$ for all $1\le i\le r-1$. Let 
$w=ut_{v \beta^\vee}\in\Waf$. Then
\begin{itemize}
    \item $\fM_{w\theta_+}(n\delta)$ does not depend on $\beta^\vee$.
    \item It is isomorphic to the Hilbert scheme $\Hilb^n(\widehat{\C^2/\Gamma})$ of points on the minimal resolution $\widehat{\C^2/\Gamma}$ of the quotient singularity $\C^2/\Gamma$ \cite{Kuz}.
\end{itemize}
As a consequence the wreath Macdonald polynomial $\tH^w_\mud$ factorizes into modified Macdonald polynomials with appropriate substitutions of variables and parameters.

In this situation we write $\tH^{u,v}_\mud=\tH^w_\mud$ and call it a generic wreath Macdonald polynomial.

For $i\in I$ let $\sq^{(i)}\in\Y^I$ be the multipartition that has a single box partition in position $i$ and empty partitions elsewhere. Let
$M^{u,v}\in \Mat_{I\times I}(\K)$ be the matrix
\begin{align}
  M^{u,v}_{ij} &= \pair{\tH^{u,v}_{\sq^{(i)}}}{s_1[X^{(j)}]}.
\end{align}

\begin{ex}  Let $M^- = M^{\id,\id}$. For this stability condition the partial order $\dom_w$ on $\Y^I_n$ is refined by the following total order $\ge$: for distinct $\lad,\mud\in\Y^I_n$, let $0\le k\le r-1$ be minimum such that $\la^{(k)}\ne \mu^{(k)}$. Then $\lad>\mud$ if $\la^{(k)}$ is reverse lex larger than $\mu^{(k)}$ (the first part that disagrees is larger in $\la^{(k)}$). See \cite[Cor. 7.13]{G} for a precise characterization of the partial order in this case. All of the generic orders are permutations of each other.

For $r=3$ and $n=1$ we have
\begin{align*}
(\ya,\cd,\cd) \ge (\cd,\ya,\cd) \ge (\cd,\cd,\ya).
\end{align*}
For $n=2$ 
\begin{align*}
(\yb,\cd,\cd) \ge (\yaa,\cd,\cd) \ge (\ya,\ya,\cd) \ge (\ya,\cd,\ya) \ge (\cd,\yb,\cd)\ge
(\cd,\yaa,\cd) \ge (\cd,\ya,\ya) \ge (\cd,\cd,\yb) \ge (\cd,\cd,\yaa).
\end{align*}
Using this for $n=1$ one may compute
\begin{align*}
  M^-_{ij} &= \begin{cases}
      t^j & \text{if $i\ge j$} \\
      q^{r-j}&\text{if $i<j$.}
  \end{cases}
\end{align*}
\end{ex}

For a symmetric function $f\in \La$ and a row vector $(a_j\mid j\in I)\in \K^I$,
we write $f[\sum_{j\in I} a_j X^{(j)}]$ to mean the image of 
$f$ under the $\K$-algebra homomorphism $\La\to\La^{\otimes I}$ sending $p_k$ to $\sum_{j\in I} a_j(q^k,t^k) p_k(X^{(j)})$.
This is called a vector plethysm.

Let $\Xd$ be the column vector with $i$-th coordinate $X^{(i)}$ for $i\in I$ and
$\epsilon_i$ the $i$-th standard basis column vector. Then
\begin{align*}
\epsilon_i^t M^{u,v}\Xd = \sum_{j\in I} M^{u,v}_{ij} X^{(j)}.
\end{align*}

Define the vectors $q_\bullet,t_\bullet\in \K^I$ by 
$q_i = q^{r-i} t^{-i}$ and $t_i = q^{-i} t^{r-i}$ for $0\le i\le r-1$.

\begin{prop}[\cite{H:private}] \label{P:factor}  The wreath Macdonald polynomial factors:
\begin{align*}
  \tH^{u,v}_{\mud} = \prod_{i\in I} \tH^{u,v}_{\chi^i \mu^{(i)}}
\end{align*}
where the special notation $\chi^i \nu$ (not used elsewhere and not to be confused with the rotation of multipartitions) denotes the multipartition which has empty components except the $i$-th, which is $\nu\in\Y$. Moreover we have
\begin{align*}
  \tH^{u,v}_{\chi^i \nu} = \tH_{\nu}(\epsilon_i^t M^{u,v}\Xd,q_{v^{-1}(i)},t_{v^{-1}(i)})
\end{align*}
where the right hand side is a usual modified Macdonald polynomial $\tH_\nu$ with parameters
$(q,t)=(q_{v^{-1}(i)},t_{v^{-1}(i)})$ to which is applied the given vector plethysm. 
\end{prop}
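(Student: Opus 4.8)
The plan is to prove the factorization geometrically, via the realization of $\tH^{u,v}_\mud$ as a bigraded $\Gamma_n$-Frobenius character (Theorem~\ref{T:wreath Haiman}) together with the identification $\fM_{w\theta_+}(n\delta)\cong\Hilb^n(\widehat{\C^2/\Gamma})$ recalled above. First I would transport the wreath Procesi bundle $P^w_n$, the $\T$-fixed points $I^w_\mud$, and the $\Gamma\times\T$-grading data across this isomorphism. The toric surface $\widehat{\C^2/\Gamma}$ has exactly $r$ $\T$-fixed points $p_0,\dots,p_{r-1}$ (the nodes of the chain of exceptional curves, together with its two endpoints); I would compute the two tangent weights at each $p_k$ from the toric data and match these, via the stability chamber determined by $v\beta^\vee$, with the pairs $(q_{v^{-1}(i)},t_{v^{-1}(i)})$ appearing in the statement. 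A $\T$-fixed point of $\Hilb^n(\widehat{\C^2/\Gamma})$ is a tuple of partitions clustered at the $p_k$ with total size $n$, and one checks that $I^w_\mud$ corresponds to the clustering whose cluster shapes are the components of the $r$-quotient of $\mu=\tau(w^{-1}(\mud,0))$ --- this is exactly where the finite permutation $u$ enters, through the recipe $\mud=u\,\quot_r(\mu)$.

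Next I would use the local product structure of Hilbert schemes of points on a smooth surface: near a $\T$-fixed point with clustering sizes $(n_0,\dots,n_{r-1})$, the variety $\Hilb^n(\widehat{\C^2/\Gamma})$ is $\T$-equivariantly locally isomorphic (in the formal or \'etale topology) to $\prod_k\Hilb^{n_k}(\C^2_k)$, where $\C^2_k:=T_{p_k}$ is the tangent plane at $p_k$ with its $\T$-action. The geometric heart of the argument is that the wreath Procesi bundle is compatible with this decomposition: restricted to such a neighborhood, $P^w_n$ is the induced module $\mathrm{Ind}_{\Gamma_{n_0}\times\cdots\times\Gamma_{n_{r-1}}}^{\Gamma_n}(Q_0\boxtimes\cdots\boxtimes Q_{r-1})$ of an outer tensor product, where each $Q_k$ is a $\Gamma_{n_k}$-equivariant bundle on $\Hilb^{n_k}(\C^2_k)$ equal to the classical Procesi bundle up to a twist by a character of $\Gamma_{n_k}$ and a line-bundle twist fixed by the normalization \eqref{E:wreath Procesi normalization} and by the $R(\Gamma)$-grading of the tautological data. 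Granting this, the standard multiplicativity of the wreath Frobenius map over products of wreath products --- the $\Gamma_n$-Frobenius character of such an induced module is the product of the $\Gamma_{n_k}$-Frobenius characters of the $Q_k$, in the spirit of \eqref{E:wreath Frob} --- gives $\tH^{u,v}_\mud=\prod_k\mathrm{Frob}_{q,t}^\Gamma(Q_k)$. Recognizing each factor as a single-cluster wreath Macdonald polynomial then yields the multiplicativity $\tH^{u,v}_\mud=\prod_{i\in I}\tH^{u,v}_{\chi^i\mu^{(i)}}$.

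It remains to identify each single-cluster factor with a modified Macdonald polynomial. For the fixed point at which all $n$ boxes cluster at one $p_k$, the previous step reduces $\tH^{u,v}_{\chi^i\nu}$ to the $\Gamma_n$-Frobenius character of a twist of the classical Procesi bundle on $\Hilb^n(\C^2_k)$; by Haiman's Theorem~\ref{T:Haiman} this character is $\tH_\nu$ with the two Macdonald parameters given by the tangent weights of $\widehat{\C^2/\Gamma}$ at $p_k$, identified in the first step with $(q_{v^{-1}(i)},t_{v^{-1}(i)})$, precomposed with the substitution of variables that records the $\Gamma$-isotypic decomposition of the tautological bundle. That substitution is precisely the vector plethysm by $\epsilon_i^tM^{u,v}\Xd$: in the base case $|\nu|=1$ the modified Macdonald polynomial $\tH_{(1)}=p_1$ is parameter-free, so the claim there reduces to the very definition $M^{u,v}_{ij}=\pair{\tH^{u,v}_{\sq^{(i)}}}{s_1[X^{(j)}]}$ of the matrix $M^{u,v}$, while the general case follows since the decomposition of the tautological bundle is detected on its rank-one subquotients. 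The normalization \eqref{E:normalization} for the resulting product is automatic from Theorem~\ref{T:wreath Haiman}. Assembling the three steps proves Proposition~\ref{P:factor}.

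The main obstacle is the geometric input of the second paragraph: that, near a $\T$-fixed point, $P^w_n$ restricts as an induced outer tensor product of character-twisted classical Procesi bundles, with the twists pinned down precisely enough to reproduce the exact parameters $q_\bullet,t_\bullet$ and matrix $M^{u,v}$. This should be extracted from Losev's classification of Procesi bundles on symplectic resolutions of quotient singularities \cite{L1,L2} and its compatibility with the local product structure of $\Hilb^n(\widehat{\C^2/\Gamma})$, combined with a toric computation of the tangent weights at the $p_k$ and of the ample cone that identifies the stability chamber $v\beta^\vee$. An alternative, purely combinatorial route would instead verify directly that the claimed product satisfies the three defining conditions \eqref{E:q triangularity}--\eqref{E:normalization} of $\tH^w_\mud$; this reduces to the explicit description of $\dom_w$ in the generic chamber (a lexicographic-then-dominance refinement respecting the decomposition of $\Y^I_n$ by component sizes, cf.\ \cite[Cor.~7.13]{G}) together with a check that $\cP_{\id-q\chi^{-1}}$ and $\cP_{\id-t^{-1}\chi^{-1}}$ act on each factor $\tH_\nu(\epsilon_i^tM^{u,v}\Xd,q_{v^{-1}(i)},t_{v^{-1}(i)})$ exactly as the classical plethysms $f\mapsto f[(1-q_{v^{-1}(i)})Y]$ and $f\mapsto f[(1-t_{v^{-1}(i)}^{-1})Y]$ act on $\tH_\nu$ --- the subtlety being that the triangularity conditions allow Schur terms outside the single block, so this check again hinges on the precise form of $M^{u,v}$ and $q_\bullet,t_\bullet$.
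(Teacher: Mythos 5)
The paper offers no proof of Proposition~\ref{P:factor}: it is recalled from Haiman's unpublished work \cite{H:private}, and the only justification given is the identification $\fM_{w\theta_+}(n\delta)\cong\Hilb^n(\widehat{\C^2/\Gamma})$ that you are elaborating. So your route is the intended one, but what you have written is a plan, not a proof, and you have located the gap yourself: everything rests on the assertion that, formally locally near a $\T$-fixed point with cluster sizes $(n_0,\dotsc,n_{r-1})$, the wreath Procesi bundle $P^w_n$ restricts to $\mathrm{Ind}_{\prod_k\Gamma_{n_k}}^{\Gamma_n}$ of an outer tensor product of character- and line-bundle-twisted classical Procesi bundles, with the twists determined. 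That statement \emph{is} the content of the proposition (multiplicativity of $\mathrm{Frob}^\Gamma$ over induction and Theorem~\ref{T:Haiman} then do the rest), and nothing in your sketch establishes it; ``should be extracted from Losev's classification'' is a pointer, not an argument. Until that local splitting is proved, neither the factorization nor the identification of the single-cluster factors is established.

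Two places where the plan would not close even granting the splitting. First, you propose to match the two tangent weights at $p_k$ with the pair $(q_{v^{-1}(i)},t_{v^{-1}(i)})$. Since $\widehat{\C^2/\Gamma}$ is a symplectic resolution whose $2$-form has $\T$-weight $qt$, the two local coordinate weights at any fixed point multiply to $qt$; but with the printed definition of $q_\bullet,t_\bullet$ one has $q_it_i=(qt)^{r-2i}$, so these pairs cannot literally be tangent-weight pairs for generic $i$. (The example following the proposition uses $(q/t^2,\,t^3)$, which does multiply to $qt$ but disagrees with the printed $t_2=q^{-2}t$ --- so your matching step has to confront a normalization, or a correction to the statement, that your plan does not identify.) Second, the passage from $|\nu|=1$ (which is just the definition of $M^{u,v}$, hence vacuous) to general $n$ via ``detected on rank-one subquotients'' is not a proof: one needs that the $\Gamma\times\T$-character of the twisted Procesi fiber over a size-$n$ cluster at $p_k$ is the plethystic extension of the $n=1$ answer, and that is again part of the unproven geometric input. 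Your combinatorial fallback founders on the rock you yourself note: $\cP_{\id-q\chi^{-1}}$ does not act on $f[\epsilon_i^tM^{u,v}\Xd]$ as a one-alphabet plethysm $f\mapsto f[(1-q_{v^{-1}(i)})Y]$, because the ratios of consecutive entries of the row $\epsilon_i^tM^{u,v}$ are not constant, so the triangularity conditions \eqref{E:q triangularity}--\eqref{E:normalization} do not reduce factorwise without a genuinely new computation.
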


\begin{ex} Let $r=3$, $i=2$, $\nu=(2)$, $u=v=\id$.
We have $\tH_{(2)} = s_2 + q s_{11}$. First the parameters 
are substituted: $q\mapsto q_2=q/t^2$ and $t\mapsto t_2=t^3$, giving $s_2 + (q/t^2) s_{11}$.
The vector plethysms of $s_2$ and $s_{11}$ using $Z=X^{(0)}+t X^{(1)}+t^2 X^{(2)}$ are:
\begin{align*}
s_2[Z]  &= s_2[X^{(0)}] + t^2 s_2[X^{(1)}] + t^4 s_2[X^{(2)}] \\
 \qquad &+ t s_1[X^{(0)}]s_1[X^{(1)}] + t^2 s_1[X^{(0)}]s_1[X^{(2)}] + t^3 s_1[X^{(1)}]s_1[X^{(2)}] \\
s_{11}[Z]   &= s_{11}X^{(0)}] + t^2 s_{11}[X^{(1)}]+ t^4 s_{11}[X^{(2)}] \\
  &+ t s_1[X^{(0)}]s_1[X^{(1)}]+ t^2 s_1[X^{(0)}]s_1[X^{(2)}] + t^3 s_1[X^{(1)}]s_1[X^{(2)}].
\end{align*}
One may verify that
\begin{align*}
    \tH^{\id,\id}_{(\cd,\cd,(2))} &= s_2[Z] + (q/t^2) s_{11}[Z].
\end{align*}

\end{ex}

\comment{
\subsection{Dynkin automorphisms}

We observe that the cyclic group $\Z/r\Z$ acts on $\Y$ via the bijection $\Y\cong \Y^I\times Q$. There is an obvious
action on $I$-tuples of partitions given by rotation.
To fix notation define
$$
(\mu^{(0)},\dotsc,\mu^{(r-1)})^{\sigma} = (\mu^{(r-1)},\mu^{(0)},\mu^{(1)},\dotsc,\mu^{(r-2)}).
$$
This rotates forward on positions. The superscript notation defines a right action of the symmetric group $\Wfin$ on $\Y^I$ by acting on positions.

The affine Dynkin automorphism $\sigma$, which maps $i$ to $i+1$ mod $r$, acts on $Q_\af$ by sending $\alpha_i$ to $\alpha_{i+1}$.
But this fixes $\delta$ so it induces an automorphism of $Q\cong Q_\af/\Z\delta$.

Recall the affine Dynkin automorphism $*$
that sends $i\mapsto -i$. It induces an automorphism
of $Q_\af$ by $\alpha_i\mapsto \alpha_{-i}$.
Since it fixes $\delta$ it induces an automorphism
$\alpha\mapsto \alpha^*$ of $Q$ which coincides with
$\alpha^* = - w_0\alpha$.

Let $\rev$ be the affine Dynkin automorphism
$i\mapsto r-1-i$. Both $*$ and $\rev$ reverse the orientation of the Dynkin diagram but $*$ fixes the $0$ node and $*$ does not. $\rev$ also induces automorphisms of $Q_\af$ and $Q$. 

The transpose or conjugate $\mu^t$ of $\mu\in\Y$ is defined by
$(p,q)\in \mu^t$ if and only if $(q,p)\in \mu$.
Given $\mud=(\mu^{(0)},\mu^{(1)},\dotsc,\mu^{(r-1)})\in \Y^I$ let its transpose $\mu^{\bullet t}=(\mu^{(0)t},\mu^{(1)t},\dotsc,\mu^{(r-1)t})$
be obtained by transposing each partition.

The affine Dynkin diagram automorphisms of $*$ and $\rev$
define obvious involutions on $\Y^I$ just by the corresponding permutation of partitions. We write $\rev$ for that obvious involution and $-$ or $\negate$ for the positional change of $i$ to $-i$:
$\mu^{\bullet-} = (\mu^{(0)},\mu^{(r-1)},\dotsc,\mu^{(2)},\mu^{(1)})$.
We reserve $*$ for the reverse transpose
$\mu^{\bullet*} = \mu^{\bullet \rev\, t}=(\mu^{(r-1)t},\dotsc,\mu^{(1)t},\mu^{(0)t})$.
Obviously permutations on positions commute with transpose on $\Y^I$
and $\mu^{\bullet - *} = \mu^{\bullet t \sigma^{-1}}$ and $\mu^{\bullet * -} = \mu^{\bullet t \sigma}$.

\begin{prop} 
	\begin{align}
		\quot_r \circ \mathrm{transpose} &= * \circ \quot_r \\
		\core_r \circ \mathrm{transpose} &= \mathrm{transpose} \circ\core_r \\
		\kappa \circ \mathrm{transpose} &= * \circ \kappa \\
		\kb\circ \mathrm{transpose} &= * \circ \kb.
	\end{align}

\end{prop}
}

\section{Quantum toroidal algebras and wreath Macdonald eigenoperators}

Throughout this section, we assume $r \ge 3$. Our goal is to explain how the quantum toroidal algebra provides a wreath generalization of the Macdonald eigenoperators from Corollary~\ref{C:tH-eigen} for wreath Macdonald functions $\tH^w_\mud$ with $w=t_{-\beta^\vee}$ a translation element. See Remark~\ref{R:2} for a brief discussion of the $r=2$ case and the connection to classical Macdonald theory for $r=1$.

\subsection{Quantum toroidal algebras}

Quantum toroidal algebras were introduced in \cite{GKV} for arbitrary simply-laced Lie types. Here we consider only the type $A$ case and follow the conventions of \cite{T,FJMM}. In this case there is a remarkable symmetry of the quantum toroidal algebra due to Miki \cite{Miki}, which plays a key role in the construction of wreath Macdonald eigenoperators.

\subsubsection{Setup}
We enlarge the base field to $\F=\Q(s,u)$, with $\K=\Q(q,t)$ embedded into $\F$ via $q=s^2u^2$ and $t=s^2u^{-2}$. We set $p=s^2$ and $d=u^2$.

Let $A=(a_{ij})_{i,j\in I}$ be the Cartan matrix of $\hat{\fsl}_r$ and let $M=(m_{ij})_{i,j\in I}$ be the skew-symmetric matrix given by:
\begin{align*}
m_{ij} = \begin{cases}\pm 1&\text{if $i=j\pm 1$}\\0 &\text{otherwise.}\end{cases}
\end{align*}

\subsubsection{Definition}
The quantum toroidal algebra $\Utor_r=\Utor_{s,u}(\gl_r)$ is the associative $\F$-algebra generated by
\begin{align*}
&C_h^{\pm 1/2}\,,\, C_v^{\pm 1/2}\,,\, e_{i,k}\, ,\, f_{i,k}\, ,\, \psi_{i,l}^\pm \qquad \text{for $i\in I\,,\, k\in\Z\,,\, \pm l\in\Z_{\ge 0}$}
\end{align*}
subject to the relations given below.\footnote{For the purposes of our discussion, we ignore the additional generators $q^{d_1},q^{d_2}$.} To state these, it is convenient to collect the generators into currents:
\begin{align*}
e_i(z) &= \sum_{k\in\Z}e_{i,k}z^{-k},\quad
f_i(z) = \sum_{k\in\Z}f_{i,k}z^{-k},\quad
\psi_i^\pm(z) = \sum_{\pm l\ge 0}\psi_{i,l}^\pm z^{-l}.
\end{align*}
The defining relations of $\Utor_r$ are then as follows for all $i,j\in I$:
\begin{align*}
&\text{$C_h^{1/2},C_v^{1/2}$ are central}\\
&\psi_{i,0}^+ \psi_{i,0}^- = \psi_{i,0}^- \psi_{i,0}^+ = 1\\
&\psi_i^\pm(z)\psi_j^\pm(w) = \psi_j^\pm(w)\psi_i^\pm(z)\\
&C_h=(C_h^{1/2})^2=\prod_{i\in I}\psi_{i,0}\\
&\theta_{a_{ij}}(C_v^{-1}d^{m_{ij}}z/w)\psi_i^+(z)\psi_j^-(w) = \theta_{a_{ij}}(C_v d^{m_{ij}}z/w)\psi_j^-(w)\psi_i^+(z)\\
&\psi_i^\pm(z)e_j(w) = \theta_{a_{ij}}(C_v^{\pm\frac{1}{2}}d^{m_{ij}}z/w)e_j(w)\psi_i^\pm(z)\\
%
%
&\psi_i^\pm(z)f_j(w) = \theta_{a_{ij}}(C_v^{\mp\frac{1}{2}}d^{m_{ij}}z/w)^{-1}f_j(w)\psi_i^\pm(z)\\
%
%
&e_i(z)e_j(w) = \theta_{a_{ij}}(d^{m_{ij}}z/w)e_j(w)e_i(z)\\
%
&f_i(z)f_j(w) = \theta_{a_{ij}}(d^{m_{ij}}z/w)^{-1}f_j(w)f_i(z)\\
&[e_i(z),f_j(w)] = \frac{\delta_{ij}}{p-p^{-1}}\left(\delta(C_v w/z)\psi_i^+(C_v^{\frac{1}{2}}w)-\delta(C_v z/w)\psi_i^-(C_v^{\frac{1}{2}}z)\right)\\
&[e_i(z_1),[e_i(z_2),e_{i\pm 1}(w)]_p]_{p^{-1}}+[e_i(z_2),[e_i(z_1),e_{i\pm 1}(w)]_p]_{p^{-1}} = 0\\
&[f_i(z_1),[f_i(z_2),f_{i\pm 1}(w)]_p]_{p^{-1}}+[f_i(z_2),[f_i(z_1),f_{i\pm 1}(w)]_p]_{p^{-1}} = 0
\end{align*}
where $\theta_m(z) = \frac{p^m z-1}{z-p^m}$ for any $m\in\Z$, $\delta(z)=\sum_{n\in\Z} z^n$, and $[x,y]_c = xy-cyx$.

Instead of the generators $\psi_{i,l}^\pm$ for $i\in I$ and $\pm l\ge 0$, we may use
\begin{align}\label{E:Utor-gens-heis}
\psi_{i,0}^{\pm}\,,\, h_{i,l} \qquad \text{for $i\in I$ and $l\in\Z_{\neq 0}$}
\end{align}
where the $h_{i,l}$ are defined by
\begin{align}\label{E:h-def}
\psi_i^\pm(z) = \psi_{i,0}^\pm \exp\left(\pm (p-p^{-1})\sum_{\pm l>0} h_{i,l}z^{-l}\right).
\end{align}



\subsubsection{Mode form of relations}
\label{SS:comp-rel}
In terms of individual generators, the relations of $\Utor_r$ can be fully unpacked as follows:
\begin{align}
\notag&\text{$C_h^{1/2},C_v^{1/2}$ are central}\\
\notag&\psi_{i,0}^+ \psi_{i,0}^- = \psi_{i,0}^- \psi_{i,0}^+ = 1\\
\notag&\psi_{i,k}^\pm\psi_{j,l}^\pm = \psi_{j,l}^\pm\psi_{i,k}^\pm\\
\notag&C_h=(C_h^{1/2})^2=\prod_{i\in I}\psi_{i,0}\\
%
&[h_{i,k},h_{j,-l}]
=\delta_{k,l}\frac{d^{-km_{ij}}}{k}\frac{(p^{ka_{ij}}-p^{-ka_{ij}})(C_v^k-C_v^{-k})}{(p-p^{-1})^2}\quad (k,l\neq 0)\label{E:h-comp}\\
\notag &\psi_{i,0}e_{j,k} = p^{a_{ij}}e_{j,k}\psi_{i,0}\\
\notag &\psi_{i,0}f_{j,k} = p^{-a_{ij}}f_{j,k}\psi_{i,0}\\
\notag &[h_{i,k},e_{j,l}] = \frac{C_v^{-|k|/2}d^{-m_{ij}k}}{k}\frac{p^{ka_{ij}}-p^{-ka_{ij}}}{p-p^{-1}} e_{j,k+l}\quad (k\neq 0)\\
\notag &[h_{i,k},f_{j,l}] = -\frac{C_v^{|k|/2}d^{-m_{ij}k}}{k}\frac{p^{ka_{ij}}-p^{-ka_{ij}}}{p-p^{-1}} f_{j,k+l}\quad (k\neq 0)\\
%
%
\notag &d^{m_{ij}}[e_{i,k+1},e_{j,l}]_{p^{a_{ij}}}+[e_{j,l+1},e_{i,k}]_{p^{a_{ij}}}=0\\
%
\notag &[f_{i,k},f_{j,l+1}]_{p^{a_{ij}}}+d^{m_{ij}}[f_{j,l},f_{i,k+1}]_{p^{a_{ij}}}=0\\
\notag &[e_{i,k},f_{j,l}] = \delta_{ij}\frac{C_v^{(k-l)/2}\psi_{i,k+l}^+-C_v^{-(l-k)/2}\psi_{i,k+l}^-}{p-p^{-1}}\\
\notag&[e_{i,k_1},[e_{i,k_2},e_{i\pm 1,l}]_p]_{p^{-1}}+[e_{i,k_2},[e_{i,k_1},e_{i\pm 1,l}]_p]_{p^{-1}} = 0\\
\notag&[f_{i,k_1},[f_{i,k_2},f_{i\pm 1,l}]_p]_{p^{-1}}+[f_{i,k_2},[f_{i,k_1},f_{i\pm 1,l}]_p]_{p^{-1}} = 0
\end{align}
where $i,j\in I$ and $k,l\in\Z$ and we set $\psi_{i,l}^+ = \psi_{i,-l}^- = 0$ for all $i\in I$ and $l<0$.

\comment{
We also have the following relations, which are equivalent to \eqref{E:psi0-e-comp},\eqref{E:psi0-f-comp},\eqref{E:h-e-comp},\eqref{E:h-f-comp}:
\begin{align}
&C_v^{\pm\frac{1}{2}}d^{m_{ij}}[\psi_{i,k+1}^\pm,e_{j,l}]_{p^{a_{ij}}} +[e_{j,l+1},\psi_{i,k}^\pm]_{p^{a_{ij}}}=0\label{E:psi-e-comp}\\
&[\psi_{i,k}^\pm, f_{j,l+1}]_{p^{a_{ij}}}+C_v^{\mp\frac{1}{2}}d^{m_{ij}}[f_{j,l},\psi_{i,k+1}^\pm]_{p^{a_{ij}}}=0\label{E:psi-f-comp}
\end{align}
}

\comment{
\begin{align}
&\text{$C_h^{1/2},C_v^{1/2}$ are central}\\
&\psi_{i,0}^+ \psi_{i,0}^- = \psi_{i,0}^- \psi_{i,0}^+ = 1\\
&\psi_{i,k}^\pm\psi_{j,l}^\pm = \psi_{j,l}^\pm\psi_{i,k}^\pm \quad (\pm k,\pm l\ge 0)\label{E:K-same}\\
&C_h=(C_h^{1/2})^2=\prod_{i\in I}\psi_{i,0}\\
&\frac{p^{a_{ij}}C_v^{-1}d^{m_{ij}}z-w}{C_v^{-1}d^{m_{ij}}z-p^{a_{ij}}w}
\psi_i^+(z)\psi_j^-(w) 
= 
\frac{p^{a_{ij}}C_v d^{m_{ij}}z-w}{C_v d^{m_{ij}}z-p^{a_{ij}}w}
\psi_j^-(w)\psi_i^+(z)\label{E:K-opp}\\
&(C_v^{\pm\frac{1}{2}}d^{m_{ij}}z-p^{a_{ij}}w)\psi_i^\pm(z)e_j(w) = (p^{a_{ij}}C_v^{\pm\frac{1}{2}}d^{m_{ij}}z-w)e_j(w)\psi_i^\pm(z)\label{E:psi-e}\\
%
%
&(p^{a_{ij}}C_v^{\mp\frac{1}{2}}d^{m_{ij}}z-w)\psi_i^\pm(z)f_j(w) = (C_v^{\mp\frac{1}{2}}d^{m_{ij}}z-p^{a_{ij}}w)f_j(w)\psi_i^\pm(z)\label{E:psi-f}\\
%
%
&(d^{m_{ij}}z-p^{a_{ij}}w)e_i(z)e_j(w) = (p^{a_{ij}}d^{m_{ij}}z-w)e_j(w)e_i(z)\label{E:e-e}\\
%
&(p^{a_{ij}}d^{m_{ij}}z-w)f_i(z)f_j(w) = (d^{m_{ij}}z-p^{a_{ij}}w)f_j(w)f_i(z)\label{E:f-f}\\
&[e_i(z),f_j(w)] = \frac{\delta_{ij}}{p-p^{-1}}\left(\delta(C_v w/z)\psi_i^+(C_v^{\frac{1}{2}}w)-\delta(C_v z/w)\psi_i^-(C_v^{\frac{1}{2}}z)\right)\label{E:e-f}\\
&[e_{i,k},f_{j,l}] = \frac{\delta_{ij}}{p-p^{-1}}\left(C_v^{(k-l)/2}\psi_{i,k+l}^+-C_v^{-(l-k)/2}\psi_{i,k+l}^-\right)\label{E:e-f-elts}
\end{align}
where $\delta(z)=\sum_{n\in\Z} z^n$, and the following quantum Serre relations:
\begin{align}
[e_{i,k_1},[e_{i,k_2},e_{i\pm 1,l}]_p]_{p^{-1}}+[e_{i,k_2},[e_{i,k_1},e_{i\pm 1,l}]_p]_{p^{-1}} &= 0\\
[f_{i,k_1},[f_{i,k_2},f_{i\pm 1,l}]_p]_{p^{-1}}+[f_{i,k_2},[f_{i,k_1},f_{i\pm 1,l}]_p]_{p^{-1}} &= 0
\end{align}
}

\subsubsection{Involution on a quotient}
\label{SS:involution}

Let $\Utor_{r,0}=\Utor_r/\langle C_v^{1/2}-1\rangle$. Observe that there exists a unique $\Q(u)$-linear involutive automorphism $\dag$ of $\Utor_0$ extending the assignment:
\begin{align*}
s&\mapsto s^{-1}\\
e_i(z)&\mapsto f_i(z)\\
f_i(z)&\mapsto e_i(z)\\
\psi_i^\pm(z)&\mapsto \psi_i^\pm(z)
\end{align*}
for all $i\in I$. To see this, one uses that $\theta_m(x)^{-1}=\theta_{-m}(x)$.

Observe that $q^\dag=t^{-1}$ and $t^\dag=q^{-1}$.

\subsubsection{Quantum affine subalgebras and the Miki automorphism}\label{SS:miki}

The quantum toroidal algebra $\Utor_r$ contains two distinguished copies of the quantum affine algebra $\Uaf_r=U_p'(\hat{\fsl}_r)$.

The \textit{horizontal} quantum affine subalgebra $U^h\subset \Utor_r$ is the $\Q(p)$-subalgebra generated by $e_{i,0},f_{i,0}$, and $\psi_{i,0}^{\pm 1}$ for all $i\in I$. An isomorphism $h: \Uaf_r\to U^h$ of $\Q(p)$-algebras is given by identifying these generators with the Chevalley generators in the Drinfeld-Jimbo presentation of $\Uaf_r$.

The \textit{vertical} quantum affine subalgebra $U^v\subset \Utor_r$ is the $\Q(p)$-subalgebra generated by $C_v$ and $d^{ik}e_{i,k},d^{ik}f_{i,k}$, and $d^{ik}C_v^{k/2}\psi^{\pm}_{i,k}$ for $1\le i\le r-1$ and $k\in\Z$. An isomorphism $v: \Uaf_r\to U^h$ of $\Q(p)$-algebras is given by identifying these generators with the generators in the Drinfeld loop presentation of $\Uaf_r$.

\begin{thm}[\cite{Miki}]\label{T:miki}
There exists an automorphism $\miki : \Utor_r \to \Utor_r$ such that 
\begin{align}\label{E:miki-h-v}
\text{$\miki(U^h)=U^v$ \ and \ $\miki(U^v)=U^h$.}
\end{align}
Moreover, $\miki$ is uniquely determined by compatibility with the isomorphisms $h : \Uaf_r\to U^h$ and $v: \Uaf_r\to U^v$.
\end{thm}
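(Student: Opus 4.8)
The plan is to realize $\miki$ as the automorphism of $\Utor_r$ attached to the order-four element $S=\left(\begin{smallmatrix}0&-1\\1&0\end{smallmatrix}\right)\in SL_2(\Z)$ under a (projective) $SL_2(\Z)$-action on the toroidal algebra, constructing it by prescribing its values on generators and then checking the defining relations. The first, essentially formal, step is to note that $U^h$ and $U^v$ together generate $\Utor_r$ as an $\F$-algebra: starting from the generator lists in \S\ref{SS:miki} and using the relations of \S\ref{SS:comp-rel}, the horizontal generators $e_{i,0},f_{i,0},\psi_{i,0}^{\pm1}$ and the vertical generators $C_v$, $d^{ik}e_{i,k}$, $d^{ik}f_{i,k}$, $d^{ik}C_v^{k/2}\psi^{\pm}_{i,k}$ (for $1\le i\le r-1$) produce between them all of $e_{i,k},f_{i,k},\psi_{i,l}^\pm$ and the central elements. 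Hence any automorphism of $\Utor_r$ is determined by its restrictions to $U^h$ and $U^v$, and the uniqueness clause is immediate: the compatibility with the fixed isomorphisms $h$ and $v$ pins down exactly these two restrictions.

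For existence, I would define $\miki$ on generators so that, read through $h$ and $v$, the restriction $\miki|_{U^h}\colon U^h\to U^v$ becomes Beck's isomorphism $\Uaf_r\to\Uaf_r$ between the Drinfeld--Jimbo and Drinfeld loop presentations, while $\miki|_{U^v}\colon U^v\to U^h$ becomes its inverse, up to the inversion of central charges forced by $S^2=-I$ (so that $\miki$ interchanges $C_h$ and $C_v$ up to reciprocals). The delicate point here is internal consistency: the horizontal Cartan generators $\psi_{i,0}$ are shared with the vertical Cartan currents, and one must check that the two prescriptions agree on the overlap, which is precisely the statement that Beck's isomorphism is compatible with both embeddings of $\Uaf_r$ into $\Utor_r$.

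The main obstacle is verifying that this assignment respects \emph{every} defining relation of $\Utor_r$ from \S\ref{SS:comp-rel}. The difficulty is that $\miki$ trades ``horizontal'' Chevalley-type generators for ``vertical'' loop-type generators, so the two sides of a relation such as $e_i(z)e_j(w)=\theta_{a_{ij}}(d^{m_{ij}}z/w)e_j(w)e_i(z)$, the commutator $[e_i(z),f_j(w)]$ with its $\delta$-function and $\psi^\pm$ right-hand side, or the cubic Serre relations, are not matched term by term but only after substantial rewriting, with careful bookkeeping of the $d$-grading (equivalently the variable $u$) that distinguishes the two $\hat{\fsl}_r$-directions. Following Miki \cite{Miki}, I would first set up the action of the toroidal braid group on $\Utor_r$, realize $\miki$ as an explicit composite of braid operators with a grading reversal, and then reduce the relation-checking to rank-two computations inside the sub-diagram spanned by two adjacent affine nodes. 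This last reduction is where the standing hypothesis $r\ge3$ is used: it guarantees that $M=(m_{ij})$ is genuinely antisymmetric and that every pair of nodes of the affine Dynkin diagram is either disconnected or of type $A_2$, so that no ``rank-one doubled-edge'' phenomena occur.

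Finally, for bijectivity I would construct $\miki^{-1}$ by the same recipe with $S^{-1}$ in place of $S$ (equivalently, by conjugating the construction with the involution $\dag$ of \S\ref{SS:involution} after passing to $\Utor_{r,0}$, adjusting the central $C_h^{1/2},C_v^{1/2}$ accordingly) and verify $\miki\circ\miki^{-1}=\id$ on the generating set $U^h\cup U^v$, which suffices by the first step. The equalities $\miki(U^h)=U^v$ and $\miki(U^v)=U^h$, together with the stated compatibility with $h$ and $v$, are built into the construction.
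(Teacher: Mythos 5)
The paper does not prove this theorem: it is quoted from Miki's work \cite{Miki}, with the explicit generator formulas recorded afterwards in Proposition~\ref{P:miki-formulas} (imported from \cite{T,FJMM}). So there is no in-paper argument to compare against; what you have written is an outline of Miki's original proof, and it does identify the right ingredients. Your uniqueness argument is sound and essentially complete: $U^h$ and $U^v$ together generate $\Utor_r$ (the zero modes from $U^h$ combined with the vertical Heisenberg elements $h_{i,k}$, $1\le i\le r-1$, recover all modes $e_{j,k},f_{j,k}$ via the relations $[h_{i,k},e_{j,l}]\propto e_{j,k+l}$, and the Cartan currents then come from $[e_{j,k},f_{j,l}]$), so an automorphism is pinned down by its restrictions to these two subalgebras, and the stated compatibility with $h$ and $v$ prescribes exactly those restrictions.

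The existence half, however, remains a plan rather than a proof: the entire difficulty---verifying that the prescribed assignment respects the quadratic and Serre relations after the horizontal/vertical exchange---is deferred to ``following Miki,'' and that verification is the substance of \cite{Miki}. Two concrete cautions. First, your symmetric-looking prescription (``$\miki|_{U^v}$ is the inverse of Beck's isomorphism up to central-charge inversion'') glosses over the point, stated in the survey immediately after the theorem, that the compatibility of $\miki$ with $h$ and $v$ is \emph{asymmetric} and that $\miki\neq\miki^{-1}$; pinning down the correct second half of the prescription, including the signs and powers of $u$ visible in Proposition~\ref{P:miki-formulas}, is part of the content, not a formality. Second, the overlap check you mention is sharper than ``Beck's isomorphism is compatible with both embeddings'': the elements $\psi_{i,0}$ for $1\le i\le r-1$ lie in both $U^h$ and $U^v$, and one must confirm the two prescriptions agree on them while, e.g., $\miki(\psi_{0,0})=\psi_{0,0}C_h^{-1}C_v$ is \emph{not} fixed. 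Your identification of where $r\ge 3$ enters (every pair of affine nodes disconnected or of type $A_2$, so $M=(m_{ij})$ is well defined and the rank-two reductions suffice) is correct.
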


We call $\miki$ the \textit{Miki automorphism}. We note that $\miki$ is not uniquely determined by \eqref{E:miki-h-v} alone (for instance, $\miki\neq \miki^{-1}$). The compatibility of $\miki$ with the isomorphisms $h: \Uaf_r \to U^h$ and $v: \Uaf_r \to U^v$, which we will not state explicitly, is asymmetric with respect to $h$ and $v$; see \cite[Theorem 1]{Miki} or \cite[Theorem 2.2]{FJMM} for details. 

It is in general difficult to describe the action of $\miki$ on elements of $\Utor_r$. For our purposes, the following explicit formulas will suffice to characterize $\miki$:

\begin{prop}[\cite{T,FJMM}]\label{P:miki-formulas}
The Miki automorphism $\miki : \Utor_r\to \Utor_r$ is uniquely determined by the following formulas, where $1\le i\le r-1$:
\begin{align*}
\miki(C_v^{1/2}) &= C_h^{-1/2}\\
\miki(C_h^{1/2}) &= C_v^{1/2}\\
\miki(\psi_{i,0}) &= \psi_{i,0}\\
\miki(\psi_{0,0}) &= \psi_{0,0}C_h^{-1}C_v\\
\miki(e_{i,0}) &= e_{i,0}\\
\miki(f_{i,0}) &= f_{i,0}\\
\miki(e_{0,-1}) &= (-1)^r u^{2r}e_{0,1}\\
\miki(f_{0,1}) &= (-1)^r u^{-2r}f_{0,-1}
\end{align*}
and
\begin{align*}
\miki(h_{i,1})
= (-1)^{i+1} u^{-2i}C_h^{-1/2}
f_{0,0} \cdot
\begin{bmatrix}
f_{r-1,0} & \dotsm & f_{i+1,0} & f_{1,0} & \dotsm & f_{i-1,0} & f_{i,0}\\
p & \dotsm & p & p & \dotsm & p & p^2
\end{bmatrix}\\
\miki(h_{i,-1}) = (-1)^{i+1} u^{2i}C_h^{1/2}
\begin{bmatrix}
e_{i,0} & e_{i-1,0} & \dotsm & e_{1,0} & e_{i+1,0} & \dotsm & e_{r-1,0} \\
p^{-2} & p^{-1} & \dotsm & p^{-1} & p^{-1} & \dotsm & p^{-1}
\end{bmatrix}\cdot e_{0,0}
\end{align*}
\begin{align*}
\miki(h_{0,1}) = (-1)^r u^{2-2r}C_h^{-1/2}f_{1,1}\cdot
\begin{bmatrix} 
f_{2,0} & \dotsm & f_{r-1,0} & f_{0,-1} \\
p & \dotsm & p & p^2 
\end{bmatrix}\\
\miki(h_{0,-1}) = (-1)^r u^{2r-2}C_h^{1/2}
\begin{bmatrix}
e_{0,1} & e_{r-1,0} & \dotsm & e_{2,0} \\
p^{-2} & p^{-1} & \dotsm & p^{-1}
\end{bmatrix}
\cdot e_{1,-1},
\end{align*}
in terms of the iterated commutators:
\begin{align*}
x \cdot \begin{bmatrix} y_1 & \dotsm & y_m \\ c_1 & \dotsm & c_m\end{bmatrix} &= [\dotsc[[x,y_1]_{c_1},y_2]_{c_2}\dotsc,y_m]_{c_m}\\
\begin{bmatrix} x_m & \dotsm & x_1 \\ c_m & \dotsm & c_1\end{bmatrix}\cdot y &= [x_m,\dotsc[x_2,[x_1,y]_{c_1}]_{c_2}\dotsc]_{c_m}.
\end{align*}
\end{prop}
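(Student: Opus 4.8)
The plan is to take the existence and uniqueness of $\miki$ in its abstract form (Theorem~\ref{T:miki}, characterized by compatibility with the two isomorphisms $h\colon\Uaf_r\to U^h$ and $v\colon\Uaf_r\to U^v$) as given, and to establish two things: first, that the displayed identities hold for this $\miki$; second, that they characterize it. The second point is the quick one. I would check that the listed elements---$C_v^{\pm1/2}$, $C_h^{\pm1/2}$, the $\psi_{i,0}^{\pm1}$ for $i\in I$, the $e_{i,0},f_{i,0}$ for $1\le i\le r-1$, together with $e_{0,-1}$, $f_{0,1}$, and all $h_{i,\pm1}$ for $i\in I$---generate $\Utor_r$ as an $\F$-algebra. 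Indeed, the mode relation $[h_{i,\pm1},e_{i,l}]=(\text{nonzero scalar})\,e_{i,l\pm1}$ and its $f$-counterpart let one build every $e_{i,k}$ from $e_{i,0}$, every $e_{0,k}$ from $e_{0,-1}$, dually for the $f$'s from $f_{i,0}$ and $f_{0,1}$; the $\psi_{i,l}^\pm$ are then recovered from the brackets $[e_{i,k},f_{i,l}]$ via \eqref{E:h-def}. Hence an algebra endomorphism of $\Utor_r$ is determined by its values on this set, which gives the uniqueness assertion.

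For the formulas themselves I would proceed in increasing order of difficulty. The finite quantum group $U_p(\fsl_r)$ generated by $e_{i,0},f_{i,0},\psi_{i,0}^{\pm1}$ with $1\le i\le r-1$ lies in $U^h\cap U^v$ and is carried by both $h$ and $v$ onto the standard copy of $U_p(\fsl_r)$ inside $\Uaf_r$, which Drinfeld's isomorphism fixes pointwise; the compatibility in Theorem~\ref{T:miki} then forces $\miki$ to restrict to the identity there, giving the formulas for $\miki(e_{i,0})$, $\miki(f_{i,0})$, $\miki(\psi_{i,0})$ with $1\le i\le r-1$. The central elements are handled by tracking central charges: the horizontal $\Uaf_r$ has central charge recorded by $C_v$ and the vertical one by $C_h$, and the asymmetric compatibility of $\miki$ with $h$ and $v$ interchanges them with the conventional signs, yielding $\miki(C_v^{1/2})=C_h^{-1/2}$ and $\miki(C_h^{1/2})=C_v^{1/2}$. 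Finally $\miki(\psi_{0,0})=\psi_{0,0}C_h^{-1}C_v$ drops out of $\prod_{i\in I}\psi_{i,0}=C_h$ together with the formulas already obtained.

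The substantive formulas are those involving the affine node $0$ and the imaginary (Heisenberg) directions, where I would use the explicit form of Drinfeld's isomorphism between the Drinfeld--Jimbo and Drinfeld loop presentations of $\Uaf_r$. Since $h^{-1}(e_{0,0})=E_0$ and $h^{-1}(f_{0,0})=F_0$, Miki's compatibility identifies $\miki(e_{0,0})$ and $\miki(f_{0,0})$ with the images of $E_0,F_0$ under Drinfeld's isomorphism, re-expressed through $v$ as elements of $U^v$; applying $[h_{0,-1},-]$ and $[h_{0,1},-]$ once more, together with the already-known $\miki(h_{0,\pm1})$, then produces $\miki(e_{0,-1})$ and $\miki(f_{0,1})$, the scalars $(-1)^ru^{2r}$ and $(-1)^ru^{-2r}$ being pinned down by weight considerations and by the constants $p=s^2$, $d=u^2$ entering the defining relations. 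Likewise, the imaginary root vectors $h_{i,\pm1}$ of $U^v$ (defined by \eqref{E:h-def}) are sent by $\miki$ into $U^h=h(\Uaf_r)$, and Drinfeld's formula writing a loop Heisenberg generator as an iterated commutator of real root vectors---plus one affine vector---translates through $h$ and $v$ into precisely the displayed iterated-commutator expressions for $\miki(h_{i,\pm1})$ and $\miki(h_{0,\pm1})$. I would roughly halve this computation using the involution $\dag$ of \S\ref{SS:involution} (which exchanges $e_i(z)\leftrightarrow f_i(z)$, fixes the $\psi_i^\pm(z)$, and inverts $s$): checking that it is compatible with $\miki$, one deduces the $f$-side formulas and the $\miki(h_{i,-1}),\miki(h_{0,-1})$ formulas from their $e$-side counterparts (modulo restoring $C_v$-dependence, as $\dag$ lives on the quotient $\Utor_{r,0}$).

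The main obstacle is this last step: pinning down the exact iterated-commutator expressions for $\miki(h_{i,\pm1})$ and $\miki(h_{0,\pm1})$, which requires computing Drinfeld's isomorphism on the imaginary root vectors of $\Uaf_r$ and then carefully bookkeeping every power of $p$, $d$, $u$ and every sign $(-1)^i$, $(-1)^r$ that the braiding factors $\theta_m(z)$ and $d^{m_{ij}}$ inject into the toroidal relations. A clean way to make this rigorous, rather than tracking Drinfeld's isomorphism directly, is to verify that the asserted right-hand sides satisfy the bracket relations \eqref{E:h-comp} and the $\psi$--$e$ and $\psi$--$f$ relations together with the already-established images $\miki(e_{i,0}),\miki(f_{i,0}),\miki(e_{0,0}),\miki(e_{0,-1}),\miki(f_{0,0}),\miki(f_{0,1})$; by the generation statement in the first paragraph this determines the images of the $h_{i,\pm1}$ uniquely, so matching the relations suffices.
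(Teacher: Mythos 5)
First, a point of comparison: the paper does not prove Proposition~\ref{P:miki-formulas} at all. It is imported from \cite{T} and \cite{FJMM}, and the only content the paper adds is the notational dictionary in the remark that follows (identifying $\miki$ with $\theta^{-1}$, $C_h$ with $\kappa$, $h_{i,l}$ with $q^{-lc/2}H_{i,l}$, etc.). So your proposal is not an alternative to the paper's argument; it is an attempt to reconstruct the argument of the cited references. Your overall plan is indeed the one those references follow (characterize $\miki$ by its interaction with $h$ and $v$, use the Drinfeld isomorphism between the two presentations of $\Uaf_r$, and push the imaginary root vectors through), and your generation argument for the uniqueness clause is sound: the listed elements do generate $\Utor_r$ via the mode-shifting brackets $[h_{i,\pm1},e_{j,l}]\propto e_{j,l\pm1}$ and the recovery of the $\psi^\pm_{i,k}$ from $[e_{i,k},f_{i,l}]$. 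The easy formulas ($\miki(e_{i,0})$, $\miki(f_{i,0})$, $\miki(\psi_{i,0})$ for $i\neq 0$, the central elements, and $\miki(\psi_{0,0})$ from $\prod_i\psi_{i,0}=C_h$) are handled correctly.

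The gap is that the substantive content of the proposition --- the precise iterated commutators and the scalars $(-1)^{i+1}u^{\mp2i}$, $(-1)^ru^{\pm(2r-2)}$, $(-1)^ru^{\pm2r}$ --- is never actually established; you flag it as ``the main obstacle'' and defer it, and your fallback strategy has two specific soft spots. First, verifying that the asserted right-hand sides satisfy \eqref{E:h-comp} and the $h$--$e$, $h$--$f$ mode relations only pins down $\miki(h_{i,\pm1})$ if you already know $\miki$ on enough modes $e_{j,\pm1}$ to make those relations determining; but $\miki(e_{j,1})$ for $1\le j\le r-1$ is itself computed from $[\miki(h_{j,1}),e_{j,0}]$, which is circular. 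One must instead argue through the compatibility $\miki\circ h=v$ directly (so that $\miki(h_{i,\pm1})$ for $1\le i\le r-1$ is the $h$-image of the Drinfeld imaginary root vector, computed once and for all in $\Uaf_r$). Second, $h_{0,\pm1}$ does not lie in $U^v$ as defined in \S\ref{SS:miki} (the vertical subalgebra only involves the loop generators at nodes $1,\dotsc,r-1$), so the $v$-compatibility alone cannot reach $\miki(h_{0,\pm1})$; the node-$0$ formulas in \cite{FJMM} require a separate argument, which your sketch does not provide. Since the paper itself outsources all of this, citing \cite{T,FJMM} as you implicitly do is acceptable for a survey, but as a self-contained proof the proposal is incomplete at exactly the step that carries the content of the statement.
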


\begin{rem}
We extracted the formulas above from \cite{FJMM} via the following identification:
\begin{center}
\begin{tabular}{|c||c|c|c|c|c|c|c|c|c|}
\hline
& $\miki$ & $C_h$ & $C_v$ & $e_{i,k}$ & $f_{i,k}$ & $\psi^{\pm}_{i,0}$ & $h_{i,l}$ & $p$ & $d$\\
\hline
\cite{FJMM} & $\theta^{-1}$ & $\kappa$ & $q^c$ & $E_{i,k}$ & $F_{i,k}$ & $K_i^{\pm 1}$ & $q^{-lc/2}H_{i,l}$ & $q$ & $d$\\
\hline
\end{tabular}.
\end{center}
\end{rem}

\subsubsection{Heisenberg subalgebras}

Let $\heis_v\subset\Utor_r$ be the $\F$-span of $C_v^{\pm 1}$ and $h_{i,l}$ for $i\in I$ and $l\neq 0$. The subspace $\heis_v$ is closed under the Lie bracket and is isomorphic to a $p$-deformed Heisenberg Lie algebra with relations given by \eqref{E:h-comp}. We call $\heis_v$ the \textit{vertical} Heisenberg (Lie) subalgebra of $\Utor_r$.

The \textit{horizontal} Heisenberg subalgebra of $\Utor_r$ is the Lie subalgebra $\heis_h=\miki(\heis_v)$. As a vector space, $\heis_h$ is spanned by $C_h^{\pm 1}$ and $\miki(h_{i,l})$ for $i\in I$ and $l\neq 0$. The horizontal Heisenberg elements $\miki(h_{i,\pm 1})$ are given explicitly by Proposition~\ref{P:miki-formulas}. 

Acting in the vertex representation $\cV$ of $\Utor_r$ introduced below, the elements $\miki(h_{i,l})$ will lead to wreath Macdonald eigenoperators.

\subsection{Fock representation}

The eigenvalues of wreath Macdonald eigenoperators are found in the Fock representation of $\Utor$.

\subsubsection{Lowest weight Fock representation}\label{SS:Fock}

The Fock space $\cF_\xi=\bigoplus_{\mu\in\Y} \F \ket{\mu}$ affords an action of $\Utor_r$ depending on a parameter $\xi\in\F^\times$ \cite{VV:double, STU}, which simultaneously extends the well-known level 1 action of $\Uaf_r$ on this space \cite{KMS} and the level 0 action of $\Uaf_r$ due to \cite{TU} (via the horizontal and vertical embeddings, respectively). Here we follow the conventions of \cite{FJMM,T}.

To describe this action explicitly, let $\cF_\xi^\vee=\oplus_{\mu\in\Y}\F \bra{\mu}$ be the finite dual space, where $\langle \mu | \nu\rangle = \delta_{\mu\nu}$, and define the rational function $\phi(z) = \frac{p-p^{-1}z}{1-z}$.

\begin{thm}[{\cite[Proposition 3.3]{FJMM}}]
The space $\cF_\xi$ affords an action of the quantum toroidal algebra $\Utor_r$ whose nonzero matrix coefficients are as follows:
\begin{align*}
\bra{\mu} C_h^{1/2}\ket{\mu} &= s^{-1}\\
\bra{\mu} C_v^{1/2}\ket{\mu} &= 1\\ 
\bra{\nu} e_i(z) \ket{\mu} &= 
\prod_{\substack{b<y\\(a,b)\in A_{i}(\mu)}}\phi(t^{a-x}q^{b-y})
\prod_{\substack{b<y\\(a,b)\in R_{i}(\mu)}}\phi(t^{x-a}q^{y-b})
\delta(t^{-x}q^{-y}\xi/z)\\
\bra{\mu} f_i(z) \ket{\nu} &= 
\prod_{\substack{b>y\\(a,b)\in A_{i}(\mu)}}\phi(t^{a-x}q^{b-y})
\prod_{\substack{b>y\\(a,b)\in R_{i}(\mu)}}\phi(t^{x-a}q^{y-b})
\delta(t^{-x}q^{-y}\xi/z)\\
\bra{\mu} \psi_i^{\pm}(z) \ket{\mu} &= 
\prod_{(a,b)\in A_{i}(\mu)}\phi(t^{-a}q^{-b}qt\xi/z)^{-1}
\prod_{(a,b)\in R_{i}(\mu)}\phi(t^{-a}q^{-b}\xi/z)
\end{align*}
for all $\mu\in\Y$, $i\in I$, and all $\nu\in\Y$ such that $\nu/\mu$ is a single cell $(x,y)$ satisfying $y-x\equiv i \ (\mathrm{mod}\ r)$. 

In particular, $\cF_\xi$ is an irreducible lowest weight module over $\Utor_{r,0}$, with lowest weight vector $\ket{\varnothing}$ and lowest weight $(\phi(z/\xi)^{\delta_{i0}})_{i\in I}$.
\end{thm}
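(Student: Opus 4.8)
The plan is to prove the two halves of the statement separately: first that the displayed matrix coefficients really do define a $\Utor_r$-module structure on $\cF_\xi$, and second that the resulting module---which automatically factors through $\Utor_{r,0}$ since $C_v^{1/2}$ acts by $1$---is irreducible and lowest weight. Throughout I would follow \cite[Proposition 3.3]{FJMM} (see also \cite{T}), where these verifications are carried out in full.

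For the first half I would check the defining relations one at a time, in the current (generating-function) form of \S\ref{SS:comp-rel}. The key preliminary observation is that $e_i(z)$ adds a single box of residue $i$, $f_i(z)$ removes one, each $\psi_i^\pm(z)$ is diagonal, and the factor $\delta(t^{-x}q^{-y}\xi/z)$ collapses the spectral variable to the monomial recording the content $y-x$ of the box moved; since distinct addable (resp.\ removable) boxes of a fixed partition have distinct contents, each mode $e_{i,k}$ or $f_{i,k}$ sends a basis vector to a scalar multiple of a single basis vector, so every product of currents occurring in the relations is well defined termwise on each $\ket{\mu}$. With this, each relation becomes a finite identity about configurations of at most three boxes. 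The relations among the $\psi$'s, among the $e$'s, among the $f$'s, and the mixed $\psi e$, $\psi f$ relations all reduce to a single \emph{two-box exchange lemma}: for $\mu\subset\nu$ with $\nu/\mu$ two boxes of residues $i,j$ whose contents differ by a prescribed amount, the ratio of ``add the first, then the second'' to ``add the second, then the first'' equals $\theta_{a_{ij}}$ evaluated at the corresponding monomial in $q,t$---exactly the structure function of $e_i(z)e_j(w)=\theta_{a_{ij}}(d^{m_{ij}}z/w)e_j(w)e_i(z)$---which I would prove by rewriting the quotient of products of $\phi(z)=\frac{p-p^{-1}z}{1-z}$ and treating the cases $|i-j|=1$ and $|i-j|>1$ separately. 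The relation $C_h=\prod_{i\in I}\psi_{i,0}$ follows from the elementary fact that every partition has exactly one more addable than removable box, which forces $\prod_i\psi_{i,0}$ to act by $p^{-1}=s^{-2}=C_h$. The Drinfeld relation $[e_i(z),f_j(w)]=\frac{\delta_{ij}}{p-p^{-1}}\bigl(\delta(C_vw/z)\psi_i^+(C_v^{1/2}w)-\delta(C_vz/w)\psi_i^-(C_v^{1/2}z)\bigr)$ is where the genuine computation lies: for $i\ne j$ the two orders of adding a $j$-box and removing an $i$-box contribute equal scalars and cancel; for $i=j$ the ``off-diagonal'' terms (add box $s$, then remove $s'\ne s$) again cancel between the two orders, while the ``diagonal'' terms (add then remove the same box) must be summed, and the resulting partial-fraction identity for $\phi$ has to reproduce the right-hand side. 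The Serre relations involve three boxes of residues $i,i,i\pm1$ and follow by symmetrizing the two-box lemma.

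For the second half, the lowest weight claim is immediate: $\varnothing$ has no removable box, so $f_i(z)\ket{\varnothing}=0$ for all $i\in I$, and its unique addable box is $(0,0)$ of residue $0$, so the $\psi$-formula shows that $\psi_i^\pm(z)$ acts on $\ket{\varnothing}$ by a scalar series supported at $i=0$, equal after the normalization of \cite{FJMM} to the stated lowest weight $(\phi(z/\xi)^{\delta_{i0}})_{i\in I}$. For cyclicity, I would note that a suitable mode $e_{i,k}$ carries $\ket{\mu}$ to a \emph{nonzero} multiple of $\ket{\nu}$ for every $\nu\supset\mu$ differing by one box---the scalar being a product of values of $\phi$, nonzero for generic $\xi$---and then induct on $|\mu|$ along a standard tableau to conclude $\Utor_{r,0}\ket{\varnothing}=\cF_\xi$. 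Dually, any nonzero submodule is graded by $|\mu|$ and stable under the lowering modes $f_{i,k}$, and peeling off the boxes of a partition in its support along a standard tableau (each step again multiplying by a nonzero scalar) shows that it contains $\ket{\varnothing}$; hence it is all of $\cF_\xi$, and the module is irreducible.

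I expect the main obstacle to be the combinatorial bookkeeping: organizing the two- and three-box exchange identities uniformly across the adjacent and non-adjacent residue cases, and carrying out the partial-fraction calculation inside the $[e_i(z),f_i(w)]$ relation. None of this is conceptually hard, but it must be arranged carefully. A useful shortcut, which I would exploit to shrink the verification, is that the displayed action restricts under the horizontal and vertical embeddings of \S\ref{SS:miki} to the level-$1$ Fock action of \cite{KMS} and the level-$0$ action of \cite{TU} respectively; these already pin down every relation not simultaneously involving $e$'s and $f$'s, leaving essentially only the Drinfeld commutator to check directly.
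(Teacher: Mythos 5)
The paper offers no proof of this statement: it is imported verbatim from \cite[Proposition 3.3]{FJMM}, so there is no internal argument to compare yours against. Your sketch is the standard direct verification (the one actually carried out in \cite{FJMM} and \cite{T}): reduce each relation to a finite identity about configurations of one, two, or three boxes, handle the Drinfeld commutator $[e_i(z),f_j(w)]$ by a partial-fraction/residue computation, and deduce irreducibility from the lowest-weight structure. As a strategy this is right, and your computation of the lowest weight and of $C_h=\prod_i\psi_{i,0}$ are correct.

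Two points need repair. First, it is not true that a single mode $e_{i,k}$ (or $f_{i,k}$) sends $\ket{\mu}$ to a multiple of one basis vector: one has $e_{i,k}\ket{\mu}=\sum_{\nu}c_\nu\,(t^{-x}q^{-y}\xi)^{k}\ket{\nu}$, summed over \emph{all} addable $i$-cells $(x,y)$. The $\delta$-factor localizes each matrix coefficient $\bra{\nu}e_i(z)\ket{\mu}$, not the action of a mode. This does not harm well-definedness of products of currents (matrix coefficients of products are finite sums of products of deltas in distinct variables), but it breaks your cyclicity and submodule arguments as written. The standard fix: the spectral parameters $t^{-x}q^{-y}\xi$ of distinct addable $i$-cells are distinct, so a Vandermonde argument recovers each $\ket{\nu}$ from the span of $\{e_{i,k}\ket{\mu}\}_{k\in\Z}$; or, more cleanly, note that the joint spectrum of the $\psi_i^{\pm}(z)$ on $\cF_\xi$ is simple (a fact the paper itself uses later), so every submodule is spanned by a subset of the $\ket{\mu}$, and connectivity of Young's lattice under nonzero single-box $e$/$f$ matrix coefficients finishes the proof. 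Second, the closing ``shortcut'' overreaches: knowing that the action restricts to the level-$1$ $U^h$-action of \cite{KMS} and the level-$0$ $U^v$-action of \cite{TU} does not pin down all relations free of $e$--$f$ mixing, because, e.g., the modes $e_{0,k}$ with $k\neq 0$ lie in neither subalgebra, so the current relations involving them are not internal to $U^h$ or $U^v$. That restriction is a consistency check, not a substitute for verifying the relations. (Relatedly, the cubic Serre relation is a genuinely separate three-box identity; it is proved by the same kind of computation as the two-box exchange lemma but does not formally follow from it by symmetrization.)
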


Here the matrix coefficients for $\psi^\pm_i(z)$ are obtained by expanding the rational functions in $z^{\mp 1}$, respectively. The lowest weight condition in $\cF_\xi$ reads as follows:
\begin{align*}
\psi^\pm_i(z)\ket{\varnothing} &= \phi(z/\xi)^{\delta_{i0}}\ket{\varnothing}\\
f_i(z)\ket{\varnothing} &= 0 \quad \text{for all $i\in I$.}
\end{align*}

\begin{rem}
More generally, there is a Fock representation of $\Utor_r$ for each $p\in I$, corresponding to a fundamental weight of $\hat{\fsl}_r$. We only consider the $p=0$ case, which matches the choice of vertex supporting the one-dimensional framing space in the quiver varieties $\fM_{\beta,n}$ from the previous section.
\end{rem}

\subsubsection{Twisting}

To obtain a highest weight $\Utor_{r,0}$-module $\cF^\dag_\xi$, we twist the Fock module by the involutive automorphism $\dag$ of $\Utor_{r,0}$ from \S\ref{SS:involution}. More precisely, let $\rho_{\cF_\xi} : \Utor_{r,0} \to \End_\F(\cF_\xi)\cong \Mat_{\Y}(\F)$ be the $\F$-algebra homomorphism determined by the Fock representation $\cF_\xi$, where $\Mat_{\Y}(\F)$ is the algebra of $\Y\times\Y$-matrices over $\F$ whose columns have finite support. We identify $\End_\F(\cF_\xi)\cong \Mat_{\Y}(\F)$ by means of the basis $\{\ket{\mu}\}_{\mu\in\Y}$.

Then, we define $\rho_{\cF_\xi^\dag}$ as the composition:
\begin{align*}
\rho_{\cF_\xi^\dag}: \Utor_{r,0}\overset{\dag}{\longrightarrow}\Utor_{r,0}\overset{\rho_{\cF_\xi}}{\longrightarrow}\Mat_{\Y}(\F) \overset{\dag}{\longrightarrow} \Mat_{\Y}(\F)
\end{align*}
where, by an abuse of notation, the map $\Mat_{\Y}(\F) \overset{\dag}{\longrightarrow} \Mat_{\Y}(\F)$ is given by entry-wise application of the automorphism $a(s,u)\mapsto a(s^{-1},u)$ of $\F$. We use the same vector space $\cF_\xi^\dag=\oplus_{\mu\in\Y} \F \ket{\mu}$ and regard the images of $\rho_{\cF_\xi^\dag}$ as matrices with respect to the basis $\{\ket{\mu}\}_{\mu\in\Y}$.

\begin{cor}\label{C:hwF}
The homomorphism $\rho_{\cF_\xi^\dag}$ determines an action of $\Utor_r$ on $\cF_\xi^\dag$ whose nonzero matrix coefficients are as follows:
\begin{align}
\notag
\bra{\mu} C_h^{1/2}\ket{\mu} &= s\\
\notag
\bra{\mu} C_v^{1/2}\ket{\mu} &= 1\\ 
%
\notag
\bra{\mu} e_i(z) \ket{\nu} &= 
\prod_{\substack{b>y\\(a,b)\in A_{i}(\mu)}}\phi(q^{a-x}t^{b-y})
\prod_{\substack{b>y\\(a,b)\in R_{i}(\mu)}}\phi(q^{x-a}t^{y-b})
\delta(q^{x}t^{y}\xi^\dag/z)\\
%
\notag
\bra{\nu} f_i(z) \ket{\mu} &= 
\prod_{\substack{b<y\\(a,b)\in A_{i}(\mu)}}\phi(q^{a-x}t^{b-y})
\prod_{\substack{b<y\\(a,b)\in R_{i}(\mu)}}\phi(q^{x-a}t^{y-b})
\delta(q^{x}t^{y}\xi^\dag/z)\\
\label{E:psi-current-act}
\bra{\mu} \psi_i^{\pm}(z) \ket{\mu} &= 
\prod_{(a,b)\in A_{i}(\mu)}\phi(q^{-a}t^{-b}qtz/\xi^\dag)^{-1}
\prod_{(a,b)\in R_{i}(\mu)}\phi(q^{-a}t^{-b}z/\xi^\dag)
\end{align}
for all $\mu\in\Y$, $i\in I$, and all $\nu\in\Y$ such that $\nu/\mu$ is a single cell $(x,y)$ satisfying $y-x\equiv i \ (\mathrm{mod}\ r)$.

In particular, $\cF_\xi$ is an irreducible highest weight module over $\Utor_{r,0}$, with highest weight vector $\ket{\varnothing}$ and highest weight $(\phi(\xi^\dag/z)^{\delta_{i0}})_{i\in I}$.
\end{cor}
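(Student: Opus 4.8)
The plan is to verify Corollary~\ref{C:hwF} by a direct computation of $\rho_{\cF_\xi^\dag}$ on the currents $C_h^{1/2},C_v^{1/2},e_i(z),f_i(z),\psi_i^\pm(z)$, starting from its definition as $\rho_{\cF_\xi^\dag}=\dag\circ\rho_{\cF_\xi}\circ\dag$, where the outer $\dag$ is the entrywise application of $a(s,u)\mapsto a(s^{-1},u)$ and the inner $\dag$ is the involution of $\Utor_{r,0}$ from \S\ref{SS:involution}. This composite is an $\F$-algebra homomorphism because the two $\F$-semilinearities cancel, so it suffices to track the effect of the two copies of $\dag$ on the matrix coefficients supplied by the preceding theorem. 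On the algebra side we have $\dag(e_i(z))=f_i(z)$, $\dag(f_i(z))=e_i(z)$, $\dag(\psi_i^\pm(z))=\psi_i^\pm(z)$, together with $\dag(C_h^{1/2})=C_h^{1/2}$ (as $C_h=\prod_{i\in I}\psi_{i,0}$ is fixed) and $\dag(C_v^{1/2})=1$ in $\Utor_{r,0}$. On the scalar side, $a(s,u)\mapsto a(s^{-1},u)$ sends $p\mapsto p^{-1}$, $q\mapsto t^{-1}$, $t\mapsto q^{-1}$, $\xi\mapsto\xi^\dag$, and it converts $\phi(z)=\frac{p-p^{-1}z}{1-z}$ into $\frac{p^{-1}-pz}{1-z}=\phi(z^{-1})$; this last identity is what makes the whole computation go through.

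Carrying this out term by term: $\rho_{\cF_\xi^\dag}(C_h^{1/2})$ is gotten from $\bra{\mu}C_h^{1/2}\ket{\mu}=s^{-1}$ by $s\mapsto s^{-1}$, giving $s$, and $\rho_{\cF_\xi^\dag}(C_v^{1/2})=1$. For $e_i(z)$ we have $\rho_{\cF_\xi^\dag}(e_i(z))=\dag(\rho_{\cF_\xi}(f_i(z)))$: reading $\bra{\mu}f_i(z)\ket{\nu}$ off the theorem (nonzero when $\nu/\mu$ is the cell $(x,y)$ with $y-x\equiv i$), substituting $q\mapsto t^{-1}$, $t\mapsto q^{-1}$, $\xi\mapsto\xi^\dag$ and replacing each factor $\phi(w)$ by $\phi(w^{-1})$ turns $\phi(t^{a-x}q^{b-y})$ into $\phi(q^{a-x}t^{b-y})$ and $\phi(t^{x-a}q^{y-b})$ into $\phi(q^{x-a}t^{y-b})$, leaves the side conditions $b>y$ untouched, and carries $\delta(t^{-x}q^{-y}\xi/z)$ to $\delta(q^{x}t^{y}\xi^\dag/z)$; this is precisely the asserted formula for $\bra{\mu}e_i(z)\ket{\nu}$, with $e_i$ now \emph{removing} the cell $\nu/\mu$. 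The formula for $f_i(z)$ comes out identically from $\rho_{\cF_\xi}(e_i(z))$ with $b<y$ in place of $b>y$, and for $\psi_i^\pm(z)$ the same substitutions applied to the diagonal coefficient give $\phi(t^{-a}q^{-b}qt\xi/z)^{-1}\mapsto\phi(q^{-a}t^{-b}qt\,z/\xi^\dag)^{-1}$ and $\phi(t^{-a}q^{-b}\xi/z)\mapsto\phi(q^{-a}t^{-b}z/\xi^\dag)$, matching \eqref{E:psi-current-act}; the prescription that $\psi_i^\pm(z)$ be expanded in $z^{\mp1}$ is undisturbed, since pushing $s\mapsto s^{-1}$ through a series expansion whose centre ($0$ or $\infty$) does not involve $s$ is harmless.

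For the remaining assertion, since $\dag$ interchanges $e_i(z)$ and $f_i(z)$, the lowest weight relations $f_i(z)\ket{\varnothing}=0$ and $\psi_i^\pm(z)\ket{\varnothing}=\phi(z/\xi)^{\delta_{i0}}\ket{\varnothing}$ holding in $\cF_\xi$ become $e_i(z)\ket{\varnothing}=0$ and $\psi_i^\pm(z)\ket{\varnothing}=\phi(\xi^\dag/z)^{\delta_{i0}}\ket{\varnothing}$ in $\cF_\xi^\dag$, so $\ket{\varnothing}$ is a highest weight vector of the stated weight; irreducibility over $\Utor_{r,0}$ passes from $\cF_\xi$ to $\cF_\xi^\dag$ because twisting by an algebra automorphism preserves irreducibility. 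The only place demanding care is the combinatorial bookkeeping---confirming that the $e\leftrightarrow f$ interchange is simultaneously compatible with swapping ``adds a box'' with ``removes a box'' and with swapping the conditions $b<y$ and $b>y$---but this is routine; granted the theorem quoted above and the well-definedness of $\dag$, there is no real obstacle.
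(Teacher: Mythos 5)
Your proposal is correct and is exactly the argument the paper intends: the paper's own justification is the one-line remark following the corollary (using $q^\dag=t^{-1}$, $t^\dag=q^{-1}$, and $\phi(c/z)^\dag=\phi(z/c^\dag)$), and your computation simply carries out that substitution generator by generator, with the $e\leftrightarrow f$ and add-box/remove-box bookkeeping handled correctly. No issues.
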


To compute these matrix coefficients, we use that $q^\dag = t^{-1}$, $t^\dag = q^{-1}$, and
\begin{align*}
\phi(c/z)^\dag &=\left(\frac{p-p^{-1}c/z }{1-c/z}\right)^\dag=\frac{p^{-1}-p c^\dag/z }{1-c^\dag/z}=\phi(z/c^\dag)
\end{align*}
for any $c\in\F$.

The highest weight condition in $\cF^\dag_\xi$ reads as follows:
\begin{align*}
\psi^\pm_i(z)\ket{\varnothing} &= \phi(\xi^\dag/z)^{\delta_{i0}}\ket{\varnothing}\\
e_i(z)\ket{\varnothing} &= 0 \quad \text{for all $i\in I$.}
\end{align*}
From \eqref{E:h-def} and \eqref{E:psi-current-act}, one computes the following finer description of the joint eigenvalues of $\heis^v$ in $\cF_\xi^\dag$:
\begin{align}
%
\label{E:h-modes-act}
\bra{\mu} h_{i,\pm l} \ket{\mu} &= A_\mu^{(i)}(q^{\pm l},t^{\pm l})\frac{p^l-p^{-l}}{p-p^{-1}}\frac{(p^{-1}\xi^\dag)^{\pm l}}{l}
\end{align}
for all $\mu\in\Y$, $i\in I$, and $l>0$, where $A_\mu^{(i)}$ denotes the coefficient of $\chi^i$ in $A_\mu(q\chi^{-1},t\chi)$. At this point, it is instructive to recall that the quantities $A_\mu(q^{\pm 1},t^{\pm 1})$ from \eqref{E:Amudef} were precisely the eigenvalues of Macdonald eigenoperators in Corollary~\ref{C:tH-eigen}. As we will see below, \eqref{E:h-modes-act} determines the eigenvalues of wreath Macdonald eigenoperators.

\begin{rem}
In \cite{T}, it is asserted that the dual $\cF_\xi^\vee$ can be made into a $\Utor_r$-module via the antipode of a topological Hopf algebra structure on $\Utor_r$. 
Here we achieve the same desired effect---namely, converting a lowest weight module to a highest weight module---by using the automorphism $\dag$.
\end{rem}

\begin{rem}\label{R:Fock geom}


By \cite{VV,Nak} (see also \cite{Nag}), the space
\begin{align}
\cK=\bigoplus_{\substack{\beta\in Q\\n\ge 0}}K^\T(\fM_{\beta,n})\otimes_{R(\T)}\F=\bigoplus_{N\ge 0}K^\T(\Hilb_N^\Gamma)\otimes_{R(\T)}\F
\end{align}
has the structure of an irreducible highest weight $\Utor_{r,0}$-module with highest weight $(\phi(z^{-1})^{\delta_{i0}})_{i\in I}$. This action is defined geometrically, in fashion similar to the action appearing in Theorem~\ref{T:FT-SV}, with the currents $e_i(z)$ and $f_i(z)$ acting via correspondences, and the Cartan currents $\psi^{\pm}_i(z)$ acting by $K$-theory multiplication with tautological classes.

Recall that we identify the Nakajima quiver varieites $\fM_{\beta,n}$ for $\beta\in Q$ and $n\in\Z_{\ge 0}$ with the irreducible components of $\Hilb_N^\Gamma$ for $N\in\Z_{\ge 0}$. The space $\cK$ has a basis given by the classes $[I_\mu]^\Gamma$ of $\T$-fixed points $I_\mu\in\Hilb_{|\mu|}^\Gamma$ for $\mu\in\Y$. These classes form a weight basis of $\cK$ under the $\Utor_r$ action:
\begin{align}\label{E:Fock geom weights}
\psi_i^\pm(z)[I_\mu]^\Gamma &= \prod_{(a,b)\in A_{i}(\mu)}\phi(q^{-a} t^{-b}qtz)^{-1}\prod_{(a,b)\in R_{i}(\mu)}\phi(q^{-a}t^{-b} z)[I_\mu]^\Gamma.
\end{align}
In particular, $[I_\varnothing]^\Gamma$ is a highest weight vector in $\cK$.

By the uniqueness of irreducible highest weight $\Utor_r$-modules (cf. \cite{FJMM1}), the modules $\cK$ and $\cF_1^\dag$ must be isomorphic. Moreover, the joint spectrum $(\psi_i^\pm(z))_{i\in I}$ is simple in these modules, as the explicit formula \eqref{E:psi-current-act} (or \eqref{E:Fock geom weights}) shows. Thus, for any isomorphism $\Phi : \cK \to \cF_1^\dag$ of $\Utor_r$-modules, it must be true that
\begin{align}\label{E:eigenspaces}
\Phi([I_\mu]^\Gamma) \in \F^\times|\mu\rangle\subset\cF_1^\dag,\quad \text{for $\mu\in\Y$.}
\end{align}

\comment{
\begin{tabular}{c|c}
\cite{Nag} & us\\
\hline
$q=s^{1/2}t^{1/2}$ & $p=s^2$\\
$r=s^{-1/2}t^{1/2}$ & $d^{-1}=u^{-2}$\\
$s=qr^{-1}$ & $pd = q_3^{-1}=q$\\
$t=qr$ & $pd^{-1}= q_1^{-1}=t$
\end{tabular}
}

\end{rem}

\subsection{Vertex representation}

Wreath Macdonald eigenoperators act in the vertex reresentation of $\Utor_r$ \cite{Saito}, as we now describe.

Recall that $Q$ denotes the root lattice of $\mathfrak{sl}_r$ and that $\alb_i=\cl(\al_i)$ for all $i\in I$. Let $\pair{\cdot}{\cdot}: Q^\vee \times Q\to\Z$ denote the canonical pairing.

\subsubsection{Skew group algebra} 
Let $\Z\{Q\}=\bigoplus_{\alpha\in Q}\Z e^\alpha$. We equip $\Z\{Q\}$ with a noncommutative product as follows. For $\alpha=\sum_{i=1}^{r-1} m_i \al_i$ and $\beta=\sum_{j=1}^{r-1} n_j \al_j$, define
\begin{align}\label{E:skew-mult}
	e^{\al}e^{\beta} &= 
	\sg(\al,\beta) e^{\al+\beta}
\end{align}
where
\begin{align}\label{E:cocycle}
	\sg(\al,\beta) &= (-1)^{\sum_{j=1}^{r-2} m_{j+1}n_j}.
\end{align}
One verifies directly that \eqref{E:skew-mult}, extended by linearity, gives $\Z\{Q\}$ the structure of a ring with $1=e^0$. 


The following are easily verified using the above definition:


\begin{lem}
For all $i,j\in I$, we have $e^{\alpha_i}e^{\alpha_j}=(-1)^{a_{ij}}e^{\alpha_j}e^{\alpha_i}$.
\end{lem}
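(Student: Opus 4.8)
The plan is to reduce the assertion to a one-line computation with the cocycle \eqref{E:cocycle}. First I would record an elementary consequence of \eqref{E:skew-mult}: since the $e^\gamma$ are indexed by the abelian group $Q$ and $\sg$ takes values in $\{\pm 1\}$, for any $\al,\beta\in Q$ we have $e^\al e^\beta=\sg(\al,\beta)e^{\al+\beta}$ and $e^\beta e^\al=\sg(\beta,\al)e^{\al+\beta}$, whence $e^\al e^\beta=\sg(\al,\beta)\sg(\beta,\al)\,e^\beta e^\al$. So the lemma is equivalent to the numerical identity $\sg(\alb_i,\alb_j)\sg(\alb_j,\alb_i)=(-1)^{a_{ij}}$ for all $i,j\in I$, where as usual $\alb_i=\cl(\al_i)$; thus $\alb_i=\al_i$ for $1\le i\le r-1$, while $\alb_0=-\theta=-\sum_{k=1}^{r-1}\al_k$.

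Next, for $\al=\sum_{k=1}^{r-1}m_k\al_k$ and $\beta=\sum_{k=1}^{r-1}n_k\al_k$ I would introduce $B(\al,\beta):=\sum_{j=1}^{r-2}\bigl(m_{j+1}n_j+m_jn_{j+1}\bigr)\in\Z$, so that $\sg(\al,\beta)\sg(\beta,\al)=(-1)^{B(\al,\beta)}$ by \eqref{E:cocycle}. The crucial observation is that $B$ is manifestly symmetric and $\Z$-bilinear in $(\al,\beta)$, so it suffices to compute it modulo $2$ on the coordinate basis $\al_1,\dots,\al_{r-1}$ and then extend linearly, using $\alb_0=-\sum_{k=1}^{r-1}\al_k$ for the node $0$. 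Substituting $m_k=\delta_{ki}$, $n_k=\delta_{kj}$ gives, for $1\le i,j\le r-1$, that $B(\al_i,\al_j)$ equals $1$ when $|i-j|=1$ and $0$ otherwise; this is congruent mod $2$ to the Cartan entry $a_{ij}$ (recall $a_{ii}=2$, $a_{ij}=-1$ for $|i-j|=1$, $a_{ij}=0$ otherwise). This already proves the lemma for $i,j\in\{1,\dots,r-1\}$.

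It remains to treat the affine node $0$, which I would do purely by bilinearity: $B(\alb_0,\al_j)\equiv\sum_{k=1}^{r-1}B(\al_k,\al_j)$ equals the number of $k\in\{1,\dots,r-1\}$ with $|k-j|=1$, i.e.\ $\equiv 1$ when $j\in\{1,r-1\}$ and $\equiv 0$ when $2\le j\le r-2$ (here $r\ge 3$ ensures $1\neq r-1$), while $B(\alb_0,\alb_0)\equiv 2(r-2)\equiv 0$. Comparing with the affine Cartan matrix of $\hat{\fsl}_r$, for which $a_{0j}=-1$ exactly for $j\in\{1,r-1\}$, $a_{0j}=0$ for $2\le j\le r-2$, and $a_{00}=2$, one finds $B(\alb_0,\alb_j)\equiv a_{0j}\pmod 2$ in every case; symmetry of $B$ then also covers the pairs with $j=0$. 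Hence $\sg(\alb_i,\alb_j)\sg(\alb_j,\alb_i)=(-1)^{a_{ij}}$ for all $i,j\in I$, which is exactly the claim. There is essentially no obstacle in this argument; the only place calling for a little care is the bookkeeping around the node $0$, since $\alb_0$ is not one of the coordinate basis vectors of $Q$ and one has to route the computation through $\alb_0=-\theta$.
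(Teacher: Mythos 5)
Your argument is correct and is exactly the direct verification the paper has in mind (the paper merely asserts the lemma is ``easily verified using the above definition''): reduce to the symmetrized exponent $\sg(\al,\beta)\sg(\beta,\al)=(-1)^{B(\al,\beta)}$, check $B(\al_i,\al_j)\equiv a_{ij}\pmod 2$ on the finite simple roots, and extend by bilinearity through $\alb_0=-\theta$. Your reading of $e^{\alpha_i}$ as $e^{\alb_i}$ with $\alb_0=-\sum_{k=1}^{r-1}\al_k$ is the intended one, and the case analysis at the node $0$ (including $B(\alb_0,\alb_0)\equiv 2(r-2)\equiv 0$) is complete.
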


\begin{lem} 
For any $\alpha\in Q$, we have $(e^{\alpha})^{-1} = \sg(\alpha) e^{-\alpha}$ where $\sg(\alpha):=\sg(\alpha,\alpha)$. We have $\sg(\alb_i) = 1$ for $i\neq 0$, and $\sg(\alb_0)=(-1)^r$.
\end{lem}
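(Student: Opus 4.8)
The plan is to verify both assertions by a direct unwinding of the definitions \eqref{E:skew-mult}--\eqref{E:cocycle}; there is nothing deep here, only sign bookkeeping, so I do not anticipate a genuine obstacle, and the whole argument reduces to two short computations plus two special-case evaluations.

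\emph{Step 1 (the inverse formula).} I would write $\al=\sum_{j=1}^{r-1}m_j\al_j$ and use the bilinear extension of \eqref{E:skew-mult} to compute $e^{\al}\cdot\bigl(\sg(\al)e^{-\al}\bigr)=\sg(\al)\,\sg(\al,-\al)\,e^0$. By \eqref{E:cocycle}, taking $\beta=-\al$ (so that $n_j=-m_j$) gives $\sg(\al,-\al)=(-1)^{\sum_{j=1}^{r-2}m_{j+1}(-m_j)}=(-1)^{\sum_{j=1}^{r-2}m_{j+1}m_j}=\sg(\al,\al)=\sg(\al)$, using $(-1)^{-k}=(-1)^k$. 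Since $\sg(\al)\in\{\pm1\}$, this yields $e^{\al}\cdot\bigl(\sg(\al)e^{-\al}\bigr)=\sg(\al)^2e^0=e^0=1$. Running the same computation with the factors reversed, and using $\sg(-\al,\al)=(-1)^{\sum_{j=1}^{r-2}(-m_{j+1})m_j}=\sg(\al)$, gives $\bigl(\sg(\al)e^{-\al}\bigr)\cdot e^{\al}=1$ as well. Hence $\sg(\al)e^{-\al}$ is a two-sided inverse of $e^{\al}$.

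\emph{Step 2 (values on simple roots).} Here I would just plug the coordinate vectors into \eqref{E:cocycle}. Recall $\alb_i=\cl(\al_i)$. For $1\le i\le r-1$ the element $\alb_i$ is a basis vector of $Q$, so its coordinate vector is $m_j=\delta_{ij}$; then $m_{j+1}m_j=\delta_{i,j+1}\delta_{i,j}=0$ for every $j$ (no index is simultaneously $i$ and $i-1$), so the exponent $\sum_{j=1}^{r-2}m_{j+1}m_j$ vanishes and $\sg(\alb_i)=\sg(\alb_i,\alb_i)=1$. For $i=0$ we have $\alb_0=\cl(\al_0)=-\theta=-\sum_{k=1}^{r-1}\alb_k$, so every coordinate equals $m_j=-1$ and $\sum_{j=1}^{r-2}m_{j+1}m_j=\sum_{j=1}^{r-2}1=r-2$, whence $\sg(\alb_0)=(-1)^{r-2}=(-1)^r$. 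The only point worth a remark is the degenerate case $r=2$, where the sum $\sum_{j=1}^{r-2}(\cdots)$ is empty and $\sg\equiv 1$ on all of $\Z\{Q\}$, which is consistent with $(-1)^2=1$. Beyond that, the argument is mechanical, so I expect no difficulty at any step.
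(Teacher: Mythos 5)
Your verification is correct and is exactly the direct computation the paper has in mind (the paper offers no proof, stating only that the lemma is "easily verified using the above definition"). Both the inverse formula via $\sg(\alpha,-\alpha)=\sg(-\alpha,\alpha)=\sg(\alpha)$ and the evaluations on $\alb_i$ and $\alb_0=-\theta$ check out.
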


Let $\F\{Q\}=\Z\{Q\}\otimes_\Z \F$.

\subsubsection{Vertex representation}
The vertex representation of $\Utor_r$ has underlying vector space
\begin{align*}
	\cV &= \Lambda_\F^{\otimes I}\otimes_{\F} \F\{Q\}
\end{align*}
where $\Lambda_\F^{\otimes I}$ is the $I$-fold tensor product of $\Lambda_\F$ with itself over $\F$. In preparation for the definition of the $\Utor_r$-action on $\cV$, we define linear maps $L_\alpha, p^{\partial_{\alpha_i}}: \F\{Q\}\to\F\{Q\}$ and $z^{H_{i,0}}: \F\{Q\}\to\F\{Q\}[z^{\pm 1}]$ as follows:
\begin{align*}
L_\alpha e^\beta &= e^\alpha e^\beta\\
p^{\partial_{\al_i}}e^\beta &= s^{2\pair{\alb_i^\vee}{\beta}}e^\beta\\
z^{H_{i,0}} e^\beta &= z^{\pair{\alb_i^\vee}{\beta}}u^{\pair{\alb_i^\vee}{M_i\beta}}e^{\beta}
\end{align*}
where $\alpha,\beta\in Q$, $i\in I$, and for $\beta=\sum_{j=1}^{r-1} n_j \al_j$:
\begin{align*}
M_i\beta &= n_{i-1}\alb_{i-1} - n_{i+1}\alb_{i+1}
\end{align*}
with $n_0=0$ and, as usual, subscripts taken mod $r$.

We also introduce the vertex operators $E^{(i)}(z), F^{(i)}(z) \in\End(\Lambda^{\otimes I})[[z^{\pm 1}]]$ given by:
\begin{align*}
E^{(i)}(z) &= \sum_{k\in\Z} E^{(i)}_k z^{-k} =
\Omega[s^{-1} zX^{(i)}]\Omega[-s z^{-1} Y^{(i)}(X,s,u)]^\perp \\
F^{(i)}(z) 
&= \sum_{k\in\Z} F^{(i)}_k z^{-k} = \Omega[-s zX^{(i)}]\Omega[s^{-1}z^{-1}Y^{(i)}(X,s^{-1},u)]^\perp
\end{align*}
where
\begin{align*}
Y^{(i)}(X,s,u) &= (1+s^{-4})X^{(i)}-s^{-2}u^2 X^{(i+1)}-s^{-2}u^{-2} X^{(i-1)}\\
&=(1+q^{-1}t^{-1})X^{(i)}-t^{-1} X^{(i+1)}-q^{-1} X^{(i-1)}\\
Y^{(i)}(X,s^{-1},u) &= (1+s^4)X^{(i)}-s^{2}u^2 X^{(i+1)}-s^{2}u^{-2} X^{(i-1)}\\
&=(1+qt)X^{(i)}-q X^{(i+1)}-t X^{(i-1)}.
\end{align*}

\begin{rem}
The vertex operators $E^{(i)}(z)$ and $F^{(i)}(z)$ are natural wreath analogs of $\tD^*(z)$ and $\tD(z)$ from \eqref{E:tD-star} and \eqref{E:tD}. However, to obtain wreath Macdonald eigenoperators, it is necessary to combine various coefficients of $E^{(i)}(z)$ and $F^{(i)}(z)$ for \textit{all} $i\in I$ (see Theorem~\ref{T:eigen} and Example~\ref{X:D} below).
\end{rem}

Now we are ready to describe Saito's vertex representation of $\Utor_r$:

\begin{thm}[\cite{Saito}]
The following formulas define an action of $\Utor_r$ on $\cV$:
\begin{align*}
C_v^{1/2} &\mapsto s\\
C_h^{1/2} &\mapsto 1\\
e_i(z) &\mapsto E^{(i)}(z) \otimes L_{\alb_i}z^{1+H_{i,0}}\\
f_i(z) &\mapsto F^{(i)}(z) \otimes \sg(\alb_i) L_{-\alb_i} z^{1-H_{i,0}}\\
\psi^-_i(z) &\mapsto \Omega[-z(p-p^{-1})X^{(i)}]\otimes p^{-\partial_{\al_i}}\\
\psi^+_i(z) &\mapsto \Omega[z^{-1}(p^2-1)Y^{(i)}(X,s,u)]^\perp\otimes p^{\partial_{\al_i}}
\end{align*}
for all $i\in I$. Moreover, $\cV$ is an irreducible $\Utor_r$-module.
\end{thm}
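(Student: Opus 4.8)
The plan is to verify directly that the proposed assignments preserve all the defining relations of $\Utor_r$, by the standard bosonic vertex-operator calculus, and then to deduce irreducibility from a cyclic-vacuum argument. First I would separate the two structures carried by $\cV$: a Heisenberg part on $\Lambda_\F^{\otimes I}$ and a twisted group-algebra part on $\F\{Q\}$. On $\Lambda_\F^{\otimes I}$ the multiplication operators $p_l[X^{(i)}]$ and the skewing operators $p_l[X^{(i)}]^\perp$ span a Heisenberg Lie algebra with single nonzero bracket $[p_l[X^{(i)}]^\perp,p_m[X^{(j)}]]=l\,\delta_{lm}\delta_{ij}$, and $E^{(i)}(z)$, $F^{(i)}(z)$ together with the $\Lambda$-halves of $\psi_i^\pm(z)$ are plethystic exponentials of currents in it. Hence all of their products are governed by one contraction identity: for plethystic coefficients $a,b$,
\[
\Omega[aX^{(i)}]\,\Omega[bz^{-1}X^{(j)}]^\perp = \Omega[ab\,z^{-1}\delta_{ij}]\cdot {:}\Omega[aX^{(i)}]\,\Omega[bz^{-1}X^{(j)}]^\perp{:},
\]
where ${:}\,\cdot\,{:}$ is the normal-ordered product and the scalar series comes from $\Omega[X+Y]=\Omega[X]\Omega[Y]$ and $\Omega[z^{-1}B]^\perp f = f[X+z^{-1}B]$. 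I would then tabulate the resulting scalar prefactors for the pairs $(E^{(i)},E^{(j)})$, $(F^{(i)},F^{(j)})$, $(E^{(i)},F^{(j)})$, $(\psi_i^\pm,E^{(j)})$ and $(\psi_i^\pm,F^{(j)})$, and observe that the alphabets $Y^{(i)}(X,s^{\pm 1},u)$ are rigged precisely so that these prefactors are the rational functions $\theta_{a_{ij}}(d^{m_{ij}}z/w)^{\pm 1}$ (and $\phi$-type factors for the Cartan currents) appearing on the right-hand sides of the relations of $\Utor_r$.

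Second, I would check the relations family by family. The $\psi$--$\psi$, $\psi$--$e$ and $\psi$--$f$ relations reduce to the scalar contraction identities of the first step together with the trivial action of $p^{\pm\partial_{\al_i}}$ on $e^\beta$. The $e$--$e$ and $f$--$f$ relations combine a $\Lambda$-contraction with the lattice identity $e^{\alb_i}e^{\alb_j}=(-1)^{a_{ij}}e^{\alb_j}e^{\alb_i}$ and the $u$-powers produced by $z^{H_{i,0}}$ acting on $\F\{Q\}$; the point is that these three contributions must assemble into exactly $\theta_{a_{ij}}(d^{m_{ij}}z/w)$, and this is the first place where the cocycle $\sg$ of \eqref{E:cocycle} and the skew matrix $M$ earn their keep. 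The quantum Serre relations then follow from the $e$--$e$ (resp. $f$--$f$) relations by the standard argument: expand the iterated commutators, substitute the quadratic relations, and check that the rational prefactor is killed by symmetrization in $z_1,z_2$ --- the same mechanism as in the quantum toroidal $\gl_1$ and $\gl_n$ vertex constructions. The main obstacle is the mixed relation $[e_i(z),f_j(w)]$: for $i\neq j$ the relevant contraction is regular and symmetric, so the commutator vanishes, while for $i=j$ the prefactor has simple poles at $z=C_v^{\pm 1}w$, and one must show that $L_{\alb_i}L_{-\alb_i}=\sg(\alb_i)^{-1}$ together with the normal-ordered vacuum value reproduces the Cartan currents exactly, so that extracting residues via $g(z)\delta(w/z)=g(w)\delta(w/z)$ yields
\[
\frac{1}{p-p^{-1}}\bigl(\delta(C_v w/z)\psi_i^+(C_v^{1/2}w)-\delta(C_v z/w)\psi_i^-(C_v^{1/2}z)\bigr)
\]
with $C_v\mapsto s$. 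Matching the signs, the $d$-powers and the two $\delta$-function expansions simultaneously is the delicate point; the rest is bookkeeping.

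Finally, for irreducibility I would argue as follows. The vertical Heisenberg $\heis_v$ acts on each slice $\Lambda_\F^{\otimes I}\otimes e^\alpha$ by operators spanning the same Heisenberg algebra as $\{p_l[X^{(i)}],p_l[X^{(i)}]^\perp\}_{l>0,\,i\in I}$ (the change of generators between the $h_{i,l}$ and the $p_l[X^{(i)}]$, $p_l[X^{(i)}]^\perp$ is invertible over $\F$), so each slice is an irreducible $\heis_v$-module. The vacuum $v_0=1\otimes e^0$ is therefore cyclic: the currents $e_i(z)$ and $f_i(z)$ applied to $1\otimes e^0$ have, among their coefficients, nonzero multiples of $1\otimes e^{\alb_i}$ and $1\otimes e^{-\alb_i}$; since the $\alb_i$ generate $Q$ we reach every $1\otimes e^\alpha$, and then $\heis_v$ fills in each slice. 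The same decomposition rules out proper submodules: a nonzero submodule $W$ is stable under the commuting operators $\psi_{i,0}$, whose joint spectrum separates the slices, so $W=\bigoplus_{\alpha\in Q}W_\alpha\otimes e^\alpha$ with each $W_\alpha$ an $\heis_v$-submodule of $\Lambda_\F^{\otimes I}$, hence $0$ or everything; choosing $\alpha_0$ with $W_{\alpha_0}=\Lambda_\F^{\otimes I}$ and rerunning the cyclicity argument from $1\otimes e^{\alpha_0}$ gives $W=\cV$. As a sanity check --- and anticipating the role of $\miki$ in the next subsections --- one may note that $E^{(i)}(z)$ and $F^{(i)}(z)$ are the wreath analogues of $\tD^*(z)$ and $\tD(z)$ from \eqref{E:tD-star} and \eqref{E:tD}, and that $\cV$ is, up to the Miki twist, the Fock module $\cF_\xi^\dag$ of Corollary~\ref{C:hwF}, which gives a second route to irreducibility.
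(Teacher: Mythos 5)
The paper does not prove this theorem---it is quoted from Saito's original work---so there is no internal argument to compare against; judged on its own, your outline is the standard and essentially correct route, and it is the one followed in the cited source: direct verification of the defining relations by vertex-operator contraction calculus on the Heisenberg factor combined with the cocycle identities on $\F\{Q\}$, Serre relations by the usual symmetrization argument, and irreducibility from the $Q$-grading plus Heisenberg irreducibility of each slice. Two points deserve care. First, your displayed contraction identity is written with the creation half on the left and the annihilation half on the right, i.e.\ already in normal order, so as stated the scalar prefactor would be $1$; the nontrivial contraction $\Omega[\,\langle aX^{(i)},bX^{(j)}\rangle\, w/z\,]$ arises only when you commute $\Omega[\cdots]^{\perp}$ \emph{past} $\Omega[\cdots]$, and it is exactly these reordering scalars that must match the $\theta_{a_{ij}}(d^{m_{ij}}z/w)$ factors. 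Second, the crux of your irreducibility argument is the parenthetical claim that the change of generators between $\{h_{i,\pm l}\}$ and $\{p_l[X^{(i)}],\,p_l[X^{(i)}]^{\perp}\}$ is invertible: this amounts to the nonvanishing over $\F=\Q(s,u)$ of the determinant of the $(d,p)$-deformed affine Cartan matrix $\bigl(d^{-lm_{ij}}(p^{la_{ij}}-p^{-la_{ij}})\bigr)_{i,j\in I}$, which holds because $d$ is an independent generic parameter; this should be stated explicitly rather than assumed, since the undeformed affine Cartan matrix is singular. Your closing remark identifying $\cV^{\miki}$ with $\cF_{\xi_\circ}^{\dag}$ cannot serve as an independent proof of irreducibility here, since Theorem~\ref{T:tsymbaliuk} logically presupposes the present theorem, but as a sanity check it is harmless. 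With the contraction identity corrected and the determinant computation supplied, the remaining work is the bookkeeping you describe.
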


On the subspaces $\cV_\alpha = \La^{\otimes I}_\F \otimes \F e^{\alpha}$, the action of $\Utor_r$ is given more explicitly as follows:
\begin{align*}
	e_i(z)\Big|_{\cV_\alpha} &=
	\left(z^{1+\pair{\alb_i^\vee}{\alpha}} u^{\pair{\alb_i^\vee}{M_i\alpha}} E^{(i)}(z)  \otimes L_{\alb_i}\right)\res{\alpha} \\
	e_{i,k}\Big|_{\cV_{\alpha}} 
	&= \left(u^{\pair{\alb_i^\vee}{M_i\alpha}}E^{(i)}_{k+1+\pair{\alb_i^\vee}{\alpha}} \otimes L_{\alb_i}\right)\res{\alpha} \\
	f_i(z)\Big|_{\cV_\alpha} &=
	\left(z^{1-\pair{\alb_i^\vee}{\alpha}} u^{-\pair{\alb_i^\vee}{M_i\alpha}} F^{(i)}(z)  \otimes \sg(\alb_i) L_{-\alb_i}\right)\res{\alpha} \\
	f_{i,k}\Big|_{\cV_{\alpha}} 
	&= \left(u^{-\pair{\alb_i^\vee}{M_i\alpha}}F^{(i)}_{k+1-\pair{\alb_i^\vee}{\alpha}} \otimes \sg(\alb_i) L_{-\alb_i}\right)\res{\alpha}\\
	h_{i,-k}\Big|_{\cV_{\alpha}} &= s^{-2\pair{\alb_i^\vee}{\alpha}}\frac{p_k[X^{(i)}(s^2-s^{-2})]}{k(s^2-s^{-2})} \qquad (k>0)\\
	h_{i,k}\Big|_{\cV_{\alpha}} &= s^{2\pair{\alb_i^\vee}{\alpha}}\frac{p_k[Y^{(i)}(X,s,u)(s^4-1)]^\perp}{k(s^2-s^{-2})} \qquad (k>0).
\end{align*}

\subsubsection{Miki twist}
The vertex representation is not a highest (or lowest) weight representation in the ordinary sense. It is a miraculous fact that its twist by the Miki automorphism is a highest weight representation.

More precisely, let $\rho_\cV:\Utor_r \to \End(\cV)$ be Saito's vertex representation. Its Miki twist $\cV^\miki$ is given on the same representation space $\cV^\miki=\cV$ but with the action of $\Utor$ determined by the homomorphism $\rho_{\cV^\miki}=\rho_{\cV}\circ \miki$.

\begin{thm}[\cite{T}]\label{T:tsymbaliuk}
The Miki-twisted vertex representation $\cV^\miki$ is an irreducible highest weight $\Utor_r$-module with highest weight vector $1\otimes e^0$ and highest weight $(\phi(\xi_\circ^\dag/z)^{\delta_{i0}})_{i\in I}$ where $\xi_\circ^\dag=(-1)^{r}s^{2}u^{-r}$.
\end{thm}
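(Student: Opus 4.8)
The plan is to exhibit $v_0:=1\otimes e^0\in\cV$ as a highest weight vector for the Miki-twisted action and to read off its weight, after which the assertion follows from irreducibility. Since Saito's vertex representation $\cV$ is irreducible over $\Utor_r$ \cite{Saito} and $\miki$ is an automorphism, $\cV^\miki$ is again irreducible. From $\miki(C_v^{1/2})=C_h^{-1/2}$ and $\miki(C_h^{1/2})=C_v^{1/2}$ in Proposition~\ref{P:miki-formulas}, together with the scalars $C_h^{1/2}\mapsto 1$, $C_v^{1/2}\mapsto s$ of $\cV$, one reads off that $C_h^{1/2}$ and $C_v^{1/2}$ act on $\cV^\miki$ by $s$ and $1$, exactly the central charges of the highest weight Fock module of Corollary~\ref{C:hwF}. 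It therefore remains to check that the currents $e_i(z)$ annihilate $v_0$ in $\cV^\miki$ and that $v_0$ is a joint eigenvector of the currents $\psi_i^\pm(z)$; granting this, $v_0$ generates a highest weight submodule, which by irreducibility is all of $\cV^\miki$, and the eigenvalue of $\psi_i^\pm(z)$ on $v_0$ gives the asserted highest weight.

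The verification rests on the elementary fact that skewing by a positive-degree symmetric function annihilates $1\in\La_\F^{\otimes I}$, so that $E^{(j)}_k\cdot1=F^{(j)}_k\cdot1=0$ for all $k\ge1$ and $j\in I$ (every contribution to the $z^{-k}$ coefficient with $k\ge 1$ involves a skewing operator of positive degree). Since $v_0$ lies in $\cV_0=\La_\F^{\otimes I}\otimes\F e^0$, on which the generators act through Saito's formulas at $\alpha=0$, this yields $e_{j,0}v_0=f_{j,0}v_0=0$ and $e_{0,1}v_0=0$ for all $j\in I$ (these act through $E^{(j)}_1$, $F^{(j)}_1$, $E^{(0)}_2$ respectively), whereas $e_{0,-1}v_0=1\otimes e^{\alb_0}$ and $f_{0,-1}v_0=(-1)^r(1\otimes e^{-\alb_0})$ are nonzero (they act through $E^{(0)}_0$, $F^{(0)}_0$, noting $\sg(\alb_0)=(-1)^r$). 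Using a convenient generating set of $\Utor_r$ -- say the Drinfeld--Jimbo generators $e_{i,0},f_{i,0},\psi_{i,0}^{\pm1}$ together with $e_{0,-1},f_{0,1}$, cf. \cite{FJMM} -- and the explicit images under $\miki$ recorded in Proposition~\ref{P:miki-formulas}, one checks that every $e$-type generator acts on $v_0$ in $\cV^\miki$ through the vanishing operators above: for $1\le i\le r-1$ this is $\miki(e_{i,0})=e_{i,0}$, and for the affine node $\miki(e_{0,-1})=(-1)^r u^{2r}e_{0,1}$. Passing from these to all modes $e_{i,k}$ is then routine via the current relations of $\Utor_r$, once one knows that $v_0$ is an eigenvector for the vertical Heisenberg $\heis_v$.

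To see the latter, note that $h_{i,l}$ acts on $\cV^\miki$ by $\rho_\cV(\miki(h_{i,l}))$, and for $l=\pm1$ Proposition~\ref{P:miki-formulas} writes $\miki(h_{i,l})$ as an explicit iterated commutator of Chevalley generators. For $1\le i\le r-1$ every generator entering these commutators is some $e_{j,0}$ or $f_{j,0}$, each killing $v_0$, so $\miki(h_{i,\pm1})v_0=0$; this is the analog of $A^{(i)}_\varnothing=0$ for $i\ne0$ in \eqref{E:h-modes-act}. For $i=0$, by contrast, the commutators defining $\miki(h_{0,\pm1})$ involve $f_{0,-1},f_{1,1}$ (resp. $e_{0,1},e_{1,-1}$), and since $f_{0,-1}v_0\ne0$ (resp. $e_{1,-1}v_0\ne0$) the evaluation on $v_0$ is a genuine computation: one propagates the vertex operators and the lattice shifts $L_{\pm\alb_j}$ through the nested brackets, uses that the total lattice shift is $\sum_{j\in I}\alb_j=\cl(\delta)=0$ so the result returns to $\cV_0$, and computes the resulting operator product expansions to find that $\miki(h_{0,\pm1})v_0$ is a nonzero scalar multiple of $v_0$. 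Assembled through \eqref{E:h-def} into $\psi_0^\pm(z)$, this scalar is $\phi(\xi_\circ^\dag/z)$, and the sign $\sg(\alb_0)=(-1)^r$ together with the powers of $s$ and $u$ in Saito's formulas is precisely what produces $\xi_\circ^\dag=(-1)^r s^2u^{-r}$; the modes $h_{i,l}$ with $|l|\ge 2$ are reached from $h_{i,\pm1}$ and the generators killing $v_0$ through the $e$--$f$ relation expressing $\psi_{i,m}^\pm$ as a combination of $[e_{i,k},f_{i,m-k}]$.

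The main obstacle is the vertex-operator computation in the previous paragraph, i.e. evaluating on the vacuum the iterated commutators of exponentials of bosons (with their lattice shifts and cocycle signs) that define $\miki(h_{0,\pm1})$; this is the computation carried out in \cite{T}, and it is the source of the nontrivial constant $\xi_\circ^\dag$. One can shorten the last step by invoking uniqueness of irreducible highest weight $\Utor_r$-modules (cf. Remark~\ref{R:Fock geom}): once $v_0$ is known to be a highest weight vector, $\cV^\miki\cong\cF^\dag_\xi$ for a unique $\xi$, and matching a single Cartan matrix coefficient then fixes $\xi^\dag=\xi_\circ^\dag$.
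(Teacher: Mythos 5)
The paper does not actually prove Theorem~\ref{T:tsymbaliuk}: it is quoted from \cite{T}, with Remarks recording that the $u$-power of the constant required a correction from \cite{Wen} and that its sign depends on the choice of $2$-cocycle \eqref{E:cocycle}. So there is no internal proof to compare against, and your proposal must stand on its own. Its skeleton is the right one, and the elementary checks are correct: irreducibility of $\cV^\miki$ from irreducibility of $\cV$; the central charges; the vanishing $E^{(j)}_k\cdot 1=F^{(j)}_k\cdot 1=0$ for $k\ge 1$, hence $e_{j,0},f_{j,0},e_{0,1}$ kill $1\otimes e^0$ while $e_{0,-1}$ and $f_{0,-1}$ do not; and the observation that $\miki(h_{i,\pm 1})(1\otimes e^0)=0$ for $i\neq 0$ because every word in the expanded iterated commutator ends in an annihilating Chevalley generator.

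The genuine gap is that the value $\xi_\circ^\dag=(-1)^r s^2u^{-r}$ --- the entire quantitative content of the statement --- is never derived. The one computation that produces it, namely evaluating $\rho_\cV(\miki(h_{0,\pm 1}))$ on $1\otimes e^0$ by expanding the commutators involving $f_{0,-1}$ and $f_{1,1}$ (resp.\ $e_{0,1}$ and $e_{1,-1}$) in the vertex representation while tracking the cocycle signs $\sg(\alpha,\beta)$ and the $u$-powers $u^{\pair{\alb_i^\vee}{M_i\alpha}}$, is explicitly outsourced to \cite{T}; your proposed ``shortcut'' via uniqueness of irreducible highest weight modules still requires computing one Cartan matrix coefficient, i.e.\ the same calculation. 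Since this is precisely the step where \cite{T} itself had to be corrected and where the conventions of this paper differ, deferring it means the stated value of $\xi_\circ^\dag$ is not established. A secondary soft spot: to conclude that $1\otimes e^0$ is a joint eigenvector of \emph{all} $\psi_{i,m}^\pm$ you cannot generate $h_{i,l}$, $|l|\ge 2$, from $h_{i,\pm1}$ inside $\heis_v$; your appeal to the $e$--$f$ relation only yields $\psi^+_{i,m}v_0=(p-p^{-1})\,e_{i,k}f_{i,m-k}v_0$ (using $C_v\mapsto 1$ in $\cV^\miki$), and an additional grading or weight-space argument is needed to see that this vector lies in $\F\,(1\otimes e^0)$. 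Both points should be filled in for a complete proof.
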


\begin{rem}
The power of $u$ in our constant $\xi^\dag$ comes from a correction to \cite{T} made by \cite{Wen}. The sign in $\xi_\circ^\dag$ is slightly different from that of \cite{T,Wen} and it results from our choice of $2$-cocycle \eqref{E:cocycle} used to define the product in $\F\{Q\}$.
\end{rem}

\begin{rem}
A precursor to Theorem~\ref{T:tsymbaliuk} was given in \cite{STU}, where it was shown that the $U^v$-action in $\cV$ agrees with the $U^h$-action in $\cF^\dag_\xi$ (which is independent of $\xi$).
\end{rem}

Recalling now the highest weight Fock representation $\cF_\xi^\dag$ and the uniqueness of irreducible highest weight modules, we deduce that $\cV^\miki\cong\cF_{\xi_\circ}^\dag$ as $\Utor_{r,0}$-modules. In particular, taking into account the action of $\heis^v$ in $\cF_{\xi_\circ}^\dag$ given by Corollary~\ref{C:hwF}, Theorem~\ref{T:tsymbaliuk} implies the following remarkable fact: the horizontal Heisenberg subalgebra $\heis^h=\miki(\heis^v)$ acts diagonally in Saito's vertex representation $\cV$, with one-dimensional joint eigenspaces and eigenvalues given by \eqref{E:h-modes-act}.

\subsubsection{Eigenoperators}

We now consider operators on $\cV$ arising from the action of particular elements of $\heis^h$, namely the following scalar multiples of $\rho_{\cV^\miki}(h_{i,\pm 1})=\rho_{\cV}(\miki(h_{i,\pm 1}))$ (cf. Proposition~\ref{P:miki-formulas}):
\begin{align*}
\tD^{(i)}_1 &= c_i(u) \hD^{(i)}_1\\
\tD^{(i)*}_1 &= c_i^*(u) \hD^{(i)*}_1,
\end{align*}
where
\begin{align*}
\hD^{(i)}_1 &=
\begin{cases}
\rho_{\cV}\left(f_{0,0} \cdot
\begin{bmatrix}
f_{r-1,0} & \dotsm & f_{i+1,0} & f_{1,0} & \dotsm & f_{i-1,0} & f_{i,0}\\
p & \dotsm & p & p & \dotsm & p & p^2
\end{bmatrix}\right) & \text{if $i\neq 0$}\\
& \\
\rho_{\cV}\left(f_{1,1}\cdot
\begin{bmatrix} 
f_{2,0} & \dotsm & f_{r-1,0} & f_{0,-1} \\
p & \dotsm & p & p^2 
\end{bmatrix}\right) & \text{if $i=0$}
\end{cases}\\
\hD^{(i)*}_1 &=
\begin{cases}
\rho_{\cV}\left(\begin{bmatrix}
e_{i,0} & e_{i-1,0} & \dotsm & e_{1,0} & e_{i+1,0} & \dotsm & e_{r-1,0} \\
p^{-2} & p^{-1} & \dotsm & p^{-1} & p^{-1} & \dotsm & p^{-1}
\end{bmatrix}\cdot e_{0,0}\right) & \text{if $i\neq 0$}\\
& \\
\rho_{\cV}\left(\begin{bmatrix}
e_{0,1} & e_{r-1,0} & \dotsm & e_{2,0} \\
p^{-2} & p^{-1} & \dotsm & p^{-1}
\end{bmatrix}
\cdot e_{1,-1}\right) & \text{if $i=0$}
\end{cases}
\end{align*}
and
\begin{align*}
	c_i(u) &= \begin{cases}
		(-1)^{r-i-1} u^{r-2i}&\text{if $i\neq 0$} \\
		u^{2-r} & \text{if $i=0$}
	\end{cases}\\
	c_{i}^*(u) &=
	\begin{cases}
		(-1)^{r-i-1} u^{2i-r}&\text{if $i\neq 0$} \\
		u^{r-2} & \text{if $i=0$,}
	\end{cases}
\end{align*}
and where we take $0\le i \le r-1$ throughout. Inspecting the eigenvalues given by \eqref{E:h-modes-act}, one sees that joint eigenspaces of $\{D^{(i)}_1\}_{i\in I}$ (or $\{D^{(i)*}_1\}_{i\in I}$) alone are one-dimensional, i.e., these sets of operators are sufficient to uniquely characterize the joint eigenfunctions of $\heis^h$ in $\cV$.

\begin{ex}\label{X:D}
For $r=3$, we have 
\begin{align*}
D^{(1)*}_1 &= -u^{-1}\rho_{\cV}([e_{1,0},[e_{2,0},e_{0,0}]_{p^{-1}}]_{p^{-2}}).
\end{align*}
The iterated commutator expands into
\begin{align*}
e_{1,0}e_{2,0}e_{0,0}-p^{-1}e_{1,0}e_{0,0}e_{2,0}-p^{-2}e_{2,0}e_{0,0}e_{1,0}+p^{-3}e_{0,0}e_{2,0}e_{1,0}
\end{align*}
and its summands act, for instance, as follows in $\cV$:
\begin{align*}
&e_{1,0}e_{2,0}e_{0,0}\cdot (f\otimes e^{\al})\\
&= u^{m_{1,2,0}(\al)}(E^{(1)}_{1+\pair{\alb_1^\vee}{\alb_2+\alb_0+\al}}E^{(2)}_{1+\pair{\alb_2^\vee}{\alb_0+\al}}E^{(0)}_{1+\pair{\alb_0^\vee}{\al}}\cdot f)\otimes e^{\alb_1}e^{\alb_2}e^{\alb_0}e^{\al},
\end{align*}
where $m_{1,2,0}(\al) = \pair{\alb_{1}^\vee}{M_{1}(\alb_{2}+\alb_{0}+\al)}+\pair{\alb_{2}^\vee}{M_{2}(\alb_{0}+\al)}+\pair{\alb_{0}^\vee}{M_{0}\al}$ and the product in $\F\{Q\}$ simplifies as $e^{\alb_1}e^{\alb_2}e^{\alb_0}e^{\al}=-e^{\al}$ since $\sg(\alpha_2,\alpha_0)=-1$.

\end{ex}

\comment{
Define $\ad^L_a(f)\cdot g = [f,g]_a = fg-agf$
and $\ad^R_a(f)\cdot g = [g,f]_a$.

\begin{align}
\hD^{(i)}_1 &= \begin{cases} 
\ad^L_{s^{-4}}(e_{i,0})\ad^L_{s^{-2}}(e_{i-1,0})\dotsm \ad^L_{s^{-2}}(e_{1,0})\ad^L_{s^{-2}}(e_{i+1,0})\dotsm \ad^L_{s^{-2}}(e_{r-1,0})\cdot e_{0,0}&\text{for $i\ne0$} \\
\ad^L_{s^{-4}}(e_{0,1})\ad^L_{s^{-2}}(e_{r-1,0})\dotsm\ad^L_{s^{-2}}(e_{2,0})\cdot e_{1,-1} & \text{if $i=0$ and $r\ge2$} \\
	e_{0,-1} & \text{if $i=0$ and $r=1$}
\end{cases}
\end{align}
and
\begin{align}\label{E:cdef}
	c_{i}(u)&=
	\begin{cases}
		(-1)^{r-i-1} u^{2i-r}&\text{for $i\ne0$} \\
		u^{r-2} & \text{for $i=0$ and $r\ge2$} \\
		1 & \text{for $i=0$ and $r=1$}
	\end{cases}
\end{align}
}

\subsubsection{Eigenfunctions are wreath Macdonald polynomials}
By analogy with Theorem~\ref{T:FT-SV}, it is natural to hope that the joint eigenfunctions of $\heis^h$ in $\cV$ are related to wreath Macdonald polynomials. Wen \cite{Wen} proved that this is indeed the case, giving a spectacular extension of the Macdonald eigenoperators from Corollary~\ref{C:tH-eigen} to the wreath setting.



For any $\mu \in \Y$, define a vector $\tH_\mu\in\cV$ by
\begin{align*}
 \tH_\mu = \tH^{t_{-\beta^\vee}}_{\mud} \otimes e^{\beta}
\end{align*}
where $\mu = \tau(\mud,\beta)$.

\begin{ex}\label{X:eigen}
Let $r=3$ and $\mu=(3,3,2,2)$. We have $\kb(\mu)=\beta=-\alb_2$, 
$\quot_3(\mu)=(\yaa,\cd,\cd)$. So $\tH_\mu = \tH^{t_{\alb_2}}_{(\yaa,\cd,\cd)} \otimes e^{-\alb_2}$.
\end{ex}

\begin{thm}[{\cite{Wen}}]\label{T:eigen} 
The vectors $\{\tH_\mu\}_{\mu\in\Y}$ form a joint eigenbasis for the horizontal Heisenberg algebra acting in $\cV$. In particular, we have $\cV = \bigoplus_{\mu\in\Y} \F \tH_\mu$ and
\begin{align}
  \label{E:Di-eigen}
 \tD_1^{(i)}\cdot \tH_\mu &= A_\mu^{(i)}(q^{-1},t^{-1}) \tH_\mu\\
  \label{E:Di*-eigen}
  \tD_1^{(i)*}\cdot \tH_\mu &= A_\mu^{(i)}(q,t) \tH_\mu
\end{align}
for all $i\in I$ and $\mu\in\Y$.
\end{thm}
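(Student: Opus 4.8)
The plan is to recognize $\tD^{(i)}_1$ and $\tD^{(i)*}_1$ as normalized horizontal Heisenberg generators acting in $\cV$, to invoke Tsymbaliuk's theorem to see that these generators have a simple joint spectrum with one-dimensional eigenspaces, and then to identify the resulting eigenbasis with $\{\tH_\mu\}_{\mu\in\Y}$ by passing through the geometric Fock module $\cK$.

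First I would observe that, by Proposition~\ref{P:miki-formulas}, the definitions of $\hD^{(i)}_1$ and $\hD^{(i)*}_1$, and the fact that $C_h^{1/2}$ acts by the scalar $1$ in Saito's representation, the operators $\tD^{(i)}_1$ and $\tD^{(i)*}_1$ are explicit nonzero scalar multiples of $\rho_{\cV}(\miki(h_{i,1}))$ and $\rho_{\cV}(\miki(h_{i,-1}))$ respectively; thus the commuting families $\{\tD^{(i)}_1\}_{i\in I}$ and $\{\tD^{(i)*}_1\}_{i\in I}$ each record the action of the horizontal Heisenberg $\heis_h=\miki(\heis_v)$ on $\cV=\cV^\miki$. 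Next, by Tsymbaliuk's Theorem~\ref{T:tsymbaliuk}, $\cV^\miki$ is an irreducible highest weight $\Utor_r$-module, isomorphic as a $\Utor_{r,0}$-module to the Fock module $\cF_{\xi_\circ}^\dag$, in which the Heisenberg acts diagonally on $\{\ket{\mu}\}_{\mu\in\Y}$ by the formula \eqref{E:h-modes-act}. Since the tuple of values of $A^{(i)}_\mu$ determines $\mu$, this joint spectrum is simple; transporting through $\miki$, the families $\{\tD^{(i)}_1\}$ and $\{\tD^{(i)*}_1\}$ act semisimply on $\cV$ with one-dimensional joint eigenspaces labeled by $\mu\in\Y$, and a routine (if slightly delicate) bookkeeping computation---combining \eqref{E:h-modes-act} at $l=1$, the value $\xi_\circ^\dag=(-1)^rs^2u^{-r}$, and the normalizing constants $c_i(u),c_i^*(u)$---produces the eigenvalues $A^{(i)}_\mu(q^{-1},t^{-1})$ and $A^{(i)}_\mu(q,t)$ asserted in \eqref{E:Di-eigen}--\eqref{E:Di*-eigen}. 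At this point $\cV$ carries a joint $\heis_h$-eigenbasis, unique up to rescaling each eigenvector, and it remains only to identify the eigenvector labeled $\mu$ with a scalar multiple of $\tH_\mu$.

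For this identification I would pass to the geometric Fock module. By Remark~\ref{R:Fock geom}, the space $\cK=\bigoplus_{N\ge 0}K^\T(\Hilb_N^\Gamma)\otimes_{R(\T)}\F$ carries a geometric $\Utor_r$-action under which it is irreducible highest weight, with the fixed-point classes $[I_\mu]^\Gamma$ forming a $\psi$-eigenbasis with eigenvalues \eqref{E:Fock geom weights}; by simplicity of the $\psi$-spectrum these are exactly the joint $\heis_h$-eigenvectors of $\cK$. On the other hand, assembling the wreath--Haiman isomorphisms $\Phi^{t_{-\beta^\vee}}_n$ of Theorem~\ref{T:wreath Haiman} over all $\beta\in Q$ and $n\ge 0$, and tensoring the $\beta$-block by $e^\beta$, yields a $\K$-linear isomorphism $\Phi:\cK\to\cV$; since $I^{t_{-\beta^\vee}}_\mud$ corresponds to the $\T$-fixed point $I_{\tau(t_{\beta^\vee}(\mud,0))}=I_{\tau(\mud,\beta)}=I_\mu$ in $\Hilb_N^\Gamma$, equation \eqref{E:BK iso image} gives $\Phi([I_\mu]^\Gamma)=\tH^{t_{-\beta^\vee}}_\mud\otimes e^\beta=\tH_\mu$. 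The crux is then the content of \cite{Wen}: that $\Phi$ coincides---after a harmless spectral rescaling of $\Utor_r$ accounting for the discrepancy $\xi_\circ\leftrightarrow 1$ and a rescaling of the fixed-point basis---with the $\Utor_r$-module isomorphism $\cF_1^\dag\cong\cV^\miki$ supplied by Theorem~\ref{T:tsymbaliuk}. Granting this, $\Phi$ carries the $\psi$-eigenbasis of $\cK$ onto the joint $\heis_h$-eigenbasis of $\cV$ up to scalars, so $\tH_\mu=\Phi([I_\mu]^\Gamma)$ is a nonzero scalar multiple of the eigenvector labeled $\mu$ from the previous paragraph, and the theorem follows.

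The main difficulty will be precisely this comparison between $\Phi$ and Tsymbaliuk's isomorphism. The strategy I would pursue is to reduce it to checking that, under the maps $\Phi^{t_{-\beta^\vee}}_n$, the geometric action of a generating set of $\Utor_r$---the Chevalley generators $e_{i,0},f_{i,0}$, the affine generators $e_{0,\pm 1},f_{0,\mp 1}$, and the Heisenberg modes $h_{i,\pm 1}$---matches Saito's vertex operators composed with $\miki$. This comparison is made tractable by the triangularity of both the geometric matrix coefficients and the vertex matrix coefficients with respect to the orders $\dom_w$ coming from the attracting sets on $\fM_{\beta,n}$, by the specific form of the vertex data $Y^{(i)}(X,s,u)$ (which already encodes the matrix plethysms $\cP_{\id-q\chi^{-1}}$ and $\cP_{\id-t^{-1}\chi^{-1}}$ from \eqref{E:q triangularity}--\eqref{E:t triangularity}), and by careful bookkeeping of the shift operators $L_{\pm\alb_i}$ and $z^{\pm H_{i,0}}$ on $\F\{Q\}$ via the core--quotient bijection $\tau$; it builds on the earlier type-$A$ comparisons of Negut~\cite{Neg} and Tsymbaliuk~\cite{T}. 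An alternative, more self-contained but no less difficult route would bypass the geometry and instead establish directly that the suitably normalized joint $\heis_h$-eigenvectors of $\cV$ satisfy the defining triangularity conditions \eqref{E:q triangularity}--\eqref{E:normalization}, invoking the uniqueness of wreath Macdonald polynomials.
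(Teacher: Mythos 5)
The paper itself contains no proof of Theorem~\ref{T:eigen}: the result is quoted from \cite{Wen}, and the survey's own framing of Wen's theorem is precisely that the wreath Haiman isomorphism \eqref{E:wreath Haiman iso intro} agrees with Tsymbaliuk's isomorphism $\cF^\dag_{\xi_\circ}\cong\cV^\miki$ up to a rescaling of the fixed-point basis. Your proposal reconstructs exactly this architecture, and its first half is sound and non-circular: Proposition~\ref{P:miki-formulas} together with $C_h^{1/2}\mapsto 1$ in Saito's representation identifies $\tD^{(i)}_1$ and $\tD^{(i)*}_1$ as scalar multiples of $\rho_{\cV}(\miki(h_{i,\pm1}))$; Theorem~\ref{T:tsymbaliuk} combined with \eqref{E:h-modes-act} then shows that $\heis_h$ acts semisimply on $\cV$ with one-dimensional joint eigenspaces, and the bookkeeping with $c_i(u)$, $c_i^*(u)$ and $\xi_\circ^\dag=(-1)^rs^2u^{-r}$ does produce the eigenvalues $A^{(i)}_\mu(q^{\pm1},t^{\pm1})$ (indeed these constants are rigged so that it does).

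The gap is in the second half. To identify the $\mu$-labelled eigenline with $\F\,\tH_\mu$ you invoke ``the content of \cite{Wen}''---namely that the assembled map $\Phi:\cK\to\cV$ coincides, up to a rescaling of the fixed-point basis, with the module isomorphism supplied by Theorem~\ref{T:tsymbaliuk}. But that assertion \emph{is} the main theorem of \cite{Wen}, of which Theorem~\ref{T:eigen} is the immediate consequence; citing it here makes the argument circular. What actually remains to be proved is that, for each $\mu=\tau(\mud,\beta)$, the joint $\heis_h$-eigenvector of $\cV$ with eigenvalues $A^{(i)}_\mu(q^{\pm1},t^{\pm1})$ lies in $\cV_\beta$ and, once normalized by \eqref{E:normalization}, satisfies the triangularity conditions \eqref{E:q triangularity}--\eqref{E:t triangularity} for $w=t_{-\beta^\vee}$. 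Your closing paragraph correctly names the two viable routes (a generator-by-generator comparison of the geometric action on $\cK$ with the Miki-twisted vertex action, or a direct verification of the triangularities for the normalized eigenvectors), but neither is carried out, and each constitutes the genuinely hard core of Wen's paper. As a reconstruction of the strategy the proposal is faithful to the approach the survey attributes to \cite{Wen}; as a proof it defers the entire content of the theorem to the very reference being proved.
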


\begin{rem}\label{R:scalars}
Just as in \eqref{E:eigenspaces}, Theorem~\ref{T:eigen} implies that the isomorphism $\cF_{\xi_\circ}^\dag\cong \cV^\miki$ maps the joint eigenspace $\K\ket{\mu}$ isomorphically to $\K\tH_\mu$. However, it does not send $\ket{\mu}$ to $\tH_\mu$ in general. The isomorphism $\cF_{\xi_\circ}^\dag\cong \cV^\miki$ is unique up to a single nonzero scalar, while the images of $\{\ket{\mu}\}_{\mu\in\Y}$ differ from $\{\tH_\mu\}_{\mu\in \Y}$ by a nonconstant function of $\mu$ which appears to be quite subtle.
\end{rem}

\begin{rem}\label{R:2}
We have made the assumption $r\ge 3$ throughout this section for simplicity of exposition and in order to match the existing literature. However, we have verified by computation that \eqref{E:Di*-eigen} and \eqref{E:Di-eigen} from Wen's Theorem continue to hold as stated in the case $r=2$. In the $r=1$ case, one can make sense of the above formulas by taking $Q=0$ and $\F\{Q\}=\F e^0$, and then the images of $e_{0,-1}$ and $f_{0,-1}$ under $\rho_{\cV}$ are exactly the operators $\tD_0^*$ and $\tD_0^*$ from Corollary~\ref{C:tH-eigen}.
\end{rem}

\subsection{Further implications and related results}

The toolkit for working with wreath Macdonald polynomials provided by Theorem~\ref{T:eigen} goes beyond the eigenoperators of \eqref{E:Di-eigen} and \eqref{E:Di*-eigen}.

\subsubsection{Pieri formulas}
The vertical Heisenberg algebra $\heis^v$ acts in $\cV$ by symmetric function multiplication and skewing operators. Thus, in light of the isomorphism  $\cF_{\xi_\circ}^\dag\cong \cV^\miki$, one may use the action of $\miki^{-1}(\heis^v)$ in $\cF_{\xi^\circ}^\dag$ to obtain Pieri formulas for the $\tH_\mu$. However, a major technical challenge is that, while the isomorphism $\cF_{\xi_\circ}^\dag\cong \cV^\miki$ identifies the eigenspaces $\K\ket{\mu}\cong\K\tH_\mu$, the explicit scalars relating these two bases are not known (see Remark~\ref{R:scalars}). For more detail and recent progress in this direction, see \cite{Wen2}.

\subsubsection{Wreath Macdonald difference operators}
One may consider the wreath Macdonald polynomials specialized to finitely many variables. In \cite{OS,OSW}, we use the shuffle algebra realization of $\Utor_r$~\cite{Neg} to construct $q$-difference operators acting diagonally on such a specialization of the wreath Macdonald polynomials $P^{t_{\beta^\vee}}_{\mud}$. These operators directly generalize the Macdonald difference operator $M_N$ from \eqref{E:Mac op}. A key point is that the shuffle algebra provides access to the higher degree Miki images $\miki(h_{i,\pm l})$ and their action in $\cV$ \cite{T,Wen}.

\subsubsection{Dual eigenoperators}
As explained to us by J. Wen, one can show that the twist of $\cV$ by $\miki^{-1}$ is a \textit{lowest} weight representation of $\Utor_r$ isomorphic to $\cF_\xi$ for a particular $\xi$. From this point of view, one obtains eigenoperators for the functions $\swap\,\inv\,\omega \tH^{t_{-\beta^\vee}}_\mud$ by considering the action of the Heisenberg algebra $\miki^{-1}(\heis^v)$ (which is different from $\heis^h$) in $\cV$.

%
%
%
%
%

\comment{

\subsection{Symmetries}

\begin{rem}
	The reason I wanted to go after the $q\leftrightarrow t$ symmetry property is because Haiman uses the $r=1$ case of this somewhere in one of his proofs.
\end{rem}

Define 
\begin{align}
	\label{E:newq}
	q &= s^2 u^2 \\
	t &= s^2/u^2
\end{align}
The $\Q(s)$-algebra automorphism $\Phi$ of $\K=\Q(s,u)$ given by $u\mapsto u^{-1}$ restricts to the $\Q$-algebra automorphism of the subfield $\mathbb{Q}(q,t)$ given by exchanging $q$ and $t$. 

\begin{thm} \label{T:swapqt}
For every $\lad,\mud\in \Y^r$ and $\alpha\in Q$ we have
\begin{align}\label{E:swapqt}
	\tK_{\lad,\mud,\alpha}(t,q) =
	\tK_{\la^{-\bullet}\mu^{\bullet*},\alpha^*}(q,t).
\end{align}
\end{thm}	
This is proved in \S \ref{SS:proof of swapping symmetry}.

\begin{rem}
For $r=1$ this symmetry is $K_{\la,\mu}(t,q)=K_{\la,\mu^t}(q,t)$.
\end{rem}

Let $\alpha^- = w_{0}^{(n-1)}\alpha$ where $w_0^{(n-1)}$ is the longest element in the subgroup of the classical Weyl group $\fS_r$
generated by $s_i$ for $1\le i \le r-2$.

\begin{rem}
	In the current indexing, the big partition $\mu$ used to compute the eigenvalues for $\tH_{\mud,\alpha}$ has the property that $(\quot_r(\mu),\core_r(\mu))=(\mud,w_0\alpha)$. The change from $\mud$ to $\mu^{\bullet-}$ reverses the last $r-1$ positions in the quotient.
	The compatible operation on $Q$ is to reverse the last $r-1$ positions in the standard realization of $Q$ in $\Z^r$. That is what happens to $w_0\alpha$. The effect on $\alpha$ is to reverse
	the first $r-1$ positions, that is, pass from $\alpha$ to $\alpha^-$.
\end{rem}

\begin{conj} \label{CJ:inverse symmetry} For every $\lad,\mud\in\Y^r$ and for every $\alpha\in Q$ that is sufficiently deep in its Weyl chamber there is a Laurent monomial $c_{\mud,\alpha}(q,t)$ such that 
	\begin{align}\label{E:invertqt}
		c_{\mud,\alpha}(q,t) 	\tK_{\lad,\mud,\alpha}(q^{-1},t^{-1}) &= \tK_{\la^*,\mu^-,\alpha^-}(q,t).
	\end{align}
	Moreover $\tK_{\triv^*,\mu,\alpha}(q,t)$ is a single Laurent monomial.
\end{conj}

For $r=1$, this identity is $q^{n(\mu^t)}t^{n(\mu)}\tK_{\la,\mu}(q^{-1},t^{-1}) = \tK_{\la^t,\mu}(q,t)$.

\begin{rem} 
	\begin{itemize}
		\item For $r\ge 3$, $c_{\mud,\alpha}(q,t)=\tK_{\triv^*,\mu,\alpha}(q,t)$ should work where $\triv^*$ is the reverse transpose of the trivial multipartition.
		\item For $r=2$ Conjecture \ref{CJ:inverse symmetry} holds for all $\alpha$,
		but the above formula for $c_{\mu,\alpha}(q,t)$ for $r=2$ does not work.
		\item We believe that the depth condition $\alpha - 2n\rho \in \bigoplus_{i\in I\setminus\{0\}} \Z_{\ge0} \,\alb_i$
		is sufficient for the dominant chamber. In the $w$-th chamber use the $w$-translate of this condition.
		\item Note that $c_{\mud,\alpha}(q^{-1},t^{-1})=c_{\mud,\alpha}(q,t)^{-1}$ since it is a monomial.
	\end{itemize}
\end{rem}

Combining the above two conjectures we obtain the following.

\begin{cor} For $\alpha$ deep in a Weyl chamber
	\begin{align}\label{E:rotateK} 
		\tK_{\lad,\mud,\alpha}(t^{-1},q^{-1}) &= c_{\mu^{\bullet*},\alpha^*}(q,t) 
		\tK_{\la^{\bullet t \sigma^{-1}},\mu^{\bullet t \sigma}, -\alpha^{\sigma^{-1}}}(q,t) \\
		\label{E:doublerotateK}
		\tK_{\lad,\mud,\alpha}(q,t) &= c_{\mu^{\bullet*},\alpha^*}(q,t)
		c_{\mu^{-},-\alpha^{*\sigma}}(q,t) \tK_{\la^{\bullet\sigma^{-2}},\mu^{\bullet \sigma^2}, \alpha^{\sigma^{-2}}}(q,t).
	\end{align}
\end{cor}
\begin{proof} Applying \eqref{E:swapqt} with $q$ and $t$ exchanged, and
	\eqref{E:invertqt} with $q$ and $t$ inverted, we obtain
	\begin{align*}
		\tK_{\la^{\bullet-},\mu^{\bullet*},\alpha^*}(t,q) &=
		\tK_{\lad,\mud,\alpha}(q,t) \\
		&= c_{\mud,\alpha}(q,t)  \tK_{\la^{\bullet*},\mu^{\bullet-},\alpha^-}(q^{-1},t^{-1}).\\
	\end{align*}
	Replacing $\lad\mapsto \la^{\bullet-}$, $\mud\mapsto \mu^{\bullet*}$,
	$\alpha\mapsto\alpha^*$, $q\mapsto q^{-1}$ and $t\mapsto t^{-1}$ we obtain
	\begin{align*}
		\tK_{\lad\mud\alpha}(t^{-1},q^{-1}) &= c_{\mu^{\bullet *},\alpha^*}(q^{-1},t^{-1}) \tK_{\la^{\bullet - *},\mu^{\bullet *-},\alpha^{*-}}(q,t) 
	\end{align*}
	which gives \eqref{E:rotateK}. Applying \eqref{E:rotateK} with $\lad\mapsto \la^{\bullet t \sigma^{-1}}$, $\mud\mapsto \mu^{\bullet t\sigma}$, $\alpha \mapsto -\alpha^{\sigma^{-1}}$, $q\mapsto t^{-1}$ and $t\mapsto q^{-1}$ we obtain
	\begin{align*}
		&c_{\mu^{\bullet t\sigma*},(-\alpha^{\sigma^{-1}})^*}(t^{-1},q^{-1})
		\tK_{\la^{\bullet \sigma^{-2}},\mu^{\bullet \sigma^2},\alpha^{\sigma^{-2}}}(t^{-1},q^{-1}) \\
		&=\tK_{\la^{\bullet t\sigma^{-1}},\mu^{\bullet t \sigma}, -\alpha^{\sigma^{-1}}}(q,t) \\
		&= \tK_{\lad,\mud,\alpha}(t^{-1},q^{-1}) c_{\mu^{\bullet*},\alpha^*}(q,t)^{-1}.
	\end{align*}
	Setting $q\mapsto t^{-1}$ and $t\mapsto q^{-1}$ we obtain \eqref{E:doublerotateK}.
\end{proof}

\subsection{When $q=t^{-1}$, there is no dependence on $\alpha$}

\begin{conj} The value of $\tK_{\lad,\mud,\alpha}(q,q^{-1})$ is independent of $\alpha$.
\end{conj}

Setting $q=t^{-1}$ is forgetting the dilation $\C^*$-action
but keeping the hyperbolic $\C^*$-action. The cyclic group part of the
wreath action lives inside the hyperbolic $\C^*$.

\begin{rem} The above polynomial is a Kostka-Shoji polynomial for the
	double of the usual cyclic quiver, with all forward arrow parameters set to $q$ and all backward parameters set to $q^{-1}$.
\end{rem}

\section{Proofs so far}

\subsection{Proof of Theorem \ref{T:swapqt}}
For $F\in\End(\cV)$ and $G\in \Aut(\cV)$ define $F^G = G \circ F \circ G^{-1}$.
Conjugation by an automorphism of $\cV$ produces an algebra automorphism on $\End(\cV)$.

Let $\Theta$ be the automorphism of $\cV$ defined by
$\Theta(f(X,z,s,u)\otimes e^\alpha) = f(X^-,z,s,u^{-1})\otimes s(\alpha) e^{\alpha^*}$; see Proposition \ref{P:groupalgebraautos}.

\begin{prop} \label{P:Thetaconj}
	\begin{align}
		\label{E:ThetaE}
		(E^{(i)}(z))^\Theta &= E^{(-i)}(z) \\
		\label{E:Thetae}
		e_i(z)^\Theta &= \sg(\alb_i) e_{-i}(z) \\ 
		\label{E:ThetaD}
		(D_1^{(i)})^\Theta &= (-1)^r D_1^{(-i)}\qquad\text{for all $i\in I$.}
	\end{align}
\end{prop}
\begin{proof} Equation \eqref{E:ThetaE} holds since
	$\Theta \cdot Y^{(i)}(X,s,u) = Y^{(-i)}(X,s,u)$.
	Using \eqref{E:eztoEzonalpha} we have
	\begin{align*}
		e_i(z)^\Theta \res{\alpha} &= 
		\Theta e_i(z) \res{\alpha^*} \Theta^{-1} \res{\alpha} \\
		&= \Theta z^{1+\pair{\hb_i}{\alpha^*}} u^{\pair{\hb_i}{M_i\alpha^*}} (E^{(i)}(z) \otimes L_{\alb_i})\Theta^{-1}\res{\alpha} \\
		&= \Theta z^{1+\pair{\hb_{-i}}{\alpha}} u^{-\pair{\hb_{-i}}{M_{-i}\alpha}} (E^{(-i)}(z) \otimes L_{\alb_i})\Phi^{-1}\res{\alpha} \\
		&=  z^{1+\pair{\hb_{-i}}{\alpha}} u^{\pair{\hb_{-i}}{M_{-i}\alpha}} (E^{(-i)}(z) \otimes \sg(\alb_i) L_{\alb_{-i}})\res{\alpha} \\
		&= \sg(\alb_i) e_{-i}(z).
	\end{align*}
	For $i\ne 0$ we have
	\begin{align*}
		\Dh_1^{(i)\Theta} = (-1)^r \Dh_1^{(-i)}
	\end{align*}
	by \eqref{E:Thetae} and the fact that $e_{i,0}$ and $e_{j,0}$ commute when $i$ and $j$ are nonadjacent Dynkin nodes, together with the fact that $\prod_{i\in I} \sg(\alb_i) = (-1)^r$ by 
	Lemma \ref{L:signsimple}. 
	Then \eqref{E:ThetaD} holds for $i\ne0$ since in that case $c_i(u^{-1})=c_{-i}(u)$ and 
	
	For $i=0$, using Lemma \ref{L:e-f-change-order} we have
	\begin{align*}
		\hD_1^{(0)\Theta} &= \ad_{s^{-4}}(e_{0,1}) \ad_{s^{-2}}(e_{1,0})\ad_{s^{-2}}(e_{2,0}) \dotsm
		\ad_{s^{-2}}(e_{r-2,0}) \cdot e_{r-1,-1} \\
		&= (-u^2)^{r-2} \hD^{(0)}_1.
	\end{align*}
	Equation \eqref{E:ThetaD} follows.
\end{proof}

Denote by $\chi(F|v)$ the eigenvalue of the operator $F$ on the vector $v$.

\begin{thm} 
	\begin{align}
		\Theta(\tH_\mu) = (-1)^r \tH_{\mu^t}.
	\end{align}
	\begin{align}\label{E:Hswapqt}
		\tH_{\mud,\alpha}(X^-,t,q) =  \tH_{\mu^*,\alpha^*}(X,q,t)
	\end{align}
	\fixit{Having issues fixing a sign.}
\end{thm}
\begin{proof} We have
	\begin{align*}
		\chi(D^{(i)}_1|\Theta(\tH_\mu) &=
		(-1)^r \chi(D^{(-i)\Theta}_1|\Theta(\tH_\mu)) \\
		&= (-1)^r \Theta  W^{(i)}_\mu(q^{-1},t^{-1})\\
		&= (-1)^r W^{(i)}_\mu(t^{-1},q^{-1})\\
		&= (-1)^r W^{(-i)}_{\mu^t}(q^{-1},t^{-1}).
	\end{align*}
Since $\Theta(\tH_\mu)$ and $\tH_{\mu^t}$ have the same $D_1^{(i)}$-eigenvalues for all $i\in I$ they are proportional:
\begin{align*}
	\Theta(\tH_\mu) \in \K^\times \tH_{\mu^t}.
\end{align*}
By definition we have (see \eqref{E:constants})
\begin{align*}
	\Theta(\tH_\mu) &= \sg(\alpha) \tH_{\mud,\alpha}(X^-,t,q) \otimes e^{\alpha^*}.
\end{align*}
Since $w_0\alpha = \kb(\core_r(\mu))$ we have
$-\alpha = \kb(\core_r(\mu^t))$ and $\mu^{\bullet}=\quot_r(\mu^t)$.
Therefore $\tH_{\mu^t} \in \cV_{\alpha^*}$.
\begin{align*}
	\tH_{\mu^t} = \tH_{\mu^{\bullet*},\alpha^*}(X,q,t) \otimes e^{\alpha^*}.
\end{align*}
Due to the normalization condition, \eqref{E:Hswapqt} follows. 
\end{proof}
}

\comment{
\section{Junk I don't know what to do with right now}

\subsection{Conjugating operators}
Define the automorphism
\begin{align}
	\Psi(f(X,z,s,u) \otimes e^{\alpha}) &=
	f(-X,z,s^{-1},u) \otimes e^{-\alpha}.
\end{align}
$X\mapsto -X$ means $X^{(i)}\mapsto -X^{(i)}$ which is the nodewise antipode, which sends $p_k^{(i)}\mapsto - p_k^{(i)}$.

\fixit{Refer to the fact that $e^\alpha\mapsto e^{-\alpha}$ is an algebra automorphism of $\K \{Q\}$.}

\begin{prop}\label{P:Psiconj}
	\begin{align}
		\label{E:PsiE}
		E^{(i)}(z)^\Psi &= F^{(i)}(z) \\
		\label{E:Psie}
		e_i(z)^\Psi &= f_i(z) \sg(\alb_i).
	\end{align}
	\begin{align}\label{E:PsiD}
		D_1^{(i)\Psi} &=
		\begin{cases}
			\ad^L_{s^4}(f_{i,0})\ad^L_{s^2}(f_{i-1,0})\dotsm
			\ad^L_{s^2}(f_{1,0}) \ad^L_{s^2}(f_{i+1,0})\dotsm \ad^L_{s^2}(f_{n-1,0})\cdot f_{0,0} &\text{if $i\ne0$} \\
			\ad^L_{s^4}(f_{0,1})\ad^L_{s^2}(f_{n-1,0})\dotsm  \ad^L_{s^2}(f_{2,0}) \cdot f_{1,1} &\text{if $i=0$ and $r\ge 2$} \\
			f_{0,1}&\text{if $i=0$ and $r=1$.}
		\end{cases}
	\end{align}
\end{prop}
\begin{proof} Equation \eqref{E:PsiE} holds since
	$\Psi\cdot Y^{(i)}(X,s,u) = -Y^{(i)}(X,s^{-1},u)$.
	Equation \eqref{E:Psie} follows from \eqref{E:PsiE} since
	$\Psi$ commutes with $z^{H_{i0}}$ and $L_{\alpha_i}$.
	Equation \eqref{E:PsiD} follows from \eqref{E:Psie} since
	conjugation by $\Psi$ is a ring automorphism, together with the fact that $(\ad^L_a(f)\cdot g)^\Psi = \ad^L_{\Psi(a)}(f^\Psi)\cdot g^\Psi$.
\end{proof}

\begin{rem} $(D_1^{(i)})^\Psi$ is the Miki image of 
	$h_{i,1}$. \fixit{
		Explain the automorphism of the quantum toroidal algebra
		which corresponds to $\Psi$.}
\end{rem}
}

\comment{

\subsection{Dual Fock module}

Using the antipode, we define an action of $\Utor$ on $F(u)^*$ by:
\begin{align}
(x\cdot v^*)(v) &= v^*(S(x)\cdot v)\\
vxv^* &= v^*S(x)v
\end{align}

\begin{align}
b_\mu C_v^{1/2} b_\mu^* &= 1\\
b_\mu e_i(z) b_\nu^* &= -b_\nu^*\psi_i^-(z)^{-1}e_i(z) b_\mu\\
&= -b_\nu^*\psi_i^-(z)^{-1}b_\nu b_\nu^* e_i(z) b_\mu\\
&= -\prod_{(s,\nu_s)\in R_{\nu,i}}\phi(q_1^{\nu_s-1}q_3^{s-1}u/z)^{-1}\prod_{(s,\nu_s+1)\in A_{\nu,i}}\phi(q_1^{(\nu_s+1)-1}q_3^{s-1}q_2u/z)\\
&\quad\times\prod_{\substack{1\le s<l\\(s,\mu_s)\in R_{\mu,i}}}\phi(q_1^{\mu_s-\nu_l}q_3^{s-l})\prod_{\substack{1\le s<l\\(s,\mu_s+1)\in A_{\mu,i}}}\phi(q_1^{\nu_l-(\mu_s+1)}q_3^{l-s})\delta(q_1^{\nu_l-1}q_3^{l-1}u/z)\\
&=-\prod_{(s,\nu_s)\in R_{\nu,i}}\phi(q_1^{\nu_s-\nu_l}q_3^{s-l})^{-1}\prod_{(s,\nu_s+1)\in A_{\nu,i}}\phi(q_1^{\nu_l-(\nu_s+1)}q_3^{l-s})^{-1}\\
&\quad\times\prod_{\substack{1\le s<l\\(s,\mu_s)\in R_{\mu,i}}}\phi(q_1^{\mu_s-\nu_l}q_3^{s-l})\prod_{\substack{1\le s<l\\(s,\mu_s+1)\in A_{\mu,i}}}\phi(q_1^{\nu_l-(\mu_s+1)}q_3^{l-s})\delta(q_1^{\nu_l-1}q_3^{l-1}u/z)\\
&=-\phi(1)^{-1}\prod_{\substack{s>l\\(s,\mu_s)\in R_{\mu,i}}}\phi(q_1^{\mu_s-\nu_l}q_3^{s-l})^{-1}\prod_{\substack{s>l\\(s,\mu_s+1)\in A_{\mu,i}}}\phi(q_1^{\nu_l-(\mu_s+1)}q_3^{l-s})^{-1}\delta(q_1^{\nu_l-1}q_3^{l-1}u/z)
\end{align}
\begin{align}
b_\nu f_i(z) b_\mu^* &= -b_\mu^*f_i(z)\psi_i^+(z)^{-1}b_\nu\\
&=-b_\mu^*f_i(z)b_\nu b_\nu^*\psi_i^+(z)^{-1}b_\nu\\
&=-\prod_{\substack{s>l\\(s,\mu_s)\in R_{\mu,i}}}\phi(q_1^{\mu_s-\nu_l}q_3^{s-l})\prod_{\substack{s>l\\(s,\mu_s+1)\in A_{\mu,i}}}\phi(q_1^{\nu_l-(\mu_s+1)}q_3^{l-s})\delta(q_1^{\nu_l-1}q_3^{l-1}u/z)\\
&\quad\times \prod_{(s,\nu_s)\in R_{\nu,i}}\phi(q_1^{\nu_s-1}q_3^{s-1}u/z)^{-1}\prod_{(s,\nu_s+1)\in A_{\nu,i}}\phi(q_1^{(\nu_s+1)-1}q_3^{s-1}q_2u/z)\\
&=-\phi(1)^{-1}\prod_{\substack{1\le s<l\\(s,\mu_s)\in R_{\mu,i}}}\phi(q_1^{\mu_s-\nu_l}q_3^{s-l})^{-1}\prod_{\substack{1\le s<l\\(s,\mu_s+1)\in A_{\mu,i}}}\phi(q_1^{\nu_l-(\mu_s+1)}q_3^{l-s})^{-1}\delta(q_1^{\nu_l-1}q_3^{l-1}u/z)
%
\end{align}
\begin{align}
b_\mu \psi^{\pm}(z) b_\mu^* &= \prod_{(s,\mu_s)\in R_{\mu,i}}\phi(q_1^{\mu_s-1}q_3^{s-1}u/z)^{-1}\prod_{(s,\mu_s+1)\in A_{\mu,i}}\phi(q_1^{(\mu_s+1)-1}q_3^{s-1}q_2u/z)
\end{align}
All other matrix coefficients are $0$. I think we have to assume $r\ge 2$ for these formulas to be correct.

}

\comment{

Let $\mu$ be an arbitrary partition and suppose $\nu\supset\mu$ such that $\nu/\mu=(l,\nu_l)$ is an $i$-node, i.e., $l-\nu_l\equiv i$. Let $A_{\mu,i}$ and $R_{\mu,i}$ be the sets of addable and removable $\mu$-nodes according to this convention and starting at $(1,1)$. We have:
\begin{align}
\langle \mu | C_v^{1/2} |\mu \rangle &= 1\\
\langle \nu | e_i(z) |\mu\rangle &= \prod_{\substack{1\le s<l\\(s,\mu_s)\in R_{\mu,i}}}\phi(q_1^{\mu_s-\nu_l}q_3^{s-l})\prod_{\substack{1\le s<l\\(s,\mu_s+1)\in A_{\mu,i}}}\phi(q_1^{\nu_l-(\mu_s+1)}q_3^{l-s})\delta(q_1^{\nu_l-1}q_3^{l-1}u/z)\\
\langle \mu | f_i(z) |\nu\rangle &= \prod_{\substack{s>l\\(s,\nu_s)\in R_{\nu,i}}}\phi(q_1^{\nu_s-\nu_l}q_3^{s-l})\prod_{\substack{s>l\\(s,\nu_s+1)\in A_{\nu,i}}}\phi(q_1^{\nu_l-(\nu_s+1)}q_3^{l-s})\delta(q_1^{\nu_l-1}q_3^{l-1}u/z)\\
\langle \mu | \psi^{\pm}(z) |\mu\rangle &=\prod_{(s,\mu_s)\in R_{\mu,i}}\phi(q_1^{\mu_s-1}q_3^{s-1}u/z)\prod_{(s,\mu_s+1)\in A_{\mu,i}}\phi(q_1^{1-(\mu_s+1)}q_3^{1-s}z/u)
\end{align}
All other matrix coefficients are $0$. I think we have to assume $r\ge 2$ for these formulas to be correct.

Let us check relation \eqref{E:e-f} in the case $i=j$. The diagonal matrix coefficient at $\mu$ of the RHS is:
\begin{align}
(q-q^{-1})^{-1}\delta(z/w)(\psi_\mu(z)^+-\psi_\mu(z)^-)
\end{align}
where
\begin{align}
\psi_\mu(z)&= \prod_{(s,\mu_s)\in R_{\mu,i}}\phi(q_1^{\mu_s-1}q_3^{s-1}u/z)\prod_{(s,\mu_s+1)\in A_{\mu,i}}\phi(q_1^{1-(\mu_s+1)}q_3^{1-s}z/u)
\end{align}
and the superscripts $+,-$ stand for the expansions at $z=\infty,0$, respectively.
All other matrix coefficients of the RHS vanish. By the residue theorem, we have
\begin{align}\label{E:e-f-RHS-mu-res}
&\frac{\psi_\mu(z)^+-\psi_\mu(z)^-}{q-q^{-1}}\\
&= +\sum_{(l,\mu_l)\in R_{\mu,i}}\delta(q_1^{\mu_l-1}q_3^{l-1}u/z)\prod_{\substack{(s,\mu_s)\in R_{\mu,i}\\s\neq l}}\phi(q_1^{\mu_s-\mu_l}q_3^{s-l})\prod_{(s,\mu_s+1)\in A_{\mu,i}}\phi(q_1^{\mu_l-(\mu_s+1)}q_3^{l-s})\notag\\
&-\sum_{(l,\mu_l+1)\in A_{\mu,i}}\delta(q_1^{1-(\mu_l+1)}q_3^{1-l}z/u)\prod_{\substack{(s,\mu_s)\in R_{\mu,i}}}\phi(q_1^{\mu_s-(\mu_l+1)}q_3^{s-l})\prod_{\substack{(s,\mu_s+1)\in A_{\mu,i}\\s\neq l}}\phi(q_1^{(\mu_l+1)-(\mu_s+1)}q_3^{l-s})\notag
\end{align}

Now let us consider the LHS. We have
\begin{align}
\langle \la | e_i(z)f_i(w) | \mu \rangle &= \sum_{\substack{\nu\subset\mu,\la\\\mu/\nu,\la/\nu\in A_{\nu,i}}}\langle \la | e_i(z)|\nu\rangle\langle\nu |f_i(w) | \mu \rangle\\
\langle \la | f_i(w)e_i(z) | \mu \rangle &= \sum_{\substack{\eta\supset\mu,\la\\\eta/\mu,\eta/\la\in R_{\eta,i}}}\langle \la | f_i(w)|\eta\rangle\langle\eta |e_i(z) | \mu \rangle 
\end{align}
We need to give an argument that these are equal when $\la\neq\mu$, which I think is not difficult. When $\la=\mu$, we compare with \eqref{E:e-f-RHS-mu-res} to see that the diagonal matrix coefficient of the LHS of \eqref{E:e-f} at $\mu$ agrees with that of the RHS.
}

\end{document}